\newtheorem{thrm}{Theorem}[section]
\newtheorem{cor}[thrm]{Corollary}
\newtheorem{lem}[thrm]{Lemma}
\newtheorem{prop}[thrm]{Proposition}
\theoremstyle{definition}
\newtheorem{defn}[thrm]{Definition}
\newtheorem{exm}[thrm]{Example}
\newtheorem{rem}[thrm]{Remark}
\crefname{thrm}{Theorem}{Theorems}
\crefname{lem}{Lemma}{Lemmas}
\crefname{cor}{Corollary}{Corollaries}
\crefname{prop}{Proposition}{Propositions}
\crefname{defn}{Definition}{Definitions}
\crefname{exm}{Example}{Examples}
\crefname{rem}{Remark}{Remarks}
\crefname{section}{Section}{Sections}
\crefname{equation}{\unskip}{\unskip}
\crefname{enumi}{\unskip}{\unskip}
\crefname{subsection}{Subsection}{Subsections}
\newcommand{\mylabel}[2]{#2\def\@currentlabel{#2}\label{#1}}
\renewcommand{\iff}{\Leftrightarrow}
\newcommand{\Mod}{\textbf{-}\mathbf{Mod}}
\newcommand{\rMod}{\mathbf{Mod}\textbf{-}}
\newcommand{\impl}{\Rightarrow}
\newcommand{\id}{\mathrm{id}}
\newcommand{\cI}{\mathcal{I}}
\newcommand{\cS}[1]{\mathcal S{(#1)}}
\newcommand{\cF}{\mathcal F}
\newcommand{\bbN}{\mathbb N}
\newcommand{\bbZ}{\mathbb Z}
\newcommand{\N}{\mathcal N}
\newcommand{\dom}[1]{\operatorname{\mathrm{dom}}{#1}}
\newcommand{\ran}[1]{\operatorname{\mathrm{ran}}{#1}}
\newcommand{\End}{\mathrm{End}}
\newcommand{\Hom}{\mathrm{Hom}}
\newcommand{\Ext}{\mathrm{Ext}}
\newcommand{\bd}{\mathbf{d}}
\newcommand{\br}{\mathbf{r}}
\DeclareMathOperator{\im}{im}
\newcommand{\m}{{}^{-1}}
\newcommand{\0}{\theta}
\newcommand{\af}{\alpha}
\newcommand{\bt}{\beta}
\newcommand{\lb}{\lambda}
\newcommand{\Lb}{\Lambda}
\newcommand{\gm}{\gamma}
\newcommand{\D}{\mathcal D}
\newcommand{\vf}{\varphi}
\newcommand{\sg}{\sigma}
\newcommand{\dl}{\delta}
\newcommand{\s}{\sigma}
\newcommand{\tl}{\tilde}
\newcommand{\sst}{\subseteq}
\newcommand{\ol}{\overline}
\newcommand{\kpar}[1]{K_{\mathrm{par}}(#1)}
\newcommand{\la}{\cdot}
\newcommand{\emp}{(\phantom{x})}
\newcommand{\cb}{\partial}
\newcommand{\arr}[1]{\overset{#1}{\rightarrow}}
\newcommand{\larr}[1]{\overset{#1}{\longrightarrow}}
\newcommand{\ot}{\otimes} 
\newcommand{\cG}{\mathcal{G}}
\newcommand{\sG}{\mathscr{G}}
\newcommand{\LX}{{\mathcal L} (X)}
\newcommand{\LXs}{{\mathcal L} (X_s)}
\newcommand{\LXsm}{{\mathcal L} (X_{s^{-1}})}
\begin{document}

\title[(Co)Homology of crossed products inverse monoid actions]{Homology and cohomology of crossed products by inverse monoid actions and Steinberg algebras}
	
	%
	\author{Mikhailo Dokuchaev$^a$}
	\address{$^a$Instituto de Matem\'atica e Estat\'istica, Universidade de S\~ao Paulo,  Rua do Mat\~ao, 1010, S\~ao Paulo, SP,  CEP: 05508--090, Brazil,\\ E-mail: \texttt{dokucha@gmail.com}}
	\thanks{Corresponding author: Mikhailo Dokuchaev (\texttt{dokucha@gmail.com})}
	\author{Mykola Khrypchenko$^b$}
	\address{$^b$Departamento de Matem\'atica, Universidade Federal de Santa Catarina, Campus Reitor Jo\~ao David Ferreira Lima, Florian\'opolis, SC,  CEP: 88040--900, Brazil,\\ E-mail: \texttt{nskhripchenko@gmail.com}}
	%
	\author{Juan Jacobo Sim\'on$^c$}
	\address{$^c$Departamento de Matem\'{a}ticas, Universidad de Murcia, 30071 Murcia, Spain,\\ E-mail: \texttt{jsimon@um.es}}
	%
	\subjclass[2020]{Primary 20M18; 16S35; 16E40; Secondary   16S99; 16W22, 18G40, 22A22.}
	\keywords{Homology, cohomology, spectral sequence, inverse semigroup, crossed product, Steinberg algebra}
	\begin{abstract} Given a unital action  $\0 $ of an inverse monoid $S$ on an algebra $A$ over a filed $K$ we produce (co)homology spectral sequences which converge to  the Hochschild (co)homology  of the crossed product $A\rtimes_\0 S$ with values in a bimodule over  $A\rtimes_\0 S$. The spectral sequences involve a new kind of  (co)homology of the  inverse monoid $S,$ which is based on $KS$-modules.
		The spectral sequences take especially nice form, when $(A\rtimes_\0 S)^e $ is flat as a left (homology case) or right (cohomology case) $A^e$-module, involving also the 
		Hochschild (co)homology of $A.$ Same nice spectral sequences are also obtained if $K$ is a commutative ring, over which $A$ is projective, and $S$ is   $E$-unitary. We apply our results to the Steinberg algebra $A_K(\sG)$ over a field $K$ of  an ample groupoid $\sG,$ whose unit space $\sG ^{(0)}$ is compact.
		In the homology case our spectral sequence collapses on the $p$-axis, resulting in an isomorphism between the Hochschild homology of   $A_K(\sG)$ with values in an
		$A_K(\sG)$-bimodule $M$ and the homology of the inverse semigroup of the compact open bisections of  $\sG$ with values in the coinvariant quotient of $M.$
	 \end{abstract}

	\maketitle
	\tableofcontents

\section*{Introduction} 

Spectral sequences, invented originally by Leray~\cite{Leray46a,Leray46b}, have been actively used to ``approximate'' the (co)homology of an object (e.g., a group, an algebra, a Hopf algebra, a Lie algebra, etc.) by some related sequence of cohomologies. Lyndon~\cite{Lyndon48} studied the cohomology of the direct product of groups and thus calculated $H^n(G,\bbZ)$, where $G$ is a finite abelian group. He also showed that his technique generalizes to the case of arbitrary group extensions, although the final result is not ``entirely definitive'', namely, it states that $H^n(G,K)$ is isomorphic to a quotient of $H^n(A,H^0(B,K))$, where $B$ is a normal subgroup of $G$ and $A=G/B$. Hochschild and Serre~\cite{Hochschild-Serre53} managed to express the relation between the cohomologies of $G$, $K\trianglelefteq G$ and $G/K$ in terms of a spectral sequence with $E_2^{p,q} \cong H^p(G/K, H^q(K,X))$ converging to $H^{p+q}(G,X)$. In fact, they constructed two spectral sequences: the first one was inspired by the Cartan-Leray~\cite{Cartan48,Cartan-Leray49}
spectral sequence and by a previous Serre's work~\cite{Serre50}, while the second one was built directly from a filtration of the standard cochain complex of $G$. Evens~\cite{Evens75} proved that, whenever $G$ is finite, these two spectral sequences are isomorphic. Beyl~\cite{Beyl81} generalized and completed this result by showing that both sequences are also isomorphic to a Grothendieck spectral sequence~\cite{Grothendieck57}. Barnes~\cite{Barnes85} developed a general theory of spectral sequence constructors and their comparison and applied it to prove the equivalence of several known spectral sequences associated to a Hopf algebra extension. The group and Lie algebra extension spectral sequences were thus treated as special cases. 

Given a unital associative algebra $A$ and a group $G$ acting on $A$ by unit preserving automorphisms, Nistor~\cite{Nistor90} showed that there is a spectral sequence with $E^2_{p,q}=H^p(G,H^q(A))$ converging to the cohomology of a quotient of the standard complex used to calculate $H^{p+q}(A\rtimes G)$. Sanada~\cite{Sanada93} considered twisted crossed products $\Lb\rtimes_\0 G$, where $\Lb$ is a commutative $R$-algebra, which is assumed to be a finitely generated projective module over $R$, $G$ is a finite group acting by $R$-automorphisms on $\Lb$ in a way that $\Lb^G=R$ and $\0$ is a normalized twisting with values in the group of units of $\Lb$. For any $(\Lb\rtimes_\0 G)^e$-module $A$, he constructed a spectral sequence satisfying $E^2_{p,q}=H^p(G,H^q(\Lb,A))$ and converging to $H^{p+q}(\Lb\rtimes_\0 G,A)$. For an arbitrary unital algebra $A$ over a field $k$ and a group $G$ acting on $A$ by automorphisms, Guichardet~\cite{Guichardet01} generalized the ``direct method'' by Hochschild--Serre~\cite{Hochschild-Serre53} to construct a filtration of the standard complex that calculates the Hochschild cohomology of $A\rtimes G$ with values in an $(A\rtimes G)$-bimodule $X$. The corresponding spectral sequence has $E^2_{p,q}=H^p(G,H^q(A,X))$ and converges to $H^{p+q}(A\rtimes G,X)$.

There are generalizations of the notion of a crossed product to \textit{partial group actions}~\cite{E-1,McClanahan95,DE}. They have appeared in the literature under different names depending on the context: partial crossed product~\cite{DE}, partial skew group ring~\cite{Fl}, partial semidirect product~\cite{Ara-Buss24} or partial smash product~\cite{AAR}. Given a partial action of a group $G$ on an algebra $A$, in order to describe the Hochschild cohomology of $A\rtimes G$ with values in an $(A\rtimes G)$-bimodule $M$, Alvares, Alves and Redondo~\cite{AAR} introduced a cohomology of $G$ with values in a $\kpar G$-module, where $\kpar G$ is the partial group algebra~\cite{DEP} of $G$ over a field $K.$ They called such a cohomology a \textit{partial group cohomology} of $G$\footnote{Observe that this cohomology differs from the one studied in~\cite{DK}.}, denoted it $H^\bullet_{par}(G,-)$ and  showed that there exists a Grothendieck spectral sequence
\begin{align}\label{spec-seq-AAR}
	E^2_{p,q}=H^q_{par}(G,H^p(A,M))\impl H^{p+q}(A\rtimes G,M).
\end{align}
In~\cite{DJ1}, Dokuchaev and Jerez generalized this result to a class of \textit{twisted partial actions} of $G$. Moreover, they also produced an analogous homological spectral sequence having the partial homology~\cite{ADK} of $G$ as one of the ingredients. On the other hand, in~\cite{DJ2}, the spectral sequence~\cref{spec-seq-AAR} and its homological analog were extended to \textit{partial smash products} coming from symmetric partial actions of cocommutative Hopf algebras. The \textit{partial (co)homology of Hopf algebras}, involved in the corresponding spectral sequence, was also introduced in~\cite{DJ2}.

In~\cite{Sieben97}, Sieben showed that the (partial) crossed product coming from a partial action $\af$ of a group on a $C^*$-algebra $A$ admitting a covariant representation $(\pi,u,H)$ is in fact isomorphic to the crossed product by an action of a certain inverse semigroup $S$ associated to $(\pi,u,H)$. Exel and Vieira~\cite{EV} defined the crossed product by an inverse semigroup action without using covariant representations and constructed an isomorphism between the crossed product by a partial group action of $G$ on $A$ and the crossed product by an action of the Exel's monoid~\cite{E1} $\cS G$ of $G$ on $A$. Thus, the crossed product by a partial group action turned out to be a particular case of the crossed product by an inverse monoid action. 

The purpose of this paper is to show that there is a Grothendieck spectral sequence converging to the (co)homology of the crossed product by a unital inverse monoid action  and apply them to Steinberg algebras. It is achieved by invoking the Grothendieck  theorems \cite[Theorems 10.47, 10.48]{Rotman} to  suitable pairs of functors, but some preparatory work is needed to define these functors and to show that they satisfy the required conditions and give the desired spectral sequences. 

In \cref{sec-comp-act}, we study the relationship between actions of an inverse monoid $S$ and partial actions of its maximum group image $\cG(S)$. We show in \cref{tl-0-part-act} that any compatible unital action $\0$ of $S$ on an algebra $A$ induces a partial action $\tl\0$ of $\cG(S)$ on $A$. Then, in \cref{prop:SurjectivePhi}, we construct a surjective homomorphism from the crossed product $A\rtimes_\0 S$ onto the  skew group algebra $A\rtimes_{\tl\0} \cG(S)$, which is an isomorphism, whenever $S$ is $E$-unitary.

In \cref{sec-(co)hom}, we  introduce a new (co)homology of inverse monoids whose idea comes from \cite{AAR,ADK}. For practical reasons, we construct projective resolutions of the trivial $KS$-module $KE(S)$ in the category of left (resp. right) $KS$-modules and give explicit formulas for $H^n(S,A)$ (resp. $H_n(S,A)$) coming from these resolutions.

\cref{sec:Homol} is the main ``homological'' part of our work. Given a unital partial action of an inverse monoid $S$ on a $K$-algebra $A$, where $K$ is a commutative ring, we define the right exact functors
\begin{align*}
	F_1(-) &:= A \otimes_{A^e}-: (A\rtimes_\0 S)^e\textbf{-Mod} \to KS\textbf{-Mod},\\
	F_2(-) &:= KE(S) \otimes_{KS}-: KS \textbf{-Mod} \to K\textbf{-Mod}
\end{align*}
and show that their composition $F_2F_1$ is naturally isomorphic to the functor
\begin{align*}
	F(-) := (A\rtimes_\0 S) \otimes_{(A\rtimes_\0 S)^e}-: (A\rtimes_\0 S)^e\textbf{-Mod} \to K\textbf{-Mod},
\end{align*} 
whose left-derived functor gives the Hochschild homology of $A\rtimes_\0 S$. Then, whenever $K$ is a \textit{field}, we show in \cref{teo:HLH} that $F_1$ and $F_2$ satisfy the conditions of~\cite[Theorem 10.48]{Rotman}, so for any $A\rtimes_\0 S$-bimodule $M$ there exists a first quadrant homology spectral sequence
$$
	E^2_{p,q} = H_p(S, (L_q F_1)M  ) \Rightarrow H_{p+q}(A\rtimes_\0 S, M). 
$$
In particular, if $(A\rtimes_\0 S)^e $ is flat as a left $A^e$-module, then there exists  a spectral sequence of the form
\begin{align}\label{E^2_pg=H_p(S_H_q(A_M))=>H_(p+q)(A-rt-S_M)}
	E^2_{p,q} = H_p(S, H_q (A, M)) \Rightarrow H_{p+q}(A\rtimes_\0 S, M). 
\end{align}
More specifically, if $A$ is separable over $K$, then we obtain an isomorphism
$$
H_n(S, M/[A, M]) \cong H_{n}(A\rtimes_\0 S, M).
$$
On the other hand, if $S$ is \textit{$E$-unitary}, then the assumption that $K$ is a field can be replaced by the weaker one requiring flatness of $A$ over $K$. It turns out by \cref{lem:FlatnessInE-unitaryCase} that $(A\rtimes_\0 S)^e $ is flat as a left (right) $A^e$-module in this case, so in \cref{teo:E-UnitaryHomolSpectralSeq} we again come to the spectral sequence \cref{E^2_pg=H_p(S_H_q(A_M))=>H_(p+q)(A-rt-S_M)}, but under these new assumptions on $A$ and $S$.

In \cref{sec-cohom}, we obtain cohomological analogs of the results of \cref{sec:Homol}. In the same setting as in \cref{sec:Homol}, we define the left exact functors
\begin{align*}
	T_1&:=\Hom_{A^e}(A,-): (A \rtimes_\0 S)^e\Mod\to KS\Mod,\\
	T_2&:=\Hom_{KS}(KE(S),-):KS\Mod\to K\Mod,
\end{align*}
whose composition $T_2T_1$ is proved to be naturally isomorphic to the functor
\begin{align*}
	T:=\Hom_{(A \rtimes_\0 S)^e}(A \rtimes_\0 S,-) : (A \rtimes_\0 S)^e\Mod\to K\Mod,
\end{align*}
which determines the Hochschild cohomology of $A \rtimes_\0 S$ with values in an $A \rtimes_\0 S$-bimodule. Assuming that $K$ is a field and  applying~\cite[Theorem 10.47]{Rotman}, we show in \cref{E^pq_2=>H^(p+q)} that for any $A\rtimes_\0 S$-bimodule $M$ there exists a third quadrant cohomology spectral sequence
\begin{align*}
	E^{p,q}_2=H^p(S,(R^qT_1)M)\impl H^{p+q}(A\rtimes_\0 S,M).
\end{align*} 
It follows that, whenever $(A\rtimes_\0 S)^e$ is flat as a right $A^e$-module, there is a spectral sequence of the form
\begin{align}\label{E^pq_2=H^p(S_H^q(A_M))=>H^(p+q)(A-rt-S_M)}
	E^{p,q}_2=H^p(S,H^q(A,M))\impl H^{p+q}(A\rtimes_\0 S,M).
\end{align}  
In particular, if $A$ is separable over $K,$ we come to an isomorphism 
\[
         H^n(S, M^A) \cong H^{n}(A\rtimes_\0 S, M), 
    \] where $M^A$ is the $K$-submodule of invariants of $M.$ 
The same sequence \cref{E^pq_2=H^p(S_H^q(A_M))=>H^(p+q)(A-rt-S_M)} is obtained in \cref{teo:EunitaryCohomSpectralSeq}, under the hypotheses that $S$ is $E$-unitary and $A$ is flat over a commutative ring $K$.

  In Section~\ref{sec:Steinberg} we apply our results to Steinberg algebras.     Introduced independently by Steinberg in 
\cite{St2010},  and by Clark,
 Farthing, Sims and Tomforde in \cite{ClarkFarthSimsTomf2014}, these algebras   draw much attention of algebraists and analysts. Being an algebraic counterpart of Renault’s \cite{Renault} groupoid C*-algebras, they include important classes  of algebras, such as group algebras,  Leavitt path algebras and the  Kumjian–Pask algebras \cite{ClarkFarthSimsTomf2014},  ultragraph Leavitt path algebras \cite{CastroGoncalvesWyk2011,HazratNam2023} and the crossed products by topological  partial actions  of groups on totally disconnected  locally compact Hausdorff spaces  \cite{BeuterGon2018}.

 Let $A_K(\sG)$ be the Steinberg algebra over a field $K$ of  an ample groupoid $\sG,$
  whose unit space $\sG ^{(0)}$ is compact. It is well-known that $A_K(\sG)$ is the crossed product by a unital action of the inverse monoid
  ${\mathcal S}^{a}(\sG )$ of the compact open bisections of $\sG$ on the algebra 
  ${\mathcal L}(\sG ^{(0)})$ of the locally constant functions $\sG ^{(0)}\to K.$ We show in Theorem~\ref{teo:SteinbergHomology} that  in this case
  the spectral sequence \eqref{E^2_pg=H_p(S_H_q(A_M))=>H_(p+q)(A-rt-S_M)}  is applicable and, moreover, it collapses on the $p$-axis, resulting in an isomorphism 
    \begin{equation*}
	H_{n}(A_K(\sG), M) \cong  H_n({\mathcal S}^{a}(\sG ), M/[{\mathcal L}( \sG ^{(0)}), M]),
	\end{equation*} where $M$ is an $ A_K(\sG)$-bimodule.  Our cohomology result, with $A_K(\sG)$ and $M$ as above, is the third quadrant spectral sequence 
		\begin{align*}
		E^{p,q}_2=H^p({\mathcal S}^{a}(\sG ),H^q({\mathcal L}( \sG ^{(0)}) ,M))\impl H^{p+q}( A_K(\sG),M),
	\end{align*} given in  Theorem~\ref{teo:SteinbergCoHomology}.
	

	\section{Preliminaries}\label{sec:Prelim}
	
	Recall that a semigroup $S$ is called {\it inverse}, if for any $s\in S$ there exists a unique $s\m\in S$ (the {\it inverse of} $s$), such that $ss\m s=s$ and $s\m ss\m=s\m$. Each inverse semigroup admits the \textit{natural partial order} defined by $s\le t\iff s=et$ for some idempotent $e$ (or, equivalently, $s=tf$ for some idempotent $f$). A highly important example of an inverse semigroup is  the \textit{symmetric inverse monoid}  $\cI(X)$ of  a set 
	$X,$ which consists of all bijections between the subsets of $X,$ including the empty one 
	$\emptyset \to \emptyset .$ The operation on $\cI(X)$ is given by the composition of bijections on the largest possible domain, and the natural partial order is induced by the restriction of functions.
	
	 Given an inverse semigroup $S$, we denote by $\sg$ the \textit{minimum group congruence} on $S$, which is defined by setting $(s,t)\in\sg$ if and only if there exists $u\le s,t$.  The notation $\cG(S)$ will stand for  the group $S/\sg ,$ called the {\it maximum group image} of $S$.	   An inverse semigroup $S$ is said to be {\it $E$-unitary}, whenever $(e,s)\in\sigma$ and $e\in E(S)$ imply that $s\in E(S)$ (equivalently, $e\le s\impl s\in E(S)$), where $E(S)$ denotes the subsemigroup of all idempotents of $S.$ It is well-known that  $E$-unitary inverse semigroups can also be characterized by the property that $(s,t)\in\sigma\iff s\m t,st\m\in E(S)$ (see~\cite[Theorem 2.4.6]{Lawson}).  
	   
	   By the semigroup algebra $K S$ of an inverse semigroup $S$ over a commutative unital ring $K$ we shall mean the free $K$-module with the free basis $S,$ endowed with the multiplication induced by that of $S.$
	   
	   \underline{In all what follows}, in general, $K$ will be a commutative (associative)  unital ring and $A$ a unital  associative algebra over $K.$ In some subsections, however, $K$ will be assumed to be a field.
	   
	    Recall from~\cite{E6} that a \textit{partial action} $\theta$ of a group $G$ on $A$ is a collection of algebra isomorphisms $\theta_x: D_{x^{-1}}\to D_x$, where $D_x$ is an ideal of $A$, $x\in G$, such that,
	   for all $x,y \in G,$
	\begin{enumerate}[(i)]
		\item $D_1=A$ and $\theta_1=\id_A$; 
		\item $\theta_x(D_{x^{-1}}\cap D_y)=D_x\cap D_{xy}$;
		\item $\theta_x\circ\theta_y=\theta_{xy}$ on $D_{y^{-1}}\cap D_{y^{-1}x^{-1}}$.
	\end{enumerate}
If each ideal $D_x$ is a unital algebra, i.e. $D_x$ is generated by an  idempotent of $A$ which is central in $A,$  we  say that $\theta $ is a {\it unital} partial action. Then obviously $D_x\cap D_y=D_x D_y.$ 

Replacing in the above definition $A$ by a semigroup $T$ we obtain the concept of a partial action of a  group $G$ on $T.$
	
	Given a partial action $\0$ of $G$ on $A$, recall from~\cite{DE} that the \textit{skew group algebra} $A\rtimes_\0 G$ is the $K$-space $\bigoplus_{x\in G} D_x\dl_x$ endowed with a $K$-algebra structure via $a\dl_x\cdot b\dl_y=\0_x(\0\m_x(a)b)\dl_{xy}$ (here $\dl_x$ is just a symbol). Observe that the algebra $A\rtimes_\0 G$ is associative, whenever the ideals $D_x$ are idempotent (see \cite[Corollary 3.2]{DE}). There is an (injective) algebra homomorphism $A\to A\rtimes_\0 G$, $a\mapsto a\dl_1$, giving an $A$-bimodule structure on $A\rtimes_\0 G$ in the case where $A\rtimes_\0 G$ is associative.   
	
	   Let $S$ be an inverse semigroup. Recall from~\cite{EV} that an \textit{action} of $S$ on $A$ is a homomorphism $\0:S\to\cI(A)$, $s\mapsto\0_s$,  such that $\dom\0_s$ and $\ran\0_s$ are ideals of $A$ and $\0_s$ is an algebra isomorphism for all $s\in S$. 
	   
	     \underline{In all what follows} we assume that $S$ is  an inverse  monoid with the unity element denoted by $1_S$ (or simply by $1$) and $\0$ is a \textit{unital} action of $S$ on $A$, i.e. $\0(1_S)=\id_A$ and   $\ran\0_s=1_sA,$ for some   central idempotent $1_s$ of $A.$ Observe that $1_{1_S}=1_A$, the unity element of $A$. 
	    
	    We recall from \cite{EV} the following properties of $\0 $ for all $s,t\in S:$
\begin{align*}
	1_{st}A &\subseteq 1_s A \;\;\; \text{and} \;\;\; 
	1_s 1_{st} = 1_{st};
	\end{align*}
	\begin{align} 
	s\leq t &\impl 1_{s}A \subseteq 1_t A
	\;\;\; \text{and} \;\;\;  1_s 1_t = 1_s;\label{eq:ideals natural order}\\
	 \0 _s (1_{s\m}1_t) &= 1_s 1_{st} = 1_{st}.\label{theta on units}
\end{align}

Obviously,  $\0_e$ is the identity map $1_e A \to 1_e A,$ for each idempotent $e\in S,$ and, since $\0 _s \circ \0\m_s = \0 _s \circ \0_{s\m}  =  \0_{s s\m},$ it follows  that
\begin{align}\label{1_(ss-inv)=1_s}
	1_{s s\m} = 1_s
\end{align}
for all $s\in S$, so that $\0 _{s s\m}$ is the identity map $1_s A \to 1_s A.$ 

We also have that 
\begin{equation*} \0 _s (1_{s\m t}) = 1_s 1_{t}.
\end{equation*} for all $s,t\in S.$ Indeed, by \eqref{theta on units}, 
$ 1_{s\m t} = \0 _{s\m} (1_s 1_t),  $ and applying $\0_s$ to both sides of the latter equality, we obtain
$ \0 _s(1_{s\m t}) = \0 _{ss\m} (1_s 1_t) =1_s 1_t, $
as desired.

In addition, observe that the fact that $\0$ is a homomorphism of semigroups implies that
\begin{equation}\label{eq:thetaOnProductOfIdempotents}1_{ef}= 1_ e 1 _f
\end{equation} for all $e,f \in E(S).$

\section{Compatible actions of inverse semigroups and partial actions of groups}\label{sec-comp-act}

\subsection{From compatible actions of $S$ to partial actions of $\cG(S)$}


We give the next:

\begin{defn}
	Let $\0$ be a (unital) action of $S$ on $A$. We say that $\0$ is \textit{compatible}, if for all $(s,t)\in\sg$ the partial bijections $\0_s$ and $\0_t$ agree on the intersection of their domains, i.e.
	\begin{align*}
		\0_s|_{\dom\0_s\cap\dom\0_t}=\0_t|_{\dom\0_s\cap\dom\0_t}.
	\end{align*}
\end{defn}

By \cite[Proposition 1.2.1 (2)]{Lawson} the latter is equivalent to $\0_{s\m t}$ and $\0_{st\m}$ being idempotents. In particular, any action of an $E$-unitary semigroup $S$ is compatible, as $s\m t,st\m\in E(S)$ for all $(s,t)\in\sg$.

Assume that $\0$ is compatible. For any $g\in \cG(S)$ define 
\begin{align*}
	\D_g=\sum_{s\in g} 1_sA.
\end{align*} 
Then $\D_{g\m}=\sum_{s\in g\m} 1_sA=\sum_{s\in g} 1_{s\m}A$. Given $a=\sum_{s\in g}a_{s\m}\in \D_{g\m}$ with $a_{s\m}\in 1_{s\m}A$, set
\begin{align}\label{tl-0_g(a)}
	\tl\0_g(a)=\sum_{s\in g}\0_s(a_{s\m}).
\end{align}

\begin{lem}\label{tl-0_g-bigection}
	Equality \cref{tl-0_g(a)} defines a bijection from $\D_{g\m}$ to $\D_g$ whose inverse is $\tl\0_{g\m}$.
\end{lem}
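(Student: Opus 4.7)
The map $\tl\0_g$ in \eqref{tl-0_g(a)} is prescribed via a choice of finite decomposition $a=\sum_{s\in F}a_{s\m}$ with $F\sst g$ and $a_{s\m}\in 1_{s\m}A$, so the main content of the lemma is to verify independence from this choice. Once well-definedness is in place, the inclusion $\tl\0_g(\D_{g\m})\sst \D_g$ is immediate from $\0_s(1_{s\m}A)=1_sA$, and the inverse relation is a direct computation: one applies $\tl\0_{g\m}$ to $\tl\0_g(a)=\sum_{s\in F}\0_s(a_{s\m})$ using the decomposition indexed by $\{s\m:s\in F\}\sst g\m$ with components $\0_s(a_{s\m})\in 1_sA=1_{(s\m)\m}A$; then $\0_{s\m}\circ\0_s=\0_{s\m s}$ is the identity on $1_{s\m s}A=1_{s\m}A$ by \eqref{1_(ss-inv)=1_s}, yielding $\tl\0_{g\m}(\tl\0_g(a))=a$, and the other composition $\tl\0_g\circ\tl\0_{g\m}=\id_{\D_g}$ is symmetric (and uses the well-definedness of $\tl\0_{g\m}$, which follows from the same argument applied to $g\m$).

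For well-definedness, by linearity I reduce to the following claim: whenever $\sum_{s\in F}c_s=0$ with $F\sst g$ finite and $c_s\in 1_{s\m}A$, we have $\sum_{s\in F}\0_s(c_s)=0$. Compatibility supplies the agreement of $\0_s$ and $\0_t$ on $1_{s\m}1_{t\m}A$ for every pair $s,t\in g$, but this is only pairwise information on binary intersections; I need to upgrade it to a single relation involving all of $F$ simultaneously. My plan is to orthogonalize the domain idempotents: for each $I\sst F$ set
\begin{equation*}
e_I=\prod_{s\in I}1_{s\m}\prod_{t\in F\setminus I}(1_A-1_{t\m}),
\end{equation*}
giving pairwise orthogonal central idempotents of $A$ with $\sum_{I\sst F}e_I=1_A$ and $e_I\cdot 1_{s\m}=e_I$ for $s\in I$, $=0$ otherwise. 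Then $c_s=\sum_{I\ni s}e_Ic_s$, and extracting the $e_I$-component of $\sum_s c_s=0$ yields $\sum_{s\in I}e_Ic_s=0$ for every $I$. Since $e_Ic_s\in\bigcap_{t\in I}1_{t\m}A\sst \dom\0_t$ for each $t\in I$, the pairwise compatibility of the $\0_t$, $t\in I$, gives $\0_s(e_Ic_s)=\0_{s_I}(e_Ic_s)$ for any fixed $s_I\in I$, and so $\sum_{s\in I}\0_s(e_Ic_s)=\0_{s_I}\bigl(\sum_{s\in I}e_Ic_s\bigr)=0$. Summing over $I\sst F$ delivers $\sum_{s\in F}\0_s(c_s)=0$, as required.

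The main obstacle I anticipate is precisely this passage from pairwise compatibility on binary domain intersections to a relation on the full finite sum; the Boolean orthogonalization of the central idempotents $\{1_{s\m}:s\in F\}$ is the device that makes compatibility directly applicable on every piece $e_IA$ simultaneously, reducing the global statement to the trivial vanishing $\sum_{s\in I}e_Ic_s=0$ inside each common-domain slice.
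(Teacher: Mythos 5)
Your proof is correct. The crux of the lemma is well-definedness, and there you take a genuinely different route from the paper: the paper argues by induction on the number of nonzero summands, fixing one $s$ and multiplying the relation $\sum_t a_{t\m}=0$ by the central idempotent $1_A-1_{s\m}$ to annihilate the $s$-term, then invoking compatibility once to fold the inductive conclusion back into $-\0_s(a_{s\m})$. You instead atomize $1_A$ into the $2^{|F|}$ pairwise orthogonal central idempotents $e_I$ of the Boolean algebra generated by $\{1_{s\m}\mid s\in F\}$, note that on each atom all the relevant $\0_s$ coincide by compatibility, and conclude by restricting the relation $\sum_s c_s=0$ to each atom. The two devices are close cousins --- the paper's single multiplication by $1_A-1_{s\m}$ is one step of your orthogonalization --- but your version avoids induction entirely and isolates exactly where pairwise compatibility on binary intersections gets upgraded to a simultaneous statement over all of $F$, at the cost of an exponentially large (but finite, hence harmless) index set. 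Your treatment of the remaining assertions (that the image lands in $\D_g$ and that $\tl\0_{g\m}\circ\tl\0_g=\id$ via $\0_{s\m}\circ\0_s=\0_{s\m s}=\id$ on $1_{s\m}A$, using \cref{1_(ss-inv)=1_s}) fills in what the paper dismisses as ``easy to see''.
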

\begin{proof}
In order to show that $\tl\0_g$ is well-defined, assume that $\sum_{s\in g}a_{s\m}=0$ with $a_{s\m}\in 1_{s\m}A$ for all $s\in g$ and $\{s\in g\mid a_{s\m}\ne 0\}$ finite.  We are going to prove by induction on the cardinality of $\{s\in g\mid a_{s\m}\ne 0\}$ that $\sum_{s\in g}\0_s(a_{s\m})=0$. The base case (cardinality $1$) is trivial. For the induction step fix $s\in g$ and write 
	\begin{align*}
		0=(1_A-1_{s\m})\sum_{t\in g}a_{t\m}=\sum_{t\in g}(1_A-1_{s\m})a_{t\m},
	\end{align*}
	where $(1_A-1_{s\m})a_{t\m}\in 1_{t\m}A$ and
	\begin{align*}
		|\{t\in g\mid (1_A-1_{s\m})a_{t\m}\ne 0\}|<|\{s\in g\mid a_{s\m}\ne 0\}|,
	\end{align*}
	because $(1-1_{s\m})a_{s\m}=0$. By the induction hypothesis 
	\begin{align*}
		0 &=\sum_{t\in g}\0_t\left((1_A-1_{s\m})a_{t\m}\right)=\sum_{t\in g,t\ne s}\0_t\left((1_A-1_{s\m})a_{t\m}\right)\\
		&=\sum_{t\in g,t\ne s}\0_t\left(a_{t\m}\right)-\sum_{t\in g,t\ne s}\0_t\left(1_{s\m}a_{t\m}\right),
	\end{align*}
	so using compatibility, we have
	\begin{align*}
		\sum_{t\in g,t\ne s}\0_t\left(a_{t\m}\right)&=\sum_{t\in g,t\ne s}\0_t\left(1_{s\m}a_{t\m}\right)=\sum_{t\in g,t\ne s}\0_s\left(1_{s\m}a_{t\m}\right)\\
		&=\0_s\left(1_{s\m}\sum_{t\in g,t\ne s}a_{t\m}\right)=\0_s\left(-1_{s\m}a_{s\m}\right)=-\0_s(a_{s\m}).
	\end{align*}
	Thus, $\sum_{s\in g}\0_s(a_{s\m})=0$, so $\tl\0_g$ is well-defined. It is easy to see that $\tl\0_g(a)\in \D_g$ and that the map $\tl\0_{g\m}:\D_g\to\D_{g\m}$ is inverse to $\tl\0_g$.
		\end{proof}

In order to prove that $\{\tl\0_g\}_{g\in \cG(S)}$ is a partial action of $\cG(S)$ on the algebra $A$, we need some preparation. The following fact should be known. Since we did not find a reference for it, we include a proof for the sake of completeness.

\begin{lem}\label{R=sum-orth-ideals}
	Let $R$  be a (non-necessarily unital) ring and $e_1,\dots,e_n$ central idempotents of $R$ such that $R=\sum_{i=1}^n Re_i$. Then  $R$ is unital with 
	\begin{align}\label{unit of R}
	1_R= \sum _i e_i - \sum_{i<j} e_i e_j + \sum _{i<j<k}e_i e_j e_k +\dots + (-1)^{n+1} e_1 e_2 \cdots e_n,
	\end{align} and
	\begin{align}\label{R=Re_1+...+R.prod(1-e_i)}
R=Re_1\oplus R(1_R-e_1)e_2
 \oplus R(1_R-e_1)(1_R-e_2)e_3
 \oplus\dots\oplus R\prod_{i=1}^{n-1}(1_R-e_i)e_n.
	\end{align} 
\end{lem}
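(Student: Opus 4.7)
The plan is to verify that the displayed element $u$ is a two-sided identity of $R$ via the inclusion-exclusion identity $u = 1 - \prod_{i=1}^n(1-e_i)$ (read formally inside the Dorroh unitization $R^\#$), and then to produce the direct sum decomposition from a complete system of pairwise orthogonal central idempotents summing to $u$.

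First I would show $ru = r$ for every $r \in R$; centrality of the $e_i$ then gives $ur = r$ automatically. In $R^\#$ this amounts to $r\prod_{i=1}^n(1-e_i) = 0$. Writing $r = \sum_{i=1}^n s_i e_i$ by hypothesis and using centrality of the $e_j$, I can reorder factors and pull $(1 - e_i)$ next to $s_i e_i$; then $s_i e_i(1-e_i) = 0$ because $e_i^2 = e_i$. Summing over $i$ gives $r(1-u) = 0$, hence $ru = r$. Since $u$ itself lies in $R$ (being a polynomial in the $e_i$), taking $r = u$ yields $u^2 = u$, so $u$ is a genuine identity of $R$, which I denote $1_R$.

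Next I would introduce $f_i := (1_R - e_1)(1_R - e_2)\cdots(1_R - e_{i-1})e_i$ for $i = 1, \dots, n$, with $f_1 := e_1$. Each $f_i$ is central and idempotent as a commuting product of central idempotents. For $i < j$, the product $f_i f_j$ contains the orthogonal pair $e_i$ and $1_R - e_i$, so $f_i f_j = 0 = f_j f_i$. A short induction establishes the telescoping identity
$$\sum_{i=1}^k f_i = 1_R - \prod_{i=1}^k (1_R - e_i), \qquad 1 \le k \le n,$$
whose right-hand side at $k = n$ equals $u = 1_R$ by the first step.

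Finally, from $\sum_i f_i = 1_R$ together with pairwise orthogonality, the decomposition $R = \bigoplus_{i=1}^n Rf_i$ follows by the standard argument: $r = r \sum_i f_i = \sum_i rf_i$ exhibits the sum, and multiplying a relation $\sum_i a_i f_i = 0$ (with $a_i f_i = a_i$) by $f_j$ isolates $a_j = 0$. Since $Rf_i = R(1_R - e_1)\cdots(1_R - e_{i-1})e_i$, this is exactly the asserted decomposition. The only delicacy is the opening step, where $1 - u$ and $\prod(1-e_i)$ must be manipulated formally before $R$ is known to be unital; everything thereafter is a routine orthogonal-idempotent computation.
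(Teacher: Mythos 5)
Your proof is correct, but it reaches the unit element by a different route than the paper. The paper proves formula \eqref{unit of R} by induction on $n$: it observes that $R'=\sum_{i=1}^{n-1}Re_i$ is unital by the inductive hypothesis, and then checks that $1_{R'}+e_n-1_{R'}e_n$ is an identity for $R=R1_{R'}+Re_n$, which upon expansion gives the inclusion--exclusion formula. You instead verify the identity in one shot, reading $u=1-\prod_{i=1}^n(1-e_i)$ inside the Dorroh unitization $R^{\#}$ and killing $r\prod_i(1-e_i)$ term by term using $s_ie_i(1-e_i)=0$; this is legitimate because $R$ embeds in $R^{\#}$ as an ideal and the $e_i$ remain central there, and it has the advantage of making the inclusion--exclusion shape of $1_R$ transparent rather than emerging from an inductive expansion. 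For the decomposition \eqref{R=Re_1+...+R.prod(1-e_i)} the two arguments coincide: the paper's equation \eqref{expresssion for 1_R} is exactly your telescoping identity $\sum_{i=1}^n f_i=1_R-\prod_{i=1}^n(1_R-e_i)=1_R$, and you are in fact slightly more careful than the paper in spelling out the pairwise orthogonality $f_if_j=0$ and the standard argument that a complete orthogonal system of central idempotents yields a direct sum. No gaps.
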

\begin{proof}
	 We use induction on $n$ to show \eqref{unit of R}. For $n=1$ there is nothing to prove. Suppose that \eqref{unit of R} holds for 
	all rings  $R'$ admitting a decomposition $R'=\sum_{i=1}^{n-1} R'e_i$ with some central idempotents $e_i \in R'$.  Now consider our ring  $R=\sum_{i=1}^{n} Re_i$.
	Then by induction the ideal $R'=\sum_{i=1}^{n-1} Re_i = \sum_{i=1}^{n-1} R' e_i  $ is a unital subring with the unity element
	\begin{align}\label{unity of R'}
	  1_{R'}=\sum _{1\leq i \leq n-1} e_i - 
	\sum_{1\leq i<j \leq n-1} e_i e_j + \sum _{1\leq i<j<k\leq n-1}e_i e_j e_k +\dots  + (-1)^{n} e_1 e_2 \cdots e_{n-1}.\end{align} Hence $R'= R 1_{R'}$ and, since  $R= R 1_{R'} + Re_{n+1},$ it is readily seen that 
	the element $ 1_{R'} + e_{n+1} -1_{R'} e_{n+1}$ is the unity element of $R.$  Then using \eqref{unity of R'}  we obtain that
	 $ 1_{R'} + e_{n+1} -1_{R'} e_{n+1}$ equals \eqref{unit of R}, as desired.
	
	 Finally, \cref{R=Re_1+...+R.prod(1-e_i)} directly follows from the following decomposition of $1_R$ into a sum of pairwise orthogonal central idempotents:
	 \begin{align}\label{expresssion for 1_R}
1_R=e_1 + (1_R-e_1)e_2
  + (1_R-e_1)(1_R-e_2)e_3
 + \dots + \prod_{i=1}^{n-1}(1_R-e_i)e_n,
	\end{align} which is readily verified by removing the parentheses in the right hand side of \eqref{expresssion for 1_R}.
	 \end{proof}

\begin{cor}\label{a=sum-a_k-with-a_k-in-R_ek}
	Under the hypotheses of \cref{R=sum-orth-ideals} each $a\in R$ is uniquely written as $a=\sum_{k=1}^n a_k$, where $a_k=a\prod_{i=1}^{k-1}(1_R-e_i)e_k\in Re_k$ for all $1\le k\le n$.
\end{cor}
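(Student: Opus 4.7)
The plan is to deduce the corollary directly from the decomposition of $1_R$ exhibited in the proof of \cref{R=sum-orth-ideals}, specifically the identity
\[
1_R=e_1 + (1_R-e_1)e_2 + (1_R-e_1)(1_R-e_2)e_3 + \dots + \prod_{i=1}^{n-1}(1_R-e_i)e_n
\]
established as \eqref{expresssion for 1_R}. Multiplying this on the left by $a\in R$ yields
\[
a = a\cdot 1_R = \sum_{k=1}^n a\prod_{i=1}^{k-1}(1_R-e_i)e_k = \sum_{k=1}^n a_k,
\]
which gives the claimed expression. Since each $a_k$ has $e_k$ as a right factor, $a_k\in Re_k$.

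For uniqueness, I would observe that the proof of \cref{R=sum-orth-ideals} gives the internal direct sum decomposition
\[
R=Re_1\oplus R(1_R-e_1)e_2 \oplus\dots\oplus R\prod_{i=1}^{n-1}(1_R-e_i)e_n,
\]
and the summand $a_k=a\prod_{i=1}^{k-1}(1_R-e_i)e_k$ sits in the $k$-th component of this decomposition. Hence the expression $a=\sum_{k=1}^n a_k$ is the unique way to write $a$ as a sum with the $k$-th term lying in $R\prod_{i=1}^{k-1}(1_R-e_i)e_k$.

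There is no real obstacle here; the corollary is a direct reformulation of \cref{R=sum-orth-ideals} once the explicit expression \eqref{expresssion for 1_R} for $1_R$ is used. The only point worth stressing is the convention that for $k=1$ the empty product $\prod_{i=1}^{0}(1_R-e_i)$ equals $1_R$, so that $a_1=ae_1$, matching the first summand $Re_1$ of the direct sum.
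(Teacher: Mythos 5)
Your argument is correct and is exactly the intended one: the paper states this corollary without proof as an immediate consequence of \cref{R=sum-orth-ideals}, namely left-multiplying the decomposition \eqref{expresssion for 1_R} of $1_R$ by $a$ for existence and invoking the direct sum \eqref{R=Re_1+...+R.prod(1-e_i)} for uniqueness. Your remark on the empty-product convention for $k=1$ is a sensible clarification and nothing is missing.
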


\begin{lem}\label{tl-0_g-circ-tl-0_h-le-tl-0_gh}
	For all $g,h\in \cG(S)$ one has $\tl\0_g\circ\tl\0_h\le \tl\0_{gh}$.
\end{lem}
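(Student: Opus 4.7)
The plan is to verify the two defining conditions of the partial-map order $\tilde\theta_g \circ \tilde\theta_h \le \tilde\theta_{gh}$: (i) $\dom(\tilde\theta_g \circ \tilde\theta_h) \subseteq \Delta_{(gh)^{-1}} = \dom\tilde\theta_{gh}$, and (ii) the two maps coincide on that domain.

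The starting structural observation is the identification
\begin{equation*}
    \Delta_h \cap \Delta_{g^{-1}} = \sum_{t \in h,\, s \in g} 1_t 1_{s^{-1}} A,
\end{equation*}
which follows by applying \cref{R=sum-orth-ideals} to finite subsums on each side and comparing the resulting central idempotent unities.

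Now I would take $a \in \dom(\tilde\theta_g \circ \tilde\theta_h)$, so that $\tilde\theta_h(a) \in \Delta_h \cap \Delta_{g^{-1}}$, and write $\tilde\theta_h(a) = \sum_{(t,s)} c_{t,s}$ with $c_{t,s} \in 1_t 1_{s^{-1}} A$ (finitely many nonzero). Setting $b_{t,s} := \theta_{t^{-1}}(c_{t,s})$, a direct use of \eqref{theta on units}, with $s$ replaced by $t^{-1}$ and $t$ replaced by $s^{-1}$, gives
\begin{equation*}
    \theta_{t^{-1}}(1_t 1_{s^{-1}}) = 1_{t^{-1}} 1_{t^{-1} s^{-1}} = 1_{(st)^{-1}},
\end{equation*}
hence $b_{t,s} \in 1_{(st)^{-1}} A \subseteq \Delta_{(gh)^{-1}}$, since $st \in gh$.

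The key recovery step is that, for each fixed $t$, the partial sum $\sum_s b_{t,s}$ lies in $1_{t^{-1}} A$ (because $1_{(st)^{-1}} A \subseteq 1_{t^{-1}} A$), and formula \cref{tl-0_g(a)} yields
\begin{equation*}
    \tilde\theta_h\Bigl(\sum_{(t,s)} b_{t,s}\Bigr) = \sum_{(t,s)} \theta_t(b_{t,s}) = \sum_{(t,s)} c_{t,s} = \tilde\theta_h(a).
\end{equation*}
Injectivity of $\tilde\theta_h$ (\cref{tl-0_g-bigection}) then forces $a = \sum_{(t,s)} b_{t,s} \in \Delta_{(gh)^{-1}}$, settling (i). For (ii), the same decomposition and \cref{tl-0_g(a)} give
\begin{equation*}
    \tilde\theta_g(\tilde\theta_h(a)) = \sum_{(t,s)} \theta_s(c_{t,s}) = \sum_{(t,s)} \theta_s\theta_t(b_{t,s}), \qquad \tilde\theta_{gh}(a) = \sum_{(t,s)} \theta_{st}(b_{t,s}),
\end{equation*}
which coincide because $\theta\colon S \to \cI(A)$ is a semigroup homomorphism, so $\theta_s \theta_t = \theta_{st}$ as partial bijections.

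The main obstacle I anticipate is that the decomposition $\tilde\theta_h(a) = \sum c_{t,s}$ is highly non-unique, so one must be careful that the resulting $b_{t,s}$'s genuinely reconstruct $a$; the argument side-steps this by needing only one such decomposition and then invoking the injectivity of $\tilde\theta_h$.
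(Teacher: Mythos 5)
Your proof is correct, and it takes a genuinely different route from the paper's. The paper works entirely on the source side: it takes $a\in\D_{h\m}\cap\D_{h\m g\m}$, combines the two decompositions of $a$ (one over $h$, one over $gh$) by orthogonalizing the idempotents $1_{s\m_i}$ via \cref{a=sum-a_k-with-a_k-in-R_ek}, and pushes the refined decomposition forward through the $\0_{t_j}$; this simultaneously yields $\tl\0_h(\D_{h\m}\cap\D_{h\m g\m})=\D_h\cap\D_{g\m}$, the exact domain $\dom(\tl\0_g\circ\tl\0_h)=\D_{h\m}\cap\D_{h\m g\m}$, and the agreement with $\tl\0_{gh}$. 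You instead work on the target side: you decompose $\tl\0_h(a)$ inside $\D_h\cap\D_{g\m}=\sum_{t\in h,\,s\in g}1_t1_{s\m}A$, pull each piece back through $\0_{t\m}$ (landing in $1_{(st)\m}A\sst\D_{(gh)\m}$), and use the injectivity of $\tl\0_h$ from \cref{tl-0_g-bigection} to identify the pulled-back sum with $a$ itself; the agreement then reduces to the homomorphism identity $\0_s\circ\0_t=\0_{st}$ in $\cI(A)$ evaluated on $1_{(st)\m}A$. Your approach avoids the inclusion--exclusion bookkeeping entirely and is arguably cleaner, at the modest cost of the structural identity for $\D_h\cap\D_{g\m}$, which is indeed a routine consequence of \cref{R=sum-orth-ideals} exactly as you indicate (every element lies in a finite subsum $Ae_T\cap Af_U=Ae_Tf_U$). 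The only trade-off worth flagging is that you establish precisely what the relation $\le$ requires, namely $\dom(\tl\0_g\circ\tl\0_h)\sst\D_{(gh)\m}$ together with agreement there, whereas the paper's proof also records the image and domain equalities that reappear as the partial-action axioms in \cref{tl-0-part-act}; those would follow from your argument applied to $\tl\0_{h\m}$, so nothing essential is lost.
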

\begin{proof}
	Let $a\in\D_{h\m}\cap\D_{h\m g\m}$. We first show that $\tl\0_h(a)\in\D_{g\m}$. We have
	\begin{align*}
		a=\sum_{i=1}^m a_{s\m_i}=\sum_{j=1}^n b_{t\m_j},
	\end{align*}
	where $s_i\in gh$, $t_j\in h$, $a_{s\m_i}\in 1_{s\m_i}A$ and $b_{t\m_j}\in 1_{t\m_j}A$. Applying \cref{a=sum-a_k-with-a_k-in-R_ek} to the ideal $I=\sum_{i=1}^m 1_{s\m_i}A=\sum_{i=1}^m 1_{s\m_i}I$, we  obtain
	\begin{align*}
		a=\sum_{k=1}^m\sum_{j=1}^n b_{t\m_j}f_k,
	\end{align*}
	where, for all $1\le k\le m$,
	\begin{align*}
		f_k=\prod_{i=1}^{k-1}\left(1_I-1_{s\m_i}\right)1_{s\m_k}=\prod_{i=1}^{k-1}\left(1_{s\m_k}-1_{s\m_i}1_{s\m_k}\right)=\prod_{i=1}^{k-1}\left(1_A-1_{s\m_i}\right)1_{s\m_k}.
	\end{align*}
	Observe, using \eqref{theta on units}, that for all $1\le k\le m$ and $1\le j\le n$, 
	\begin{align*}
		\0_{t_j}\left(1_{t\m_j}f_k\right)&=\prod_{i=1}^{k-1}\0_{t_j}\left(1_{t\m_j}\left(1_A-1_{s\m_i}\right)1_{s\m_k}\right)\\
		&=\prod_{i=1}^{k-1}\0_{t_j}\left(1_{t\m_j}1_{s\m_k}-1_{t\m_j}1_{s\m_i}\cdot 1_{t\m_j}1_{s\m_k}\right)\\
		&=\prod_{i=1}^{k-1}\left(1_{t_js\m_k}-1_{t_js\m_i}\cdot 1_{t_js\m_k}\right)=\prod_{i=1}^{k-1}\left(1_A -1_{t_js\m_i}\right)1_{t_js\m_k}.
	\end{align*}
	Then, for all $1\le k\le m$ and $1\le j\le n$,
	\begin{align}\label{eq:theta t_j}
		\0_{t_j}\left(b_{t\m_j}f_k\right)=\0_{t_j}\left(b_{t\m_j}\right)\0_{t_j}\left(1_{t\m_j}f_k\right)=\0_{t_j}\left(b_{t\m_j}\right)\prod_{i=1}^{k-1}\left(1_A - 1_{t_js\m_i}\right)1_{t_js\m_k},
	\end{align}
	which belongs to $\D_{g\m}$ because $t_js\m_k\in h(gh)\m=g\m$. It follows that
	\begin{align*}
		\tl\0_h(a)=\sum_{k=1}^m\sum_{j=1}^n\0_{t_j}\left(b_{t\m_j}f_k\right)\in\D_{g\m}.
	\end{align*}
Thus, $\tl\0_h(\D_{h\m}\cap\D_{h\m g\m})\sst  \D_{h}\cap\D_{g\m}$. It is then easy to prove that $\tl\0_h(\D_{h\m}\cap\D_{h\m g\m})= \D_{h}\cap\D_{g\m},$ using the fact that $\tl\0 \m _h = \tl\0_{h\m}.$ In particular,
\begin{align*}
	\dom(\tl\0_g\circ\tl\0_h)=\tl\0_{h\m}(\D_h\cap\D_{g\m})=\D_{h\m}\cap\D_{h\m g\m}.
\end{align*} 

It remains to show that $\tl\0_g(\tl\0_h(a))=\tl\0_{gh}(a)$. Indeed, in view of \eqref{eq:theta t_j}, we see that
	\begin{align*}
		\tl\0_g(\tl\0_h(a))&=\sum_{k=1}^m\sum_{j=1}^n\0_{s_kt\m_j}\left(\0_{t_j}\left(b_{t\m_j}f_k\right)\right)=\sum_{k=1}^m\sum_{j=1}^n\0_{s_k}\left(\0_{t\m_jt_j}\left(b_{t\m_j}f_k\right)\right)\\
		&=\sum_{k=1}^m\sum_{j=1}^n\0_{s_k}\left(1_{t\m_j}b_{t\m_j}f_k\right)=\sum_{k=1}^m\sum_{j=1}^n\0_{s_k}\left(b_{t\m_j}f_k\right)=\tl\0_{gh}(a).
	\end{align*}
	\end{proof}

\begin{prop}\label{tl-0-part-act}
	 Let $\0$ be a compatible action of $S$ on $A.$ Then the family $\{\tl\0_g\}_{g\in \cG(S)}$ is a partial action of $\cG(S)$ on $A$.
\end{prop}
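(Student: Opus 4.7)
My plan is to verify that $\{\tl\0_g\}_{g\in\cG(S)}$ satisfies the four defining axioms of a partial action of $\cG(S)$ on $A$: (i)~each $\D_g$ is an ideal of $A$ with $\D_1=A$ and $\tl\0_1=\id_A$; (ii)~each $\tl\0_g:\D_{g\m}\to\D_g$ is a $K$-algebra isomorphism; (iii)~$\tl\0_g(\D_{g\m}\cap\D_h)=\D_g\cap\D_{gh}$; and (iv)~$\tl\0_g\circ\tl\0_h=\tl\0_{gh}$ on $\D_{h\m}\cap\D_{h\m g\m}$.

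Item~(i) is immediate: each $\D_g=\sum_{s\in g}1_sA$ is a sum of ideals, $1_S$ lies in the identity class of $\cG(S)$ with $1_{1_S}=1_A$, so $\D_1=A$; and the representation $a=1_{1_S}\cdot a$ together with~\cref{tl-0_g(a)} gives $\tl\0_1(a)=\0_{1_S}(a)=a$. For~(ii), bijectivity of $\tl\0_g$ is Lemma~\ref{tl-0_g-bigection} and $K$-linearity is immediate from~\cref{tl-0_g(a)}.

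The key step is multiplicativity of $\tl\0_g$. By $K$-linearity, it suffices to prove, for $s,t\in g$, $a_{s\m}\in 1_{s\m}A$ and $b_{t\m}\in 1_{t\m}A$, that $\0_s(a_{s\m}b_{t\m})=\0_s(a_{s\m})\cdot\0_t(b_{t\m})$. Writing $a_{s\m}b_{t\m}=a_{s\m}\cdot(1_{s\m}b_{t\m})$ with both factors in $\dom\0_s=1_{s\m}A$, applying multiplicativity of $\0_s$, then compatibility of $(s,t)\in\sg$ on the overlap (since $1_{s\m}b_{t\m}\in 1_{s\m}A\cap 1_{t\m}A$), and finally multiplicativity of $\0_t$ on $1_{t\m}A$ together with~\cref{theta on units}, the left-hand side becomes $\0_s(a_{s\m})\cdot 1_{ts\m}\cdot\0_t(b_{t\m})$. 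The factor $1_{ts\m}$ is then absorbed by $\0_s(a_{s\m})\cdot\0_t(b_{t\m})\in 1_s1_tA$ via the identity $1_s1_t=1_{ts\m}$; under compatibility, this follows from the computation $\0_{s\m}(1_s1_tA)=1_{s\m t}A\sst 1_{s\m}1_{t\m}A$, where the inclusion uses the equivalent form $1_{s\m t}=1_{t\m s}$ of compatibility and the consequent $1_{s\m t}\le 1_{t\m}$, so that $1_s1_tA=\0_s(1_{s\m}1_{t\m}A)=1_{ts\m}A$ by~\cref{theta on units}.

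Finally, item~(iii) is obtained by renaming variables (the lemma's $h$ becomes our $g$, and the lemma's $g\m$ becomes $gh$) in the equality $\tl\0_h(\D_{h\m}\cap\D_{h\m g\m})=\D_h\cap\D_{g\m}$ established inside the proof of Lemma~\ref{tl-0_g-circ-tl-0_h-le-tl-0_gh}. Then~(iv) follows immediately: by~(iii), $\dom(\tl\0_g\circ\tl\0_h)=\D_{h\m}\cap\D_{h\m g\m}$, and on this set Lemma~\ref{tl-0_g-circ-tl-0_h-le-tl-0_gh} gives the agreement. The main anticipated obstacle is establishing the identity $1_s1_t=1_{ts\m}$ used in the multiplicativity step; all remaining verifications are either immediate or formal consequences of the two preceding lemmas.
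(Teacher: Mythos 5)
Your proof is correct and follows essentially the same route as the paper, which deduces the proposition directly from \cref{tl-0_g-bigection,tl-0_g-circ-tl-0_h-le-tl-0_gh}. The one place where you go beyond the paper's (very terse) proof is the explicit verification that $\tl\0_g$ is multiplicative --- a detail the paper leaves implicit, since \cref{tl-0_g-bigection} only asserts bijectivity --- and your derivation of the identity $1_s1_t=1_{ts\m}$ from compatibility is sound.
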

\begin{proof}
	This is a consequence of \cref{tl-0_g-bigection,tl-0_g-circ-tl-0_h-le-tl-0_gh}.
\end{proof}

		\subsection{The crossed product   $A\rtimes_\0 S$}
	Given an inverse monoid $S$ and a unital action $\0$ of $S$ on a $K$-algebra $A$, denote $L(A,\0,S)=\bigoplus_{s\in S} 1_sA\dl_s$, where $\delta_s$ is a symbol. Since unital ideals of an associative algebra are always $(L,R)$-associative, it follows from~\cite[Theorem 3.4]{EV} that $L(A,\0,S)$ is an associative $K$-algebra under the multiplication $a\delta_s\cdot b\delta_t=a\0_s(1_{s\m} b)\delta_{st}$. It is also an $A$-bimodule via the homomorphism of algebras $A\to L(A,\0,S)$, $a\mapsto a\dl_1$.
	
	\begin{defn}\label{defn-cross_prod_for_S-mod}
		Denote by $\N$ the ideal of $L(A,\0,S)$ generated by 
		\begin{equation}\label{rel_on_L_gen_rho}
			\{a\delta_s-a\delta_t\in L(A,\0,S)\mid a\in 1_sA\text{ and } s\le t\}.
		\end{equation}
		The quotient algebra $L(A,\0,S)/\N$ will be denoted by $A\rtimes_\0 S$ and called {\it the crossed product}  by $\0 .$ Observe that $A\rtimes_\0 S$ inherits the $A$-bimodule structure from $L(A,\0,S)$ in the natural way.
	\end{defn} 
	
	\noindent Notice that the element $a$ in \eqref{rel_on_L_gen_rho} is contained in  $1_tA$  by \eqref{eq:ideals natural order}.
	 In this case  we also have that $1_{s\m}1_{t\m}=1_{s\m}$ (because $s\m\le t\m$).
	
	\begin{lem}\label{rho_is_trans_closure}
		The set~\cref{rel_on_L_gen_rho} is invariant under the multiplication on the left and on the right in $L(A,\0,S)$, so $\N$ is the $K$-subspace of $L(A,\0,S)$ spanned by~\cref{rel_on_L_gen_rho}.
	\end{lem}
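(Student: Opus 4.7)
My approach will be to show directly that left or right multiplication of a generator $a\delta_s-a\delta_t$ (with $a\in 1_sA$ and $s\le t$) by an arbitrary basis element $b\delta_u$ of $L(A,\0,S)$ produces another element of the set \cref{rel_on_L_gen_rho}. Extending by $K$-linearity, the two-sided ideal $\N$ then coincides with the $K$-span of \cref{rel_on_L_gen_rho}.

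\textbf{Left multiplication.} A direct computation gives
\[
 b\dl_u\cdot(a\dl_s-a\dl_t)=b\0_u(1_{u\m}a)(\dl_{us}-\dl_{ut}).
\]
Setting $c=b\0_u(1_{u\m}a)$, the multiplication in $L(A,\0,S)$ already ensures $c\in 1_{us}A$, and the fact that the natural order is stable under multiplication yields $us\le ut$. Hence $c\dl_{us}-c\dl_{ut}$ lies in \cref{rel_on_L_gen_rho}.

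\textbf{Right multiplication.} Here
\[
 (a\dl_s-a\dl_t)\cdot b\dl_u=a\0_s(1_{s\m}b)\dl_{su}-a\0_t(1_{t\m}b)\dl_{tu},
\]
and the main obstacle is that the two coefficients involve different maps $\0_s$ and $\0_t$. The plan is to exploit the fact that $\0$, being an inverse semigroup homomorphism, preserves the natural order, so $\0_s\le \0_t$ and $\0_s$ is the restriction of $\0_t$ to $\dom\0_s=1_{s\m}A$. Combined with $1_{s\m}=1_{s\m}1_{t\m}$ (which follows from $s\m\le t\m$ via \cref{eq:ideals natural order}) and property \cref{theta on units} applied to evaluate $\0_t(1_{s\m})=\0_t(1_{t\m}1_{s\m})=1_t 1_{ts\m}$, together with the elementary observation that $s\le t$ forces $ts\m=ss\m$ and hence $1_{ts\m}=1_s$, I expect to derive
\[
 \0_s(1_{s\m}b)=\0_t(1_{s\m}\cdot 1_{t\m}b)=\0_t(1_{s\m})\0_t(1_{t\m}b)=1_s\0_t(1_{t\m}b).
\]
Since $a\in 1_sA$, multiplying by $a$ absorbs the factor $1_s$, so $a\0_s(1_{s\m}b)=a\0_t(1_{t\m}b)$. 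Denoting this common value by $c$, we obtain $(a\dl_s-a\dl_t)\cdot b\dl_u=c(\dl_{su}-\dl_{tu})$, where $c\in 1_{su}A$ (from the first summand) and $su\le tu$, so the result again belongs to \cref{rel_on_L_gen_rho}.

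\textbf{Conclusion.} Having shown invariance under both one-sided multiplications by arbitrary basis elements, and hence by arbitrary elements of $L(A,\0,S)$ by linearity, the two-sided ideal generated by \cref{rel_on_L_gen_rho} coincides with its $K$-linear span. The main technical point is the manipulation in the right-multiplication step, specifically the reduction $\0_s(1_{s\m}b)=1_s\0_t(1_{t\m}b)$, which rests on the fact that inverse semigroup homomorphisms preserve the natural order and on the identity $ts\m=ss\m$ whenever $s\le t$.
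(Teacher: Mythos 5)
Your proposal is correct and follows essentially the same route as the paper: compute the left and right products of a generator with a basis element $b\dl_u$, observe that the left case is immediate from $us\le ut$, and reduce the right case to the identity $a\0_s(1_{s\m}b)=a\0_t(1_{t\m}b)$, which both you and the paper derive from $\0_s\le\0_t$ and $\0_s(1_{s\m})=\0_t(1_{s\m})=1_s$ (you via \cref{theta on units} and $ts\m=ss\m$, the paper by directly restricting $\0_t$ to $1_{s\m}A$ --- an inessential difference).
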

	\begin{proof}
		Denote  by $X$ the set~\cref{rel_on_L_gen_rho}. Given $a\delta_s,a\delta_t\in L(A,\0,S)$, such that $s\le t$, and an arbitrary $b\delta_u\in L(A,\0,S)$, we see that
		\[
		b\delta_u(a\delta_s-a\delta_t)=b\delta_u\cdot a\delta_s-b\delta_u\cdot a\delta_t=b\0_u(1_{u\m}a)\delta_{us}-b\0_u(1_{u\m}a)\delta_{ut}\in X,
		\]
		because $us\le ut$. Similarly,
		\[
		(a\delta_s-a\delta_t)b\delta_u=a\delta_s\cdot b\delta_u-a\delta_t\cdot b\delta_u=a\0_s(1_{s\m}b)\delta_{su}-a\0_t(1_{t\m}b)\delta_{tu},
		\]
		which belongs to $X$ if $a\0_s(1_{s\m}b)=a\0_t(1_{t\m}b)$. This is indeed the case: since $s\le t$, then $\0_s\le\0_t$ and 
		\begin{align*}
			a\0_s(1_{s\m}b)&=a\0_t(1_{s\m}1_{t\m} b)=a\0_t(1_{s\m}1_{t\m})\0_t(1_{t\m}b)\\
			&=a\0_s(1_{s\m})\0_t(1_{t\m}b)=a\0_t(1_{t\m}b).
		\end{align*}
	\end{proof}

Observe that the natural partial order $\le$ on $S$ is compatible with the multiplication on the left or on the right in $S$. Hence, the equivalence relation generated by $\le$ is a congruence on $S$. It is easy to see that this congruence coincides with $\sg$.

\begin{lem}\label{sum-s-in-C-a_s=0}
	Let $a=\sum_{s\in S}a_s\dl_s\in \N$. Then for any $\sg$-class $C$ of $S$ we have $\sum_{s\in C}a_s=0$.
\end{lem}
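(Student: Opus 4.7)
My plan is to reduce to the generators of $\N$ provided by \cref{rho_is_trans_closure} and then simply track coefficients $\sg$-class by $\sg$-class.

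First I would invoke \cref{rho_is_trans_closure} to write $a$ as a finite $K$-linear combination
\[
a=\sum_i k_i\bigl(b_i\dl_{s_i}-b_i\dl_{t_i}\bigr),\qquad k_i\in K,\ b_i\in 1_{s_i}A,\ s_i\le t_i.
\]
Because $L(A,\0,S)=\bigoplus_{s\in S}1_sA\dl_s$, the coefficient $a_s\in 1_sA$ of $\dl_s$ in $a$ is uniquely determined, and for every subset $T\sst S$ the map $\sum_s a_s\dl_s\mt\sum_{s\in T}a_s$ is a well-defined $K$-linear functional on $L(A,\0,S)$. I will apply this functional with $T$ being an arbitrary $\sg$-class $C$.

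The key observation is that $s\le t$ forces $(s,t)\in\sg$: this is noted in the paragraph preceding the lemma, where it is recalled that $\sg$ is the congruence generated by $\le$. Consequently, for each index $i$, the two elements $s_i,t_i$ lie in the same $\sg$-class, and the generator $b_i\dl_{s_i}-b_i\dl_{t_i}$ contributes to $\sum_{s\in C}a_s$ either $b_i-b_i=0$ (when $s_i,t_i\in C$) or $0$ (when $s_i,t_i\notin C$). Summing over $i$ gives $\sum_{s\in C}a_s=0$, as required.

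There is essentially no obstacle here; the only point requiring a moment's care is the well-definedness of the $C$-coefficient map, which is immediate from the direct-sum decomposition of $L(A,\0,S)$. The work has already been done in \cref{rho_is_trans_closure} (so that one may pass from the ideal description of $\N$ to a $K$-linear span) and in the identification of $\sg$ as the congruence generated by $\le$.
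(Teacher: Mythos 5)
Your proposal is correct and follows essentially the same route as the paper: both reduce via \cref{rho_is_trans_closure} to a $K$-linear combination of the generators $b\dl_s-b\dl_t$ with $s\le t$, note that $s\le t$ places $s$ and $t$ in the same $\sg$-class, and conclude that each generator contributes zero to the $C$-coefficient sum. The paper merely organizes the bookkeeping by first writing each $a_s$ as $\sum_{s<v}b_{s,v}-\sum_{u<s}b_{u,s}$ and then summing over $C$, which is the same cancellation you perform generator by generator.
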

\begin{proof}
	Since $a\in \N$, by \cref{rho_is_trans_closure} there exists 
	$\{b_{s,t}\}_{s<t}\sst A$ such that $a=\sum_{s<t}(b_{s,t}\dl_s-b_{s,t}\dl_t)$. Then for any $s\in C$ we have
	\begin{align*}
		a_s=\sum_{s<v}b_{s,v}-\sum_{u<s}b_{u,s}.
	\end{align*}
Observe that $u<s\impl u\in C$ and $s<v\impl v\in C$. It follows that
\begin{align*}
	\sum_{s\in C}a_s=\sum_{s\in C}\left(\sum_{s<v}b_{s,v}-\sum_{u<s}b_{u,s}\right)=\sum_{s,v\in C, s<v}b_{s,v}-\sum_{u,s\in C, u<s}b_{u,s}=0.
\end{align*}
\end{proof}

\begin{rem}\label{rem:Adelta_1Embedding}  Lemma~\ref{sum-s-in-C-a_s=0} immediately implies that the homomorphism of $K$-algebras 
$A \to A\rtimes_\0 S,$ given by $a \mapsto a\dl _1 + \N $, is injective. 
\end{rem}

\begin{lem}\label{sum-a_s=0=>a-in-N}
	 Suppose that  $S$ is $E$-unitary and $a=\sum_{s\in S}a_s\dl_s\in L(A,\0,S)$. If $\sum_{s\in C}a_s=0$ for any $\sg$-class $C$ of $S$, then $a\in \N$.
\end{lem}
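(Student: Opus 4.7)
The plan is to proceed in two stages: first reduce to a single $\sg$-class using that $\N$ is graded along $S/\sg$, then within a single class express the element as a combination of ``pairwise cancellations'' which are shown to lie in $\N$ by exploiting the $E$-unitary hypothesis. Each generator $b\dl_s-b\dl_t$ from \eqref{rel_on_L_gen_rho} has $s\le t$ and hence $s\sg t$, so it lies in $L_C := \bigoplus_{s\in C}1_sA\dl_s$ where $C$ is the $\sg$-class of $s$. Therefore $\N = \bigoplus_{C\in S/\sg}(\N\cap L_C)$, and it suffices to prove: for a single $\sg$-class $C$ and finitely many $s_1,\dots,s_n\in C$ with $a_i\in 1_{s_i}A$ and $\sum_i a_i=0$, we have $\sum_i a_i\dl_{s_i}\in\N$.

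Next I would prove a purely ring-theoretic lemma: for central idempotents $e_1,\dots,e_n$ of $A$, the kernel of the sum map $\bigoplus_i e_iA\to A$ is $K$-spanned by tuples of the form $(0,\dots,c,\dots,-c,\dots,0)$ with $c$ at some position $i$, $-c$ at some position $j\ne i$, and $c\in e_ie_jA$. This goes by induction on $n$: splitting each $a_i$ as $(1_A-e_n)a_i+e_na_i$, the $(1_A-e_n)$-parts in positions $1,\dots,n-1$ still sum to zero and reduce to the $(n-1)$-instance, while the tuple $(e_na_1,\dots,e_na_{n-1},a_n)$ is visibly the sum of $n-1$ elementary cancellations between positions $(i,n)$ with parameter $e_na_i\in e_ie_nA$. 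Applied with $e_i=1_{s_i}$, this rewrites $\sum_i a_i\dl_{s_i}$ as a $K$-linear combination of terms $c\dl_{s_i}-c\dl_{s_j}$ with $c\in 1_{s_i}1_{s_j}A$, so it suffices to prove that each such term belongs to $\N$.

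The final step uses $E$-unitarity and is where the main difficulty lies. Since $s_i\sg s_j$, the $E$-unitary characterization recalled in the preliminaries gives $s_i\m s_j\in E(S)$. Set $u:=s_is_i\m s_j$. Parsing $u=(s_is_i\m)s_j$ shows $u\le s_j$, and parsing $u=s_i(s_i\m s_j)$ shows $u\le s_i$, since right multiplication by an idempotent yields an element below in the natural order. Using that idempotents in an inverse semigroup commute,
\[
uu\m = s_is_i\m s_js_j\m s_is_i\m = (s_is_i\m)(s_js_j\m),
\]
so by \eqref{eq:thetaOnProductOfIdempotents} and \eqref{1_(ss-inv)=1_s},
\[
1_u=1_{uu\m}=1_{s_is_i\m}\cdot 1_{s_js_j\m}=1_{s_i}1_{s_j}.
\]
Hence any $c\in 1_{s_i}1_{s_j}A$ lies in $1_uA$, and since $u\le s_i$ and $u\le s_j$, both $c\dl_u-c\dl_{s_i}$ and $c\dl_u-c\dl_{s_j}$ are generators of $\N$; subtracting them yields $c\dl_{s_i}-c\dl_{s_j}\in\N$, as required. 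The crucial identification is $1_u=1_{s_i}1_{s_j}$, which rests on $s_i\m s_j\in E(S)$, i.e., on the $E$-unitary hypothesis.
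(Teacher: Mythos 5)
Your proof is correct and follows essentially the same route as the paper's: reduce to a single $\sg$-class, peel off the $1_{s_n}$-components to induct on the number of nonzero terms (your standalone kernel lemma for $\bigoplus_i e_iA\to A$ is exactly the paper's inductive decomposition \cref{sum-a_s_idl_s_i=two-sums}, just packaged separately), and settle the two-term case via $u=s_is_i\m s_j$ with $1_u=1_{s_i}1_{s_j}$, which is precisely where the paper also invokes $E$-unitarity.
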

\begin{proof}
	It is enough to prove that $a_C:=\sum_{s\in C}a_s\dl_s\in \N$ for any  $\sg$-class $C$. The proof will be by induction on $n=|\{s\in C\mid a_s\ne 0\}|$.
	
	The case $n=1$ is impossible, as $a_C=0$ in this case. For $n=2$ we have $a_C=a_{s_1}\dl_{s_1}-a_{s_1}\dl_{s_2}$, where $s_1,s_2\in C$. Since $S$ is $E$-unitary, $s\m_1 s_2\in E(S)$. Then  $u:=s_1s\m_1 s_2\le s_1,s_2$  and
	 $$u u\m = s_1 s\m_1 ( s_2 s\m_2) (s_1 s\m_1)= 
	 s_1 s\m_1 ( s_1 s\m_1) (s_2 s\m_2)= (s_1 s\m_1 ) (s_2 s_2\m).$$ Then using \eqref{eq:thetaOnProductOfIdempotents} we see that  $1_u=1_{u u\m}=1_{s_1 s\m_1}1_{s_2 s\m_2} =1_{s_1}1_{s_2}$, so that $a_{s_1}\in 1_uA$. It follows that $a_C=(a_{s_1}\dl_{s_1}-a_{s_1}\dl_u)+(a_{s_1}\dl_u-a_{s_1}\dl_{s_2})\in \N$.
	
	Assume that $m\ge 2$ and $a_C\in \N$ for all $n\le m$.  Suppose that $n=m+1$  for our element $a$  and write $a_C=\sum_{i=1}^{m+1}a_{s_i}\dl_{s_i}$. Then $a_{s_{m+1}}=1_{s_{m+1}}a_{s_{m+1}}=-\sum_{i=1}^m 1_{s_{m+1}}a_{s_i}$, so that
	\begin{align}
		a_C&=\sum_{i=1}^m (a_{s_i}\dl_{s_i}-1_{s_{m+1}}a_{s_i}\dl_{s_{m+1}})\notag\\
		&=\sum_{i=1}^m (1_{s_{m+1}}a_{s_i}\dl_{s_i}-1_{s_{m+1}}a_{s_i}\dl_{s_{m+1}})+\sum_{i=1}^m (a_{s_i}\dl_{s_i}-1_{s_{m+1}}a_{s_i}\dl_{s_i}).\label{sum-a_s_idl_s_i=two-sums}
	\end{align}
	Observe that each summand of the first sum of \cref{sum-a_s_idl_s_i=two-sums} belongs to $\N$ by the case $n=2$. Now, the second sum of \cref{sum-a_s_idl_s_i=two-sums} equals $\sum_{i=1}^m (1_A-1_{s_{m+1}})a_{s_i}\dl_{s_i}$, where 
	$$\sum_{i=1}^m (1_A-1_{s_{m+1}})a_{s_i}=
	\sum_{i=1}^m a_{s_i} 
	-\sum_{i=1}^m 1_{s_{m+1}}a_{s_i}= \sum_{i=1}^m a_{s_i} +a_{s_{m+1}}=0,$$  so it also belongs to $\N$ by the case $n=m$. Thus, $a_C\in \N$.
\end{proof}

\begin{prop}\label{prop:SurjectivePhi}
	 Assume that  $\0 $ is a compatible  action of an inverse monoid $S$ on the algebra $A.$ Then there is a surjective algebra homomorphism $\Phi:A\rtimes_\0 S\to A\rtimes_{\tl\0} \cG(S)$ sending $a\dl_s+\N$ to $a\dl_{[s]}$, where $[s]\in\cG(S)$ is the $\sg$-class of $s$.
\end{prop}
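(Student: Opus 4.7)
The plan is to first construct a $K$-linear map at the level of $L(A,\0,S)$, verify that it is an algebra homomorphism killing the ideal $\N$, and finally check surjectivity. Define $\phi:L(A,\0,S)\to A\rtimes_{\tl\0}\cG(S)$ on the direct-sum basis by $a\dl_s\mapsto a\dl_{[s]}$; this makes sense because $a\in 1_sA\sst \D_{[s]}$ by definition of $\D_{[s]}$. Descent to the quotient is then immediate from \cref{rho_is_trans_closure}: for $s\le t$ one has $(s,t)\in\sg$, so $[s]=[t]$ and $\phi(a\dl_s-a\dl_t)=a\dl_{[s]}-a\dl_{[t]}=0$ on each generator of $\N$.

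The only real obstacle is verifying that $\phi$ is multiplicative. For $a\in 1_sA$ and $b\in 1_tA$, one side reads
$$\phi(a\dl_s\cdot b\dl_t)=a\,\0_s(1_{s\m}b)\dl_{[st]},$$
while the other, by the multiplication rule in the skew group algebra, reads
$$\phi(a\dl_s)\cdot\phi(b\dl_t)=\tl\0_{[s]}(\tl\0_{[s]\m}(a)\,b)\dl_{[s][t]}.$$
Since $[st]=[s][t]$ in $\cG(S)$, it suffices to match the $A$-coefficients. Viewing $a\in 1_sA$ as the element of $\D_{[s]}=\sum_{u\in[s]}1_uA$ whose only nonzero summand sits at $u=s$, formula \cref{tl-0_g(a)} gives $\tl\0_{[s]\m}(a)=\0_{s\m}(a)\in 1_{s\m}A$. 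Hence $\tl\0_{[s]\m}(a)\,b=\0_{s\m}(a)\cdot 1_{s\m}b\in 1_{s\m}A$, and applying $\tl\0_{[s]}$ to this single-component element of $\D_{[s]\m}$ produces $\0_s(\0_{s\m}(a)\cdot 1_{s\m}b)=a\,\0_s(1_{s\m}b)$, using that $\0_s:1_{s\m}A\to 1_sA$ is an algebra isomorphism inverse to $\0_{s\m}$.

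Combined with $K$-linearity, this shows that $\phi$ is a $K$-algebra homomorphism, and passage to the quotient yields the desired $\Phi:A\rtimes_\0 S\to A\rtimes_{\tl\0}\cG(S)$ with the prescribed action on generators. Surjectivity is then easy: any generator $a\dl_g$ of $A\rtimes_{\tl\0}\cG(S)$ decomposes as $a=\sum_{s\in g}a_s$ with $a_s\in 1_sA$ and only finitely many nonzero terms, so
$$a\dl_g=\sum_{s\in g}a_s\dl_{[s]}=\Phi\left(\sum_{s\in g}a_s\dl_s+\N\right).$$
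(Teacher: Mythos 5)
Your proof is correct and follows essentially the same route as the paper's: define the map on the generators $a\dl_s$, kill $\N$ (you via the spanning set of \cref{rho_is_trans_closure}, the paper via the equivalent \cref{sum-s-in-C-a_s=0}), and verify multiplicativity through the identity $\0_s(\0_{s\m}(a)b)=\tl\0_{[s]}(\tl\0_{[s]\m}(a)b)$. You in fact justify that last identity (and surjectivity) in more detail than the paper, which states both without comment.
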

\begin{proof}
	Let us first show that the mapping $\Phi$ is well-defined. Assume that $\sum a_s\dl_s\in \N$. Then $\sum a_s\dl_{[s]}=0$ because the coefficient of $\dl_{[s]}$ equals $\sum_{t\in[s]}a_t$, which is zero by \cref{sum-s-in-C-a_s=0}. Clearly, $\Phi$ is surjective. Now,
	\begin{align*}
		\Phi(a\dl_s\cdot b\dl_t+\N)&=\Phi(\0_s(\0_{s\m}(a)b)\dl_{st}+\N) =\0_{s}(\0_{s\m}(a)b)\dl_{[st]}\\ &
		=\tl\0_{[s]}(\tl\0_{[s]\m}(a)b)\dl_{[st]}=a\dl_{[s]}\cdot b\dl_{[t]}=
		\Phi(a\dl_s+\N)\Phi( b\dl_t+\N).
	\end{align*} 
\end{proof}

\begin{cor}\label{cor:IsoPhi}
	 Suppose that $\0 $ is a unital action of an  $E$-unitary monoid $S$ on the algebra $A.$ Then $A\rtimes_\0 S\cong A\rtimes_{\tl\0} \cG(S)$ as $K$-algebras.
\end{cor}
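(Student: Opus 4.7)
The plan is to apply \cref{prop:SurjectivePhi} to obtain a surjective algebra homomorphism $\Phi:A\rtimes_\0 S\to A\rtimes_{\tl\0}\cG(S)$ and then to verify that $\Phi$ is injective. Note that \cref{prop:SurjectivePhi} applies here because, as observed just after the definition of compatibility, every action of an $E$-unitary inverse semigroup is automatically compatible (since $s\m t,st\m\in E(S)$ for all $(s,t)\in\sg$), so the partial action $\tl\0$ of $\cG(S)$ on $A$ and the homomorphism $\Phi$ are well-defined.

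The injectivity is where $E$-unitarity is truly used, via \cref{sum-a_s=0=>a-in-N}. Take an element $a+\N\in\ker\Phi$ with $a=\sum_{s\in S}a_s\dl_s\in L(A,\0,S)$. Then
\begin{align*}
0=\Phi(a+\N)=\sum_{s\in S}a_s\dl_{[s]}=\sum_{g\in \cG(S)}\biggl(\sum_{s\in g}a_s\biggr)\dl_g
\end{align*}
in $A\rtimes_{\tl\0}\cG(S)=\bigoplus_{g\in\cG(S)}\D_g\dl_g$. Since this is a direct sum decomposition, it follows that $\sum_{s\in C}a_s=0$ for every $\sg$-class $C$ of $S$. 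By \cref{sum-a_s=0=>a-in-N} (which requires the $E$-unitary hypothesis), this forces $a\in\N$, i.e.\ $a+\N=0$ in $A\rtimes_\0 S$. Hence $\Phi$ is injective, and combining this with its surjectivity we conclude that $\Phi$ is an isomorphism of $K$-algebras.

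There is essentially no obstacle here, since the two lemmas \cref{sum-s-in-C-a_s=0,sum-a_s=0=>a-in-N} are designed so that membership in $\N$ is characterized by the vanishing of the partial sums over $\sg$-classes (the first gives the ``only if'' direction, valid in general; the second gives the converse, under the $E$-unitary assumption). Thus the corollary follows by combining these two facts with \cref{prop:SurjectivePhi}, and no further computation is needed.
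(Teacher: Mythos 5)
Your proposal is correct and follows essentially the same route as the paper: invoke \cref{prop:SurjectivePhi} (noting that any action of an $E$-unitary $S$ is automatically compatible) for surjectivity, then deduce injectivity by reading off the coefficients $\sum_{s\in C}a_s$ from the direct sum $\bigoplus_{g\in\cG(S)}\D_g\dl_g$ and applying \cref{sum-a_s=0=>a-in-N}. No gaps; your write-up is if anything slightly more explicit than the paper's about where the direct-sum decomposition and the $E$-unitary hypothesis enter.
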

\begin{proof}
	 Since any action of $S$ is compatible, Proposition~\ref{prop:SurjectivePhi} is applicable, and we  only need to prove that $\Phi$ is injective. Let $a=\sum a_s\dl_s$ such that $\Phi(a+\N)=0$. Then for any $\sg$-class $C$ we have $\sum_{s\in C}a_s=0$. Thus, $a\in \N$ by \cref{sum-a_s=0=>a-in-N}.
\end{proof}

\begin{rem}\label{rem:A-bimodule mapping}
	The mapping $\Phi:A\rtimes_\0 S\to A\rtimes_{\tl\0} \cG(S)$ from \cref{prop:SurjectivePhi} is also a morphism of $A$-bimodules.
\end{rem}
\begin{proof}
	Indeed, for all $a\in A$ and $b\in 1_sA$ we have
	\begin{align*}
		\Phi(a\cdot(b\dl_s+\N))&=\Phi(a\dl_{1}\cdot b\dl_s+\N)=\Phi(ab\dl_s+\N)=ab\dl_{[s]}\\
		&=a\dl_{[1]}\cdot b\dl_{[s]}=a\cdot\Phi(b\dl_s+\N),\\
		\Phi((b\dl_s+\N)\cdot a)&=\Phi(b\dl_s\cdot a\dl_1+\N)=\Phi(\0_s(\0_{s\m}(b)a)\dl_s+\N)=\0_{[s]}(\0_{[s]\m}(b)a)\dl_{[s]}\\
		&=b\dl_{[s]}\cdot a\dl_{[1]}=\Phi(b\dl_s+\N)\cdot a.
	\end{align*}
\end{proof}

\subsection{$K S$ as skew group algebra for $E$-unitary $S$}

We know from~\cite[Theorem 3.17]{KL} (which is a reformulation of~\cite[Theorem 3.2]{Petrich-Reilly79}) that any $E$-unitary inverse semigroup $S$ is isomorphic to $E(S)\rtimes_\tau \cG(S)$, where $\tau$ is the partial action of $\cG(S)$ on $E(S)$ whose domains are $D_g=\{ss\m\mid s\in g\}$ for all $g\in\cG(S)$ and 
\begin{align}\label{tau_g(s-inv-s)=ss-inv}
	\tau_g(s\m s)=ss\m
\end{align}
for all $s\in g$. The isomorphism $\vf:S\to E(S)\rtimes_\tau \cG(S)$ maps $s\in S$ to 
\begin{align}\label{vf(s)=ss-inv.dl_[s]}
	\vf(s)=ss\m\dl_{[s]},
\end{align}
where $[s]$ is the $\sg$-class of $s$ in $S$ (see \cite[Theorem 3.2]{Petrich-Reilly79}). Clearly, $\vf$ extends by linearity to an isomorphism of $K$-algebras
\begin{equation}\label{eq:KS Iso1}
KS\to K(E(S)\rtimes_\tau \cG(S)).
\end{equation}

 \begin{lem}\label{lem:tilde par action} With the notation above, for every  $g\in \cG(S)$ let $$\tilde{\tau}_g :  K D _{g\m} \to  K D _g $$ be the linear extension of $\tau_g.$ Then the collection 
 $\tilde{\tau}$ of the $K$-linear isomorphisms $\tilde{\tau}_g$ is a partial action of $\cG(S)$ on the commutative semigroup algebra  $K E(S).$
 \end{lem}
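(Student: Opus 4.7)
The plan is to lift each defining property of a partial action from the level of the semigroup $E(S)$, on which $\tau$ is already a partial action by hypothesis, to the level of the semigroup algebra $KE(S)$. The crucial observation is that, since $E(S)$ is a $K$-basis of $KE(S)$, for any subsets $X,Y\sst E(S)$ the $K$-submodule $KX$ of $KE(S)$ has $X$ as a basis and $KX\cap KY=K(X\cap Y)$. Thus all set-theoretic relations among the domains $D_g\sst E(S)$ transfer directly to their $K$-linear spans.

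With this dictionary in hand, the verification is routine. Since $D_g$ is an ideal of the semigroup $E(S)$ and multiplication in $KE(S)$ is bilinear, $KD_g$ is an ideal of the algebra $KE(S)$. Since $\tau_g:D_{g\m}\to D_g$ is a bijection satisfying $\tau_g(ef)=\tau_g(e)\tau_g(f)$ on the basis, its $K$-linear extension $\tilde\tau_g:KD_{g\m}\to KD_g$ is a $K$-algebra isomorphism. The unit axiom holds because every $e\in E(S)$ satisfies $e=ee\m$ with $[e]=1$, so $D_1=E(S)$ and $\tau_1=\id_{E(S)}$, giving $\tilde\tau_1=\id_{KE(S)}$.

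For the intersection axiom, the basis observation yields
\begin{align*}
\tilde\tau_g(KD_{g\m}\cap KD_h)&=\tilde\tau_g(K(D_{g\m}\cap D_h))=K(\tau_g(D_{g\m}\cap D_h))\\
&=K(D_g\cap D_{gh})=KD_g\cap KD_{gh},
\end{align*}
where the third equality is the corresponding property of $\tau$ on $E(S)$. The composition axiom is handled the same way: both $K$-linear maps $\tilde\tau_g\circ\tilde\tau_h$ and $\tilde\tau_{gh}$ are defined on $K(D_{h\m}\cap D_{h\m g\m})$ and agree on the basis $D_{h\m}\cap D_{h\m g\m}$, so they coincide.

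There is no serious obstacle here; the lemma is a formal consequence of the principle that a partial action on a semigroup extends by linearity to a partial action on its semigroup algebra, with the only point requiring care being the elementary fact that $KX\cap KY=K(X\cap Y)$ for subsets $X,Y$ of the basis $E(S)$.
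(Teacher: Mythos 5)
Your proof is correct, but it takes a genuinely different route from the paper's. You reduce everything to the linear-algebraic observation that for subsets $X,Y$ of the basis $E(S)$ one has $KX\cap KY=K(X\cap Y)$ and $\tl\tau_g(KZ)=K\tau_g(Z)$, so each axiom of a partial action transfers verbatim from the semigroup level to the span level; this is elementary and self-contained (the only contextual point, which you handle, is that $\tau_1=\id_{E(S)}$, valid here since $S$ is $E$-unitary so the $\sg$-class of $1$ is $E(S)$). The paper instead argues ring-theoretically: it first shows, via \cref{R=sum-orth-ideals}, that each $KD_g$ has local units, deduces the identity $KD_g\cap KD_h=KD_gD_h$ for all $g,h\in\cG(S)$, and then lets the partial-action axioms for $\tau$ carry over. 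What the paper's approach buys is precisely this extra identity (intersections of domains equal their products), which is the standard hypothesis making crossed products by partial actions well behaved and is the form in which the domains are used elsewhere in the text; what your approach buys is a shorter and more transparent verification that does not need \cref{R=sum-orth-ideals} at all, exploiting the fact that all domains sit inside a single free $K$-module with basis $E(S)$.
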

 \begin{proof} Observe that for every $g\in \cG(S)$ the semigroup algebra $K D _g $ has local units, i.e. for any 
  $a \in K D _g $ there exists an idempotent $e\in K D_g $
  such that $a = ae.$ Indeed,  write
 $a= \sum _i \lb _i s_is\m_i,$ where $\lb _i \in K,s_i \in g.$
 Then by Lemma~\ref{R=sum-orth-ideals}, the subalgebra of
 $K D _g ,$ generated by the idempotents $s_is\m_i$ has a unity element $e$ which is an idempotent element in $K D _g,$ so that $ae=a,$ as desired. Then for any $g,h \in \cG(S)$ we have that $K D _{g} \cap K D _h = K D_g D_h,$ and the fact that $\tau $ is a partial action of  $\cG(S)$ on $E(S)$ readily implies that $\tilde{\tau} $ is a partial action of $\cG(S)$ on $K E(S).$
 \end{proof}

In view of Lemma~\ref{lem:tilde par action} we may also consider the   obvious $K$-algebra isomorphism 
$$
K(E(S)\rtimes_\tau \cG(S))\cong (KE(S))\rtimes_{\tilde{\tau}} \cG(S).
$$
 The latter isomorphism acts as the identity map on the $K$-basis $\{e\dl_g\mid g\in \cG(S),\ e\in D_g\}$ of both $K(E(S)\rtimes_\tau \cG(S))$ and $(KE(S))\rtimes_{\tl\tau} \cG(S)$. Thus, composing it with \eqref{eq:KS Iso1}, we obtain a $K$-algebra isomorphism 
 $$
 KS\to (KE(S))\rtimes_{\tl\tau} \cG(S),\footnote{One can also obtain this isomorphism from \cref{cor:IsoPhi} by showing that $\tl\tau=\tl\0$ and $KS\cong (KE(S))\rtimes_\0 S$, where $\0$ is the natural action of $S$ on $KE(S)$.}
 $$ 
 sending $s$ to $ss\m\dl_{[s]}$, which is now seen as an element of $(KE(S))\rtimes_{\tl\tau} \cG(S)$. Let us denote this isomorphism by the same letter $\vf$, so that \cref{vf(s)=ss-inv.dl_[s]} holds.

\begin{lem}\label{KS-cong-KE(S)-rtimes-G(S)}
	The $K$-algebra isomorphism $\vf:KS\to (KE(S))\rtimes_{\tl\tau} \cG(S)$ is also an isomorphism of $KE(S)$-bimodules.
\end{lem}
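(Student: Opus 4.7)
The plan is to exploit the fact that $\vf$ is already established to be a $K$-algebra isomorphism. On each side, the $KE(S)$-bimodule structure is induced by a canonical embedding of $KE(S)$ as a subalgebra: on the left it is the obvious inclusion $KE(S)\hookrightarrow KS$ coming from $E(S)\subseteq S$, while on the right it is the map $\iota:KE(S)\to (KE(S))\rtimes_{\tl\tau}\cG(S)$, $\iota(a)=a\dl_{[1]}$. Since the bimodule actions on both sides are obtained by left/right multiplication in the ambient algebra, a $K$-algebra homomorphism that intertwines these two embeddings is automatically a $KE(S)$-bimodule morphism. So the entire task reduces to verifying that $\vf|_{KE(S)}=\iota$.

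To carry this out, I would compute $\vf(e)$ for an arbitrary idempotent $e\in E(S)$ using the formula \cref{vf(s)=ss-inv.dl_[s]}. Since $e^{-1}=e$ in any inverse semigroup, $ee^{-1}=e$. Moreover, $e\le 1_S$ in the natural partial order (as $e=e\cdot 1_S$), hence $e\,\sg\,1_S$ and $[e]=[1_S]$ in $\cG(S)$. Therefore
\[
\vf(e)=ee^{-1}\dl_{[e]}=e\dl_{[1_S]}=\iota(e),
\]
and extending by $K$-linearity gives $\vf|_{KE(S)}=\iota$.

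Once this identification is in hand, the bimodule compatibility follows formally: for any $a\in KE(S)$ and $b\in KS$, the multiplicativity of $\vf$ yields
\[
\vf(a\cdot b)=\vf(a)\vf(b)=\iota(a)\cdot\vf(b)=a\cdot\vf(b),
\]
and analogously $\vf(b\cdot a)=\vf(b)\cdot a$. Combined with the bijectivity of $\vf$ (already proven), this gives the desired $KE(S)$-bimodule isomorphism.

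I do not anticipate any genuine obstacle here: the whole argument reduces to the elementary computation of $\vf$ on idempotents, after which bimodule compatibility is a formal consequence of the algebra-homomorphism property.
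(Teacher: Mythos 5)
Your proof is correct, and it takes a cleaner route than the paper's. The paper verifies the bimodule property by direct computation in the crossed product: it expands $e\dl_{[1]}\cdot ss\m\dl_{[s]}$ and $ss\m\dl_{[s]}\cdot e\dl_{[1]}$ using the multiplication formula, which on the right-hand side requires unwinding $\tl\tau_{[s]}(\tl\tau\m_{[s]}(ss\m)e)=\tl\tau_{[s]}(s\m s e)=(se)(se)\m$ via \cref{tau_g(s-inv-s)=ss-inv}, and then matches the results against $\vf(es)$ and $\vf(se)$. You instead observe that both bimodule structures are restrictions of scalars along embeddings of $KE(S)$ into the respective algebras, so the whole statement reduces to checking $\vf|_{KE(S)}=\iota$, i.e. $\vf(e)=e\dl_{[1_S]}$; this follows from $e=ee\m$ and $[e]=[1_S]$ (since $e\le 1_S$), after which bimodule compatibility is a formal consequence of the multiplicativity of $\vf$, which is already established before the lemma. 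Your argument buys brevity and avoids re-deriving the action of $\tl\tau$; the paper's buys a self-contained verification that does not lean on the algebra-homomorphism property (even though that property is available). One small point worth making explicit if you write this up: $e\dl_{[1_S]}$ is a legitimate element of $(KE(S))\rtimes_{\tl\tau}\cG(S)$ because $e=ee\m\in D_{[1_S]}$, and the $KE(S)$-bimodule structure on the crossed product used in the lemma is indeed the one given by multiplication by $e\dl_{[1_S]}$, as the paper's own computation confirms.
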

\begin{proof}
	Let $e\in E(S)$ and $s\in S$. Since $[es]=[se]=[s]$, we have 
	\begin{align*}
		e\cdot\vf(s)&\overset{\cref{vf(s)=ss-inv.dl_[s]}}{=}e\dl_{[1]}\cdot ss\m \dl_{[s]}=ess\m \dl_{[s]}=(es)(es)\m\dl_{[s]}\overset{\cref{vf(s)=ss-inv.dl_[s]}}{=}\vf(es),\\
		\vf(s)\cdot e&\overset{\cref{vf(s)=ss-inv.dl_[s]}}{=}ss\m\dl_{[s]}\cdot e\dl_{[1]}=\tl\tau_{[s]}(\tl\tau\m_{[s]}(ss\m)e)\dl_{[s]}\overset{\cref{tau_g(s-inv-s)=ss-inv}}{=}\tl\tau_{[s]}(s\m se)\dl_{[s]}\\
		&=\tl\tau_{[s]}((se)\m (se))\dl_{[s]}\overset{\cref{tau_g(s-inv-s)=ss-inv}}{=}(se)(se)\m\dl_{[s]}\overset{\cref{vf(s)=ss-inv.dl_[s]}}{=}\vf(se).
	\end{align*}
\end{proof}

\cref{KS-cong-KE(S)-rtimes-G(S)} and \cite[Proposition 5.3]{DJ2} imply the following.

\begin{cor}\label{cor:KS pojective over KR(S)} Let $S$ be an $E$-unitary inverse semigroup. Then $K S$ is projective as a left 
$K E(S)$-module and as a right $K E(S)$-module.
\end{cor}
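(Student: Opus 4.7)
The approach is to leverage the two results explicitly cited in the corollary's statement. By \cref{KS-cong-KE(S)-rtimes-G(S)}, the $K$-algebra $KS$ is isomorphic as a $KE(S)$-bimodule to the partial crossed product $(KE(S))\rtimes_{\tl\tau}\cG(S)$, so it suffices to establish that the latter is projective as a left $KE(S)$-module and as a right $KE(S)$-module. A direct invocation of \cite[Proposition 5.3]{DJ2}, which is formulated precisely for skew group algebras coming from partial group actions on an algebra, then finishes the proof.

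More concretely, I would first recall that, by \cref{lem:tilde par action}, $\tl\tau$ is a partial action of the group $\cG(S)$ on the commutative semigroup algebra $KE(S)$ whose domains $KD_g$ all carry local units (built from the finite sub-semilattice argument of \cref{R=sum-orth-ideals}). I would then observe that the $KE(S)$-bimodule isomorphism $\vf$ supplied by \cref{KS-cong-KE(S)-rtimes-G(S)} preserves projectivity on each side separately, reducing both halves of the corollary to the single application of \cite[Proposition 5.3]{DJ2}. No further computation is needed.

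The only potential point of friction I anticipate is matching the hypotheses of \cite[Proposition 5.3]{DJ2} with the present setup: the partial action $\tl\tau$ is not always ``unital'' in the strongest sense, since $KD_g$ need not be generated by a single central idempotent when $D_g\subseteq E(S)$ is infinite, only locally unital. If the cited proposition in \cite{DJ2} is stated under the stronger hypothesis, one would have to add a short reduction argument writing
\[
(KE(S))\rtimes_{\tl\tau}\cG(S)=\bigoplus_{g\in\cG(S)} KD_g\cdot\dl_g
\]
as a left $KE(S)$-module and using \cref{R=sum-orth-ideals} to exhibit each $KD_g$ as a direct limit of direct summands of $KE(S)$, hence as a flat (in fact projective via a standard Govorov--Lazard-type patching on idempotent ideals) left $KE(S)$-module. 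Under the local unit reading of \cite[Proposition 5.3]{DJ2}, however, the statement follows immediately and the proof is essentially a one-line citation.
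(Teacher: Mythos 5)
Your proposal matches the paper's proof exactly: the paper derives the corollary in one line from \cref{KS-cong-KE(S)-rtimes-G(S)} together with \cite[Proposition 5.3]{DJ2}, which is precisely your main argument. Your cautionary fallback is not something the paper invokes; I would only note that in that fallback a directed union of direct summands is in general merely flat rather than projective, so the parenthetical ``in fact projective'' claim would need additional justification --- but since both you and the paper rest on the direct citation, this does not change the assessment.
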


		\section{(Co)homology of an inverse monoid $S$ with values in a $K S$-module}\label{sec-(co)hom}
	Our study of homology and cohomology of the crossed product by an action of an inverse monoid on an algebra involves (co)homology of inverse monoids with values in a module over its semigroup algebra, which we introduce next. It is inspired by the concept of partial  group cohomology  defined in \cite{AAR} and that of  partial  group homology analogously defined in \cite{ADK}.

	

		 Given an inverse monoid $S$, there is a 
		left $K S$-module structure on $KE(S)$, defined by
		\begin{align}\label{left KS-mod KE(S)}
			s\cdot e=ses\m,
		\end{align}
		where $s\in S$ and $e\in E(S)$.	 Similarly, $KE(S)$ is a right $KS$-module by
		\begin{equation}\label{right KS-mod KE(S)}
			e \cdot s =  s\m e s.
		\end{equation} 
	
		This  $K S$-module will be called \textit{trivial}, because in the definition of the (co)homology below it appears in the place  usually occupied by the trivial module in the traditional sense.
	
	 \begin{defn}\label{def:Inv Monoid (co)homol} 
    Let $S$ be  an inverse monoid. Given a left $K S$-module   $V$ and $n\in\bbN$\footnote{Here and below the set of natural numbers $\bbN$ includes zero.} we define the  \textit{$n$-th homology group of $S$ with values in $V$} by setting
    $$H_n(S,V)= \operatorname{Tor}^{K S}_n(K E(S), V). $$
     Analogously, the \textit{$n$-th cohomology group of $S$ with values in $V$} is
     \begin{align*}
			H^n(S,V) =\Ext^n_{KS}(KE(S),V).
		\end{align*}
	\end{defn}

	Given an inverse monoid $S$ and $s\in S$, we write
	\begin{align*}
		\bd(s)&=s\m s\mbox{ and }\br(s)=ss\m.
	\end{align*}
	
	Since both $ \operatorname{Tor}$ and  
	$\operatorname{Ext}$ may be computed using projective resolutions of the first variable, we proceed by constructing one of the trivial left $K S$-module $K E(S).$
	\begin{defn}
		Let $S$ be an inverse monoid and $n\in \bbN $. We define the following sequence of projective (see \cite[Lemma 2.8]{DKS2}) left $K S$-modules:
		\begin{align*}
			P_0&=KS,\\
			P_n&=\bigoplus_{s_1,\dots,s_n\in S}KS\br(s_1\dots s_n),\ n> 0.
		\end{align*}
	\end{defn}
	As in \cite[Remark 2.10]{DKS2}, for $n>0$ one shows that $P_n$ is (isomorphic to) the free $K$-module with free basis
	\begin{align*}
		\{t(s_1,\dots,s_n)\mid t,s_1,\dots,s_n\in S,\ \bd(t)\le\br(s_1\dots s_n)\}.
	\end{align*}
	We shall identify the $n$-tuple $(s_1,\dots,s_n)$ with the element $\br(s_1\dots s_n)(s_1,\dots,s_n)$ of $P_n$, $n>0$. Thus, we may write $t(s_1,\dots,s_n)$ for \textit{arbitrary} $t,s_1,\dots,s_n\in S$, meaning $t\br(s_1\dots s_n)(s_1,\dots,s_n)\in P_n$. This does not lead to a confusion, because, whenever $t(s_1,\dots,s_n)\in P_n$, one has
	\begin{align}\label{eq:t=t range}
		t=t\br(s_1\dots s_n).
	\end{align}
	On the other hand, if we do not impose any condition on $t,s_1,\dots,s_n$ in $t(s_1,\dots,s_n)$, then we may have $t(s_1,\dots,s_n)=v(u_1,\dots,u_n)$ for $t\ne v$. More precisely,
	\begin{align*}
	t(s_1,\dots,s_n)=v(u_1,\dots,u_n)\iff
	\begin{cases}
	(s_1,\dots,s_n)=(u_1,\dots,u_n),\\
	t\br(s_1\dots s_n)=v\br(u_1\dots u_n).
	\end{cases}
	\end{align*}
	
	For uniformity, we also represent $P_0$ as the free $K$-module with free basis
	\begin{align*}
		\{t\emp\mid t\in S\}.
	\end{align*}
	
	\begin{defn}\label{d_n:P_n->P_(n+1)-defn}
		We define the following morphisms of left $KS$-modules $\cb_0:P_0\to KE(S)$ and $\cb_n:P_n\to P_{n-1}$, $n>0$:
		\begin{align}
		\cb_0(s\emp)&=\br(s),\ s\in S,\label{d_0(s())=ss^(-1)}\\
		\cb_1(t(s))&=t(s\emp-\emp),\ t(s)\in P_1,\notag
			\end{align}
			\begin{align}
		\cb_n(t(s_1,\dots,s_n))&=t(s_1(s_2,\dots,s_n)\notag\\
		&\quad+\sum_{i=1}^{n-1}(-1)^i(s_1,\dots,s_is_{i+1},\dots,s_n)\notag\\
		&\quad+(-1)^n(s_1,\dots,s_{n-1})),\label{cb_n(t(s_1...s_n)=...)}
		\end{align}	
		where $n>1$ and $t(s_1,\dots,s_n)\in P_n$.
	\end{defn}

	In order to prove that $\{P_n\}_{n\ge 0}$ is a projective resolution of $KE(S)$, we introduce the next.
	\begin{defn}\label{sigma_n-defn}
		We define the following morphisms of $K$-modules: $\s_{-1}:KE(S)\to P_0$ and $\s_n:P_n\to P_{n+1}$, $n\in \bbN $, where
		\begin{align}
		\s_{-1}(e)&=e\emp,\ e\in E(S),\label{sigma_(-1)(e)=e()}\\
		\s_0(s\emp)&=(s),\ s\in S,\notag\\
		\s_n(t(s_1,\dots,s_n))&=(t,s_1,\dots,s_n),\label{sigma_n(t(s_1...s_n))}
		\end{align}
		where $n>0$ and $t(s_1,\dots,s_n)\in P_n$.
	\end{defn}
	
	\begin{lem}
		The following equalities hold
		\begin{align}
		\cb_0\circ\s_{-1}&=\id_{KE(S)},\label{d_0-sigma_(-1)=id}\\
		\cb_{n+1}\circ\s_n+\s_{n-1}\circ\cb_n&=\id_{P_n},\ n\ge 0.\label{d_(n+1)-sigma_n+sigma_(n-1)-d_n=id}
		\end{align}
	\end{lem}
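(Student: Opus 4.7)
For the first identity \eqref{d_0-sigma_(-1)=id}, I would simply unfold definitions: given $e\in E(S)$, $\cb_0(\s_{-1}(e)) = \cb_0(e\emp) = \br(e) = ee\m = e$ since $e^2 = e = e\m$. For the second identity \eqref{d_(n+1)-sigma_n+sigma_(n-1)-d_n=id}, I would split into the cases $n = 0$ and $n \ge 1$. The $n = 0$ case is a short direct computation: $\cb_1\s_0(s\emp) = \cb_1((s)) = \cb_1(\br(s)(s)) = \br(s)(s\emp - \emp) = s\emp - \br(s)\emp$ (using $\br(s)s = s$), while $\s_{-1}\cb_0(s\emp) = \s_{-1}(\br(s)) = \br(s)\emp$, so their sum is $s\emp$, as required.

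For $n \ge 1$, my first step would be to prove an auxiliary range identity: for any basis element $t(s_1,\dots,s_n) \in P_n$ (so that $\bd(t) \le \br(s_1\cdots s_n)$), one has $\br(ts_1\cdots s_k) = \br(t)$ for every $0 \le k \le n$. This follows from the inverse-semigroup inequality $\br(uv) \le \br(u)$, which gives $\br(s_1\cdots s_n) \le \br(s_1\cdots s_k)$ and hence $\bd(t) \le \br(s_1\cdots s_k)$; then by \eqref{eq:t=t range} we have $t\br(s_1\cdots s_k) = t$, so $\br(ts_1\cdots s_k) = t\br(s_1\cdots s_k)t\m = tt\m = \br(t)$.

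With this in hand, I would apply \eqref{cb_n(t(s_1...s_n)=...)} (with $n+1$ in place of $n$) to $\s_n(t(s_1,\dots,s_n)) = \br(ts_1\cdots s_n)(t,s_1,\dots,s_n)$, obtaining
\begin{align*}
\cb_{n+1}\s_n(t(s_1,\dots,s_n))
  &= t(s_1,\dots,s_n) - (ts_1,s_2,\dots,s_n) \\
  &\quad + \sum_{i=2}^n (-1)^i (t,s_1,\dots,s_{i-1}s_i,\dots,s_n) + (-1)^{n+1}(t,s_1,\dots,s_{n-1}),
\end{align*}
where the auxiliary identity guarantees that the external coefficient $\br(ts_1\cdots s_n)$ acts as the identity on every summand, including the ``short'' one $(t,s_1,\dots,s_{n-1})$ whose canonical leading coefficient $\br(ts_1\cdots s_{n-1})$ equals $\br(t) = \br(ts_1\cdots s_n)$. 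A parallel computation using \eqref{cb_n(t(s_1...s_n)=...)} and \eqref{sigma_n(t(s_1...s_n))} yields
\begin{align*}
\s_{n-1}\cb_n(t(s_1,\dots,s_n))
  &= (ts_1,s_2,\dots,s_n) + \sum_{i=1}^{n-1}(-1)^i (t,s_1,\dots,s_is_{i+1},\dots,s_n) \\
  &\quad + (-1)^n (t,s_1,\dots,s_{n-1}).
\end{align*}
Adding the two expressions, the $(ts_1,s_2,\dots,s_n)$ terms cancel, the two middle sums cancel after the re-indexing $j = i-1$ (since $(-1)^{j+1} = -(-1)^j$), and the last two terms cancel because $(-1)^{n+1} + (-1)^n = 0$. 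What remains is $t(s_1,\dots,s_n)$, establishing \eqref{d_(n+1)-sigma_n+sigma_(n-1)-d_n=id}.

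The main obstacle is the canonical-form bookkeeping around the identification $(u_1,\dots,u_m) = \br(u_1\cdots u_m)(u_1,\dots,u_m)$: it is not \emph{a priori} transparent that the external coefficient $\br(ts_1\cdots s_n)$ arising from $\s_n(t(s_1,\dots,s_n))$ acts as identity on every summand produced by $\cb_{n+1}$, and in particular on the ``shortened'' tuple $(t,s_1,\dots,s_{n-1})$, whose canonical leading coefficient is $\br(ts_1\cdots s_{n-1})$ rather than $\br(ts_1\cdots s_n)$. The auxiliary range identity is precisely what resolves this mismatch; once it is in place, the remaining calculation reduces to the familiar bar-resolution cancellation pattern.
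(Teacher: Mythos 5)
Your proof is correct and follows essentially the same route as the paper's: compute $\cb_{n+1}\circ\s_n$ and $\s_{n-1}\circ\cb_n$ on a basis element and observe the telescoping cancellation, the key bookkeeping point being that $\br(ts_1\cdots s_k)=\br(t)$ whenever $\bd(t)\le\br(s_1\cdots s_n)$. The only step you gloss over that the paper spells out is the check that $ts_1(s_2,\dots,s_n)$ and $t(s_1,\dots,s_{n-1})$ are genuine basis elements of $P_{n-1}$ (i.e.\ $\bd(ts_1)\le\br(s_2\cdots s_n)$ and $\bd(t)\le\br(s_1\cdots s_{n-1})$), which is what lets $\s_{n-1}$ send them to $(ts_1,s_2,\dots,s_n)$ and $(t,s_1,\dots,s_{n-1})$; the second inequality is an instance of your auxiliary identity and the first is a one-line computation, so this is a minor omission rather than a gap.
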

	\begin{proof}
		Equality \cref{d_0-sigma_(-1)=id} is an easy consequence of \cref{sigma_(-1)(e)=e(),d_0(s())=ss^(-1)}. Now, taking an arbitrary $s\in S$, we have
		\begin{align*}
			(\cb_1\circ\s_0+\s_{-1}\circ\cb_0)(s\emp)&=\cb_1(\br(s)(s))+\s_{-1}(\br(s))\\
			&=\br(s)(s\emp-\emp)+\br(s)\emp\\
			&=s\emp.
		\end{align*}
		Given $n>0$ and $t(s_1,\dots,s_n)\in P_n$, we observe, keeping in mind \eqref{eq:t=t range}, that $\br(ts_1\dots s_n)=\br(t).$  Then by \cref{sigma_n(t(s_1...s_n)),cb_n(t(s_1...s_n)=...)} we have
		\begin{align}
			(\cb_{n+1}\circ\s_n)(t(s_1,\dots,s_n))&=\cb_{n+1}(\br(t)(t,s_1,\dots,s_n))\notag\\
			&=\br(t)(t(s_1,\dots,s_n)-(ts_1,s_2,\dots,s_n)\notag\\
			&\quad+\sum_{i=1}^{n-1}(-1)^{i+1}(t,s_1,\dots,s_is_{i+1},\dots,s_n)\notag\\
			&\quad+(-1)^{n+1}(t,s_1,\dots,s_{n-1}))\notag\\
			&=t(s_1,\dots,s_n)-(ts_1,s_2,\dots,s_n)\notag\\
			&\quad+\sum_{i=1}^{n-1}(-1)^{i+1}(t,s_1,\dots,s_is_{i+1},\dots,s_n)\notag\\
			&\quad+(-1)^{n+1}(t,s_1,\dots,s_{n-1}).\label{cb_(n+1)s_n(t(s_1...s_n))}
		\end{align}
		Furthermore,
		\begin{align}
			(\s_{n-1}\circ\cb_n)(t(s_1,\dots,s_n))&=\s_{n-1}(t(s_1(s_2,\dots,s_n)\notag\\
			&\quad+\sum_{i=1}^{n-1}(-1)^i(s_1,\dots,s_is_{i+1},\dots,s_n)\notag\\
			&\quad+(-1)^n(s_1,\dots,s_{n-1})))\notag\\
			&=(ts_1,s_2,\dots,s_n)\notag\\
			&\quad+\sum_{i=1}^{n-1}(-1)^i(t,s_1,\dots,s_is_{i+1},\dots,s_n)\notag\\
			&\quad+(-1)^n(t,s_1,\dots,s_{n-1})),\label{s_(n-1)cb_n(t(s_1...s_n))}
		\end{align}
		because $ts_1(s_2,\dots,s_n),t(s_1,\dots,s_{n-1})\in P_{n-1}$ (the fact that $t(s_1,\dots,s_is_{i+1},\dots,s_n)\in P_{n-1}$ is obvious). Indeed,
		\begin{align*}
			\bd(ts_1)&=s\m_1\bd(t)s_1\le s\m_1\br(s_1\dots s_n)s_1=\bd(s_1)\br(s_2\dots s_n)\le\br(s_2\dots s_n),\\
			\bd(t)&\le\br(s_1\dots s_n)\le\br(s_1\dots s_{n-1}).
		\end{align*}
		Thus, adding \cref{s_(n-1)cb_n(t(s_1...s_n)),cb_(n+1)s_n(t(s_1...s_n))}, we obtain $t(s_1,\dots,s_n)$. 
	\end{proof}

	\begin{prop}\label{P_n-is-res-of-KE(S)}
		The sequence
		\begin{align}\label{...->P_1->P_0->KE(S)}
		\dots\arr{\cb_2}P_1\arr{\cb_1}P_0\arr{\cb_0} KE(S)\to 0
		\end{align}
		is a projective resolution of $KE(S)$ in the category of left $K S$-modules.
	\end{prop}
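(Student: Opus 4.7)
The plan is to verify the three conditions of a projective resolution: each $P_n$ is projective, the sequence is a chain complex with augmentation $\cb_0$, and the complex is exact. Projectivity of each $P_n$ has already been established via \cite[Lemma 2.8]{DKS2}, and surjectivity of $\cb_0$ is immediate from \cref{d_0-sigma_(-1)=id}. Hence the real work will be verifying $\cb^2 = 0$ and exactness at each $P_n$.

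For the complex property $\cb_n \circ \cb_{n+1} = 0$, I will check it directly on a generator $t(s_1, \ldots, s_{n+1})$. The case $n = 0$ is a short inverse-semigroup computation anchored on the constraint $\bd(t) \le \br(s)$: unwinding definitions gives
\begin{align*}
\cb_0 \cb_1(t(s)) = tss\m t\m - tt\m,
\end{align*}
and the inequality $t\m t \le ss\m$ yields $tss\m t\m = t(ss\m)(t\m t)t\m = t(t\m t)t\m = tt\m$, so the difference vanishes. For $n \ge 1$ this is the standard simplicial cancellation: after expanding $\cb_n\cb_{n+1}(t(s_1, \ldots, s_{n+1}))$ via \cref{cb_n(t(s_1...s_n)=...)}, the ``leading'' terms (those where $s_1$ has been pushed in front) pair up with the $i=1$ middle terms, the internal middle terms, involving consecutive products $s_i s_{i+1}$, cancel in pairs by associativity $(s_i s_{i+1}) s_{i+2} = s_i (s_{i+1} s_{i+2})$ with opposite signs, exactly as in the bar resolution of a group, and the ``tail'' terms cancel analogously. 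The constraints $\bd(\cdot) \le \br(\cdot)$ have to be checked to remain satisfied after each application of $\cb$, which is the same kind of verification already carried out in the proof of the contracting-homotopy identities.

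Exactness at each $P_n$ will then follow formally from the contracting homotopy $\{\s_n\}$ constructed above. Although the $\s_n$ are only $K$-linear, this suffices because exactness of a sequence of $KS$-modules coincides with its exactness as a sequence of $K$-modules. Given $x \in \ker \cb_n$, \cref{d_(n+1)-sigma_n+sigma_(n-1)-d_n=id} yields
\begin{align*}
x = \cb_{n+1}\s_n(x) + \s_{n-1}\cb_n(x) = \cb_{n+1}\s_n(x),
\end{align*}
so $\ker \cb_n \subseteq \im \cb_{n+1}$, and the same argument with $\s_{-1}$ gives $\ker\cb_0 \subseteq \im\cb_1$ via $x = \cb_1\s_0(x)$ when $\cb_0(x)=0$. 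The main obstacle will be the explicit check that $\cb^2 = 0$: the contracting-homotopy identities do not imply it formally, so one must do the simplicial computation by hand, and the inverse-semigroup constraints $\bd(\cdot) \le \br(\cdot)$ make the bookkeeping noticeably more delicate than in the classical group case.
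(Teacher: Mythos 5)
Your proposal is correct and follows essentially the same route as the paper's own proof: projectivity is quoted, surjectivity of $\cb_0$ and the inclusions $\ker\cb_n\subseteq\im\cb_{n+1}$ come from the ($K$-linear) contracting homotopy via \cref{d_0-sigma_(-1)=id} and \cref{d_(n+1)-sigma_n+sigma_(n-1)-d_n=id}, while $\cb_n\circ\cb_{n+1}=0$ is checked by the direct simplicial cancellation on generators, with the extra factor $\br(s_1\dots s_{n+1})$ tracked throughout. You also correctly identify the one non-formal point — that the homotopy cannot yield $\cb^2=0$ here — which the paper attributes to $\s_n(P_n)$ not generating $P_{n+1}$ as a $KS$-module.
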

	\begin{proof}
		Indeed, \cref{...->P_1->P_0->KE(S)} is exact in $KE(S)$ in view of \cref{d_0-sigma_(-1)=id}. It also follows from \cref{d_(n+1)-sigma_n+sigma_(n-1)-d_n=id} that $\ker\cb_n\sst\im\cb_{n+1}$ for all $n\ge 0$. However, we cannot use \cref{d_(n+1)-sigma_n+sigma_(n-1)-d_n=id} to prove the converse inclusion, because $\s_n(P_n)$ does not generate $P_{n+1}$ as a $K S$-module. So, let us do this by a straightforward computation. Applying $\cb_n\circ\cb_{n+1}$ to a generator $(s_1,\dots,s_{n+1})$ of the $S$-module $P_{n+1}$, we obtain
		\begin{align}
			\cb_n\circ\cb_{n+1}(s_1,\dots,s_{n+1})&=\cb_n(\br(s_1\dots s_{n+1})(s_1(s_2,\dots,s_{n+1})\notag\\
			&\quad+\sum_{i=1}^n(-1)^i(s_1,\dots,s_is_{i+1},\dots,s_{n+1})\notag\\
			&\quad+(-1)^{n+1}(s_1,\dots,s_n)))\notag\\
			&=\br(s_1\dots s_{n+1})s_1\br(s_2\dots s_{n+1})(s_2(s_3,\dots,s_{n+1})\label{s_1(s_2(s_3...s_(n+1)))}\\
			&\quad+\sum_{j=2}^n(-1)^{j+1}(s_2,\dots,s_js_{j+1},\dots,s_{n+1})\label{sum(-1)^n(s_2...s_js_(j+1)...s_(n+1))}\\
			&\quad+(-1)^n(s_2,\dots,s_n))\label{(-1)^n(s_2...s_n)}\\
			&\quad+\sum_{i=1}^n(-1)^i\br(s_1\dots s_{n+1})\cb_n(s_1,\dots,s_is_{i+1},\dots,s_{n+1})\label{sum(-1)^icb_n(s_1...s_is_(i+1)...s_(n+1))}\\
			&\quad+(-1)^{n+1}\br(s_1\dots s_{n+1})\cb_n(s_1,\dots,s_n))).\label{(-1)^(n+1)cb_n(s_1...s_(n+1))}
		\end{align}
		Observe that $\br(s_1\dots s_{n+1})s_1=s_1\br(s_2\dots s_{n+1})$, so $\br(s_1\dots s_{n+1})s_1\br(s_2\dots s_{n+1})=\br(s_1\dots s_{n+1})s_1$ and
		\begin{align*}
			\br(s_1\dots s_{n+1})s_1\br(s_2\dots s_{n+1})s_2&=s_1\br(s_2\dots s_{n+1})s_2\\
			&=s_1s_2\br(s_3\dots s_{n+1})\\
			&=\br(s_1\dots s_{n+1})s_1s_2.
		\end{align*}
		It follows that the term \cref{s_1(s_2(s_3...s_(n+1)))} will be canceled with the first term of the expansion of the 1-st summand (corresponding to $i=1$) of \cref{sum(-1)^icb_n(s_1...s_is_(i+1)...s_(n+1))}; the $j$-th summand of \cref{sum(-1)^n(s_2...s_js_(j+1)...s_(n+1))} will be canceled with the first term of the expansion of the $j$-th summand (corresponding to $i=j$) of \cref{sum(-1)^icb_n(s_1...s_is_(i+1)...s_(n+1))}; and the term \cref{(-1)^n(s_2...s_n)} will be canceled with the first term of the expansion of \cref{(-1)^(n+1)cb_n(s_1...s_(n+1))}. The rest of the terms of \cref{sum(-1)^icb_n(s_1...s_is_(i+1)...s_(n+1)),(-1)^(n+1)cb_n(s_1...s_(n+1))} will be exactly the same as in the classical case, but multiplied by $\br(s_1\dots s_{n+1})$ on the left. Hence, they will also be canceled. Thus, $\cb_n\circ\cb_{n+1}=0$ proving $\im\cb_{n+1}\sst\ker\cb_n$.
	\end{proof}
	
	\begin{defn}\label{C^n_par(G_A)-defn}
		Given a left $K S$-module $V$ and $n\in \bbN$, we define the following $K$-modules
		\begin{align*}
		C^0(S,V)&=V,\\
		C^n(S,V)&=\{\s:S^n\to V\mid \s(s_1,\dots,s_n)\in \br(s_1\dots s_n)V\},\ n>0,
		\end{align*}
		and their morphisms $\dl^n:C^n(S,V)\to C^{n+1}(S,V)$:
		\begin{align}
		(\dl^0 x)(s)&=sx-\br(s)x,\ x\in C^0(S,V),\label{(dl^0m)(g)=...}\\
		(\dl^n\s)(s_1,\dots,s_{n+1})&=s_1\s(s_2,\dots,s_{n+1})\notag\\
		&\quad +\sum_{i=1}^n (-1)^i \s(s_1,\dots,s_is_{i+1},\dots,s_{n+1})\notag\\
		&\quad + (-1)^{n+1} \br(s_1\dots s_{n+1})\s(s_1,\dots,s_n),\ n>0,\ \s\in C^n(S,V).\label{(dl^nf)(g_1...g_n)=...}
		\end{align}
	\end{defn}

	\begin{prop}
		The sequence
		\begin{align}\label{C^0(S_A)->...}
		0\to C^0(S,V)\arr{\dl^0}C^1(S,V)\arr{\dl^1}\dots
		\end{align} 
		is a cochain complex of $K$-modules whose $n$-th cohomology $K$-module is isomorphic to $H^n(S,V)$.
	\end{prop}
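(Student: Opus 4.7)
The plan is to compute $H^n(S, V) = \Ext^n_{KS}(KE(S), V)$ by applying the contravariant functor $\Hom_{KS}(-, V)$ to the projective resolution \cref{...->P_1->P_0->KE(S)} constructed in \cref{P_n-is-res-of-KE(S)}, and then to exhibit a natural isomorphism of cochain complexes between $(\Hom_{KS}(P_\bullet, V), \Hom_{KS}(\cb_\bullet, V))$ and $(C^\bullet(S, V), \dl^\bullet)$.

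First I would identify $\Hom_{KS}(P_n, V)$ with $C^n(S, V)$. Since $P_0 = KS$, one has the standard isomorphism $\Hom_{KS}(KS, V) \cong V = C^0(S, V)$ via $f \mapsto f(1_S)$. For $n > 0$, the module $P_n$ decomposes as a direct sum of cyclic projective summands $KS \cdot \br(s_1 \cdots s_n)$ indexed by $(s_1, \ldots, s_n) \in S^n$, and for each idempotent $e \in E(S)$ there is a standard isomorphism $\Hom_{KS}(KS \cdot e, V) \cong eV$, $f \mapsto f(e)$, since $f(e) = e \cdot f(e)$. Assembling these yields an isomorphism of $K$-modules $\Hom_{KS}(P_n, V) \cong C^n(S, V)$ sending $\tl\sigma$ to the function $\sigma(s_1, \ldots, s_n) := \tl\sigma(\br(s_1\cdots s_n)(s_1, \ldots, s_n))$. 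Under this bijection the value of $\tl\sigma$ on an arbitrary generator $t(s_1, \ldots, s_n) \in P_n$ becomes $t \cdot \sigma(s_1, \ldots, s_n)$, thanks to \cref{eq:t=t range}.

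Next I would verify that the induced coboundary $\Hom_{KS}(\cb_{n+1}, V)$ coincides with $\dl^n$ under this identification. This is a direct term-by-term calculation: applying $\tl\sigma$ to each of the three groups of summands in $\cb_{n+1}(s_1, \ldots, s_{n+1})$ as written in \cref{cb_n(t(s_1...s_n)=...)}, and using the identity $\br(s_1 \cdots s_{n+1}) s_1 = s_1 \br(s_2 \cdots s_{n+1})$ together with the fact that $\sigma(s_2, \ldots, s_{n+1}) \in \br(s_2\cdots s_{n+1}) V$ and $\sigma(s_1, \ldots, s_i s_{i+1}, \ldots, s_{n+1}) \in \br(s_1 \cdots s_{n+1}) V$, one recovers term by term the formula \cref{(dl^nf)(g_1...g_n)=...} for $\dl^n \sigma$. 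The case $n = 0$ is analogous, using $\br(s) \cdot s = s$ to match \cref{(dl^0m)(g)=...}. In particular, $\dl^{n+1} \circ \dl^n = 0$ follows automatically from $\cb_n \circ \cb_{n+1} = 0$, so the sequence \cref{C^0(S_A)->...} is a cochain complex whose $n$-th cohomology equals $\Ext^n_{KS}(KE(S), V) = H^n(S, V)$.

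The main obstacle, if any, is pure bookkeeping: one must check carefully that the various $\br$-factors appearing inside $\cb_{n+1}$ do not alter the images once $\tl\sigma$ is applied, which is precisely what forces $C^n(S, V)$ to consist of functions whose values lie in $\br(s_1\cdots s_n)V$ rather than arbitrary $V$-valued maps. Once this observation is made, the matching of differentials becomes entirely mechanical.
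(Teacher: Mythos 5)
Your proposal is correct and follows essentially the same route as the paper, which simply states that the complex $(C^\bullet(S,V),\dl^\bullet)$ is obtained by applying $\Hom_{KS}(-,V)$ to the resolution \cref{...->P_1->P_0->KE(S)} (deferring the details to the analogous \cite[Lemma 2.16]{DKS2}). Your identification $\Hom_{KS}(KS\br(s_1\cdots s_n),V)\cong\br(s_1\cdots s_n)V$ and the term-by-term matching of $\Hom_{KS}(\cb_{n+1},V)$ with $\dl^n$ via $\br(s_1\cdots s_{n+1})s_1=s_1\br(s_2\cdots s_{n+1})$ are exactly the intended computation.
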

	\begin{proof}
		The proof is similar to the proof of \cite[Lemma 2.16]{DKS2} and consists in showing that \cref{C^0(S_A)->...} is the result of application of the functor $\Hom_{KS}(-,V)$ to the projective resolution \cref{...->P_1->P_0->KE(S)}.
	\end{proof}

	\begin{defn}
		Let $V$ be a left $K S$-module and $n\in \bbN$. We introduce the following notations: $Z^n(S,V):=\ker{\dl^n}$, $n\ge 0$, and $B^n(S,V):=\im{\dl^{n-1}}$, $n>0$, where $\dl^n$ is given by \cref{(dl^0m)(g)=...,(dl^nf)(g_1...g_n)=...}.
	\end{defn}
	
	\begin{cor}
		Given  a left   $K S$-module $V$, we have $H^0(S,V)=Z^0(S,V)$ and $H^n(S,V)=Z^n(S,V)/B^n(S,V)$  for all $n\in\bbN$, $n>0$.
	\end{cor}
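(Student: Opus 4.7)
The statement is an immediate consequence of the previous proposition together with the standard definition of cohomology of a cochain complex. My plan is therefore very short.

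First, I would invoke the preceding proposition, which tells us that $H^n(S,V)$ is computed as the $n$-th cohomology $K$-module of the cochain complex \cref{C^0(S_A)->...}. By the very definition of cohomology of a cochain complex of $K$-modules, the $n$-th cohomology $K$-module equals $\ker\dl^n / \im\dl^{n-1}$ for $n>0$, and for $n=0$ (where there is no $\dl^{-1}$) it is simply $\ker\dl^0$.

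Then I would unpack the notation: by the definitions just introduced, $Z^n(S,V)=\ker\dl^n$ for all $n\ge 0$ and $B^n(S,V)=\im\dl^{n-1}$ for $n>0$. Substituting these into the expressions from the previous paragraph yields $H^0(S,V)=Z^0(S,V)$ and $H^n(S,V)=Z^n(S,V)/B^n(S,V)$ for $n>0$, which is exactly the claim.

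There is no real obstacle here; the work has already been done in the previous proposition, and this corollary is essentially a translation of the conclusion into the notation $Z^n$, $B^n$ that was just set up.
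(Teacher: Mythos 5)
Your proposal is correct and matches the paper's (implicit) argument: the corollary is an immediate consequence of the preceding proposition identifying $H^n(S,V)$ with the cohomology of the complex \cref{C^0(S_A)->...}, combined with the definitions $Z^n(S,V)=\ker\dl^n$ and $B^n(S,V)=\im\dl^{n-1}$. The paper leaves this unproved for exactly the reason you give, so there is nothing further to add.
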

	
	\begin{exm}
		Let $V$ be a left $K S$-module. We have
		\begin{align*}
			Z^0(S,V)&=\{x \in V\mid \forall s\in S: sx=\br(s)x\},\\
			Z^1(S,V)&=\{\s\in C^1(S,V)\mid\forall s,t\in S:\ s\s(t)-\s(st)+\br(st)\s(s)=0\},\\
			B^1(S,V)&=\{\s\in C^1(S,V)\mid\exists x\in V\ \forall s\in S:\ \s(s)=sx-\br(s)x\},\\
			Z^2(S,V)&=\{\s\in C^2(S,V)\mid\forall s,t,u\in S:\ s\s(t,u)-\s(st,u)+\s(s,tu)-\br(stu)\s(s,t)=0\},\\
			B^2(S,V)&=\{\s\in C^2(S,V)\mid\exists \tau\in C^1(S,V)\ \forall s,t\in S:\ \s(s,t)=s\tau(t)-\tau(st)+\br(st)\tau(s)\}.
		\end{align*}
	\end{exm}

Similarly, for all $n\in\bbN$, we have the following projective right $KS$-modules:
	\begin{align*}
		P'_0&=KS,\\
		P'_n&=\bigoplus_{s_1,\dots,s_n\in S}K\bd(s_n\dots s_1)S,\ n>0.
	\end{align*}
It can be seen that, for all $n>0$, $P'_n$ is isomorphic to the free $K$-module with free basis
\begin{align*}
	\{(s_n,\dots,s_1)t\mid t,s_1,\dots,s_n\in S,\ \br(t)\le\bd(s_n\dots s_1)\}.
\end{align*}
Then, identifying $P'_0$ with the free $K$-module with free basis
\begin{align*}
	\{\emp t\mid t\in S\},
\end{align*}
and an $n$-tuple $(s_n,\dots,s_1)\in S^n$ with the element $(s_n,\dots,s_1)\bd(s_n\dots s_1)\in P'_n ,$
we can define the following morphisms of right $KS$-modules $\cb'_0:P'_0\to KE(S)$ and $\cb'_n:P'_n\to P'_{n-1}$, $n>0$:
\begin{align*}
	\cb'_0(\emp s)&=\bd(s),\ s\in S,\\
	\cb'_1((s)t)&=(\emp s-\emp)t,\ (s)t\in P'_1,\\
	\cb'_n((s_n,\dots,s_1)t)&=((s_n,\dots,s_2)s_1\notag\\
	&\quad+\sum_{i=1}^{n-1}(-1)^i(s_n,\dots,s_{i+1}s_i,\dots,s_1)\notag\\
	&\quad+(-1)^n(s_{n-1},\dots,s_1))t,
\end{align*}	 where $ (s_n,\dots,s_1)t\in P'_n,\ n>1.$
 It follows that
\begin{align}\label{...->P'_1->P'_0->KE(S)}
	\dots\arr{\cb'_2}P'_1\arr{\cb'_1}P'_0\arr{\cb'_0} KE(S)\to 0
\end{align}
	is a projective resolution of $KE(S)$ in the category of right $KS$-modules. 
	
	Let $V$ be a left $KS$-module. Applying the functor $-\ot_{KS}V$ to the sequence \cref{...->P'_1->P'_0->KE(S)} (with $KE(S)$ removed), we obtain the sequence of $K$-modules
\begin{align*}
	\dots\larr{\cb'_2\ot\id}P'_1\ot_{KS}V\larr{\cb'_1\ot\id}P'_0\ot_{KS}V\larr{\cb'_0\ot\id} 0,
\end{align*}
whose $n$-th homology $K$-module is $H_n(S,V)$, $n\in\bbN$. Observe that 
\begin{align*}
	P'_0\ot_{KS}V&=KS\ot_{KS}V\cong V,\\
	P'_n\ot_{KS}V&\cong\bigoplus_{s_1,\dots,s_n\in S}K\bd(s_n\dots s_1)S\ot_{KS}V\cong\bigoplus_{s_1,\dots,s_n\in S}\bd(s_n\dots s_1)V,\ n>0.
\end{align*}
We thus define the following $K$-modules:
\begin{align*}
	C'_0(S,V)&=V,\\
	C'_n(S,V)&=\left\{\sum_{i}(s_{n,i},\dots,s_{1,i})v_i\mid s_{1,i},\dots,s_{n,i}\in S,\ v_i\in\bd(s_{n,i}\dots s_{1,i})V\right\},\ n>0,
\end{align*}
and their morphisms $\dl'_n:C'_n(S,V)\to C'_{n-1}(S,V)$, $n>0$:
\begin{align*}
	\dl'_1((s)v)&=sv-v,\ (s)v\in C'_1(S,V),\\
	\dl'_n((s_n,\dots,s_1)v)&=(s_n,\dots,s_2)s_1v\\
	&\quad+\sum_{i=1}^{n-1}(-1)^i(s_n,\dots,s_{i+1}s_i,\dots,s_1)v\\
	&\quad+(-1)^n(s_{n-1},\dots,s_1)v,\ (s_n,\dots,s_1)v\in C'_n(S,V),\ n>1.
\end{align*}
Then set $Z_n(S,V):=\ker{\dl'_n}$, $n>0$, and $B_n(S,V):=\im{\dl'_{n+1}}$, $n\ge 0$.

\begin{cor}
	Given  a left $KS$-module $V$, we have $H_0(S,V)=V/B_0(S,V)$ and $H_n(S,V)=Z_n(S,V)/B_n(S,V)$, $n\in\bbN$, $n>0$.
\end{cor}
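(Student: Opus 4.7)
The plan is to unravel the definition $H_n(S,V)=\operatorname{Tor}^{KS}_n(KE(S),V)$ by computing it with the projective resolution \cref{...->P'_1->P'_0->KE(S)} of $KE(S)$ as a right $KS$-module, and then to identify the resulting chain complex with the complex $(C'_n(S,V),\dl'_n)$ introduced just above the statement.

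First I would apply the functor $-\ot_{KS}V$ to the truncated resolution (i.e. with $KE(S)$ removed), producing a chain complex whose $n$-th homology is $H_n(S,V)$ by definition of $\operatorname{Tor}$. Next, I would make explicit the natural $K$-linear isomorphism
\begin{align*}
P'_n\ot_{KS}V\xrightarrow{\ \cong\ } C'_n(S,V),
\end{align*}
already sketched in the paragraph preceding the statement. For $n=0$ it is $KS\ot_{KS}V\cong V=C'_0(S,V)$. For $n>0$, one uses the decomposition $P'_n=\bigoplus_{s_1,\dots,s_n\in S}K\bd(s_n\dots s_1)S$ and the standard identification $K\bd(s_n\dots s_1)S\ot_{KS}V\cong\bd(s_n\dots s_1)V$, sending $(s_n,\dots,s_1)\ot v$ to $(s_n,\dots,s_1)v$ with $v\in\bd(s_n\dots s_1)V$.

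Then I would verify that under these isomorphisms the induced boundary $\cb'_n\ot\id_V$ transforms into $\dl'_n$ as defined in the text. This is a direct comparison of the explicit formulas for $\cb'_n$ on basis elements $(s_n,\dots,s_1)t\in P'_n$ and for $\dl'_n$ on $(s_n,\dots,s_1)v\in C'_n(S,V)$; the only thing to keep track of is that the factor $t$ is absorbed into $v\in V$ via the tensor product identification, which is the same routine calculation used in the cohomology case (as in the proof referring to~\cite[Lemma 2.16]{DKS2}).

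Finally, I would read off the homology of $(C'_\bullet(S,V),\dl'_\bullet)$. For $n>0$, by definition $H_n(S,V)=\ker\dl'_n/\im\dl'_{n+1}=Z_n(S,V)/B_n(S,V)$. For $n=0$, since the chain complex ends at $C'_0(S,V)=V$ (there is no outgoing differential), the cycles are all of $V$, and $H_0(S,V)=V/\im\dl'_1=V/B_0(S,V)$. I do not foresee a real obstacle here: the only slightly delicate point is checking that the isomorphism $P'_n\ot_{KS}V\cong C'_n(S,V)$ is well defined with the correct target submodules $\bd(s_n\dots s_1)V$, which follows from \eqref{eq:t=t range} and the idempotency relations for $\bd$ and $\br$.
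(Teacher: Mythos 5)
Your proposal is correct and follows exactly the route the paper takes: the corollary is stated there without a separate proof precisely because the preceding paragraphs already apply $-\ot_{KS}V$ to the truncated resolution \cref{...->P'_1->P'_0->KE(S)}, identify $P'_n\ot_{KS}V$ with $C'_n(S,V)$, and define $Z_n$, $B_n$ so that the homology identification is immediate. Your additional care about the boundary map matching $\dl'_n$ and the $n=0$ case is exactly the routine verification the authors leave implicit.
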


\begin{exm}
	Let $V$ be a left $KS$-module. We have
	\begin{align*}
		B_0(S,V)&=\left\{\sum (s_iv_i-v_i)\mid s_i\in S,\ v_i\in \bd(s_i)V\right\}\\
		&=\left\{\sum (s_iv_i-\bd(s_i)v_i)\mid s_i\in S,\ v_i\in V\right\},\\
		Z_1(S,V)&=\left\{\sum(s_i)v_i\in C'_1(S,V)\mid \sum (s_iv_i-v_i)=0\right\},\\
		B_1(S,V)&=\left\{\sum((s_{2,i})s_{1,i}v_i-(s_{2,i}s_{1,i})v_i+(s_{1,i})v_i)\mid(s_{2,i},s_{1,i})v_i\in C'_2(S,V), \forall i\right\},\\
		Z_2(S,V)&=\left\{\sum(s_{2,i},s_{1,i})v_i\in C'_2(S,V)\mid\sum((s_{2,i})s_{1,i}v_i-(s_{2,i}s_{1,i})v_i+(s_{1,i})v_i)=0\right\},\\
		B_2(S,V)&=\Big\{\sum((s_{3,i},s_{2,i})s_{1,i}v_i-(s_{3,i}s_{2,i},s_{1,i})v_i+(s_{3,i},s_{2,i}s_{1,i})v_i-(s_{2,i},s_{1,i})v_i)\mid\\
		&\quad\quad (s_{3,i},s_{2,i},s_{1,i})v_i\in C'_3(S,V),\forall i\Big\}.
	\end{align*}
\end{exm}

	\section{Homology of $A\rtimes_\0 S $}\label{sec:Homol}

	\subsection{Preparatory results}
 For an algebra $R$ over a commutative ring $K$ let $R^{\rm op}$ be its opposite algebra and denote by $R^e$ the en\-ve\-lo\-ping algebra $R \otimes _K R^{\rm op}.$ It is well-known that an $R$-bimodule $N$ is a left $R^e$-module and also a right $R^e$-module with the actions defined by 

 \begin{equation}\label{left right R^e-mod}
 (a\otimes _K b) \cdot x = a \cdot x \cdot b
 \;\;\;\; \text{and}\;\;\;\; 
 x \cdot (a\otimes _K b) = b \cdot x \cdot a,
\end{equation} $x \in N, a,b \in R.$

 The following easy property will be frequently used.
 
 \begin{lem}\label{lem:dentro fora}\cite[Lemma 2.15]{DJ1} Let  $N_1$ and $N_2$ be bimodules over a $K$-algebra $R.$ Then 
 \begin{equation}\label{dentro}
 x \cdot a \otimes _{R^e} y = x  \otimes _{R^e} a \cdot y,
  \end{equation} 
 
and 
  \begin{equation}\label{fora}
 a \cdot x \otimes _{R^e} y = x  \otimes _{R^e} y \cdot a,
  \end{equation} 
  for any $a\in R, x \in N_1, y\in N_2.$
 \end{lem}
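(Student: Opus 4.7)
The plan is to reduce both identities to the single defining balancing relation of tensor products over $R^e$: for a right $R^e$-module $u$, a left $R^e$-module $v$, and $r \in R^e$, one has $u \cdot r \otimes_{R^e} v = u \otimes_{R^e} r \cdot v$. It suffices to pick the right elementary tensor $r \in R \otimes_K R^{\rm op}$ in each case and unwind the bimodule actions via the definitions in \eqref{left right R^e-mod}.

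For \eqref{dentro}, I would apply the balancing relation to $r = a \otimes_K 1 \in R^e$. By the right $R^e$-action on $N_1$ given in \eqref{left right R^e-mod}, we have $x \cdot (a \otimes_K 1) = 1 \cdot x \cdot a = x \cdot a$, while by the left $R^e$-action on $N_2$ we have $(a \otimes_K 1) \cdot y = a \cdot y \cdot 1 = a \cdot y$. Substituting these identifications into the balancing relation yields exactly \eqref{dentro}.

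For \eqref{fora} I would instead use $r = 1 \otimes_K a$. Now $x \cdot (1 \otimes_K a) = a \cdot x \cdot 1 = a \cdot x$ by the right $R^e$-action, and $(1 \otimes_K a) \cdot y = 1 \cdot y \cdot a = y \cdot a$ by the left $R^e$-action, so the balancing relation produces \eqref{fora}. The only point demanding attention — and the main bookkeeping obstacle, albeit a minor one — is the flip caused by the opposite algebra: the same elementary tensor $1 \otimes_K a$ encodes left bimodule multiplication by $a$ when read through the right $R^e$-action but right bimodule multiplication by $a$ through the left $R^e$-action, and symmetrically for $a \otimes_K 1$. Once this sign convention is tracked, each identity is a single-line consequence of the balance property of $\otimes_{R^e}$.
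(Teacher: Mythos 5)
Your argument is correct and is the standard one: the paper itself gives no proof here, citing \cite[Lemma 2.15]{DJ1}, and your reduction of \eqref{dentro} and \eqref{fora} to the $R^e$-balancing relation via the elementary tensors $a\otimes_K 1$ and $1\otimes_K a$, unwound through the actions in \eqref{left right R^e-mod}, is exactly the intended one-line verification. You have also correctly tracked the twist coming from the opposite algebra, which is the only place one could slip.
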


\begin{rem}\label{rem:left R^e on (R')^e}
If $R$ is a  unital $K$-subalgebra  of a unital $K$-algebra $R' ,$   then there is the obvious   homomorphism of unital $K$-algebras 
$R^e \to (R') ^e,$ which sends $a\otimes _K b \in R^e$ to $a\otimes _K b \in (R')^e.$ This endows  $(R')^e$ with the structure of a left $R^e$-module given by 
\begin{equation}\label{eq:left R^e on (R')^e}
 (a\otimes _K b) \cdot (x \ot _K y) = ax \ot _K yb, 
\end{equation}
and with the structure of a right $R^e$-module defined by 
\begin{equation}\label{eq:right R^e on (R')^e}
  (x \ot _K y) \cdot  (a\otimes _K b) = xa \ot _K by, 
\end{equation}
where $a,b \in R$ and $x,y \in R'.$ Of course, this gives  $R$-bimodule structures on $(R') ^e.$ Obviously,  in the case of \eqref{eq:left R^e on (R')^e}  the right $R$-action on
$(R')^{\rm op}$ is given by the multiplication in $R',$ that is  $y\cdot b := yb.$ Thus  $(R')^{\rm op}$ and $R'$ are identical 
as right $R$-modules. Analogously, in the case of \eqref{eq:right R^e on (R')^e} the left $R$-modules  $(R')^{\rm op}$ and $R'$ are identical.

Evidently, we may replace the condition of $R$ being a $K$-subalgebra of $R'$ by the assumption of the existence of a $K$-algebra homomorphism $R \to R'.$ If $R \to R''$
is another  $K$-algebra homomorphism such that  the  $R$-bimodules
  $R'$ and $R''$  are isomorphic, then the left (right) $R^e$-modules 
$(R')^e$ and $(R'')^e$ are also isomorphic.
\end{rem}

As above, let $\0 =\{\0 _s : 1_{s\m}A \to 1_s A \}_{s\in S}  $ be a unital action of the inverse monoid $S$ on the algebra $A.$ Then   
$A$ becomes a left $KS$-module by means of the formula 
\begin{equation}\label{left KS-mod A}
s \cdot a = \0 _s (1_{s\m}a), \;\;\;\;\;\; s\in S, a \in A.
\end{equation}  For details see the proof of Theorem 3.7 in \cite{DK}, in particular, formula (9) in \cite{DK}.

For further use we  display the following obvious equality:

\begin{equation}\label{EmmanuelLemma(i)}
(s s\m) \cdot a  = \0_s (\0_{s\m} (1_s a))= 1_s a,
\end{equation} for any $a\in A.$

For $w \in L(A, \0 , S)$ we shall write   
$\overline{w} = w+ \N \in A\rtimes_\0 S .$

We proceed with the next easy but important for us fact.

\begin{lem}\label{lem:monoid hom} The mapping
\begin{equation}
\Gamma ^{\0} : S \to  A\rtimes_\0 S, \;\; s \mapsto \overline{1_s\delta_s},
\end{equation} is a homomorphism of monoids. 
\end{lem}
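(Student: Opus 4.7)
The plan is to verify the two defining conditions of a monoid homomorphism by a direct computation, making use of the identities about the idempotents $\{1_s : s\in S\}$ collected in the Preliminaries. There are essentially no obstacles: everything reduces to the relations \eqref{theta on units}, \eqref{1_(ss-inv)=1_s} and the observation $1_{1_S}=1_A$, together with the multiplication formula in $L(A,\0,S)$.

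For the identity, I would simply observe that $\Gamma^\0(1_S)=\overline{1_{1_S}\dl_{1_S}}=\overline{1_A\dl_1}$, which is the unity of $A\rtimes_\0 S$ under the embedding $A\hookrightarrow A\rtimes_\0 S$, $a\mapsto\overline{a\dl_1}$ (injectivity recorded in \cref{rem:Adelta_1Embedding}, but here only the fact that $\overline{1_A\dl_1}$ is the identity is needed).

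For multiplicativity, given $s,t\in S$, I would compute in $L(A,\0,S)$:
\begin{align*}
\overline{1_s\dl_s}\cdot\overline{1_t\dl_t}
&=\overline{1_s\dl_s\cdot 1_t\dl_t}
=\overline{1_s\,\0_s(1_{s\m}\cdot 1_t)\,\dl_{st}}.
\end{align*}
Applying \eqref{theta on units} yields $\0_s(1_{s\m}1_t)=1_s1_{st}=1_{st}$, and using $1_s1_{st}=1_{st}$ once more gives
\begin{align*}
\overline{1_s\dl_s}\cdot\overline{1_t\dl_t}=\overline{1_s\cdot 1_{st}\,\dl_{st}}=\overline{1_{st}\dl_{st}}=\Gamma^\0(st),
\end{align*}
as desired. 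Thus $\Gamma^\0$ is a monoid homomorphism, and the only ``work'' consists in recognising which of the previously recorded identities to apply at each step.
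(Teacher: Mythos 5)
Your proposal is correct and follows essentially the same route as the paper: both verify multiplicativity by the computation $1_s\delta_s\cdot 1_t\delta_t=1_s\,\0_s(1_{s\m}1_t)\,\delta_{st}=1_{st}\delta_{st}$ via \cref{theta on units}, and check that $\overline{1_A\delta_1}$ is the unity. The only cosmetic difference is that the paper first establishes the identity at the level of $L(A,\0,S)$ and then passes to the quotient, whereas you compute directly with the classes.
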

\begin{proof} It is enough to show that 
$$
\tilde{\Gamma} ^{\0} : S \to  L(A, \0 , S) , \;\; s \mapsto {1_s\delta_s},$$
 is a homomorphism of monoids. 
 Using \cref{theta on units} we have 
\begin{align*}
	\tilde{\Gamma} ^{\0} (s)  \tilde{\Gamma}^{\0} (t)&=
	 1_s\delta_s \cdot 1_t\delta_t  =  \0 _s (1_{s\m}1_t) \delta_{st} =
	1_{st} \delta_{st} = \tilde{\Gamma} ^{\0} (st),\\
	 \tl\Gamma^{\0}(1)& =1_A\dl_1=1_{L(A,\0,S)}.
\end{align*}

 \end{proof}

Let $M$ be an $A\rtimes_\0 S$-bimodule. Then it easily follows from Lemma~\ref{lem:monoid hom}  that $M$ has a structure of a left $KS$-module via 

\begin{equation}\label{left KS-mod M}
s \cdot x =  (\overline{1_s \delta_s}) \cdot  x \cdot (\overline{1_{s\m} \delta_{s\m}}),
\end{equation} for any $x\in M, s \in S.$ Symmetrically, $M$ is a right $KS$-module by means of 
\begin{equation*}
 x \cdot s =  (\overline{1_{s\m} \delta_{s\m}})  \cdot  x \cdot (\overline{1_s \delta_s}).
\end{equation*}  Furthermore, $M$ will be also considered as an $A$-bimodule with the actions defined by 
\begin{equation}\label{A-bimod M}
a \cdot x =  (\overline{a \delta_1}) \cdot  x  \;\;\; \mbox{and} \;\;\; x \cdot a = x \cdot (\overline{a \delta_1}),
\end{equation} $a\in A, x \in M.$

The following equality will be useful for us.
\begin{lem}\label{lemma:EmmanuelLemma(ii)} For any $s\in S, x \in M$ we have that
\begin{equation}\label{EmmanuelLemma(ii)}  
(ss\m )\cdot x = 1_s  x 1_s.
\end{equation} 
\end{lem}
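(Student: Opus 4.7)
The plan is to compute $(ss\m)\cdot x$ directly from the definition \cref{left KS-mod M} and then reduce the coefficient $\overline{1_{ss\m}\dl_{ss\m}}$ to $\overline{1_s\dl_1}$ using the defining relations of $A\rtimes_\0 S$.

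First, note that $ss\m\in E(S)$ is idempotent, so $(ss\m)\m = ss\m$, and by \cref{1_(ss-inv)=1_s} we have $1_{ss\m}=1_s$. Hence applying \cref{left KS-mod M} with $s$ replaced by $ss\m$ yields
\begin{align*}
	(ss\m)\cdot x = \overline{1_s\dl_{ss\m}}\cdot x\cdot \overline{1_s\dl_{ss\m}}.
\end{align*}

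Next, since $ss\m\le 1_S$ in the natural partial order and $1_s\in 1_{ss\m}A=1_sA$, the element $1_s\dl_{ss\m}-1_s\dl_1$ lies in the generating set \cref{rel_on_L_gen_rho} of $\N$, so $\overline{1_s\dl_{ss\m}}=\overline{1_s\dl_1}$ in $A\rtimes_\0 S$. Substituting and invoking the $A$-bimodule structure \cref{A-bimod M}, we obtain
\begin{align*}
	(ss\m)\cdot x = \overline{1_s\dl_1}\cdot x\cdot \overline{1_s\dl_1} = 1_s\cdot x\cdot 1_s = 1_s x 1_s,
\end{align*}
as required. There is no serious obstacle here: the only subtlety is recognizing that $\delta_{ss\m}$ collapses to $\delta_1$ once multiplied by $1_s$, which is immediate from \cref{defn-cross_prod_for_S-mod} together with $ss\m\le 1_S$.
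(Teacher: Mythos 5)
Your proof is correct and follows essentially the same route as the paper: both reduce $(ss\m)\cdot x$ to $\overline{1_s\delta_{ss\m}}\cdot x\cdot\overline{1_s\delta_{ss\m}}$ (the paper via the product $\overline{1_s\delta_s}\cdot\overline{1_{s\m}\delta_{s\m}}$, you via \cref{1_(ss-inv)=1_s} directly) and then use $ss\m\le 1_S$ to identify $\overline{1_s\delta_{ss\m}}$ with $\overline{1_s\delta_1}$ and invoke the $A$-bimodule structure. No gaps.
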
 
\begin{proof} Indeed, 
$$(ss\m )\cdot x \overset{\eqref{left KS-mod M}}{=}  
(\overline{1_s \delta_s})(\overline{1_{s^{-1}} \delta_{s^{-1}}}) \cdot x \cdot (\overline{1_s \delta_s})(\overline{1_{s^{-1}} \delta_{s^{-1}}})
 =  (\overline{1_s \delta_{ss\m}}) \cdot x \cdot 
 (\overline{1_s \delta_{ss\m}}).$$ Since $ss\m \leq 1_S$ we have  
 $1_{s} \delta _{1} - 1_{s} \delta _{ss\m}  \in {\mathcal N}, $
 so that $\overline{1_s \delta_{ss\m}} = \overline{1_s \delta_{1}},$ which implies that 
 $$(ss\m )\cdot x   =  (\overline{1_s \delta_{1}})   \cdot x \cdot 
 (\overline{1_s \delta_{1}}) \overset{\eqref{A-bimod M}}{=}   1_s x 1_s,$$ as desired.
 \end{proof}

\begin{prop}\label{prop:left KS-module A otimes M}
 The tensor product $A \otimes _{A^e} M$ is a left $KS$-module with the action 
\begin{equation}\label{left KS-module A otimes M}
s \cdot (a \otimes _{A^e} x) = s \cdot a \otimes _{A^e} s \cdot x,
\end{equation} $s \in S, a\in A, x \in M.$
\end{prop}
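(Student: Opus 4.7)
My approach is to reduce both the well-definedness of the formula~\eqref{left KS-module A otimes M} and the verification of the $KS$-module axioms to identities inside $A\rtimes_\0 S$, using the monoid homomorphism $\Gm^\0:S\to A\rtimes_\0 S$ of~\cref{lem:monoid hom} and the embedding $A\hookrightarrow A\rtimes_\0 S$, $a\mapsto \overline{a\dl_1}$, of~\cref{rem:Adelta_1Embedding}.

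As preparation I would record three multiplicative identities valid for all $s\in S$, $a,b\in A$ and $x\in M$: $s\cdot(ab)=(s\cdot a)(s\cdot b)$, $s\cdot(ax)=(s\cdot a)(s\cdot x)$ and $s\cdot(xa)=(s\cdot x)(s\cdot a)$. The first is immediate from $\0_s$ being an algebra isomorphism on $1_{s\m}A$ and from centrality of the idempotents $1_t$. The other two come from the key computations $\overline{1_s\dl_s}\cdot\overline{a\dl_1}=\overline{(s\cdot a)\dl_1}\cdot\overline{1_s\dl_s}$ and $\overline{a\dl_1}\cdot\overline{1_{s\m}\dl_{s\m}}=\overline{1_{s\m}\dl_{s\m}}\cdot\overline{(s\cdot a)\dl_1}$, which rely on $\0_s(1_{s\m})=1_s$, the identity $\0_{s\m}\circ\0_s=\id$ on $1_{s\m}A$, and the $\N$-relation $\overline{a\dl_{ss\m}}=\overline{a\dl_1}$ for $a\in 1_sA$ (which follows from $ss\m\le 1_S$).

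With these identities in hand, well-definedness of~\eqref{left KS-module A otimes M} reduces to checking that the $K$-bilinear map $(a,x)\mapsto(s\cdot a)\otimes_{A^e}(s\cdot x)$ respects the balancing relations~\eqref{dentro} and~\eqref{fora} from~\cref{lem:dentro fora}. Indeed, for $a,b\in A$ and $x\in M$ one has
\[
(s\cdot(ab))\otimes_{A^e}(s\cdot x)=(s\cdot a)(s\cdot b)\otimes_{A^e}(s\cdot x)\overset{\eqref{dentro}}{=}(s\cdot a)\otimes_{A^e}(s\cdot b)(s\cdot x)=(s\cdot a)\otimes_{A^e}s\cdot(bx),
\]
and analogously $(s\cdot(ba))\otimes_{A^e}(s\cdot x)=(s\cdot a)\otimes_{A^e}s\cdot(xb)$ using~\eqref{fora}. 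The $KS$-module axioms are then straightforward: $\Gm^\0(1_S)=1_{A\rtimes_\0 S}$ and $\0_{1_S}=\id_A$ force $1_S$ to act as the identity, while the associativity $(st)\cdot(a\otimes x)=s\cdot(t\cdot(a\otimes x))$ follows factorwise from $s\cdot(t\cdot a)=(st)\cdot a$ in $A$ and $s\cdot(t\cdot x)=(st)\cdot x$ in $M$, both obtained by expressing each action as conjugation $y\mapsto\Gm^\0(s)\,y\,\Gm^\0(s\m)$ inside $A\rtimes_\0 S$ and invoking that $\Gm^\0$ is a monoid homomorphism.

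The main obstacle is the preparatory step: the three multiplicative identities require careful bookkeeping with the central idempotents $1_s,1_{s\m}$ and the defining relations of $\N$, since one has to repeatedly move $\overline{1_s\dl_s}$ past $\overline{a\dl_1}$ and dually past $\overline{1_{s\m}\dl_{s\m}}$. Once these identities are established, the remainder of the proof is essentially formal.
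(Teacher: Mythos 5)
Your proposal is correct and follows essentially the same route as the paper: the paper's proof likewise reduces everything to checking that $(a,x)\mapsto s\cdot a\otimes_{A^e}s\cdot x$ is $A^e$-balanced via \cref{lem:dentro fora}, using exactly the multiplicativity identities $s\cdot(cab)=\0_s(1_{s\m}c)\,\0_s(1_{s\m}a)\,\0_s(1_{s\m}b)$ and $s\cdot(b\cdot x\cdot c)=(s\cdot b)\cdot(s\cdot x)\cdot(s\cdot c)$ that you isolate as preparatory steps, and it too dismisses the module axioms as immediate since $A$ and $M$ are already left $KS$-modules. Splitting the balancing condition into the two relations \cref{dentro} and \cref{fora} rather than treating a general $b\otimes_K c$ at once is only a cosmetic difference.
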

\begin{proof} Since $A$ and $M$ are left $KS$-modules via \cref{left KS-mod A,left KS-mod M}, we only need to check that \eqref{left KS-module A otimes M} is well-defined. For $s\in S$ consider the map 
$$f_s : A \times M \to A \otimes _{A^e} M, \;\;\; 
(a,x) \mapsto s \cdot a \otimes _{A^e} s \cdot x .$$ Then using Lemma \ref{lem:dentro fora} we see that
\begin{align*}
f_s(a\cdot (b \otimes _K c), x) & =  
s \cdot (a \cdot (b \otimes _K c)) \otimes _{A^e} s \cdot x
=  s \cdot (c  a b ) \otimes _{A^e} s \cdot x\\
& =  \0 _s   (1 _{s\m }c  a b ) \otimes _{A^e} s \cdot x
=    \0 _s  (1 _{s\m }c)  \0 _s   (1 _{s\m } a) \0 _s   (1 _{s\m } b)  \otimes _{A^e} s \cdot x\\
&=   \0 _s   (1 _{s\m } a)   \otimes _{A^e} \0 _s   (1 _{s\m } b) \cdot (s \cdot x ) \cdot \0_s   (1 _{s\m }c)\\
&= s   \cdot  a  \otimes _{A^e} 
(\overline{\0 _s   (1 _{s\m } b) \delta_1}) \cdot ((\overline{1_s \delta_s }) \cdot  x \cdot (\overline{1_{s\m} \delta_{s\m}})) \cdot 
\overline{\0_s   (1 _{s\m }c) \delta _1}\\
&= s   \cdot  a   \otimes _{A^e} 
(\overline{\0 _s   (1 _{s\m } b)  \delta_s}) \cdot  x \cdot (\overline{c 1 _{s\m } \delta_{s\m}})\\
&=  s   \cdot  a    \otimes _{A^e} 
  (\overline{1 _{s }  \delta_s}) \cdot (( \overline{ b\delta_1} ) \cdot  x \cdot 
 ( \overline{c\delta _1})) \cdot (\overline{1 _{s\m } \delta_{s\m}})\\
 &= s   \cdot  a   \otimes _{A^e} 
  s \cdot (  b \cdot  x \cdot 
  c)
  = s   \cdot  a   \otimes _{A^e} 
  s \cdot (   (b \otimes _K c) \cdot  x )\\
  &= f_s(a,  (b \otimes _K c) \cdot x).
\end{align*} It follows that $f_s$ is balanced over $A^e$ and
there is a well-defined map
$$ A \otimes _{A^e }M \to A \otimes _{A^e} M, \;\;\; 
a  \otimes _{A^e} x \mapsto s \cdot a \otimes _{A^e} s \cdot x .$$  This yields  that \eqref{left KS-module A otimes M} is well-defined.
\end{proof}

With respect to the  left $KS$-module structure on 
 $A \otimes _{A^e} M$ the following property will be useful, which is an analogue of item (iv) of \cite[Lemma 5.7]{DJ2}.
 \begin{lem}\label{lemma:EmmanuelLemma(iv)} For any $e\in E(S)$,  $a\in A$ and $x \in M$ we have that
  $$ e  \cdot (a \otimes_{A^e} x) = a \otimes_{A^e} e \cdot x = e  \cdot a \otimes_{A^e}  x.$$
  Consequently, 
   $$\omega \cdot (a \otimes_{A^e} x) = \omega \cdot a \otimes_{A^e} x = a \otimes_{A^e} \omega \cdot x,$$ for all $\omega \in K E(S).$ 
 \end{lem}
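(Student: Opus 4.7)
The plan is to reduce to the case of a single idempotent by $K$-linearity, and then to migrate the central idempotent $1_e\in A$ across the tensor sign using the two balanced relations of \cref{lem:dentro fora}. First I would compute, directly from \cref{left KS-module A otimes M},
\[
e \cdot (a \otimes_{A^e} x) = e \cdot a \otimes_{A^e} e \cdot x,
\]
and then apply \cref{EmmanuelLemma(i)} (which gives $e \cdot a = 1_e a$, since $e = e^{-1}$) together with \cref{lemma:EmmanuelLemma(ii)} with $s = e$ (which gives $e \cdot x = 1_e x 1_e$) to rewrite this as $1_e a \otimes_{A^e} 1_e x 1_e$.

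The task then reduces to establishing the two equalities
\[
1_e a \otimes_{A^e} 1_e x 1_e = a \otimes_{A^e} 1_e x 1_e \quad \text{and} \quad a \otimes_{A^e} 1_e x 1_e = 1_e a \otimes_{A^e} x.
\]
For the first, I would invoke \cref{dentro} of \cref{lem:dentro fora} with the element $1_e\in A$ to absorb the leftmost $1_e$ of the first tensor factor into the second tensor factor, using that $1_e$ is central in $A$. For the second, I would apply \cref{fora} to push the rightmost $1_e$ from the second factor onto the left of $a$ in the first factor, and then use \cref{dentro} once more to absorb the remaining $1_e$ sitting immediately to the left of $x$; at each step the calculation uses nothing beyond centrality of $1_e$ and the idempotency $1_e^2=1_e$.

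Finally, the corresponding statement for an arbitrary $\omega = \sum_i k_i e_i \in KE(S)$ (with $k_i\in K$ and $e_i\in E(S)$) follows at once by $K$-linearity, since the three $K$-linear maps $\omega \mapsto \omega \cdot (a \otimes_{A^e} x)$, $\omega \mapsto \omega \cdot a \otimes_{A^e} x$ and $\omega \mapsto a \otimes_{A^e} \omega \cdot x$ have just been shown to agree on the generating set $E(S)$ of $KE(S)$. I do not expect a substantive obstacle: the whole argument is a careful bookkeeping exercise with the two balanced relations in \cref{lem:dentro fora}. The only subtlety is to track which of \cref{dentro} or \cref{fora} applies at each step, since the left and right $A$-actions on $M$ are genuinely distinct, while it is precisely the centrality of each $1_e$ in $A$ that makes both moves available and forces the three expressions to collapse to the same element.
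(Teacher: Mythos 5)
Your proof is correct and follows essentially the same route as the paper, which simply refers to the analogous computation in \cite[Lemma 5.7(iv)]{DJ2} and lists exactly the ingredients you use: the definition \cref{left KS-module A otimes M} of the action, \cref{EmmanuelLemma(i)}, \cref{EmmanuelLemma(ii)}, and the two balanced relations \cref{dentro} and \cref{fora} of \cref{lem:dentro fora}, together with centrality and idempotency of $1_e$. Your write-up just makes that adaptation explicit, and the final $K$-linearity step is as immediate as you say.
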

 \begin{proof} Taking the trivial twist in the proof of item  (iv) of \cite[Lemma 5.7]{DJ2}, it can be easily adapted to the case of an inverse monoid action, using \eqref{left right R^e-mod}, Lemma~\ref{lem:dentro fora}, \eqref{EmmanuelLemma(i)} and \eqref{EmmanuelLemma(ii)}. \end{proof}

\begin{rem}\label{bimod map becomes KS-mod map} It is readily seen that a homomorphism $f : M \to N$ of $A\rtimes_\0 S$-bimodules becomes a mapping of left $KS$-modules in view of \eqref{left KS-mod M}. Then  thanks to  Proposition~\ref{prop:left KS-module A otimes M} it is readily seen that $A \otimes _{A^e} f : A \otimes _{A^e} M \to A \otimes _{A^e} N$ is a homomorphism of left $KS$-modules.
\end{rem}

Proposition~\ref{prop:left KS-module A otimes M} and Remark~\ref{bimod map becomes KS-mod map} allow us to define the following right exact functor
\begin{align}\label{functor-F_1-hom}
  F_1(-):= A \otimes_{A^e}-: (A\rtimes_\0 S)^e\textbf{-Mod} \to KS\textbf{-Mod}.
\end{align}

Recall that $KE(S)$ is a left $KS$-module with the action \cref{left KS-mod KE(S)} and a right $KS$-module with the action \cref{right KS-mod KE(S)}.

The next fact is an analogue of \cite[Lemma 5.7, (iii)]{DJ2} for the case of inverse monoid actions. The proof is similar, but we include it for reader's convenience.

\begin{lem}\label{lemma:EmmanuelLemma(iii)} Let $X$ be a left $KS$-module. Then in $KE(S) \otimes_{KS} X$  we have that
\begin{equation*}
s s\m \otimes_{KS} x = 1 \otimes_{KS} s\m \cdot x = 1 \otimes_{KS} (s s\m) \cdot x,
\end{equation*}  for all $s \in S$ and $x \in X.$
\end{lem}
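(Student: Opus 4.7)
The plan is to establish each of the two equalities separately, keeping in mind the delicate feature of the right $KS$-action on $KE(S)$ given by \cref{right KS-mod KE(S)}: the formula $e\cdot t = t\m e t$ is conjugation-like, so $1\cdot t = t\m t$, and in particular $1\cdot s\m = s\cdot 1\cdot s\m = ss\m$. With this identity at hand, both equalities become direct consequences of the $KS$-balancing of the tensor product together with the elementary absorption rule $(s\m s)\cdot(s\m\cdot x)=(s\m s s\m)\cdot x = s\m\cdot x$.

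For the first equality $ss\m\otimes_{KS}x = 1\otimes_{KS} s\m\cdot x$, I would simply rewrite $ss\m = 1\cdot s\m$ and push $s\m$ across the tensor:
\[
ss\m \otimes_{KS} x = (1\cdot s\m)\otimes_{KS} x = 1\otimes_{KS} s\m\cdot x.
\]

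For the second equality $1\otimes_{KS}s\m\cdot x = 1\otimes_{KS}(ss\m)\cdot x$, set $y := s\m\cdot x$. The key observation is that the idempotent $s\m s$ fixes $y$, since $(s\m s)\cdot y = (s\m s s\m)\cdot x = s\m\cdot x = y$. Using that $(s\m s)\m = s\m s$, on one hand
\[
1\otimes_{KS} y = 1\otimes_{KS}(s\m s)\cdot y = 1\cdot(s\m s)\otimes_{KS} y = s\m s\otimes_{KS} y,
\]
and on the other hand, since $s\cdot y = (ss\m)\cdot x$,
\[
1\otimes_{KS}(ss\m)\cdot x = 1\otimes_{KS} s\cdot y = 1\cdot s\otimes_{KS} y = s\m s\otimes_{KS} y.
\]
Comparing the two lines yields the claim.

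The only subtlety worth flagging is getting the sidedness right in the right $KS$-module structure: one must consistently remember that $1\cdot s = s\m s$ and $1\cdot s\m = ss\m$ (rather than $s$ or $s\m$), which is used crucially in both equalities. Beyond this point the argument is entirely formal and requires no further input than balancing and the semigroup identity $s\m s s\m = s\m$.
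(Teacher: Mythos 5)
Your proof is correct and uses the same essential ingredients as the paper: the conjugation-type right action $e\cdot t=t\m e t$ from \cref{right KS-mod KE(S)} and the balancing of $\otimes_{KS}$. The paper's argument is slightly more direct on the second equality (it observes $1\cdot(ss\m)=ss\m$ and immediately gets $ss\m\otimes_{KS}x=1\cdot(ss\m)\otimes_{KS}x=1\otimes_{KS}(ss\m)\cdot x$, rather than passing through the auxiliary element $s\m s\otimes_{KS}y$), but your detour is valid and the two proofs are essentially the same.
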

\begin{proof} Using \eqref{right KS-mod KE(S)} we have that 
	\begin{align*}
		s s\m \otimes_{KS} x &= (1 \cdot s\m) \otimes_{KS} x = 1  \otimes_{KS} s\m \cdot x,\\
		s s\m \otimes_{KS} x &= 1\cdot (s s\m) \otimes_{KS} x=
		1    \otimes_{KS}  (s  s\m) \cdot x, 
	\end{align*}
as desired.
 \end{proof}

 With the right $KS$-action  \eqref{right KS-mod KE(S)}, we consider the right exact functor
\begin{align}\label{functor-F_2-hom}
   F_2(-): = KE(S) \otimes_{KS}-: KS \textbf{-Mod} \to K\textbf{-Mod}. 
\end{align}
We also need the  functor 
\begin{align}\label{functor-F-hom}
   F(-) := (A\rtimes_\0 S) \otimes_{(A\rtimes_\0 S)^e}-: (A\rtimes_\0 S)^e\textbf{-Mod} \to K\textbf{-Mod},
\end{align} 
whose  left-derived functor gives the Hochschild homology of
$A\rtimes_\0 S.$ 

Thanks to Lemma~\ref{lem:monoid hom}, $A\rtimes_\0 S$ is a left $KS$-module via $s \cdot \overline{a \delta _t} = \Gamma ^{\0} (s) \; \overline{ a \delta _t}  = \overline{1_s \delta _s \cdot a\delta _t},$ that is 
\begin{equation}\label{left KS-action on cross}
s \cdot \overline{a \delta _t} 
=  \overline{\0_s (1_{s\m} a ) \delta _{st}}.
\end{equation} In particular, $A\rtimes_\0 S$ is a left $K E(S)$-module with
\begin{equation}\label{left KE(S)-action on cross}
(s s\m) \cdot \overline{a \delta _t}   = 
\overline{1_s  a \delta _t}.
\end{equation} Since any idempotent of $S$ is of the form $s s\m, $ with $s\in S,$ the equality \eqref{left KE(S)-action on cross} determines a left
$K E(S)$-action on  $A\rtimes_\0 S.$

Note that $A\rtimes_\0 S$ may be considered as an $A\rtimes_\0 S$-bimodule, but  the left $K S$-action \eqref{left KS-action on cross} on $A\rtimes_\0 S$ differs from the left $K S$-action \eqref{left KS-mod M}. In all what follows we shall consider \eqref{left KS-action on cross} as a left action of $KS$ on $A\rtimes_\0 S.$

We shall need the following.

\begin{lem} \label{prop:bimodule structure X tensor cross}
	Let $X$ be a right $K S$-module. Then the actions 
	\begin{equation}\label{left crossprod-action}
		\overline{a \delta_s} \cdot (x \otimes_{K E(S)} 
		\overline{c \delta_t}) = x \cdot s^{-1} \otimes_{K E(S)} (\overline{a \delta_s})(\overline{c \delta_t})
	\end{equation}
	and
	\begin{equation} \label{right crossprod-action}
		(x \otimes_{K E(S)} \overline{c \delta_t}) \cdot \overline{a \delta_s} = x \otimes_{K E(S)} (\overline{c \delta_t}) (\overline{a \delta_s}).
	\end{equation}
	give an  $A\rtimes_\0 S$-bimodule structure on  
	$X \otimes_{K E(S)} (A\rtimes_\0 S).$
\end{lem}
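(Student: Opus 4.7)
The plan is to verify, in order: the well-definedness of the right action \eqref{right crossprod-action}, the well-definedness of the left action \eqref{left crossprod-action}, associativity and unitality of each, and the bimodule compatibility between them.

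The right action is the straightforward half. For a fixed $\overline{a\delta_s}$, the assignment $x \otimes \overline{c\delta_t} \mapsto x \otimes (\overline{c\delta_t})(\overline{a\delta_s})$ is $KE(S)$-balanced because $e\cdot(\overline{c\delta_t}\cdot\overline{a\delta_s}) = (e\cdot\overline{c\delta_t})\cdot\overline{a\delta_s}$ for every $e \in E(S)$, which is immediate from \eqref{left KE(S)-action on cross} and the centrality of $1_e$ in $A$. Since $\overline{a\delta_s}$ enters only via right multiplication inside the second tensor slot, the defining relations of $\N$ are automatically respected, and the right action is well-defined on $X \otimes_{KE(S)} (A\rtimes_\0 S)$.

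The main obstacle is the well-definedness of the left action, which I split into two stages. First, I fix a representative $a\delta_s \in L(A,\0,S)$ and show that $x \otimes \overline{c\delta_t} \mapsto x\cdot s\m \otimes \overline{a\delta_s\cdot c\delta_t}$ is $KE(S)$-balanced. The crux is the identity $es\m = s\m(ses\m)$ in $S$: since idempotents commute, $s\m(ses\m) = (s\m s)es\m = e(s\m s)s\m = es\m$, so $ses\m \in E(S)$ and $(x\cdot e)\cdot s\m = (x\cdot s\m)\cdot(ses\m)$, pushing the idempotent across the tensor. On the other hand, by \eqref{theta on units} one has $\0_s(1_{s\m}1_e) = 1_{se}$, and \eqref{1_(ss-inv)=1_s} gives $1_{se} = 1_{(se)(se)\m} = 1_{ses\m}$, whence $(\overline{a\delta_s})(\overline{1_e c\delta_t}) = \overline{1_{ses\m}\cdot a\delta_s\cdot c\delta_t}$, matching the expression obtained by the balance thanks to the centrality of $1_{ses\m}$ in $A$. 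Second, I check that the resulting map descends to $A\rtimes_\0 S = L(A,\0,S)/\N$: for a generator $a\delta_s - a\delta_t$ of $\N$ with $s\le t$ and $a\in 1_sA$, the identity $s = ss\m t$ yields $s\m = t\m(ss\m)$, while $a = 1_s a$ makes $\overline{a\delta_s\cdot c\delta_t}$ fixed by the left action of $ss\m$ on $A\rtimes_\0 S$; moving $ss\m$ across the tensor via the balance produces $x\cdot s\m \otimes \overline{a\delta_s\cdot c\delta_t} = x\cdot t\m \otimes \overline{a\delta_t\cdot c\delta_t}$, as required.

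The remaining verifications are routine. Associativity of the left action follows from $(su)\m = u\m s\m$ in $S$ combined with the associativity of multiplication in $A\rtimes_\0 S$; associativity of the right action is immediate. Both actions are unital because $\Gamma^\0(1_S) = \overline{1_A\delta_{1_S}}$ is the identity of $A\rtimes_\0 S$ and $1_S$ acts trivially on $X$. Finally, the bimodule compatibility $(\overline{a\delta_s}\cdot(x\otimes y))\cdot\overline{b\delta_u} = \overline{a\delta_s}\cdot((x\otimes y)\cdot\overline{b\delta_u})$ reduces to the associativity of the product $(\overline{a\delta_s})\,y\,(\overline{b\delta_u})$ inside the second tensor factor, since the twist $x\mapsto x\cdot s\m$ of the $X$-factor is unaffected by right multiplication by $\overline{b\delta_u}$.
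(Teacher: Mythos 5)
Your proposal is correct and follows essentially the same route as the paper's proof: first establish the $KE(S)$-balance of the left multiplication map for a fixed representative $a\delta_s$ (your identity $es\m = s\m(ses\m)$ together with $\0_s(1_{s\m}1_e)=1_{se}=1_{ses\m}$ is exactly the paper's computation with $e=tt\m$), then check that the generators $a\delta_s-a\delta_t$ of $\N$ act as zero using $s\m=t\m(ss\m)$ and $a=1_sa$, and finally note that multiplicativity, unitality and the compatibility of the two actions are routine. The only blemishes are cosmetic (a clash of the letter $t$ in the quotient-descent step), so nothing substantive is missing.
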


\begin{proof} Obviously, the structure of a right $A\rtimes_\0 S$-module is well-defined. As to the left action we check first that 
 $X \otimes_{K E(S)} (A\rtimes_\0 S ) $ is a left $L(A, \0 , S)$-module
 via 
 \begin{equation*}
a \delta_s \cdot (x \otimes_{K E(S)} 
\overline{c \delta_t}) = x \cdot s^{-1} \otimes_{K E(S)} (\overline{a \delta_s})(\overline{c \delta_t}).
        \end{equation*}
For any fixed $a \delta_s\in L(A, \0 , S)$ let
       $$ 
       \tilde{\psi }_{a \delta_s}: X \times (A\rtimes_\0 S) \to X \otimes_{ K E(S)} A\rtimes_\0 S
       $$ 
       be defined by   
        $$ 
        \tilde{\psi}_{a \delta_s}(x, \overline{c\delta_t}) = 
        x \cdot s^{-1} \otimes_{K E(S)}\overline{(a \delta_s)(c \delta_t)}.
         $$
   We check that $ \tilde{\psi }_{a \delta_s}$  is balanced over $K E(S).$ Indeed, for any $ t \in S$ and $c \delta _v \in   L(A, \0 , S)$ we compute 
    \begin{align*}
        \tilde{\psi}_{a \dl_s}(x \cdot (t t\m), \overline{c\dl_v})
        &= (x \cdot  (t t\m) ) \cdot {  s^{-1} } \otimes_{K E(S)} \overline{(a \dl_s)(c \dl_v)} \\
        &= (x \cdot  (t t\m s\m s s\m) \otimes_{K E(S)} \overline{ (a \dl_s)(c \dl_v) } \\  
        &= (x \cdot  s^{-1})\cdot (st t\m s\m) \otimes_{K E(S)} \overline{ (a \dl_s)(c \dl_v)} \\ 
        &= x \cdot  s^{-1}  \otimes_{K E(S)} (st t\m s\m)  \cdot \left(\overline{(a \dl_s)(c \dl_v)}\right) \\ 
        &\overset{\eqref{left KE(S)-action on cross}}{=}  x \cdot {  s^{-1} } \otimes_{K E(S)} \overline{ (1_{st} a \dl_s)(c \dl_v)} \\
        &\overset{\eqref{theta on units}}{=} x \cdot {  s^{-1} } \otimes_{K E(S)} \overline{(a \dl_s)(1_t \dl_1)(c \dl_v)} \\
        &\overset{\eqref{left KE(S)-action on cross}}{=} x \cdot {  s^{-1} } \otimes_{K E(S)} \overline{(a \dl_s)}\left({ (t t\m)} \cdot \overline{(c \dl_v)}\right) \\
        &= \tilde{\psi}_{a \dl_s}(x , { (t t\m)} \cdot  \overline{c\dl_v}).
    \end{align*}
    Hence, we have a well-defined mapping 
    \[
        \psi_{a \delta_s}: X \otimes_{K E(S)}  (A\rtimes_\0 S)  \to X \otimes_{K E(S)} (A\rtimes_\0 S )   
    \]
    determined by 
    \[
        \psi_{a \delta_s}(x \otimes_{K E(S)} \overline{c\delta_t}) = x \cdot s^{-1} \otimes_{K E(S)} \overline{(a \delta_s) (c \delta_t)}
    \]
     for all $s \in S$ and  $a \in 1_s A$.  Then it can be directly verified that 
      $$ 
      \psi_{a \dl_s} (\psi_{b \dl_t}(x \otimes_{K E(S)} \overline{c\dl_v})) = \psi_{(a \dl_s )(b \dl_t)}(x \otimes_{K E(S)} \overline{c\dl_v}) 
      $$
      for all $s, t, v \in S,$ $ a\in 1_sA$, $b\in 1_tA$, $c\in 1_vA$ and $x\in X,$
 so that    we obtain     a left 
 $ L(A, \0 , S)$-module structure on $X \otimes_{K E(S)} (A\rtimes_\0 S),$ by setting
 $$ 
 a \dl_s \cdot (x \otimes_{K E(S)} \overline{c\dl_t}) =  \psi_{a \dl_s}(x \otimes_{K E(S)} \overline{c\dl_t}).
 $$
 
 Observe now that 
 $${\mathcal N} \cdot ( X \otimes_{K E(S)} (A\rtimes_\0 S))=0. $$
 Indeed, let $s,t \in S$ be such that $s \leq t$ and let $a \in 1_sA.$ Then $1_sA \subseteq 1_t A,$ 
 $\overline{a \delta _s}= \overline{a \delta _t} ,$  $s = s s\m t$ and for any $x \in X,$ $c\in 1_vA$ we see that
 \begin{align*}
 	a \dl _s \cdot (x \ot_{K E(S)} \ol{c \dl_v}) 
 	& =  x \cdot s^{-1} \ot_{K E(S)} (\ol{a \dl_s}) (\ol{c \dl_v})\\
 	&=  x \cdot (ss\m t)^{-1} \ot_{K E(S)} (\ol{a \dl_t}) (\ol{c \dl_v})\\
 	&=  x \cdot t^{-1} \ot_{K E(S)} (ss\m )\cdot (\ol{a \dl_t}) (\ol{c \dl_v})\\
 	&\overset{\eqref{left KE(S)-action on cross}}{=}
 	x \cdot t^{-1} \ot_{K E(S)} (\ol{1_{s}  a \dl_t}) (\ol{c \dl_v})\\
 	& = x \cdot t^{-1} \ot_{K E(S)} (\ol{a \dl_t}) (\ol{c \dl_v})\\
 	&=a \delta _t  \cdot (x \otimes_{K E(S)} \overline{c\delta_v}),
 \end{align*}
 since $a \in 1_s A.$
  Consequently, $(a\dl_s-a\dl_t) \cdot (x \ot_{K E(S)} \ol{c \dl_v})=0$, and the left $ L(A, \0 , S)$-action on $X \otimes_{K E(S)} (A\rtimes_\0 S)$ induces a left  $A\rtimes_\0 S$-module structure on  $X \otimes_{K E(S)} (A\rtimes_\0 S)$ determined by
 \eqref{left crossprod-action}. 
 
  Finally, it is readily seen  that the actions \eqref{left crossprod-action} and \eqref{right crossprod-action} commute, providing an $A\rtimes_\0 S$-bimodule structure on 
 $X \otimes_{K E(S)} (A\rtimes_\0 S).$ 
     \end{proof}
     
     It evidently follows from \eqref{left crossprod-action} and \eqref{right crossprod-action} that if $f : X \to X'$ is a homomorphism of right $KS$-modules, then
     \begin{align*}
     	f \otimes_{K E(S)} (A\rtimes_\0 S): X \otimes_{K E(S)} (A\rtimes_\0 S) \to X' \otimes_{K E(S)} (A\rtimes_\0 S)
     \end{align*}
     is a homomorphism of $A\rtimes_\0 S$-bimodules. Therefore, by  Lemma~\ref{prop:bimodule structure X tensor cross} we may consider the functor 
     $$
     - \otimes_{K E(S)} (A\rtimes_\0 S): \textbf{Mod-}K S \to (A\rtimes_\0 S)^e\textbf{-Mod}.
     $$

As in \cite{DJ2}, in order to obtain the homology spectral sequence we shall use a  result by A. Grothendieck 
\cite[Theorem 10.48]{Rotman}, for which we will need to show that the composition $F_2 F_1$ is naturally isomorphic to $F$ and that $F_1(P)$ is right $F_2$-acyclic for any projective $A\rtimes_\0 S $-bimodule $P$ (the latter means any projective object in $(A\rtimes_\0 S)^e\textbf{-Mod} $). For this purpose we shall use the next proposition, which is an inverse semigroup analogue of
\cite[Proposition 5.10]{DJ2}.

\begin{prop} \label{prop:bifunctor iso}
    There is a natural isomorphism of the bifunctors
    \[
        -\otimes_{K S} (A \otimes_{A^e} - ): \rMod K S \times  (A\rtimes_\0 S)^e \Mod \to K\Mod
    \]
    and
    \[
        (- \otimes_{K E(S)} (A\rtimes_\0 S)) \otimes_{(A\rtimes_\0 S)^e}-: \rMod KS \times (A\rtimes_\0 S)^e \Mod \to K\Mod.
    \]  
\end{prop}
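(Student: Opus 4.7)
The plan is to construct mutually inverse natural transformations between the two bifunctors. Write $B = A \rtimes_\0 S$ for brevity. The forward map
\[
	\Phi_{X,M} : X \otimes_{KS} (A \otimes_{A^e} M) \to (X \otimes_{KE(S)} B) \otimes_{B^e} M
\]
will be defined by $x \otimes_{KS} (a \otimes_{A^e} m) \mapsto (x \otimes_{KE(S)} \overline{a\dl_1}) \otimes_{B^e} m$, treating $A$ as a subalgebra of $B$ via Remark \ref{rem:Adelta_1Embedding}. The backward map
\[
	\Psi_{X,M} : (X \otimes_{KE(S)} B) \otimes_{B^e} M \to X \otimes_{KS} (A \otimes_{A^e} M)
\]
will send $(x \otimes_{KE(S)} \overline{c\dl_t}) \otimes_{B^e} m$ to $x \otimes_{KS} (1_A \otimes_{A^e} \overline{c\dl_t} \cdot m)$.

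I would build each map from a $K$-trilinear map on $X \times A \times M$ (respectively, $X \times B \times M$) and then check the balance conditions required to descend to the relevant tensor products. For $\Phi$, the $A^e$-balance in $(A, M)$ is immediate from Lemmas \ref{lem:dentro fora} and \ref{prop:bimodule structure X tensor cross}. The $KS$-balance is the interesting step: I would use \eqref{dentro} and \eqref{fora} together with the multiplication in $B$ to reduce $(x \otimes_{KE(S)} \overline{\0_s(1_{s^{-1}}a)\dl_1}) \otimes_{B^e} (s \cdot m)$ to $(x \cdot s \otimes_{KE(S)} \overline{1_{s^{-1}}a\dl_1}) \otimes_{B^e} m$, after which the residual discrepancy $(x \cdot s \otimes_{KE(S)} \overline{(1_A - 1_{s^{-1}})a\dl_1}) \otimes_{B^e} m$ vanishes because the identity $x \cdot s = (x \cdot s) \cdot (s^{-1}s)$ in $X$ (coming from $s \cdot s^{-1}s = s$ in $S$) lets the idempotent $s^{-1}s$ be absorbed across the $KE(S)$-tensor, where by \eqref{left KE(S)-action on cross} it acts as multiplication by $1_{s^{-1}}$ and annihilates the factor $1_A - 1_{s^{-1}}$.

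For $\Psi$, the $KE(S)$-balance in $(X, B)$ follows from \cref{lemma:EmmanuelLemma(iv)} combined with \eqref{left KE(S)-action on cross}. The $B^e$-balance splits into two halves: the one coming from the right $B$-action on $X \otimes_{KE(S)} B$ is immediate from associativity of the $B$-action on $M$, while the one coming from the left $B$-action is the hardest step, as it must match the twist $x \mapsto x \cdot u^{-1}$ on $X$ (introduced by \eqref{left crossprod-action}) with the movement of $\overline{d\dl_u}$ across the central tensor. For this I would use the multiplication rule in $B$, the identity $1_{u^{-1}ut} = 1_{u^{-1}}1_t$ (which follows from commutativity of idempotents in an inverse semigroup together with \eqref{1_(ss-inv)=1_s} and \eqref{eq:thetaOnProductOfIdempotents}), and the $KS$-action formula \eqref{left KS-module A otimes M} to bring both sides to a common form.

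Once $\Phi$ and $\Psi$ are shown to be well-defined, the identities $\Phi \circ \Psi = \id$ and $\Psi \circ \Phi = \id$ follow by direct computation on generators, after using the simplification $(x \otimes_{KE(S)} \overline{c\dl_t}) \otimes_{B^e} m = (x \otimes_{KE(S)} \overline{1_A\dl_1}) \otimes_{B^e} (\overline{c\dl_t} \cdot m)$ obtained from \eqref{dentro} to reduce any element of the right-hand side to the canonical form. Naturality in both variables is built into the definitions. I expect the main technical obstacle to be the left half of the $B^e$-balance for $\Psi$, as this is the only step that must simultaneously control the twisted left action on $X \otimes_{KE(S)} B$, the product in $B$, and the $KS$-action on $A \otimes_{A^e} M$ from Proposition \ref{prop:left KS-module A otimes M}.
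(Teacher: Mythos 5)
Your proposal is correct and follows essentially the same route as the paper: the same pair of explicit mutually inverse natural transformations (your $\Phi$ agrees with the paper's $\xi$ after the \eqref{dentro}-normalization you mention, and your $\Psi$ is exactly the paper's $\eta$), with the same balance checks, including the absorption of the idempotent $s^{-1}s$ across the $KE(S)$-tensor to kill the $1_A-1_{s^{-1}}$ discrepancy. The only difference is cosmetic bookkeeping in where the coefficient $a$ sits before descending to the tensor product.
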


\begin{proof}
    Take  a right $K S$-module $X$ and an $(A\rtimes_\0 S)^e$-module $M.$ Fixing  $x \in X ,$ consider the mapping
    \begin{align*}
         A \times M & \to (X \otimes_{K E(S)} (A\rtimes_\0 S))  \otimes_{(A\rtimes_\0 S)^e}M,\\
        (a,m) & \mapsto (x \otimes_{K E(S)} \overline{1_A \delta_1}) \otimes_{(A\rtimes_\0 S)^e} a \cdot m,
    \end{align*}
    which, using Lemma~\ref{lem:dentro fora}, can be directly  verified to be $A^e$-balanced (see also the proof of \cite[Proposition 5.10]{DJ2}). Thus we have the following  well-defined function
    \begin{align*}
        \xi _{x,M}: A \otimes_{A^e} M & \to (X \otimes_{K E(S)} (A\rtimes_\0 S))  \otimes_{(A\rtimes_\0 S)^e}M, \\ 
        a \otimes_{A^e} m &\mapsto (x \otimes_{K E(S)} 
        \overline{1_A \delta_1}) \otimes_{(A\rtimes_\0 S)^e} a \cdot m.
    \end{align*} 
    Varying $x\in X ,$ consider the mapping
    \begin{align*}
        \tilde{\xi}_{(X,M)} : X \times (A\otimes_{A^e}M) & \to (X \otimes_{K E(S)}( A\rtimes_\0 S))  \otimes_{(A\rtimes_\0 S)^e}M, \\ 
        (x,a\otimes_{A^e} y) &\mapsto \xi _{x,M}(a\otimes_{A^e}y)=(x \otimes_{K E(S)} \overline{1_A \delta_1}) \otimes_{(A\rtimes_\0 S)^e} a \cdot y.
    \end{align*}
    In order  to show that  $\tilde{\xi }_{(X,M)}$ is $K S$-balanced, notice first that by \cref{1_(ss-inv)=1_s} we have $1_{s\m}=1_{ s\m s}$, and $s\m  s \leq 1$ implies
    $\overline{1_{s\m } \delta _{s\m s}} = \overline{1_{s\m } \delta _1} .$ Then for $s\in S$ and $x\in X$
     we see that
    \begin{align}
    	x \cdot s \ot_{K E(S)} \ol{1_A \dl_1}
    	&= x \cdot s (s\m s)  \ot_{K E(S)} \ol{1_A \dl_1} 
    	= x \cdot s \ot_{K E(S)} (s^{-1}s) \cdot (\ol{1_A \dl_1})\notag\\ 
    	&\overset{\eqref{left KE(S)-action on cross}}{=} x \cdot s \ot_{K E(S)} \ol{1_{s^{-1}}\dl_1} 
    	= x \cdot s \ot_{K E(S)} \ol{1_{s^{-1}}\dl_{s\m s}}\notag\\ 
    	&= x \cdot s \ot_{K E(S)} (\ol{1_{s^{-1}} \dl_{s^{-1}}})(\ol{1_s \dl_s})\notag \\ 
    	&\overset{\eqref{left crossprod-action}, \eqref{right crossprod-action}}{=}  (\ol{1_{s^{-1}} \dl_{s^{-1}}}) \cdot \big(x \ot_{K E(S)}\ol{1_A \dl_1} \big) \cdot (\ol{1_s \dl_s }). \label{xs-ot-1_A.dl_1}
  \end{align}
  Furthermore, for all $a \in A$ and $y \in M$ we have
  \begin{align}
    	(\ol{1_s \dl_s}) \cdot (a \cdot y) \cdot (\ol{1_{s\m} \dl_{s^{-1}}})&\overset{\eqref{A-bimod M}}{=}(\ol{1_s \dl_s})(\ol{a \dl_1}) \cdot y \cdot (\ol{1_{s\m} \dl_{s^{-1}}}) \notag\\ 
    	&= (\ol{\theta_s(1_{s^{-1}}a)\dl_s}) \cdot y \cdot (\ol{1_{s\m} \dl_{s^{-1}}}) \notag\\
    	&= (\ol{\theta_s(1_{s^{-1}}a) \dl_1}) (\ol{1_{s} \dl_s}) \cdot y \cdot (\ol{1_{s\m} \dl_{s^{-1}}}) \notag\\
    	& \overset{\eqref{left KS-mod M}}{=}(\ol{\theta_s(1_{s^{-1}}a) \dl_1}) ( s \cdot y) 
    	\overset{\eqref{left KS-mod A}}{=}((\ol{s \cdot a)\dl_1}) \cdot (s \cdot y).\label{1_s.dl_s(ay)1_(s-inv).dl_(s-inv)}
    \end{align}
It follows that
    \begin{align*}
    	\tilde{\xi}_{(X,M)}(x \cdot s,a\ot_{A^e}y) 
    	&= (x \cdot s \ot_{K E(S)} \ol{1_A \dl_1}) \ot_{(A\rtimes_\0 S)^e} a \cdot y \\
    	&\overset{\cref{xs-ot-1_A.dl_1}}{=} (\ol{1_{s^{-1}} \dl_{s^{-1}}}) \cdot \big(x \ot_{K E(S)}\ol{1_A \dl_1} \big) \cdot (\ol{1_s \dl_s })\ot_{(A\rtimes_\0 S)^e} a \cdot y \\
    	&\overset{\eqref{dentro},\eqref{fora}}{=}  \big(x \ot_{K E(S)}\ol{1_A \dl_1} \big) \ot_{(A\rtimes_\0 S)^e} (\ol{1_s \dl_s}) \cdot (a \cdot y) \cdot (\ol{1_{s\m} \dl_{s^{-1}}}) \\ 
    	&\overset{\cref{1_s.dl_s(ay)1_(s-inv).dl_(s-inv)}}{=}\big(x \ot_{K E(S)}\ol{1_A \dl_1} \big) \ot_{(A\rtimes_\0 S)^e} ((\ol{s \cdot a)\dl_1}) \cdot (s \cdot y)  \\ 
    	&= \tilde{\xi}_{(X,M)}(x , (s \cdot a)\ot_{A^e}(s \cdot y)) 
    	\overset{\eqref{left KS-module A otimes M}}{=} \tilde{\xi}_{(X,M)}(x ,s \cdot (a\ot_{A^e}y)),
    \end{align*}    
    as desired. Consequently, we obtain the following   well-defined
    mapping
    \begin{align*}
        \xi _{(X,M)} : X \otimes_{K S} (A\otimes_{A^e}M) & \to (X \otimes_{K E(S)} (A\rtimes_\0 S))  \otimes_{(A\rtimes_\0 S)^e}M, \\ 
        x \otimes_{K S} (a\otimes_{A^e} y) &\mapsto (x \otimes_{K E(S)} \overline{1_A \delta_1}) \otimes_{(A\rtimes_\0 S)^e} a \cdot y.
    \end{align*}

    We are going to produce an inverse  of $   \xi _{(X,M)}.$  Fix  $m \in M$ and define the mapping
    \begin{align*}
        \tilde{\eta}_{X,m}: X \times  (A\rtimes_\0 S)  & \to  X \otimes_{K S} (A\otimes_{A^e}M), \\ 
        (x, \overline{a \dl_t})&\mapsto x \otimes_{K S}(1_A \otimes_{A^e} (\overline{a \dl_t}) \cdot m).
    \end{align*}
    Then   $\tilde{\eta}_{X,m}$ is $K E(S)$-balanced. Indeed,
    \begin{align*}
        \tilde{\eta}_{X,m}(x \cdot (s s\m ) , a \dl_t)    &=  x \cdot (s s\m )  \otimes_{K S}(1_A \otimes_{A^e} (\ol{a \dl_t}) \cdot m) \\
        &=  x \otimes_{K S} (s s\m)   \cdot (1_A \otimes_{A^e} (\ol{a \dl_t}) \cdot m) \\
        (\text{by Lemma~\ref{lemma:EmmanuelLemma(iv)}} )&=  x \otimes_{K S} \big( 1_A \otimes_{A^e} s s\m \cdot( (\ol{a \dl_t}) \cdot m) \big)  \\
        &\overset{\eqref{EmmanuelLemma(ii)}}{=}  x \otimes_{K S} \big( 1_A \otimes_{A^e} 1_s ( (\ol{a \dl_t}) \cdot m) 1_s \big)  \\ 
        &= x \ot_{K S} \big(1_s 1_A 1_s \ot_{A^e}  (\ol{a \dl_t}) \cdot m \big) \\
        &= x \ot_{K S} \big(1_A 1_s \ot_{A^e}  (\ol{a \dl_t}) \cdot m \big) \\
        &\overset{\cref{dentro}}{=} x \ot_{K S} \big(1_A \ot_{A^e}  1_s((\ol{a \dl_t}) \cdot m) \big) \\
        &\overset{\cref{A-bimod M}}{=} x \ot_{K S} \big(1_A \ot_{A^e}  \ol{1_s\dl_1}((\ol{a \dl_t}) \cdot m) \big) \\
        &= x \otimes_{K S} \big(  1_A \otimes_{A^e}  (\ol{1_s a \dl_t}) \cdot m \big) \\
        &=\tilde{\eta}_{X,m}(x, \ol{1_s a \dl_t}) \overset{\eqref{left KE(S)-action on cross}}{=}\tilde{\eta}_{X,m}(x, (s s\m) \cdot (\ol{a \dl_t})).
    \end{align*}
    Thus we obtain the following   well-defined mapping
    \begin{align*}
        \eta _{X,m}: X \otimes_{K E(S)} (A\rtimes_\0 S) & \to  X \otimes_{K S} (A\otimes_{A^e}M),\\ 
        x \otimes_{K E(S)} \overline{a \dl_t} &\mapsto x \otimes_{K S}(1_A \otimes_{A^e} (\overline{a \dl_t})\cdot m).
    \end{align*}
    Then we may  consider the  mapping
    \begin{align*}
        \tilde{\eta}_{(X,M)}: (X \otimes_{K E(S)} (A\rtimes_\0 S))  \times M  & \to  X \otimes_{K S} (A\otimes_{A^e}M),\\ 
        (x \otimes_{K E(S)} \overline{a \dl_t}, m) &\mapsto \eta _{X,m}(x \otimes_{K E(S)} \overline{a \dl_t})=x \otimes_{K S}(1_A \otimes_{A^e} (\overline{a \dl_t}) \cdot m).
    \end{align*}
    Let us see that $ \tilde{\eta}_{(X,M)}$ is 
    $(A\rtimes_\0 S)^e$-balanced. On the one hand,
    \begin{align*}
        \tilde{\eta}_{(X,M)}((x \ot_{K E(S)} \ol{a \dl_t})\cdot (\ol{b \dl_s}), m) 
        &\overset{\cref{right crossprod-action}}{=}  \tilde{\eta}_{(X,M)}(x \ot_{K E(S)} (\ol{a \dl_t})(\ol{b \dl_s}), m) \\
        &= x \ot_{K S}(1_A \ot_{A^e} \left((\ol{a \dl_t})(\ol{b \dl_s})\right) \cdot m) \\ 
        &= x \ot_{K S}(1_A \ot_{A^e} (\ol{a \dl_t})\cdot ((\ol{b \dl_s}) \cdot m)) \\
        &=\tilde{\eta}_{(X,M)}(x \ot_{K E(S)} \ol{a \dl_t},  (\ol{b \dl_s })\cdot m).
    \end{align*}
    On the other hand,
    \begin{align*}
        \tilde{\eta}_{(X,M)}((\ol{b \dl_s}) \cdot (x \ot_{K E(S)} \ol{a \dl_t}), m)  & \overset{\cref{left crossprod-action}}{=} \tilde{\eta}_{(X,M)}(x \cdot s\m \ot_{K E(S)} (\ol{b \dl_s})(\ol{a \dl_t}), m) \\
        &= x \cdot s^{-1} \ot_{K S} \bigl( 1_A \ot_{A^e} (\ol{b \dl_s})(\ol{a \dl_t}) \cdot m \bigr) \\ 
        &= x \cdot s^{-1} \ot_{K S} \bigl( 1_A \ot_{A^e}(\ol{b\dl_1})(\ol{1_s \dl_s})(\ol{a \dl_t}) \cdot m \bigr)\\
        &= x \cdot s^{-1} \ot_{K S} \bigl( 1_A \ot_{A^e}(\ol{b\dl_1})\cdot\left((\ol{1_s \dl_s})(\ol{a \dl_t}) \cdot m\right) \bigr)\\
        &\overset{\cref{A-bimod M}}{=}x \cdot s^{-1} \ot_{K S} \bigl( 1_A \ot_{A^e}b\cdot\left((\ol{1_s \dl_s})(\ol{a \dl_t}) \cdot m\right) \bigr)\\
        &\overset{\eqref{dentro}}{=} x \cdot s^{-1} \ot_{K S} \bigl( b \ot_{A^e} (\ol{1_s \dl_s})(\ol{a \dl_t}) \cdot m \bigr) \\ 
        &\overset{\eqref{fora}}{=} x \cdot s^{-1} \ot_{K S} \bigl( 1_A \ot_{A^e} (\ol{1_s \dl_s})(\ol{a \dl_t}) \cdot m \cdot b\bigr) \\ 
        &\overset{\cref{A-bimod M}}{=}x \cdot s^{-1} \ot_{K S} \bigl( 1_A \ot_{A^e} (\ol{1_s \dl_s})(\ol{a \dl_t}) \cdot m \cdot (\ol{b \dl_1})\bigr) \\ 
        &= x  \ot_{K S} s^{-1} \cdot \bigl( 1_A \ot_{A^e} (\ol{1_s \dl_s})(\ol{a \dl_t}) \cdot m \cdot (\ol{b \dl_1})\bigr) \\ 
        &\overset{\eqref{left KS-module A otimes M}}{=} x  \ot_{K S} \bigl( s^{-1} \cdot  1_A \ot_{A^e} s^{-1} \cdot ((\ol{1_s \dl_s})(\ol{a \dl_t}) \cdot m \cdot (\ol{b \dl_1}))\bigr) \\ 
        &\overset{\eqref{left KS-mod M}}{=} x  \ot_{K S} \bigl( s^{-1} \cdot  1_A \ot_{A^e} (\ol{1_{s^{-1}}\dl_{s^{-1}}})(\ol{1_s \dl_s})(\ol{a \dl_t}) \cdot m \cdot (\ol{b \dl_1})(\ol{1_s \dl_s})\bigr)\\
        &= x  \ot_{K S} \bigl( s^{-1} \cdot  1_A \ot_{A^e} (\ol{1_{s^{-1}}\dl_{s^{-1}s}})(\ol{a \dl_t}) \cdot m \cdot (\ol{b \dl_s})\bigr).
\end{align*} 
Since $\overline{1_{s^{-1}}\delta_{s^{-1}s}}= \overline{1_{s^{-1}}\delta_{1}}$ and $s\m\cdot 1_A=1_{s\m}$ by \cref{left KS-mod A}, in view of \cref{A-bimod M} the last equals
 \begin{align*}
        x  \ot_{K S} \Bigl( 1_{s^{-1}} \ot_{A^e} 1_{s^{-1}}\left((\ol{a \dl_t}) \cdot m \cdot (\ol{b \dl_s})\right)\Bigr)
        &\overset{\eqref{dentro}}{=} x \ot_{K S} \Bigl(1_{s^{-1}} \ot_{A^e}  (\ol{a\dl_t}) \cdot m \cdot (\ol{b\dl_s})\Bigr) \\ 
        &\overset{\eqref{fora}}{=} x  \ot_{K S} \Bigl( 1_A \ot_{A^e} \big( (\ol{a\dl_t}) \cdot m \cdot (\ol{b\dl_s})\big)1_{s\m}\Bigr) \\  
        &\overset{\cref{A-bimod M}}{=} x  \ot_{K S} \Bigl( 1_A \ot_{A^e}  (\ol{a \dl_t}) \cdot m \cdot (\ol{b \dl_s})(\ol{1_{s^{-1}} \dl_1})\Bigr) \\  
        &= x  \ot_{K S} \Bigl( 1_A \ot_{A^e} (\ol{a \dl_t}) \cdot m \cdot (\ol{b \dl_s})\Bigr) \\
        &= \tilde{\eta}_{(X,M)}(x \ot_{K E(S)} \ol{a \dl_t}, m \cdot (\ol{b \dl_s }) ).
    \end{align*}
    Consequently, we obtain a well-defined mapping  
    \begin{align*}
        \eta_{(X,M)} :  (X \otimes_{K E(S)} (A\rtimes_\0 S))  \otimes_{(A\rtimes_\0 S)^e} M  & \to  X \otimes_{K S} (A\otimes_{A^e}M), \\ 
        (x \ot_{K E(S)} \ol{a \dl_t}) \ot_{(A\rtimes_\0 S)^e}  m &\mapsto x \ot_{K S}(1_A \ot_{A^e} (\ol{a \dl_t}) \cdot m).
    \end{align*}
     Composing $\xi_X$ and $\eta_X$ we see that
    \begin{align*}
        \eta_{(X,M)}\big( \xi _{(X,M)}(x \otimes_{K S}(a\otimes_{A^e}m))\big) &=  \eta _{(X,M)}\big( (x \otimes_{K E(S)} 1_A \delta_1) \otimes_{(A\rtimes_\0 S)^e} a \cdot m\big) \\ 
        &=x \otimes_{K S}(1_A \otimes_{A^e} a \cdot m) \\
        &\overset{\eqref{dentro}}{=}x \otimes_{K S}(a \otimes_{A^e} m)
    \end{align*}
    and
    \begin{align*}
        \xi _{(X,M)}\big( \eta _{(X,M)}((x \otimes_{K E(S)} \overline{a_t \delta_t}) 
        \otimes_{(A\rtimes_\0 S)^e} m) \big)
        &= \xi _{(X,M)} \big(x \otimes_{K S}(1_A \otimes_{A^e} (\overline{a_t \delta_t}) \cdot m) \big) \\
        &=  (x \otimes_{K E(S)} \overline{1_A \delta_1})
 \otimes_{(A\rtimes_\0 S)^e} (\overline{a_t \delta_t} ) \cdot m \\
        &\overset{\eqref{dentro}}{=}(x \otimes_{K E(S)} \overline{a_t \delta_t})
 \otimes_{(A\rtimes_\0 S)^e} m,
    \end{align*} so that  $\xi _X$ and $\eta _X$ are mutually inverse.
    
    To see that  $\xi $  is natural, let  $f: X \to X'$ be a homomorphism of right $K S$-modules and $\phi : M \to M'$ be a homomorphism of $A\rtimes_\0 S$-bimodules. Write   $\nu_{(f,\phi)}=f \otimes_{K S}(1_A \otimes_{A^e} \phi)$ and $\nu ^{(f,\phi )}=(f \otimes_{K E(S)}1_A\delta_1)
    \otimes_{(A\rtimes_\0 S)^e} \phi$. Consider the  diagram 
    \[\begin{tikzcd}
        {X \otimes_{K S} (A\otimes_{A^e}M)} & {X' \otimes_{K S} (A\otimes_{A^e}M')} \\
        {(X \otimes_{K E(S)} (A\rtimes_\0 S))  \otimes_{(A\rtimes_\0 S)^e}M} & {(X' \otimes_{K E(S)} (A\rtimes_\0 S))  \otimes_{(A\rtimes_\0 S)^e}M'.}
        \arrow["{\nu_{(f,\phi)}}", from=1-1, to=1-2]
        \arrow["{\nu ^{(f,\phi )}}", from=2-1, to=2-2]
        \arrow["{\xi_{(X,M)}}"', from=1-1, to=2-1]
        \arrow["{\xi_{(X',M')}}"', from=1-2, to=2-2]
    \end{tikzcd}\] 
    It is easily seen to be commutative:
    \begin{align*}
        \xi_{(X',M')} \left( \nu_{(f,\phi)}(x \otimes_{K S}(a \otimes_{A^e} m)) \right)
        &=\xi_{(X,M)} \left(f(x) \otimes_{K S}(a \otimes_{A^e} \phi (m))\right) \\
        &= (f(x) \otimes_{K E(S)} \overline{1_A \delta_1}) \otimes_{(A\rtimes_\0 S)^e} a \cdot \phi (m) \\
        &= (f(x) \otimes_{K E(S)} \overline{1_A \delta_1}) \otimes_{(A\rtimes_\0 S)^e} \phi (a \cdot m) \\ 
        &= \nu ^{(f,\phi )}\left( (x \otimes_{K E(S)} \overline{1_A \delta_1}) \otimes_{(A\rtimes_\0 S)^e} a \cdot m \right) \\
        &= \nu ^{(f,\phi )}\left(\xi_{(X,M)}(x \otimes_{K S}(a \otimes_{A^e} m))  \right),
    \end{align*}
implying that $\xi $ is a natural isomorphism.
\end{proof}

We proceed with the next.

\begin{lem} \label{l_AsemidirSisoBsemidirS}
    The mapping 
    \begin{align*}
        \varphi:A\rtimes_\0 S        &\to K E(S) \otimes_{K E(S)} (A\rtimes_\0 S) , \\ 
            \overline{a \delta_t}    &\mapsto 1_{K E(S)} \otimes_{K E(S)} \overline{a \delta_t},
    \end{align*}
    is an isomorphism of $A\rtimes_\0 S$-bimodules.
\end{lem}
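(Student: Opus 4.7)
The strategy is to recognize $\varphi$ as the canonical unit isomorphism $M \to KE(S)\otimes_{KE(S)}M$ for the left $KE(S)$-module $M=A\rtimes_\0 S$ (with the action given by \eqref{left KE(S)-action on cross}), and then verify separately that it is compatible with both the left and right actions of $A\rtimes_\0 S$ coming from Lemma~\ref{prop:bimodule structure X tensor cross}.

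To produce a $K$-linear inverse, define
\[
\psi: KE(S)\otimes_{KE(S)}(A\rtimes_\0 S)\to A\rtimes_\0 S,\qquad \omega\otimes_{KE(S)}\overline{c\delta_u}\mapsto \omega\cdot \overline{c\delta_u},
\]
which is well-defined by the universal property of the tensor product (since $KE(S)$ is a commutative unital $K$-algebra with unit $1_S$, $S$ being a monoid). The equality $\psi\circ\varphi=\id$ is then just the computation $1_S\cdot\overline{a\delta_t}=\overline{1_A a\delta_t}=\overline{a\delta_t}$ via \eqref{left KE(S)-action on cross} and $1_{1_S}=1_A$; while $\varphi\circ\psi=\id$ follows by pulling $\omega$ across the tensor, $1_{KE(S)}\otimes_{KE(S)}\omega\cdot\overline{c\delta_u}=\omega\otimes_{KE(S)}\overline{c\delta_u}$.

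Compatibility with the right $A\rtimes_\0 S$-action is immediate from \eqref{right crossprod-action}, since both $\varphi(\overline{b\delta_t}\cdot\overline{a\delta_s})$ and $\varphi(\overline{b\delta_t})\cdot\overline{a\delta_s}$ equal $1_{KE(S)}\otimes_{KE(S)}(\overline{b\delta_t})(\overline{a\delta_s})$. The only step requiring care is the left action. Expanding via \eqref{left crossprod-action} and using \eqref{right KS-mod KE(S)},
\[
\overline{a\delta_s}\cdot\varphi(\overline{b\delta_t})
 = 1_{KE(S)}\cdot s^{-1}\otimes_{KE(S)}(\overline{a\delta_s})(\overline{b\delta_t})
 = (ss^{-1})\otimes_{KE(S)}(\overline{a\delta_s})(\overline{b\delta_t}),
\]
and balancing over $KE(S)$ rewrites this as $1_{KE(S)}\otimes_{KE(S)}(ss^{-1})\cdot\bigl[(\overline{a\delta_s})(\overline{b\delta_t})\bigr]$. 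Since $(\overline{a\delta_s})(\overline{b\delta_t})=\overline{a\theta_s(1_{s^{-1}}b)\delta_{st}}$ has its $A$-coefficient in $1_sA$ (because $a\in 1_sA$), formula \eqref{left KE(S)-action on cross} shows that acting by $ss^{-1}$ leaves this element unchanged, so the right-hand side collapses to $\varphi(\overline{a\delta_s}\cdot\overline{b\delta_t})$.

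The main (mild) obstacle is the interplay between the nontrivial right $KS$-action on $KE(S)$ given by \eqref{right KS-mod KE(S)} and the left $KE(S)$-action on $A\rtimes_\0 S$: one must identify $1_{KE(S)}\cdot s^{-1}$ with $ss^{-1}$ and then absorb this idempotent across the tensor by recognizing it as the unit of the left action on an element of $1_sA\delta_{st}$. Once this cancellation is in place the bimodule check is automatic.
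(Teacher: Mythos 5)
Your proof is correct and follows essentially the same route as the paper's: the same inverse map $\omega\otimes_{KE(S)}\overline{c\delta_u}\mapsto\omega\cdot\overline{c\delta_u}$, and the same key manoeuvre for the left action, namely identifying $1_{KE(S)}\cdot s^{-1}$ with $ss^{-1}$ via \eqref{right KS-mod KE(S)}, sliding it across the balanced tensor, and absorbing it through \eqref{left KE(S)-action on cross} because the product $(\overline{a\delta_s})(\overline{b\delta_t})$ has its coefficient in $1_sA$. The paper merely packages the left and right checks into a single computation on a triple product $(\overline{b\delta_v})(\overline{a\delta_s})(\overline{c\delta_t})$, which is a presentational difference only.
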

\begin{proof}  Evidently, $\varphi $ is a homomorphism of $K$-modules. In order to see that  it is a  $A\rtimes_\0 S $-bimodule mapping, take  arbitrary  $b\in 1_vA$, $a\in 1_sA$ and $c\in 1_tA$. Recall that we consider $K E(S)$ as a right $K S$-module by means of \eqref{right KS-mod KE(S)}. Then
    \begin{align*}
        \varphi((\ol{b \dl_v})(\ol{a \dl_s})(\ol{c \dl_t}))  
        &= 1_{K E(S)} \ot_{K E(S)} (\ol{b \dl_v})(\ol{a \dl_s})(\ol{c \dl_t}) \\  
        & = 1_{K E(S)} \ot_{K E(S)} ( \ol{1_v b \dl_v})(\ol{a \dl_s})  (\ol{c \dl_t})  \\ 
        &\overset{\eqref{left KE(S)-action on cross}}{=} 1_{K E(S)} \ot_{K E(S)} (v v\m)  \cdot ((\ol{b \dl_v})(\ol{a \dl_s}) (\ol{c \dl_t}) ) \\ 
        &= v v\m \ot_{K E(S)} (\ol{b \dl_v})(\ol{a \dl_s})  (\ol{c \dl_t}) \\
        &\overset{\eqref{right crossprod-action}}{=} \big( v v\m\ot_{K E(S)} (\ol{b \dl_v})(\ol{a \dl_s})\big) \cdot \ol{c \dl_t} \\
        &\overset{\eqref{right KS-mod KE(S)}}{=} \big(  1_{K E(S)}\cdot v\m \ot_{K E(S)} (\ol{b \dl_v})(\ol{a \dl_s})\big) \cdot \ol{c \dl_t} \\ 
        &\overset{\eqref{left crossprod-action}}{=} \ol{b \dl_v} \cdot \big(1_{K E(S)} \ot_{K E(S)} \ol{a \dl_s}\big) \cdot \ol{c \dl_t} \\
        &= \ol{b \dl_v} \cdot \varphi (\ol{a \dl_s}) \cdot \ol{c \dl_t}.
    \end{align*}
    Thus, $\varphi$ is a homomorphism of $A\rtimes_\0 S$-bimodules. Since  $A\rtimes_\0 S$ is a left $K E(S)$-module by \eqref{left KE(S)-action on cross}, the mapping 
      \begin{align*}
        {\varphi}': K E(S) \otimes_{K E(S)} (A\rtimes_\0 S) &\to A\rtimes_\0 S, \\ 
        \omega \otimes_{K E(S)} \overline{a\delta_t}   &\mapsto \omega \cdot \overline{a\delta_t},
    \end{align*}
    is well-defined and easily seen to be   an inverse of $\varphi .$ Consequently, $\varphi$ is an isomorphism of $A\rtimes_\0 S $-bimodules.
\end{proof}

Recall that $F_1$, $F_2$ and $F$ are the functors defined in \cref{functor-F_1-hom,functor-F_2-hom,functor-F-hom}, respectively. As a consequence of \cref{prop:bifunctor iso,l_AsemidirSisoBsemidirS}, we obtain the following.
\begin{cor} \label{cor:F2F1isoF}
    The functors $F_2F_1$ and $F$ are naturally isomorphic. 
\end{cor}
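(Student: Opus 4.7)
The plan is to combine the two preceding results in the obvious way, treating $KE(S)$ as the specific right $KS$-module we want to plug into the bifunctor isomorphism. First I would specialize \cref{prop:bifunctor iso} by fixing $X = KE(S)$. Since the stated isomorphism
\[
	\xi_{(X,M)} : X \otimes_{KS}(A \otimes_{A^e} M) \longrightarrow (X \otimes_{KE(S)}(A\rtimes_\0 S)) \otimes_{(A\rtimes_\0 S)^e} M
\]
is natural in both variables, its restriction to $X = KE(S)$ gives a natural (in $M$) isomorphism
\[
	F_2F_1(M) = KE(S) \otimes_{KS}(A \otimes_{A^e} M) \cong (KE(S) \otimes_{KE(S)}(A\rtimes_\0 S)) \otimes_{(A\rtimes_\0 S)^e} M.
\]

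Next I would invoke \cref{l_AsemidirSisoBsemidirS}, which provides an $(A\rtimes_\0 S)$-bimodule isomorphism $\vf : A\rtimes_\0 S \to KE(S) \otimes_{KE(S)}(A\rtimes_\0 S)$. Applying the functor $-\otimes_{(A\rtimes_\0 S)^e} M$ to $\vf$ yields a natural (in $M$) isomorphism of $K$-modules
\[
	F(M) = (A\rtimes_\0 S) \otimes_{(A\rtimes_\0 S)^e} M \cong (KE(S) \otimes_{KE(S)}(A\rtimes_\0 S)) \otimes_{(A\rtimes_\0 S)^e} M.
\]
Composing the inverse of this with the previous isomorphism produces, for each $M$, an isomorphism $F_2F_1(M) \cong F(M)$.

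Finally, I would check naturality in $M$, which is the only thing requiring any argument at all. Naturality of $\xi$ in its second variable (already established in \cref{prop:bifunctor iso}) handles the first leg; naturality of $\vf \otimes_{(A\rtimes_\0 S)^e} \mathrm{id}_M$ in $M$ is automatic because $\vf$ is fixed. Thus the composition is a natural transformation $F_2F_1 \Rightarrow F$ whose components are isomorphisms. I do not anticipate any real obstacle: the whole content of the corollary is packaged inside the two preceding results, and the proof is essentially a one-line diagram chase.
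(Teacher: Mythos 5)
Your proposal is correct and follows exactly the same route as the paper: specialize \cref{prop:bifunctor iso} at $X=KE(S)$ to identify $F_2F_1$ with $(KE(S)\otimes_{KE(S)}(A\rtimes_\0 S))\otimes_{(A\rtimes_\0 S)^e}-$, then use the bimodule isomorphism of \cref{l_AsemidirSisoBsemidirS} to identify that functor with $F$. The naturality check you describe is the same (essentially automatic) one the paper relies on.
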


\begin{proof} Taking  $X=K E(S)$ as the first argument of the bifunctors from 
   Proposition~\ref{prop:bifunctor iso}  we conclude that the functors
    \[
        K E(S) \otimes_{K S} (A \otimes_{A^e}-): (A\rtimes_\0 S)^e\textbf{-Mod}  \to K\textbf{-Mod}
    \]
    and
    \[
        (K E(S) \otimes_{K E(S)} (A\rtimes_\0 S)) \otimes_{(A\rtimes_\0 S)^e}-:(A\rtimes_\0 S)^e\textbf{-Mod}  \to K\textbf{-Mod}
    \] 
    are naturally isomorphic. On the one hand, $F_2F_1 = K E(S)\otimes_{K S} (A \otimes_{A^e}-),$ and on the other hand Lemma~\ref{l_AsemidirSisoBsemidirS} gives us an $A\rtimes_\0 S$-bimodule isomorphism  
    $$
    K E(S) \otimes_{K E(S)} (A\rtimes_\0 S) \cong A\rtimes_\0 S.
    $$ 
    The latter   implies the natural isomorphism  
    $$
    F \cong (K E(S) \otimes_{K E(S)} (A\rtimes_\0 S)) \otimes_{(A\rtimes_\0 S)^e}- 
    $$ 
    of functors, proving  our statement.
\end{proof}

  We also need the following auxiliary fact on flat modules, which is known, but we did not find a  bibliographic reference for it and we include a proof for reader's convenience. 

\begin{lem}\label{lem:FlatTransitivity} (Transitivity of the flatness)
 Let $\phi:R\to R'$ be a  homomorphism of unital rings such that $_RR'$ (resp. $R'_R$) is flat, and let $_{R'}M$ (resp. $M_{R'}$) be a  flat left (resp. right) module. Then $_RM$ (resp. $M_R$) is also flat.
\end{lem}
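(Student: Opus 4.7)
The plan is to deduce the transitivity from the associativity of the tensor product over different rings. I will handle the left-module case; the right-module case is entirely symmetric.

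First, I observe that via the ring homomorphism $\phi$, the ring $R'$ becomes an $(R,R')$-bimodule (with the left $R$-action $r\cdot x = \phi(r)x$ and the right $R'$-action by multiplication), and $M$ is a left $R'$-module. Therefore, for any right $R$-module $N$, the iterated tensor product is defined, and the standard associativity isomorphism yields a natural isomorphism
\begin{equation*}
N \otimes_R M \;\cong\; N \otimes_R (R' \otimes_{R'} M) \;\cong\; (N \otimes_R R') \otimes_{R'} M,
\end{equation*}
where I use the canonical identification $M \cong R' \otimes_{R'} M$ of left $R'$-modules (hence also of left $R$-modules).

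Next, I will verify that the functor $-\otimes_R M$ (from right $R$-modules to abelian groups) is exact, which by definition means that ${}_R M$ is flat. Suppose $0\to N' \to N$ is an exact sequence of right $R$-modules. Since ${}_RR'$ is flat, the induced sequence $0\to N'\otimes_R R' \to N\otimes_R R'$ of right $R'$-modules is exact. Since ${}_{R'}M$ is flat, applying $-\otimes_{R'} M$ preserves this exactness, giving
\begin{equation*}
0 \to (N'\otimes_R R')\otimes_{R'} M \to (N\otimes_R R')\otimes_{R'} M
\end{equation*}
exact. Using the natural isomorphism of the previous paragraph, this sequence is naturally identified with $0 \to N'\otimes_R M \to N\otimes_R M$, which therefore is exact. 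Hence ${}_R M$ is flat.

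The right-module case is entirely analogous: if $M_{R'}$ and $R'_R$ are flat, then for a left $R$-module $N$, associativity gives $M \otimes_R N \cong (M \otimes_{R'} R') \otimes_R N \cong M \otimes_{R'} (R' \otimes_R N)$, and the composition of the two exact functors $R' \otimes_R -$ (exact by flatness of $R'_R$) and $M \otimes_{R'} -$ (exact by flatness of $M_{R'}$) is exact. There is no serious obstacle; the only point requiring some care is to make sure that the bimodule structures used in the associativity isomorphism are compatible with the induced module structures coming from $\phi$, which is immediate from the definition.
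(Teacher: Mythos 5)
Your proposal is correct and follows essentially the same route as the paper's proof: tensor the injection first with the flat module $_RR'$ over $R$, then with the flat module $_{R'}M$ over $R'$, and identify the result with $-\otimes_R M$ via associativity of the tensor product and the canonical isomorphism $R'\otimes_{R'}M\cong M$. The only difference is that you spell out the bimodule structures and naturality a bit more explicitly, which is harmless.
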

\begin{proof}
We will prove the statement for left modules. Consider an exact sequence of right $R$-modules 
$0\to K\to N$. Since $_RR'$ is flat then $0\to K \otimes_R R'\to N\otimes _R R'$ is an exact sequence in  the category of right $R'$-modules. The sequence   $0\to (K \otimes_R R')\otimes_{R'} M\to (N\otimes _R R')\otimes_{R'} M$ is also exact, because of the flatness of  $_{R'}M$. Then by the associativity of the tensor product and the canonical isomorphism $R'\otimes_{R'} M\cong M$, we have that $0\to K \otimes_R  M\to N\otimes _R M$ is exact.
\end{proof}


We shall also use the following fact.

\begin{lem}\label{lem:for0-homology} Let    $\0$ be a  unital action of an   inverse monoid $S$ on  an algebra $A$ over a commutative ring $K.$ Let  $M$ be an $A\rtimes_\0 S$-bimodule and denote by 
$[A, M]$  the $K$-submodule of $M$ generated by 
$$\{a x-x a \; \mid \; a\in A, x\in M\}.$$   Then $[A, M]$ is a left $KS$-submodule of $M$ and the
$K$-module mapping
 $$\psi: M/[A, M] \to   A \ot _{A^e} M, $$ given by
 $x+[A, M]   \mapsto  1_A  \ot  x,$ $x\in M,$ is a well-defined isomorphism of left $KS$-modules.
 \end{lem}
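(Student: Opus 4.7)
The plan is to verify in turn: (a) $[A,M]$ is a $KS$-submodule of $M$ (so that $M/[A,M]$ inherits the $KS$-structure); (b) $\psi$ is a well-defined $K$-linear map; (c) $\psi$ admits an explicit $K$-linear inverse; and (d) $\psi$ intertwines the $KS$-actions. The content is essentially the classical identification $A\ot_{A^e}M\cong M/[A,M]$ upgraded to a $KS$-module isomorphism, so the substance lies in the $KS$-compatibility.

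For (a), since $S$ spans $KS$ over $K$, it suffices to check $s\cdot(ax-xa)\in[A,M]$ for $s\in S$, $a\in A$, $x\in M$. Using \eqref{left KS-mod M}, \eqref{A-bimod M} and \eqref{left KS-mod A}, I would push $\ol{a\dl_1}$ past $\ol{1_s\dl_s}$ in $A\rtimes_\0 S$ to obtain the identity $\ol{1_s\dl_s}\cdot\ol{a\dl_1}=\ol{\0_s(1_{s\m}a)\dl_s}=\ol{(s\cdot a)\dl_1}\cdot\ol{1_s\dl_s}$, which gives $s\cdot(ax)=(s\cdot a)(s\cdot x)$; a symmetric computation $\ol{a\dl_1}\cdot\ol{1_{s\m}\dl_{s\m}}=\ol{1_{s\m}\dl_{s\m}}\cdot\ol{(s\cdot a)\dl_1}$ yields $s\cdot(xa)=(s\cdot x)(s\cdot a)$. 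Subtracting produces a commutator in $[A,M]$, and $M/[A,M]$ inherits a $KS$-module structure.

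For (b), I would apply \cref{lem:dentro fora}: taking the right $A$-action $1_A\cdot a=a$ on $A$, equation \eqref{dentro} yields $1_A\ot_{A^e}ax=a\ot_{A^e}x$, and taking the left action $a\cdot 1_A=a$, equation \eqref{fora} yields $1_A\ot_{A^e}xa=a\ot_{A^e}x$, so $1_A\ot_{A^e}(ax-xa)=0$. For (c), I would define $\psi^{-1}(a\ot_{A^e}x)=ax+[A,M]$; its well-definedness amounts to $A^e$-balancedness, namely $(a\cdot(b\ot_K c))\cdot x=cabx\equiv abxc=a\cdot((b\ot_K c)\cdot x)\pmod{[A,M]}$, which holds since $[abx,c]\in[A,M]$. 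The compositions $\psi\psi^{-1}$ and $\psi^{-1}\psi$ are then immediately the identity using the formulas of (b).

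For (d), the $KS$-equivariance $\psi(s\cdot(x+[A,M]))=s\cdot\psi(x+[A,M])$ reduces to $1_A\ot_{A^e}(s\cdot x)=1_s\ot_{A^e}(s\cdot x)$, because $s\cdot 1_A=1_s$ by \eqref{left KS-mod A} and so $s\cdot(1_A\ot x)=1_s\ot(s\cdot x)$ by \eqref{left KS-module A otimes M}. Both sides of this equation equal $(ss\m)\cdot(1_A\ot_{A^e}(s\cdot x))$: by \cref{lemma:EmmanuelLemma(iv)} applied to $e=ss\m$, this can be written either as $((ss\m)\cdot 1_A)\ot(s\cdot x)=1_s\ot(s\cdot x)$ or as $1_A\ot((ss\m)\cdot(s\cdot x))$, and the latter reduces to $1_A\ot(s\cdot x)$ via \cref{lemma:EmmanuelLemma(ii)}, since $(ss\m)\cdot(s\cdot x)=1_s(s\cdot x)1_s=s\cdot x$ (the idempotent $1_s$ acting trivially on $s\cdot x$ from both sides). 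The main obstacle is not conceptual but purely bookkeeping: the several overlapping actions—$A$-bimodule and $KS$-module structures on $M$, and $A^e$-module and $KS$-module structures on $A\ot_{A^e}M$—must be carefully tracked, and each verification reduces to one or two lines after invoking the appropriate one of \cref{lem:dentro fora,lemma:EmmanuelLemma(ii),lemma:EmmanuelLemma(iv)} at the right moment.
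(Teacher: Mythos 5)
Your proof is correct and follows essentially the same route as the paper: the same commutator identities $s\cdot(ax)=(s\cdot a)(s\cdot x)$ and $s\cdot(xa)=(s\cdot x)(s\cdot a)$ for the submodule claim, the same explicit inverse $a\ot x\mapsto ax+[A,M]$, and the same use of \cref{lem:dentro fora} for well-definedness. The only cosmetic difference is in the equivariance step, where you route $1_s\ot(s\cdot x)=1_A\ot(s\cdot x)$ through \cref{lemma:EmmanuelLemma(iv)} and \cref{lemma:EmmanuelLemma(ii)} rather than performing the paper's inline \cref{dentro} computation; the underlying mechanism is identical.
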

 \begin{proof} First note that it is readily seen, using \eqref{dentro} and \eqref{fora}, that the $K$-module homomorphism  $ M \to   A \ot _{A^e} M, $ determined by $ x \mapsto  1_A  \ot  x,$
  $x \in M,$ vanishes on $[A,M].$ Thus $\psi$ is a well-defined homomorphism of $K$-modules. Then keeping in mind \eqref{left right R^e-mod}, it is easily verified that the mapping  
  $$A \ot _{A^e} M \to M/[A, M], \;\; a\ot x \mapsto ax+
  [A, M],$$ is a well-defined inverse of $\psi. $
It follows from 
	 \eqref{left KS-mod A},  \eqref{left KS-mod M} and \eqref{A-bimod M} that

	 \begin{equation*}
		s\cdot (a \cdot x)= (s\cdot a) (s \cdot x) \;\;\; \text {and} \;\;\; 
		s\cdot (x \cdot a)= (s\cdot x) (s \cdot a) 
	 \end{equation*} for all $s \in S, a \in A$ and $x \in M.$ This implies that 
  $[A, M]$ is a left $K S$-submodule
	of $M,$ so that $ M/[A, M]$ is a left $K S$-module. Finally, for any $s\in S$ and $x \in M$ we see using \eqref{left KS-mod A},  \eqref{left KS-mod M} and \eqref{left KS-module A otimes M} that
	\begin{align*}  
& s\cdot \psi (x + [A,M]) = s\cdot (1_A \ot x)=
s\cdot 1_A \ot s\cdot x= 1_s \ot s\cdot x\\
&=1_A 1_s \ot (\overline{1_s \delta _s}) \cdot x \cdot (\overline{1_{s\m} \delta _{s\m}}) =
 1_A  \ot (\overline{1_s \delta _1} \overline{ 1_s \delta _s}) \cdot x \cdot (\overline{1_{s\m} \delta _{s\m}})
\\ &= 1_A  \ot (\overline{ 1_s \delta _s}) \cdot x \cdot (\overline{1_{s\m} \delta _{s\m}}) =  1_A \ot s\cdot x = \psi (s \cdot (x + [A,M])),
	\end{align*} showing that $\psi $ is a left $KS$-module isomorphism.
	\end{proof}
	

\subsection{Homology of $A\rtimes_\0 S $ over a field $K$}
To proceed with an arbitrary inverse monoid $S$  \underline{we  assume during this subsection} that $K$ is a field.

By \cite[Proposition 5.14]{DJ2} any commutative algebra over a field
 ge\-ne\-ra\-ted by idempotents is von Neumann regular. Thus, $K E(S)$ is von Neumann regular. Consequently, any module over $ K E(S)$ is flat (see  \cite[Corollary 1.13)]{Goodearl}). Therefore, we have the next:

\begin{lem}\label{lem:exactfunctor} The functor 
$- \otimes_{K E(S)} (A\rtimes_\0 S) $ is exact.
\end{lem}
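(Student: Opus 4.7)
The plan is that this lemma is essentially immediate from the preparatory remarks in the paragraph preceding it. The statement that $KE(S)$ is commutative, generated by idempotents, and thus von Neumann regular over the field $K$ (via \cite[Proposition 5.14]{DJ2}), together with the fact that every module over a von Neumann regular ring is flat \cite[Corollary 1.13]{Goodearl}, already does almost all the work.

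Specifically, first I would observe that by Lemma~\ref{prop:bimodule structure X tensor cross} the construction $X \otimes_{KE(S)} (A\rtimes_\0 S)$ makes sense because $A\rtimes_\0 S$ is a left $KE(S)$-module via the action \eqref{left KE(S)-action on cross}; hence $- \otimes_{KE(S)} (A\rtimes_\0 S)$ is a well-defined additive functor on right $KE(S)$-modules (or right $KS$-modules, regarded as right $KE(S)$-modules by restriction of scalars through the inclusion $KE(S) \hookrightarrow KS$). Second, I would invoke the observations immediately preceding the lemma: $KE(S)$ is von Neumann regular, so \emph{every} $KE(S)$-module is flat. In particular, $A\rtimes_\0 S$ is flat as a left $KE(S)$-module. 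By the very definition of flatness, the functor $- \otimes_{KE(S)} (A\rtimes_\0 S)$ preserves short exact sequences, which is precisely exactness.

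There is no real obstacle here; the only thing worth a line of justification is that exactness in the source category of right $KS$-modules (which is the category in which the functor will be applied in the sequel) is the same as exactness after restriction to right $KE(S)$-modules, since exactness of a sequence is an underlying abelian group property and the $KS$-action plays no role in it. Thus the proof reduces to two sentences citing \cite[Proposition 5.14]{DJ2} and \cite[Corollary 1.13]{Goodearl}.
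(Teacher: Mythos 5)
Your proof is correct and is essentially identical to the paper's: the paper also derives the lemma directly from the observations that $KE(S)$ is a commutative algebra over a field generated by idempotents, hence von Neumann regular by \cite[Proposition 5.14]{DJ2}, so that every $KE(S)$-module (in particular $A\rtimes_\0 S$) is flat by \cite[Corollary 1.13]{Goodearl}. Your extra remark about exactness being unaffected by restriction of scalars from $KS$ to $KE(S)$ is a harmless clarification the paper leaves implicit.
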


Now we are ready to give the following result.

\begin{thrm} \label{teo:HLH}
   Let $\0$ be a unital action of an   inverse monoid $S$ on an algebra  $A$  over a field $K$ and $M$ an $A\rtimes_\0 S$-bimodule. Then   there exists a first quadrant homology spectral sequence
    \[
        E^2_{p,q} = H_p(S, (L_q F_1)M  ) \Rightarrow H_{p+q}(A\rtimes_\0 S, M). 
    \]
\end{thrm}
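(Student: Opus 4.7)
The plan is to invoke the Grothendieck spectral sequence \cite[Theorem 10.48]{Rotman} applied to the pair of right exact functors $F_1$ and $F_2$. Corollary~\ref{cor:F2F1isoF} already supplies the natural isomorphism $F_2F_1\cong F$ and identifies $F_2$ with the functor whose left-derived functors are $H_\bullet(S,-)$, while $F$ computes Hochschild homology of $A\rtimes_\0 S$. Hence the only hypothesis of the Grothendieck theorem that still needs verification is that $F_1$ carries projective $(A\rtimes_\0 S)^e$-modules to $F_2$-acyclic left $KS$-modules.

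To establish this acyclicity, I would fix a projective $(A\rtimes_\0 S)^e$-module $P$ and a projective resolution $Q_\bullet\to KE(S)\to 0$ in $\rMod KS$; such a resolution exists (for instance the one obtained by the right-module analog of \cref{P_n-is-res-of-KE(S)}, namely~\cref{...->P'_1->P'_0->KE(S)}). Then $(L_n F_2)(F_1(P))=\operatorname{Tor}^{KS}_n(KE(S),A\ot_{A^e}P)$ is the $n$-th homology of the complex $Q_\bullet\ot_{KS}(A\ot_{A^e}P)$. Invoking the natural isomorphism of bifunctors provided by \cref{prop:bifunctor iso}, one has a chain-isomorphism
\begin{equation*}
	Q_\bullet\ot_{KS}(A\ot_{A^e}P)\;\cong\;\bigl(Q_\bullet\ot_{KE(S)}(A\rtimes_\0 S)\bigr)\ot_{(A\rtimes_\0 S)^e}P.
\end{equation*}

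The key step is now to show that the outer complex on the right is exact in positive degrees. Because $K$ is a field, $KE(S)$ is von Neumann regular, so by \cref{lem:exactfunctor} the functor $-\ot_{KE(S)}(A\rtimes_\0 S)$ is exact. Applying it to the augmented resolution $Q_\bullet\to KE(S)\to 0$ and using the $A\rtimes_\0 S$-bimodule isomorphism $KE(S)\ot_{KE(S)}(A\rtimes_\0 S)\cong A\rtimes_\0 S$ of \cref{l_AsemidirSisoBsemidirS}, one concludes that $Q_\bullet\ot_{KE(S)}(A\rtimes_\0 S)\to A\rtimes_\0 S\to 0$ is an exact sequence of $(A\rtimes_\0 S)^e$-modules. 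Since $P$ is projective, hence flat, as an $(A\rtimes_\0 S)^e$-module, tensoring on the right with $P$ over $(A\rtimes_\0 S)^e$ preserves exactness. Thus $(L_n F_2)(F_1(P))=0$ for $n>0$, which is exactly the $F_2$-acyclicity condition.

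The main technical obstacle to watch is the exactness of $-\ot_{KE(S)}(A\rtimes_\0 S)$: this is precisely where the field hypothesis on $K$ is used, through the von Neumann regularity of $KE(S)$. Everything else is bookkeeping with the bifunctor isomorphism of \cref{prop:bifunctor iso}. Having verified the three hypotheses, Grothendieck's theorem yields a first quadrant spectral sequence
\begin{equation*}
	E^2_{p,q}=(L_pF_2)\bigl((L_qF_1)M\bigr)\;\Rightarrow\;L_{p+q}(F_2F_1)(M),
\end{equation*}
and the identifications $(L_pF_2)(N)=\operatorname{Tor}^{KS}_p(KE(S),N)=H_p(S,N)$ together with $L_{p+q}(F_2F_1)(M)\cong L_{p+q}F(M)=H_{p+q}(A\rtimes_\0 S,M)$ give the desired convergence
\begin{equation*}
	E^2_{p,q}=H_p(S,(L_qF_1)M)\;\Rightarrow\;H_{p+q}(A\rtimes_\0 S,M).
\end{equation*}
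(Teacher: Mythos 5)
Your proposal is correct and follows essentially the same route as the paper: both reduce the problem to verifying $F_2$-acyclicity of $F_1(P)$ for projective $P$, and both establish this by combining the bifunctor isomorphism of \cref{prop:bifunctor iso} with the exactness of $-\ot_{KE(S)}(A\rtimes_\0 S)$ (via von Neumann regularity of $KE(S)$ over the field $K$, \cref{lem:exactfunctor}) and the exactness of $-\ot_{(A\rtimes_\0 S)^e}P$. The only cosmetic difference is that you track a specific projective resolution $Q_\bullet$ degree by degree, whereas the paper phrases the same computation as the vanishing of the higher left derived functors of an exact composite functor.
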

\begin{proof} We are going to use \cite[Theorem 10.48]{Rotman} for the functors
 \[
       (A\rtimes_\0 S)^e\textbf{-Mod}  
       \overset{F_1}{\longrightarrow} K S\textbf{-Mod}  \overset{F_2}{\longrightarrow} K\textbf{-Mod}.
    \]
 Note that the functors  $F_1$ and $F_2$ are right exact and that  
 \begin{align*}
 	L_\bullet F_2 (-) = \operatorname{Tor}^{K S}_\bullet (K E(S),-)\text{ and }L_{\bullet}F(-)=H_{\bullet}(A\rtimes_\0 S, -).
 \end{align*}
  Since by Corollary~\ref{cor:F2F1isoF} the functors $F_2 F_1$ and $ F$ are naturally isomorphic,  it  remains to show that for any projective object $P$ in $(A\rtimes_\0 S)^e$\textbf{-Mod} the left $K S$-module $F_1(P)$ is left $F_2$-acyclic. Equivalently,
    \[
        \operatorname{Tor}^{K S}_n(K E(S),F_1(P))=0, \, \forall n>0.
    \]
    So let  $P$ be a projective $(A\rtimes_\0 S)^e$-module. Then $-\otimes_{(A\rtimes_\0 S)^e} P$ is an exact functor. In addition, the functor  $- \otimes_{K E(S)} (A\rtimes_\0 S)$ is also exact thanks to Lemma~\ref{lem:exactfunctor}. Consequently, 
    $(- \otimes_{K E(S)} (A\rtimes_\0 S)) \otimes_{(A\rtimes_\0 S)^e}P$ is an exact functor. Taking $M=P$ as the second argument of the bifunctors from Proposition~\ref{prop:bifunctor iso},  we have that $- \otimes_{K S} (A \otimes_{A^e} P)$ is naturally isomorphic to $(- \otimes_{K E(S)} (A\rtimes_\0 S)) \otimes_{(A\rtimes_\0 S)^e}P.$ Then  
    \begin{align*}
        \operatorname{Tor}^{K S}_n(K E(S),F_1(P))
        &=L_n(- \otimes_{K S} (A \otimes_{A^e} P))(K E(S)) \\ 
        &\cong  L_n((- \otimes_{K E(S)} (A\rtimes_\0 S) \otimes_{(A\rtimes_\0 S)^e}P)(K E(S)) =0,
    \end{align*} for all $n>0,$ as desired. Thus we can apply \cite[Theorem 10.48]{Rotman}, which gives us the following first quadrant homology spectral sequence 
    \[
        E^2_{p,q} = \operatorname{Tor}^{K S}_p(K E(S), (L_q F_1)M  ) \Rightarrow H_{p+q}(A\rtimes_\0 S, M). 
    \] Finally, observe that by Definition~\ref{def:Inv Monoid (co)homol} we have that
    $$ \operatorname{Tor}^{K S}_p(K E(S), (L_q F_1)M  ) =
    H_p(S, (L_q F_1)M  ),
    $$  completing our proof.
    \end{proof}


\begin{cor}\label{cor:flatHHH}
	Let $\0$ be a unital action of an  inverse monoid $S$ on  a $K$-algebra $A$ over a field $K$ and $M$ an $A\rtimes_\0 S$-bimodule. Assume that 
	$(A\rtimes_\0 S)^e $ is flat as a left $A^e$-module. Then   there exists  a first quadrant homology spectral sequence
	\begin{align*}
	   E^2_{p,q} = H_p(S, H_q (A, M)) \Rightarrow H_{p+q}(A\rtimes_\0 S, M). 
   \end{align*}
	   \end{cor}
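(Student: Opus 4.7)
The plan is to deduce this from \cref{teo:HLH} by identifying the left-derived functors $L_q F_1$ evaluated at $M$ with the Hochschild homology groups $H_q(A,M)$, using the flatness hypothesis to reduce a projective resolution over $(A\rtimes_\0 S)^e$ to a flat resolution over $A^e$.

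More precisely, I would take any projective resolution $P_\bullet \to M$ in the category $(A\rtimes_\0 S)^e\Mod$. By definition, $(L_q F_1)(M)$ is the $q$-th homology of the complex $A \otimes_{A^e} P_\bullet$, computed in $KS\Mod$ via the structure supplied by \cref{prop:left KS-module A otimes M}. Independently, Hochschild homology $H_q(A,M) = \operatorname{Tor}^{A^e}_q(A,M)$ may be computed from any \emph{flat} resolution of $M$ as a left $A^e$-module. So the problem reduces to showing that every projective $(A\rtimes_\0 S)^e$-module is flat as a left $A^e$-module.

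This is exactly what the hypothesis gives: since $(A\rtimes_\0 S)^e$ is flat as a left $A^e$-module, every free $(A\rtimes_\0 S)^e$-module is a direct sum of copies of $(A\rtimes_\0 S)^e$ and hence is flat over $A^e$ (direct sums of flat modules are flat). Each projective $(A\rtimes_\0 S)^e$-module is a direct summand of such a free module, and direct summands of flat modules are flat; thus every $P_n$ is a flat left $A^e$-module. Consequently $P_\bullet \to M$ is a flat resolution in $A^e\Mod$, and
\[
(L_q F_1)(M) = H_q(A \otimes_{A^e} P_\bullet) \cong \operatorname{Tor}^{A^e}_q(A,M) = H_q(A,M)
\]
as $K$-modules; moreover this isomorphism transports the natural $KS$-action on $(L_q F_1)(M)$ to one on $H_q(A,M)$, so we may read the equality in $KS\Mod$. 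Substituting into the spectral sequence of \cref{teo:HLH} yields the desired
\[
E^2_{p,q} = H_p(S, H_q(A,M)) \Rightarrow H_{p+q}(A\rtimes_\0 S, M).
\]

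There is no real obstacle here beyond bookkeeping; the only point that deserves attention is verifying transitivity of flatness in the precise form needed (already recorded in \cref{lem:FlatTransitivity}, though only the freeness-to-projectivity step is strictly needed in this argument), and checking that the identification $(L_q F_1)(M) \cong H_q(A,M)$ respects the $KS$-action coming from \cref{prop:left KS-module A otimes M}, so that the left-hand side of the spectral sequence may legitimately be rewritten in terms of classical Hochschild homology.
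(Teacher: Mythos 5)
Your proposal is correct and follows essentially the same route as the paper: both reduce the problem to observing that a projective resolution of $M$ over $(A\rtimes_\0 S)^e$ is a flat resolution over $A^e$ (the paper cites \cref{lem:FlatTransitivity}, you give the equivalent free-summand argument), then identify $(L_qF_1)M$ with $\operatorname{Tor}^{A^e}_q(A,M)=H_q(A,M)$ and substitute into \cref{teo:HLH}. Your explicit remark that the identification must be read as transporting the $KS$-module structure is a point the paper only makes carefully for $q=0$ (in \cref{lem:0-homology}), but it does not change the argument.
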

	   \begin{proof}  
	   By the transitivity of the flatness (Lemma~\ref{lem:FlatTransitivity}) every flat left 
	   $(A\rtimes_\0 S)^e $-module is flat as a left $A^e$-module. Consequently, any flat resolution of $M$ in  $(A\rtimes_\0 S)^e\textbf{-Mod}$ is a flat resolution of $M$ in  $A^e\textbf{-Mod}.$    Since
	   $\operatorname{Tor}^{A^e}_q (A,M)$ can be computed using flat resolutions of $M$ in  $A^e\textbf{-Mod}$  (see, for example,  \cite[Theorem 7.5]{Rotman}), it follows that  
   $(L_q F_1)M =   \operatorname{Tor}^{A^e}_q (A,M) = H_q (A,M)$  for all $q,$ and our statement is obtained by applying
   Theorem~\ref{teo:HLH}. 
		\end{proof}

   Observe that it is well-known that if  $N$ is an $A$-bimodule, then   the  groups 
$H_n(A,N) := \operatorname{Tor}_n^{A^e} (A,N),$ $n\geq 0,$  can be identified with the homology groups of the Hochschild complex  (see, for example,  \cite[Section 1.1]{Loday}). In particular, there are isomorphisms between the $K$-spaces
$H_0(A,N)$ and    $N/[A, N].$   In our case, under the assumptions of Corollary~\ref{cor:flatHHH},  the identification between   
$H_0(A,M)$  and    $M/[A, M]$  turns out to be an isomorphism 
of  $KS$-modules:  

\begin{lem}\label{lem:0-homology} Let $\0$ be a unital action of an  inverse monoid $S$ on  a $K$-algebra $A$ over a field $K$ and $M$ an $A\rtimes_\0 S$-bimodule. Assume that 
	$(A\rtimes_\0 S)^e $ is flat as a left $A^e$-module. Then there is a  left $KS$-module isomorphism
	$$H_0(A,M)  \cong M/[A, M].  $$
\end{lem}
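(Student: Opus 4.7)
The statement follows almost immediately from the earlier \cref{lem:for0-homology}. My plan would be the following short chain of identifications.

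First I would observe that by the very definition of Hochschild homology (as the paper sets it up),
\[
H_0(A,M) = \operatorname{Tor}_0^{A^e}(A,M) = A \otimes_{A^e} M,
\]
and that, under the flatness hypothesis on $(A\rtimes_\0 S)^e$ over $A^e$, the identification $(L_q F_1)M \cong H_q(A,M)$ proved inside the argument of \cref{cor:flatHHH} in particular gives, at $q=0$, that the left $KS$-module structure carried by $H_0(A,M)$ in the spectral sequence agrees with the $KS$-module structure on $A\otimes_{A^e}M$ supplied by \cref{prop:left KS-module A otimes M}. (In fact, for $q=0$ this identification is unconditional, since $F_1$ is right exact and $L_0 F_1 = F_1$.)

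Next I would invoke \cref{lem:for0-homology}, which already provides exactly what is needed: the $K$-module map
\[
\psi: M/[A,M] \to A \otimes_{A^e} M, \qquad x+[A,M]\mapsto 1_A \otimes x,
\]
is well-defined, bijective, and a morphism of left $KS$-modules. Composing $\psi^{-1}$ with the identification $H_0(A,M) = A\otimes_{A^e}M$ of the preceding paragraph yields the desired $KS$-module isomorphism $H_0(A,M) \cong M/[A,M]$.

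There is essentially no obstacle here; the only subtle point worth spelling out is that the $KS$-action on $H_0(A,M)$ inherited from the derived functor $L_q F_1$ at $q=0$ is literally the diagonal action $s\cdot(a\otimes x)=s\cdot a\otimes s\cdot x$ of \cref{prop:left KS-module A otimes M}. This holds by construction of $F_1$, so that no further compatibility check is required and \cref{lem:for0-homology} transports directly to the conclusion.
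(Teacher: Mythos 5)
Your proposal is correct and follows essentially the same route as the paper: both arguments reduce to \cref{lem:for0-homology} and identify $H_0(A,M)$ with $A\otimes_{A^e}M$ as left $KS$-modules via the right exactness of $F_1$. The paper spells this identification out with an explicit flat resolution of $M$ over $(A\rtimes_\0 S)^e$ (using the flatness hypothesis to view it as a flat resolution over $A^e$), whereas you invoke $L_0F_1=F_1$ and $\operatorname{Tor}_0^{A^e}(A,M)=A\otimes_{A^e}M$ directly, but the content is the same.
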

\begin{proof} Thanks to Lemma~\ref{lem:for0-homology} it is enough to establish a left $KS$-module isomorphism between $H_0(A,M) $ and $ A \ot _{A^e} M.$   An identification of these $K$-spaces is well-known and we only need to observe that it is a $KS$-isomorphism. 

Let 
\begin{equation}\label{eq:flatResolution}
 \ldots \to P_1 \to P_0 \to M\to 0
 \end{equation}  be a flat resolution of $M$ in  $(A\rtimes_\0 S)^e\textbf{-Mod}.$ Applying to the exact sequence $ P_1 \to P_0 \to M\to 0$ the right exact functor $F_1,$ we obtain the exact sequence of left $KS$-modules and  $KS$-homomorphisms  
$$ A \ot _{A^e}  P_1 \to A \ot _{A^e} P_0 \to A \ot _{A^e} M\to 0.$$ As in the proof of Corollary~\ref{cor:flatHHH},   the complex \eqref{eq:flatResolution} is also  a flat resolution of $M$ in $A^e\textbf{-Mod}$ and, consequently, 
 $$H_0(A,M) = 
( A \ot _{A^e} P_0) / {\rm im}\, (A \ot _{A^e}  P_1 \to A \ot _{A^e} P_0).$$  This yields  the desired isomorphism between the left $KS$-modules $H_0(A,M)$ and  $ A \ot _{A^e} M.$ 
\end{proof}

\begin{cor}\label{rem:separable}
    Let $\0$ be a unital action of an  inverse monoid $S$ on  a $K$-algebra $A$ over a field $K$ and $M$ an $A\rtimes_\0 S$-bimodule. Assume that $A$ is separable over $K.$ Then   there is  an isomorphism
    \[
         H_n(S, M/[A, M]) \cong H_{n}(A\rtimes_\0 S, M). 
    \] 
\end{cor}
\begin{proof} 
 Since $A$ is separable over the field $K,$ the $K$-algebra $A^e$  is semisimple, so that any (left or right)  $A^e$-module is semisimple and hence flat.  In particular, so is the left   $A^e$-module $(A\rtimes_\0 S)^e ,$ and the conditions of \cref{cor:flatHHH} are satisfied. Then, since  $A$ is flat  as a right $A^e$-module,  the spectral sequence collapses on the $p$-axis and by 
	\cite[Proposition 10.21]{Rotman} we obtain an isomorphism
	$$ H_n(S, H_0(A,M)) \cong H_{n}(A\rtimes_\0 S, M).  $$
	Finally,  thanks to Lemma~\ref{lem:0-homology},
	the left $KS$-modules   $H_0(A,M)$ and $  M/[A, M]$ are isomorphic. 
\end{proof}



\subsection{Homology of $A\rtimes_\0 S $ with $E$-unitary $S$}
 In the $E$-unitary case we are able to drop the restriction on $K$ to be a field, replacing it by the assumption that $A$ is flat over the commutative ring $K.$  Moreover, we shall see below   that if $A$ is flat over $K,$ then  $(A\rtimes_\0 S)^e $ is flat as a left (right) $A^e$-module for any $E$-unitary inverse semigroup $S.$  

\begin{lem}\label{lem:acyclic}
	Let $\0$ be a unital action of an  $E$-unitary inverse monoid $S$ on a $K$-algebra  $A$  over a commutative ring $K,$ 
	 $M$ be a left $K S$-module, and let $P_\bullet \to K E(S)$ be a projective resolution  of $K E(S)$ in $\rMod KS.$  Then, $H_n(P_\bullet \otimes_{K E(S)} M) = 0$ for all $n \geq 1$.
\end{lem}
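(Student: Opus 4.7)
The plan is to reduce the statement to the standard triviality that $\operatorname{Tor}_n^{KE(S)}(KE(S),M)=0$ for $n\ge 1$, by showing that $P_\bullet$, regarded as a complex of right $KE(S)$-modules via restriction of scalars along the inclusion $KE(S)\hookrightarrow KS$, is already a projective resolution of $KE(S)$ in $\rMod KE(S)$.

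First I would check compatibility of structures. The right $KS$-action on $KE(S)$ from \cref{right KS-mod KE(S)} is $e\cdot s=s^{-1}es$; restricting to $s=f\in E(S)$ and using the fact that idempotents in an inverse semigroup commute, this becomes $e\cdot f=fef=ef$, which is the ordinary multiplication. Hence the restriction of $KE(S)$ (as a right $KS$-module) along $KE(S)\hookrightarrow KS$ is the regular right $KE(S)$-module. Similarly, the left $KS$-module $M$ restricts to a left $KE(S)$-module, so $P_\bullet \otimes_{KE(S)} M$ makes sense.

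Next, exactness is preserved under restriction of scalars (it is a statement about underlying $K$-modules), so the augmented complex $P_\bullet\to KE(S)\to 0$ remains exact in $\rMod KE(S)$. The key input for projectivity is \cref{cor:KS pojective over KR(S)}, which uses the hypothesis that $S$ is $E$-unitary and asserts that $KS$ is projective as a right $KE(S)$-module. Therefore any free right $KS$-module is a direct sum of copies of $KS$ and hence projective over $KE(S)$; any projective right $KS$-module, being a direct summand of a free one, is then also projective over $KE(S)$. In particular, each $P_n$ is projective in $\rMod KE(S)$.

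Combining these observations, $P_\bullet\to KE(S)$ is a projective resolution of $KE(S)$ in $\rMod KE(S)$, so
\[
H_n(P_\bullet\otimes_{KE(S)} M)=\operatorname{Tor}_n^{KE(S)}(KE(S),M)=0
\]
for all $n\ge 1$, since $KE(S)$ is the regular bimodule over itself. There is no serious obstacle; the only real content is the transfer of projectivity from $KS$ to $KE(S)$, which is precisely where the $E$-unitary hypothesis enters via \cref{cor:KS pojective over KR(S)}.
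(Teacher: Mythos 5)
Your proposal is correct and follows essentially the same route as the paper: both arguments rest on \cref{cor:KS pojective over KR(S)} and the transfer of projectivity from right $KS$-modules to right $KE(S)$-modules, after which the result is the vanishing of $\operatorname{Tor}_n^{KE(S)}(KE(S),M)$ for $n\ge 1$. Your extra check that the restricted right action of $E(S)$ on $KE(S)$ is the ordinary multiplication is a small point the paper leaves implicit, but it does not change the argument.
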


\begin{proof} 
   By Corollary~\ref{cor:KS pojective over KR(S)},   $K S$ is projective  as a right $K E(S)$-module.  Then by the transitivity of projectivity (see \cite[Proposition 1.4]{DeMeyerIngraham}) any projective right $KS$-module is  projective as a right $K E(S)$-module.  Consequently,
	  $P_{\bullet} \to K E(S)$ is also a projective resolution of $K E(S)$ in \textbf{Mod-}$K E(S)$. Therefore,
	\[
		H_n(P_{\bullet}\otimes_{K E(S)} M)= \operatorname{Tor}_n^{K E(S)}(K E(S), M)= \left\{\begin{matrix}
			0 & \text{ if } n \geq 1 \\ 
			M & \text{ if } n=0.
			\end{matrix}\right.
	\]
\end{proof}

  \begin{prop} \label{p_F1sendPtoF2acylic} Let $\0$ be a unital action of an  $E$-unitary inverse monoid $S$ on a $K$-algebra  $A$  over a commutative ring $K.$ Then
     $F_1$ sends projective $A\rtimes_\0 S$-bimodules to left $F_2$-acyclic modules.
 \end{prop}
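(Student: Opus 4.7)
The plan is to show directly that $\operatorname{Tor}^{KS}_n(KE(S),F_1(P))=0$ for all $n\geq 1$ when $P$ is projective over $(A\rtimes_\0 S)^e$, by computing this Tor via a right $KS$-projective resolution of $KE(S)$ and invoking the bifunctor isomorphism of \cref{prop:bifunctor iso}. First I would fix a projective resolution $Q_\bullet \to KE(S)$ in $\rMod KS$, so that by definition
\[
	\operatorname{Tor}^{KS}_n(KE(S), A\otimes_{A^e} P) \;=\; H_n\bigl(Q_\bullet \otimes_{KS}(A\otimes_{A^e}P)\bigr).
\]

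Next, the natural isomorphism of bifunctors from \cref{prop:bifunctor iso}, evaluated at $M=P$ and applied termwise to $Q_\bullet$, yields a chain isomorphism
\[
	Q_\bullet \otimes_{KS}(A\otimes_{A^e} P) \;\cong\; \bigl(Q_\bullet \otimes_{KE(S)}(A\rtimes_\0 S)\bigr)\otimes_{(A\rtimes_\0 S)^e} P,
\]
since the isomorphism $\xi_{(X,M)}$ constructed there is natural in $X$.  Because $P$ is projective over $(A\rtimes_\0 S)^e$ it is in particular flat, so the functor $-\otimes_{(A\rtimes_\0 S)^e}P$ is exact and therefore commutes with taking homology; this gives
\[
	H_n\!\Bigl(\bigl(Q_\bullet \otimes_{KE(S)}(A\rtimes_\0 S)\bigr)\otimes_{(A\rtimes_\0 S)^e} P\Bigr) \;\cong\; H_n\bigl(Q_\bullet \otimes_{KE(S)}(A\rtimes_\0 S)\bigr)\otimes_{(A\rtimes_\0 S)^e} P.
\]

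Finally, since $S$ is $E$-unitary, \cref{lem:acyclic} applied to the left $KE(S)$-module $M=A\rtimes_\0 S$ (which is indeed a left $KE(S)$-module through \cref{left KE(S)-action on cross}) yields $H_n\bigl(Q_\bullet \otimes_{KE(S)}(A\rtimes_\0 S)\bigr)=0$ for every $n\geq 1$.  Combining the three displayed isomorphisms, $\operatorname{Tor}^{KS}_n(KE(S),F_1(P))=0$ for $n\geq 1$, which is exactly the statement that $F_1(P)$ is left $F_2$-acyclic.

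The only delicate point is the verification that \cref{prop:bifunctor iso} is applicable termwise and produces a genuine isomorphism of chain complexes (as opposed to only isomorphisms in each degree), but this is immediate from the naturality of $\xi_{(X,M)}$ in $X$, which was built into the proof of that proposition; the key structural input that makes everything work is the $E$-unitary hypothesis, used only through \cref{lem:acyclic} (whose proof rests on \cref{cor:KS pojective over KR(S)}).
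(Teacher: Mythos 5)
Your argument is correct and is essentially the paper's own proof: both reduce $\operatorname{Tor}^{KS}_n(KE(S),F_1(P))$ to $H_n\bigl((Q_\bullet\otimes_{KE(S)}(A\rtimes_\0 S))\otimes_{(A\rtimes_\0 S)^e}P\bigr)$ via the symmetry of Tor and the naturality of the bifunctor isomorphism of \cref{prop:bifunctor iso}, and then conclude from \cref{lem:acyclic} together with the exactness of $-\otimes_{(A\rtimes_\0 S)^e}P$ for projective $P$. The only cosmetic difference is that you phrase the last step as the exact functor commuting with homology, while the paper phrases it as preservation of exactness at each term; these are the same observation.
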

 \begin{proof} Let $P$ be an arbitrary projective 
  $(A\rtimes_\0 S)^e$-module. We need to show that 
     \[
         (L_nF_2)(A \otimes_{A^e}P) = 0, \, \, \forall n \geq 1.
     \] Using the fact that $\operatorname{Tor}(-,-)$ is symmetric, we see that
     \begin{align*}
         (L_nF_2)(A \otimes_{A^e}P)
         &= L_n(K E(S) \otimes_{K S} -)(A \otimes_{A^e}P) \\
         &\cong\operatorname{Tor}^{K S}_n (K E(S),A \otimes_{A^e}P)\\
         &\cong L_n(- \otimes_{K S} (A \otimes_{A^e}P))(K E(S)) \\
        (\text{by Proposition \ref{prop:bifunctor iso}}) &\cong L_n((- \otimes_{K E(S)} (A\rtimes_\0 S)) \otimes_{(A\rtimes_\0 S)^e}P)(K E(S)).
     \end{align*}
     Let $Q_{\bullet} \to K E(S)$ be a projective resolution of $K E(S)$ in \textbf{Mod-}$K S$. Then,
     \begin{align*}
         (L_nF_2)(A \otimes_{A^e}P) &\cong L_n((- \otimes_{K E(S)} (A\rtimes_\0 S)) \otimes_{(A\rtimes_\0 S)^e}P)(K E(S))\\ &= H_n \left(  (Q_{\bullet}\otimes_{K E(S)} (A\rtimes_\0 S)) \otimes_{(A\rtimes_\0 S)^e}P \right).
     \end{align*}
    By Lemma~\ref{lem:acyclic} we have that $ H_n(Q_{\bullet}\otimes_{K E(S)}( A\rtimes_\0 S))= 0$ for all $n>0.$ 
      This yields that the complex $Q_{\bullet}\otimes_{K E(S)}  (A\rtimes_\0 S)$ is exact at the $n$-th term for all $n \geq 1$. It follows that 
      the complex $(Q_{\bullet}\otimes_{K E(S)} (A\rtimes_\0 S)) \otimes_{(A\rtimes_\0 S)^e}P$ is also exact at the $n$-th term for all $n \geq 1$ since $P$ is projective as an $(A\rtimes_\0 S)^e$-module, and thus
     \[
         H_n \left(  (Q_{\bullet}\otimes_{K E(S)}(A\rtimes_\0 S)) \otimes_{(A\rtimes_\0 S)^e}P \right) = 0,\ \forall n \geq 1,
     \]
     completing our proof. 
 \end{proof}

 In view of  Remark~\ref{rem:Adelta_1Embedding} we may consider $A$ as a $K$-subalgebra of $A\rtimes_\0 S$ by means of the embedding $A\to  A\rtimes_\0 S,$ given by $a \mapsto a \dl _1.$ Then  $(A\rtimes_\0 S)^e$ is a left $A^e$-module with the action given in \eqref{eq:left R^e on (R')^e} and it is also a right
  $A^e$-module with the action defined in \eqref{eq:right R^e on (R')^e}. Similarly, 
 $(A\rtimes_{\tl\0} {\mathcal G}(S))^e$ is also a left and right  $A^e$-module via \eqref{eq:left R^e on (R')^e} and \eqref{eq:right R^e on (R')^e}, respectively.

\begin{lem}\label{lem:FlatnessInE-unitaryCase}
  Suppose that the $K$-algebra $A$ is flat over the commutative ring $K$ and $\0$ is a  unital action of an  $E$-unitary inverse monoid $S$ on  $A.$ Then  $(A\rtimes_\0 S)^e $ is flat as a left (right) $A^e$-module.
\end{lem}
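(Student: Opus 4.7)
The plan is to use \cref{cor:IsoPhi} to reduce to a skew group algebra by a partial action of $G=\cG(S),$ and then to decompose $(A\rtimes_{\tl\0} G)^e$ as a direct sum of cyclic submodules of $A^e,$ each generated by a central idempotent. Since $S$ is $E$-unitary, every action of $S$ is compatible (as noted just before \cref{tl-0_g-bigection}), so \cref{cor:IsoPhi} applies and yields an isomorphism $\Phi:A\rtimes_\0 S\to A\rtimes_{\tl\0} G$ of $K$-algebras which, by \cref{rem:A-bimodule mapping}, is simultaneously an isomorphism of $A$-bimodules. Invoking \cref{rem:left R^e on (R')^e}, this gives an isomorphism $(A\rtimes_\0 S)^e\cong (A\rtimes_{\tl\0} G)^e$ both of left and of right $A^e$-modules, so it suffices to prove the claim for $A\rtimes_{\tl\0} G.$

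Writing $A\rtimes_{\tl\0} G=\bigoplus_{g\in G}\D_g\dl_g$ with $\D_g=1_g A$ and $1_g$ a central idempotent of $A,$ the next step is the $K$-module decomposition
\[
(A\rtimes_{\tl\0} G)^e=\bigoplus_{g,h\in G}(\D_g\dl_g)\ot_K(\D_h\dl_h).
\]
Each summand is stable under the left $A^e$-action $(a\ot b)\cdot(x\ot y)=ax\ot yb,$ because left multiplication by $a$ preserves the $G$-grade of the first tensor factor, while right multiplication by $b\dl_1$ in $A\rtimes_{\tl\0} G$ preserves the $G$-grade of the second. Thus this is a decomposition of left $A^e$-modules, and it is enough to show that every summand is projective over $A^e.$

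The crux is identifying each summand with a cyclic submodule of $A^e$ generated by a central idempotent. As a left $A$-module, $\D_g\dl_g\cong 1_g A$ in the obvious way, and as a right $A$-module, $\D_h\dl_h\cong A 1_{h^{-1}}$ via the bijection $c\dl_h\mapsto \tl\0_{h^{-1}}(c),$ because the twisted right action $(c\dl_h)\cdot a=\tl\0_h(\tl\0_{h^{-1}}(c)a)\dl_h$ transports through this map to the usual right multiplication $\tl\0_{h^{-1}}(c)a$ on $A 1_{h^{-1}}.$ Combining these yields an isomorphism of left $A^e$-modules
\[
(\D_g\dl_g)\ot_K(\D_h\dl_h)\cong 1_g A\ot_K A 1_{h^{-1}}.
\]
Since $1_g$ and $1_{h^{-1}}$ are central idempotents of $A,$ the element $1_g\ot 1_{h^{-1}}$ is a central idempotent of $A^e;$ using the $K$-module splittings $A=1_g A\oplus(1-1_g)A$ and $A=A 1_{h^{-1}}\oplus A(1-1_{h^{-1}}),$ the submodule $1_g A\ot_K A 1_{h^{-1}}$ embeds naturally into $A^e$ and coincides there with $A^e\cdot(1_g\ot 1_{h^{-1}}).$ Hence each summand is a direct summand of $A^e$ as a left $A^e$-module, and therefore projective over $A^e.$

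Consequently $(A\rtimes_{\tl\0} G)^e$ is a direct sum of projective left $A^e$-modules and is itself projective, hence flat. The right $A^e$-module case is entirely symmetric, identifying each summand with the cyclic right $A^e$-submodule $(1_{g^{-1}}\ot 1_h)A^e.$ I expect the main obstacle to be the careful verification of the twisted right-module identification $\D_h\dl_h\cong A 1_{h^{-1}}$ and the check that the resulting isomorphism respects the left $A^e$-action; once this bookkeeping is complete, the argument reduces to the elementary fact that a cyclic submodule of a ring generated by a central idempotent is a direct summand of the ring.
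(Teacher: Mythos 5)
There is a genuine gap at the very first step of your decomposition: you write $A\rtimes_{\tl\0} G=\bigoplus_{g\in G}\D_g\dl_g$ ``with $\D_g=1_g A$ and $1_g$ a central idempotent of $A$,'' but the induced partial action $\tl\0$ of $G=\cG(S)$ is \emph{not} unital in general. By construction $\D_g=\sum_{s\in g}1_sA$ is a sum of unital ideals indexed by the $\sg$-class $g$, which may be infinite; \cref{R=sum-orth-ideals} produces a unit only for \emph{finite} such sums, so there is no central idempotent $1_g$ generating $\D_g$. Everything downstream of this assumption fails: $\D_g\ot_K \D_{h\m}$ is not a cyclic $A^e$-submodule generated by a central idempotent, it is not a direct summand of $A^e$, and the summands $\D_g\dl_g\ot_K\D_h\dl_h$ are not projective over $A^e$. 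A telltale sign is that your argument never uses the hypothesis that $A$ is flat over $K$; in your setup the splittings $A=1_gA\oplus(1-1_g)A$ would make the embedding of $1_gA\ot_K A1_{h\m}$ into $A^e$ automatic, whereas in reality that hypothesis is exactly what is needed to see that the natural map $\D_g\ot_K(\D_h)^{\rm op}\to A\ot_K A^{\rm op}$ is injective when $\D_g$ and $\D_h$ are not direct summands.

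The paper's proof repairs this by replacing ``generated by a central idempotent'' with a direct-limit argument: for each finite subset $T$ of $g\times h$ the corresponding finite sum of ideals $\cI(T)\sst A^e$ \emph{is} unital by \cref{R=sum-orth-ideals}, hence a projective direct summand of $A^e$, and $\D_g\ot_K(\D_h)^{\rm op}$ is the direct limit of these $\cI(T)$, hence flat (but in general only flat, not projective). Your isomorphisms $\D_g\dl_g\cong 1_gA$ (as left $A$-modules) and $c\dl_h\mapsto\tl\0_{h\m}(c)$ (as right $A$-modules) are the right idea and essentially coincide with the maps $\psi$ and $\eta$ in the paper, so the skeleton of your argument survives; what must change is the claim of projectivity of each summand, which should be weakened to flatness and justified via the direct-limit construction, together with an explicit use of the flatness of $A$ over $K$ to identify $\D_g\ot_K(\D_h)^{\rm op}$ with an ideal of $A^e$.
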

\begin{proof}  We know  from Proposition~\ref{cor:IsoPhi} and Remark~\ref {rem:A-bimodule mapping} that
      $A\rtimes_\0 S $ and $  A\rtimes_{\tl\0} {\mathcal G}(S)$ are isomorphic not only as  $K$-algebras, but also as $A$-bimodules. By Remark~\ref{rem:left R^e on (R')^e} this implies that  $(A\rtimes_\0 S)^e $ and 
      $(A\rtimes_{\tl\0} {\mathcal G}(S))^e$ are isomorphic as left (right)  $A^e$-modules. Thus, it is enough to prove that $(A\rtimes_{\tl\0} {\mathcal G}(S))^e$ is flat as a left (right) $A^e$-module.
   
   We shall show first that  $\D _g$ is flat as a right (left) $A$-module  for any $g \in {\mathcal G}(S).$
   Indeed,  let $g\in \cG(S)$ and $(\cF(g),\sst)$ be the directed set of all finite subsets of the $\sg$-class $g$. For any $T\in \cF(g)$ consider the finite sum $\cI(T):= \sum_{t\in T}1_tA$ of unital ideals. Then $\cI(T)$ is also a unital ideal in $A$ by \cref{R=sum-orth-ideals}.  It follows that $\cI(T)$ is of the form $Ae$, where $e \in \D_ g$ is an idempotent, which is central in $A.$ Then $\cI(T)$ is a direct summand of $A$, and, consequently, it is projective as a left (right)  $A$-module. Therefore, $\cI(T)$ is flat  as a left (right)  $A$-module.  Consider the direct system $\{\cI(T),\vf^T_U\}$ of left (right) $A$-modules, where $\vf^T_U:\cI(T)\to\cI(U)$ is the inclusion map for all $T\sst U$. Observe by \cite[Corollary 5.31]{Rotman} that
     	\begin{align*}
     		\D_g=\sum_{s\in g}1_sA=\bigcup_{T\in\cF(g)}\cI(T)
     	\end{align*}
     	is the direct limit of $\{\cI(T),\vf^T_U\}$. Thus, by \cite[Corollary 5.34]{Rotman},  $\D_ g$  is  flat  as a left (right) $A$-module.
     	
     	Take $t,s \in S.$ With a slight abuse of notation we denote by $ (1_sA)^{\rm op}$ the subset $1_sA $ of  $ A^{\rm op}.$  Since $1_tA$  is a direct summand of $A,$  $ (1_sA)^{\rm op}$ is a direct summand of $ A^{\rm op}$ and tensor product respects direct sums, it follows that  
     	$1_tA \otimes _K  (1_sA)^{\rm op}$ is a direct summand of  $A^e = A \ot _K A^{\rm op}.$  Hence $1_tA \otimes _K  (1_sA)^{\rm op}$ is a flat  left (right) $A^e$-module, being projective over $A^e.$ 
     
   We know already that $\D_g$ is flat as a right $A$-module for every $g \in \cG(S).$ Using  the fact that $A$ is flat over $K$ and the transitivity of flat modules (Lemma~\ref{lem:FlatTransitivity}), we conclude that $\D_g$ is flat over $K.$   	Then for any $h\in  \cG(S)$ the inclusion mapping 
     	$ (\D _h)^{\rm op} \to A^{\rm op}$ gives rise to the exact sequence 
     	\begin{equation}\label{eq:mono1}
0 \to \D _g \otimes _K  (\D _h)^{\rm op} \to \D _g \ot _K A^{\rm op}. 
\end{equation} Here, similarly as above, $ (\D _h)^{\rm op}$ stands for the subset  $\D _h$ of  $A^{\rm op}.$ Obviously, the mapping $\D_g \ot _K (\D _h)^{\rm op} \to \D _g \ot _K A^{\rm op} $ is a monomorphism of non-necessarily unital algebras,  
      which allows us to identify the algebra $\D_g \ot _K (\D _h)^{\rm op} $ with its image in $\D _g \ot _K A^{\rm op}.$
      Now, since $A$ is flat over the commutative ring $K, $ then $A^{\rm op}$ is also flat over $K,$ and from the inclusion map $\D _g \to A$ we obtain the exact sequence of algebras
\begin{equation}\label{eq:mono2}
0 \to \D _g \ot _K A^{\rm op} \to A \ot _K A^{\rm op} = A^e.
\end{equation} Combining \eqref{eq:mono1} and \eqref{eq:mono2} we obtain a monomorphism of algebras  (considered as non-necessarily unital algebras)
$$0 \to \D _g \otimes _K  (\D _h)^{\rm op} \to A \ot _K A^{\rm op}, $$ and identify $\D _g \otimes _K  (\D_h)^{\rm op}$ with its image in $A\ot _K  A^{\rm op}, $  so that   $\D_g \otimes _K  (\D _h)^{\rm op}$ becomes an ideal in $A^e.$ Moreover, we may write
 $$\D_g \otimes _K  (\D _h)^{\rm op}=  \sum_{t\in g, s\in h}1_tA \otimes _K (1_s A)^{\rm op}.$$

     Let $g, h\in \cG(S)$ and $\cF(g,h)$ be the  set of all subsets $T$ of the Cartesian product $g \times h$ of the form  $T=T_g \times T_h,$ where $T_g$ is a  finite subset of the  $\sg$-class $g$ and $T_h$ is a  finite subset of the  $\sg$-class $h.$ For 
     $T=T_g \times T_h \in \cF(g,h)$ and $U= U_g \times  U_h \in \cF(g,h)$ it is clear that  
     $T \sst U$ if and only if 
      $T_g \sst U_g$ and  $T_h \sst U_h.$ Evidently, $(\cF(g,h),\sst)$ is a directed partially ordered set.

     Given any  $T=T_g \times T_h\in \cF(g,h)$, consider the finite sum 
     $$\cI(T):= \sum_{t\in T_g, s\in T_h}1_tA \otimes _K (1_s A)^{\rm op}$$ of unital ideals of $A^e.$ Then using again  \cref{R=sum-orth-ideals} we see that $\cI(T)$ is also a unital ideal in $A^e .$  Hence, 
      $\cI(T) = A^e a^e$, where $a^e \in A^e$ is an idempotent, which is central in $A^e.$ Thus $\cI(T)$ is a direct summand of $A^e$, and, consequently, it is  flat as a left (right)  $A^e$-module, being  projective over $A^e.$
        Now we look at the direct system 
       $\{\cI(T),\psi^{T}_{U}\}$ of left (right) $A^e$-modules, where $\psi^T_U:\cI(T)\to\cI(U)$ is the inclusion map 
        for all $T\sst U$. Referring again to \cite[Corollary 5.31]{Rotman} we have that
     	\begin{align*}
     		\D_g \otimes _K (\D _h)^{\rm op}=\sum_{t\in g, s \in h}1_tA \otimes _K (1_sA)^{\rm op} =\bigcup_{T\in\cF(g,h)}\cI(T)
     	\end{align*}
     	is the direct limit of $\{\cI(T),\psi^T_U\}$. Then, $ \D_g \otimes _K (\D _h)^{\rm op} \subseteq A \otimes _K A^{\rm op}$  is  flat  as a left (right)  $A^e$-module, being a direct limit of flat  left (right)  $A^e$-modules   (see \cite[Corollary 5.34]{Rotman}).  
     	
     Given $g, h \in \cG(S),$ denote by $(\D_h \dl_h)^{\rm op}$ the subset $\D_h\dl_h$ of $(A\rtimes_{\tl\0} {\mathcal G}(S))^{\rm op}$ and consider the mapping 	
     $$
     \psi :  \D_g \otimes _K (\D _{h\m})^{\rm op} \to \D _g \delta _g \otimes _ K (\D _h \delta _h)^{\rm op},
     $$ 
     given by 
     $$a \ot b \mapsto a \delta _g \ot {\tl\0}_h (b) \dl_h, $$
     where $a \in  \D_g\text{ and }b \in \D _{h\m}.$ Then $\psi $ is an isomorphism of left $A^e$-modules. Indeed, clearly, it is a $K$-isomorphism. Moreover, for $a \in  \D_g, b \in \D _{h\m}$ and $c \ot d \in A^e$ we have that 
      \begin{align*}
      	\psi ((c \ot d)\cdot (a \ot b)) 
      	&= \psi (ca \ot bd) 
      	= ca \dl _g \ot  \tl\0_h (bd) \dl_h =  ca \dl _g \ot \tl\0_h(\tl\0\m_h(\tl\0_h(b))d)\dl_h\\
      	 &=c\dl_1 \cdot a \dl _g \ot \tl\0_h (b) \dl _h \cdot d \dl_1  = 	(c \ot d)\cdot \psi (a \ot b),
      \end{align*} showing that $\psi $  is a mapping of left $A^e$-modules. 
      Similarly, for $g, h \in \cG(S),$ define 	
      $$
      \eta :  \D_{g\m} \otimes _K (\D_h)^{\rm op} \to \D_g \delta_g \otimes_K (\D_h \delta _h)^{\rm op}
      $$ 
      by 
      $$
      a \ot b \mapsto {\tl\0}_g(a) \delta_g \ot b \dl_h, 
      $$
      where $a \in  \D_{g\m}$ and $b \in \D _h.$ Then $\eta$ is an isomorphism of right $A^e$-modules. For, $\eta$ is obviously a $K$-isomorphism and for all $a \in \D_{g\m}, b \in \D _h$ and $c \ot d \in A^e$ one has
      \begin{align*}
      	\eta((a \ot b) \cdot (c \ot d)) 
      	      	&= \eta (ac \ot db)={\tl\0}_g(ac) \delta_g \ot db \dl_h=\tl\0_g(\tl\0\m_g(\tl\0_g(a))c) \delta_g \ot db \dl_h\\  
      	      	&=\tl\0_g(a)\dl _g \cdot c\dl_1 \ot d \dl_1 \cdot b\dl_h= \eta(a \ot b)\cdot (c \ot d).
      \end{align*} 
       This yields that 
      $\D _g \delta _g \otimes _ K (\D _h \delta _h)^{\rm op}$ is also flat as a left (right) $A^e$-module.
      
      Now, since  
      $  A\rtimes_{\tl\0} {\mathcal G}(S) = \bigoplus_{g \in {\mathcal G}(S)} \D _g \dl _g$ and $\ot _K$ respects direct sums,
      we obtain the direct sum  
 $$( A\rtimes_{\tl\0} {\mathcal G}(S))^{e} =
 \bigoplus _{g,h \in {\mathcal G}(S)} \D _g \delta _g \otimes _ K (\D _h \delta _h)^{\rm op} $$ of $K$-modules. Evidently, it is a direct sum of left (right) $A^e$-modules, which implies that  $( A\rtimes_{\tl\0} {\mathcal G}(S))^{e}$ is flat as a left (right) $A^e$-module. 
 \end{proof}

\begin{thrm}\label{teo:E-UnitaryHomolSpectralSeq}
   Suppose that the $K$-algebra $A$ is flat over the commutative ring $K.$ Let $\0$ be a  unital action of an  $E$-unitary inverse monoid $S$ on  $A$ and $M$ an $A\rtimes_\0 S$-bimodule. Then   there exists  a first quadrant homology spectral sequence
    \begin{equation}\label{eq:E-UnitaryHomolSpectralSeq}
        E^2_{p,q} = H_p(S, H_q (A, M)) \Rightarrow H_{p+q}(A\rtimes_\0 S, M). 
    \end{equation}
    \end{thrm}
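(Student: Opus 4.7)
The plan is to apply the Grothendieck spectral sequence theorem \cite[Theorem 10.48]{Rotman} to the composable pair
\[
(A\rtimes_\0 S)^e\textbf{-Mod}\xrightarrow{F_1} KS\textbf{-Mod}\xrightarrow{F_2} K\textbf{-Mod},
\]
exactly as in \cref{teo:HLH}, but with the field hypothesis on $K$ replaced by the combined hypothesis ``$A$ flat over $K$ and $S$ is $E$-unitary''. Three ingredients must be assembled, and each is already available in the preparatory material.

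First I would invoke \cref{cor:F2F1isoF} to obtain the natural isomorphism $F_2F_1\cong F$; the proofs of \cref{cor:F2F1isoF}, \cref{prop:bifunctor iso} and \cref{l_AsemidirSisoBsemidirS} nowhere use that $K$ is a field, so this ingredient transfers verbatim. It gives in particular $L_{p+q}(F_2F_1)M\cong H_{p+q}(A\rtimes_\0 S,M)$ and identifies $L_pF_2\cong H_p(S,-)$.

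Next I would verify that $F_1$ sends projective $(A\rtimes_\0 S)^e$-modules to left $F_2$-acyclic $KS$-modules. This is precisely \cref{p_F1sendPtoF2acylic}, whose proof relies only on the $E$-unitary assumption (through \cref{cor:KS pojective over KR(S)} and \cref{lem:acyclic}) together with \cref{prop:bifunctor iso}; the field hypothesis plays no role there either, so this step also applies directly in our setting.

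The third and only genuinely new ingredient is the identification $(L_qF_1)M\cong H_q(A,M)$. Here is where the standing hypotheses are actually consumed: by \cref{lem:FlatnessInE-unitaryCase}, whose statement is exactly ``$A$ flat over $K$ and $S$ is $E$-unitary'', the algebra $(A\rtimes_\0 S)^e$ is flat as a left $A^e$-module. Transitivity of flatness (\cref{lem:FlatTransitivity}) then ensures that every projective $(A\rtimes_\0 S)^e$-module is flat over $A^e$. Consequently, any projective resolution $P_\bullet\to M$ in $(A\rtimes_\0 S)^e\textbf{-Mod}$ is simultaneously a flat resolution of $M$ in $A^e\textbf{-Mod}$, and so
\[
(L_qF_1)M=H_q\bigl(A\otimes_{A^e}P_\bullet\bigr)\cong\operatorname{Tor}_q^{A^e}(A,M)=H_q(A,M).
\]
Substituting these identifications into the output of \cite[Theorem 10.48]{Rotman} yields the spectral sequence \eqref{eq:E-UnitaryHomolSpectralSeq}. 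I do not anticipate any real obstacle: all the substantive work has been isolated in \cref{lem:FlatnessInE-unitaryCase} and \cref{p_F1sendPtoF2acylic}, and the only task remaining is the bookkeeping check that each cited result holds under the weakened hypotheses on $K$ and $S$.
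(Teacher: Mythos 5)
Your proposal is correct and follows essentially the same route as the paper's own proof: apply \cite[Theorem 10.48]{Rotman} using \cref{cor:F2F1isoF} for $F_2F_1\cong F$, \cref{p_F1sendPtoF2acylic} for the acyclicity condition, and \cref{lem:FlatnessInE-unitaryCase} together with \cref{lem:FlatTransitivity} to identify $(L_qF_1)M$ with $H_q(A,M)$. The only cosmetic difference is that the paper phrases the last step in terms of flat resolutions of $M$ over $(A\rtimes_\0 S)^e$ rather than projective ones, which is equivalent.
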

    \begin{proof} 
    Since  the functor  $F_2$ is right exact and   by Proposition~\ref{p_F1sendPtoF2acylic} the functor 
     $F_1$ sends projective $A\rtimes_\0 S$-bimodules to left $F_2$-acyclic modules,  we are in conditions to apply \cite[Theorem 10.48]{Rotman} to the functors
 \[
       (A\rtimes_\0 S)^e\textbf{-Mod}  
       \overset{F_1}{\longrightarrow} K S\textbf{-Mod}  \overset{F_2}{\longrightarrow} K\textbf{-Mod},
    \]  
    which gives us  a first quadrant Grothendieck spectral sequence
    \begin{equation}\label{eq:SpectralSeqFromRotman}
        E^2_{p,q} = (L_p F_2) (L_q F_1) M \Rightarrow L_{p+q} ( F_2 F_1 ) M. 
    \end{equation} Thanks to the natural isomorphism
   of the functors $F_2 F_1$ and $ F$  provided  by Corollary~\ref{cor:F2F1isoF} and keeping in mind that
    \begin{align*}
 	L_\bullet F_2 (-) = \operatorname{Tor}^{K S}_\bullet (K E(S),-)\text{ and }L_{\bullet}F(-)=H_{\bullet}(A\rtimes_\0 S, -),
 \end{align*} we  obtain from \eqref {eq:SpectralSeqFromRotman} the  first quadrant  spectral sequence 
    \[
        E^2_{p,q} = \operatorname{Tor}^{K S}_p(K E(S), (L_q F_1)M  ) \Rightarrow H_{p+q}(A\rtimes_\0 S, M). 
    \] Since by definition of homology of $S$ (see Definition~\ref{def:Inv Monoid (co)homol}), we have that 
    $$ \operatorname{Tor}^{K S}_p(K E(S), (L_q F_1)M  ) =
    H_p(S, (L_q F_1)M  ),
    $$  the spectral sequence takes the form
    \begin{align}\label{eq:IntermediateSpectralSeq}
        E^2_{p,q} = H_p(S, (L_q F_1)M  ) \Rightarrow H_{p+q}(A\rtimes_\0 S, M). 
    \end{align}
         
            By  Lemma~\ref{lem:FlatnessInE-unitaryCase} we have that $(A\rtimes_\0 S)^e $ is flat as a left $A^e$-module. Then by the transitivity of the flatness (Lemma~\ref{lem:FlatTransitivity}) every flat left 
	   $(A\rtimes_\0 S)^e $-module is flat as a left $A^e$-module. Consequently, any flat resolution of $M$ in  $(A\rtimes_\0 S)^e\textbf{-Mod}$ is a flat resolution of $M$ in  $A^e\textbf{-Mod}.$    As it was mentioned already,
	   $\operatorname{Tor}^{A^e}_q (A,M)$ can be computed using flat resolutions of $M$ in  $A^e\textbf{-Mod}.$ It follows that  
   $(L_q F_1)M =   \operatorname{Tor}^{A^e}_q (A,M) = H_q (A,M)$  for all $q,$ and the spectral sequence \eqref{eq:IntermediateSpectralSeq} takes the form 
    \eqref{eq:E-UnitaryHomolSpectralSeq}, completing our proof. 
   \end{proof}

        Since an algebra over a field $K$ is obviously flat over $K,$ we immediately obtain the next:
    
   \begin{cor}\label{cor:E-unitaryOverField}
   Let $\0$ be a unital action of an  $E$-unitary inverse monoid $S$ on an algebra $A$ over a field $K$ and $M$ be an $A\rtimes_\0 S$-bimodule. Then   there exists  a first quadrant homology spectral sequence of the form 
   \eqref{eq:E-UnitaryHomolSpectralSeq}.
   \end{cor}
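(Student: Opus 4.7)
The plan is very short, since this corollary is meant as a direct specialization of Theorem~\ref{teo:E-UnitaryHomolSpectralSeq} to the case where the ground ring is a field. The only thing to observe is that the flatness hypothesis on $A$ over $K$ required by that theorem is automatic when $K$ is a field.

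First I would recall that every vector space over a field $K$ is free, and in particular flat as a $K$-module. Hence the $K$-algebra $A$ is flat over $K$, which is the single extra hypothesis needed by Theorem~\ref{teo:E-UnitaryHomolSpectralSeq} beyond what is already assumed here ($\theta$ a unital action of an $E$-unitary inverse monoid $S$ on $A$, and $M$ an $A\rtimes_\theta S$-bimodule).

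Then I would simply invoke Theorem~\ref{teo:E-UnitaryHomolSpectralSeq} to conclude the existence of the first quadrant homology spectral sequence
\begin{align*}
    E^2_{p,q} = H_p(S, H_q (A, M)) \Rightarrow H_{p+q}(A\rtimes_\theta S, M).
\end{align*}
There is no obstacle here; the corollary is stated explicitly only to emphasize the field case as a cleaner specialization of the $E$-unitary result, parallel to the field-case homology statement of Theorem~\ref{teo:HLH} and Corollary~\ref{cor:flatHHH}.
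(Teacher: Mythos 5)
Your proposal is correct and coincides with the paper's own argument: the corollary is obtained by noting that any algebra over a field is automatically flat over it, and then applying Theorem~\ref{teo:E-UnitaryHomolSpectralSeq}. Nothing further is needed.
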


    \section{Cohomology of $A\rtimes_\0 S$}\label{sec-cohom}

\subsection{Preliminary results} As in the case of homology, we need some preparation. Let, as above,  $M$ be an $A \rtimes_\0 S$-bimodule.

\begin{lem}\label{Gamma-hom-S-to-End(Hom)}
	There is a homomorphism $\Gamma : S \to \End_K (\Hom_{A^e}(A, M))$, $s\mapsto\Gamma _s$,
	given by  
	\begin{align*}
		\Gamma _s(f)(a) = s \cdot f(s\m \cdot a),
	\end{align*}
	for all $f \in \Hom_{A^e} (A, M)$ and $a\in A$, where $A$ is considered as a left $KS$-module via \cref{left KS-mod A} and $M$ is a left $KS$-module via \cref{left KS-mod M}.
\end{lem}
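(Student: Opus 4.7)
The plan is to verify, in order: (i) that for each $s\in S$ the formula for $\Gamma_s(f)$ really defines an element of $\Hom_{A^e}(A,M)$, so that $\Gamma_s$ is a well-defined self-map; (ii) that $\Gamma_s$ is $K$-linear; and (iii) that $\Gamma$ is unital and multiplicative, i.e.\ $\Gamma_{1_S}=\id$ and $\Gamma_{st}=\Gamma_s\Gamma_t$. The heart of the proof is (i); items (ii) and (iii) are essentially formal once the $KS$-module structures on $A$ and $M$ are in place.

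For (ii), $K$-linearity of $\Gamma_s$ is immediate from the $K$-bilinearity of the two $KS$-actions. For (iii), $\Gamma_{1_S}=\id$ since $1_S$ acts as the identity on both $A$ and $M$, and the multiplicativity follows from the one-line computation
\begin{align*}
(\Gamma_s\Gamma_t)(f)(a)=s\cdot\bigl(t\cdot f(t\m\cdot(s\m\cdot a))\bigr)=(st)\cdot f((st)\m\cdot a)=\Gamma_{st}(f)(a),
\end{align*}
using only that $A$ and $M$ are left $KS$-modules via \cref{left KS-mod A,left KS-mod M}.

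The substantive task is (i): I must show $\Gamma_s(f)(axb)=a\cdot\Gamma_s(f)(x)\cdot b$ for all $a,b,x\in A$. I would first record two auxiliary facts, each a direct calculation. (a) The map $s\cdot(-):A\to A$ is multiplicative, since $s\cdot(xy)=\0_s(1_{s\m}xy)=\0_s(1_{s\m}x)\,\0_s(1_{s\m}y)=(s\cdot x)(s\cdot y)$ using idempotence of $1_{s\m}$; however it is \emph{not} unit preserving, as $s\cdot 1_A=1_s$ in general. (b) The $KS$-action on $M$ is compatible with the $A$-bimodule action, in the sense that $s\cdot(a\cdot m)=(s\cdot a)\cdot(s\cdot m)$ and $s\cdot(m\cdot a)=(s\cdot m)\cdot(s\cdot a)$; this follows from the identity $\ol{1_s\dl_s}\cdot\ol{a\dl_1}=\ol{(s\cdot a)\dl_s}=\ol{(s\cdot a)\dl_1}\cdot\ol{1_s\dl_s}$ inside $A\rtimes_\0 S$ (valid because $s\cdot a\in 1_sA$) together with its right-side analogue. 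Combining (a) and (b) and using $A^e$-linearity of $f$, one obtains
\begin{align*}
\Gamma_s(f)(axb)=\bigl(s\cdot(s\m\cdot a)\bigr)\cdot\Gamma_s(f)(x)\cdot\bigl(s\cdot(s\m\cdot b)\bigr)=(1_sa)\cdot\Gamma_s(f)(x)\cdot(1_sb),
\end{align*}
where the last equality uses $s\cdot(s\m\cdot a)=\0_s\0_{s\m}(1_sa)=1_sa$.

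The remaining step, which is the only real obstacle, is to absorb the stray factors $1_s$ produced by the failure of $s\cdot(-)$ to preserve the unit. I would finish by proving the analogue of \cref{lemma:EmmanuelLemma(ii)} for $M$: every element of the form $s\cdot m$ satisfies $1_s\cdot(s\cdot m)=s\cdot m=(s\cdot m)\cdot 1_s$. Via the explicit formula for the $KS$-action on $M$, this reduces to the trivial identities $\ol{1_s\dl_1}\cdot\ol{1_s\dl_s}=\ol{1_s\dl_s}$ and $\ol{1_{s\m}\dl_{s\m}}\cdot\ol{1_s\dl_1}=\ol{1_{s\m}\dl_{s\m}}$ in $L(A,\0,S)$. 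Since $\Gamma_s(f)(x)$ lies in the image of $s\cdot(-)$ by definition, and since $1_s$ is central in $A$, this absorption yields $(1_sa)\cdot\Gamma_s(f)(x)\cdot(1_sb)=a\cdot\Gamma_s(f)(x)\cdot b$, completing the verification.
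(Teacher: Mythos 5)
Your proposal is correct and follows essentially the same route as the paper: both arguments reduce the $A^e$-linearity of $\Gamma_s(f)$ to computations with $\ol{1_s\dl_s}$, $\ol{a\dl_1}$ and $\ol{1_{s\m}\dl_{s\m}}$ inside $A\rtimes_\0 S$, and both verify $\Gamma_s\circ\Gamma_t=\Gamma_{st}$ by the same one-line use of the $KS$-module structures on $A$ and $M$. The only difference is bookkeeping: you isolate the compatibility $s\cdot(a\cdot m)=(s\cdot a)\cdot(s\cdot m)$ and then absorb the resulting $1_s$ factors via $s\cdot(s\m\cdot a)=1_sa$ and $1_s\cdot(s\cdot m)=s\cdot m$, whereas the paper orders its single chain of crossed-product identities so that no stray $1_s$ ever appears; both versions are sound.
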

\begin{proof}
	Let us first show that $\Gamma _s(f) \in \Hom_{A^e} (A, M)$. For all $a,b,c\in A$ we have
	\begin{align*}
		\Gamma _s (f) (bac) &= s \cdot f(s\m \cdot (bac))
		= s \cdot f (s\m  b \cdot s\m  a \cdot s\m  c)
		= s \cdot (s\m  b \cdot f(s\m  a) \cdot s\m  c)\\
		&\overset{\cref{left KS-mod M}}{=}\ol{1_s\dl_s}\cdot (s\m  b \cdot f(s\m  a) \cdot s\m  c)\cdot \ol{1_{s\m} \dl_{s\m}}\\
		&\overset{\cref{left KS-mod A}}{=}\ol{1_s\dl_s}\cdot (\0_{s\m} (1_sb) \cdot f(s\m  a) \cdot \0_{s\m} (1_sc))\cdot \ol{1_{s\m} \dl_{s\m}}\\
		&\overset{\cref{A-bimod M}}{=}\ol{1_s\dl_s}\cdot \ol{\0_{s\m}(1_sb)\dl_1} \cdot f(s\m  a) \cdot \ol{\0_{s\m} (1_sc)\dl_1}\cdot \ol{1_{s\m} \dl_{s\m}}\\
		&=\ol{\0_s\circ\0_{s\m}(1_sb)\dl_s} \cdot f(s\m  a) \cdot \ol{\0_{s\m} (1_sc)\dl_{s\m}}\\
		&=\ol{1_sb\dl_s} \cdot f(s\m  a) \cdot \ol{\0_{s\m} (1_sc)\dl_{s\m}}=\ol{b\dl_1}\cdot\left(\ol{1_s\dl_s} \cdot f(s\m  a) \cdot \ol{1_{s\m}\dl_{s\m}}\right)\cdot\ol{c\dl_1}\\
		&\overset{\cref{left KS-mod M}}{=}\ol{b\dl_1}\cdot\left(s \cdot f(s\m  a) \right)\cdot\ol{c\dl_1}\overset{\cref{A-bimod M}}{=}b\cdot\left(s \cdot f(s\m  a) \right)\cdot c=b\cdot\Gamma _s(a)\cdot c.
	\end{align*}
	Now we prove that $\Gamma _s\circ\Gamma _t = \Gamma _{st}$. For all $f \in \Hom_{A^e} (A, M)$ and $a\in A$ we have
	\begin{align*}
		\Gamma _s(\Gamma _t(f))(a) = s \cdot \Gamma _t(f)(s\m\cdot  a) &= s \cdot t \cdot f(t\m \cdot s\m \cdot a)= (st) \cdot f( (t\m s\m)\cdot  a)\\
		&=(st) \cdot f( (st)\m\cdot  a)= \Gamma _{st}(f)(a).
	\end{align*}	
\end{proof}


Thus, $\Gamma $ endows $\Hom_{A^e}(A, M)$ with a left $KS$-module structure via the action
\begin{align}\label{(sf)(a)=sf(s-inv-a)}
	(s\la f)(a)=s\la f(s\m\la a).
\end{align}


Hence, we can define the following left exact functors:
\begin{align*}
	T_1&:=\Hom_{A^e}(A,-): (A \rtimes_\0 S)^e\Mod\to KS\Mod,\\
	T_2&:=\Hom_{KS}(KE(S),-):KS\Mod\to K\Mod.
\end{align*}
We shall also need  the functor which determines the Hochschild cohomology of $A \rtimes_\0 S$ with coefficients in an $A \rtimes_\0 S$-bimodule:
\begin{align*}
	T:=\Hom_{(A \rtimes_\0 S)^e}(A \rtimes_\0 S,-) : (A \rtimes_\0 S)^e\Mod\to K\Mod.
\end{align*}

Let $X$ be a left $KS$-module. Then $X$ can be considered as a right $KE(S)$-module via
\begin{align}\label{x-cdot-b=b-cdot-x}
	x\cdot b:=b\cdot x,
\end{align}	
for all $x\in X$ and $b\in KE(S)$, since $KE(S)$ is commutative.
\begin{lem}
	Let $X$ be a left $KS$-module. Then $X\ot_{KE(S)}(A \rtimes_\0 S)$ is an $A \rtimes_\0 S$-bimodule with the actions \cref{right crossprod-action} and
	\begin{align}\label{a.dl_s(x-ot-b.dl_t)}
		\ol{a\dl_s}\la(x\ot_{KE(S)}\ol{b\dl_t})=s\la x\ot_{KE(S)}\ol{a\dl_s}\cdot\ol{b\dl_t}.
	\end{align}
\end{lem}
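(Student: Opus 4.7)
The plan is to mirror the argument used for Lemma~\ref{prop:bimodule structure X tensor cross}, with the roles of left and right swapped on $X$. The right action \cref{right crossprod-action} of $A\rtimes_\0 S$ touches only the second tensor factor and so is well-defined exactly as before; the real work lies in constructing the left action \cref{a.dl_s(x-ot-b.dl_t)} and then verifying that it commutes with the right one.

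First I would fix $a\dl_s \in L(A,\0,S)$ and verify that the mapping $\tl\psi_{a\dl_s}: X\times (A\rtimes_\0 S) \to X\ot_{KE(S)} (A\rtimes_\0 S)$, sending $(x,\ol{b\dl_u})$ to $s\la x \ot_{KE(S)} \ol{(a\dl_s)(b\dl_u)}$, is $KE(S)$-balanced. The crux is to check that, for any $e=tt\m$, applying $e$ on the $X$ side via \cref{x-cdot-b=b-cdot-x} agrees with applying $e$ on the crossed-product side via \cref{left KE(S)-action on cross}: using $x\cdot(tt\m)=(tt\m)\la x$ together with the factorization $stt\m = ((st)(st)\m)\cdot s$ moves the idempotent $(st)(st)\m$ across $\ot_{KE(S)}$, and then the identity $\0_s(1_{s\m}1_t)=1_{st}$ from \cref{theta on units} matches both sides to $s\la x \ot_{KE(S)} \ol{1_{st}(a\dl_s)(b\dl_u)}$. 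This yields a well-defined $K$-linear endomorphism $\psi_{a\dl_s}$ of $X\ot_{KE(S)}(A\rtimes_\0 S)$.

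The multiplicativity $\psi_{a\dl_s}\circ\psi_{b\dl_u}=\psi_{(a\dl_s)(b\dl_u)}$ is then immediate from $s\la(u\la x)=(su)\la x$ together with associativity in $L(A,\0,S)$, producing a left $L(A,\0,S)$-module structure. Next I would show that the generators of $\N$ act trivially: if $s\le t$ and $a\in 1_sA$, writing $s=ss\m t$ gives $s\la x = (ss\m)\la(t\la x)$; sliding the idempotent $ss\m$ across $\ot_{KE(S)}$ via \cref{x-cdot-b=b-cdot-x} and \cref{left KE(S)-action on cross}, combined with $1_s a = a$ and the equality $\ol{a\dl_s}=\ol{a\dl_t}$ in $A\rtimes_\0 S$, identifies $a\dl_s\la(x\ot\ol{b\dl_u})$ with $a\dl_t\la(x\ot\ol{b\dl_u})$. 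Thus the action descends to $A\rtimes_\0 S$, and commutativity with the right action \cref{right crossprod-action} is a short direct check using associativity in $A\rtimes_\0 S$.

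The main obstacle is the $KE(S)$-balance check in the first step: it requires carefully reconciling three distinct incarnations of an idempotent $tt\m$, namely the right $KE(S)$-action on $X$ induced from its left $KS$-structure via \cref{x-cdot-b=b-cdot-x}, the left $KE(S)$-action on $A\rtimes_\0 S$ given by \cref{left KE(S)-action on cross}, and the semigroup identity $stt\m = ((st)(st)\m)\cdot s$ in $S$. Once this translation is firmly in hand, every subsequent step reduces to a short manipulation using the rules already established in \cref{sec-comp-act} and the earlier part of the present section.
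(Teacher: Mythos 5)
Your proposal is correct and is essentially the argument the paper intends: the paper's own proof consists of the single remark that it is ``similar to the proof of Lemma~\ref{prop:bimodule structure X tensor cross}'', and your write-up is precisely that mirrored argument, with the key balancing identity $stt\m=((st)(st)\m)s$ playing the role that $tt\m s\m=s\m(stt\m s\m)$ plays there. No gaps.
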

\begin{proof}
	The proof is similar to the proof of \cref{prop:bimodule structure X tensor cross}.
\end{proof}

\begin{lem}\label{construction-of-gm}
	There is a $K$-linear map
	\begin{align*}
		\gm:\Hom_{KS}(X,\Hom_{A^e}(A,M))\to\Hom_{(A \rtimes_\0 S)^e}(X\ot_{KE(S)} (A \rtimes_\0 S) ,M)
	\end{align*}
	such that, for all $f\in\Hom_{KS}(X,\Hom_{A^e}(A,M))$,
	\begin{align}\label{gm(f)(x-ot-a.dl_s)=f_s(1_A)a.dl_s}
		\gm(f)(x\ot_{KE(S)}\ol{a\dl_s})=f_x(1_A)\cdot \ol{a\dl_s},
	\end{align}
	where $f_x:=f(x)\in\Hom_{A^e}(A,M)$.
\end{lem}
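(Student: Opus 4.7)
The plan is to realize $\gamma(f)$ as the $K$-linear map induced by the $KE(S)$-balanced map
\[
\tilde{\gamma}(f):X\times (A\rtimes_\0 S)\to M,\qquad (x,\ol{a\dl_s})\mapsto f_x(1_A)\la\ol{a\dl_s},
\]
and then check that the resulting $\gamma(f)$ is an $A\rtimes_\0 S$-bimodule morphism. The key structural observation, used throughout, is that since $f_x\in\Hom_{A^e}(A,M)$, for every $c\in A$ we have $c\la f_x(1_A)=f_x(c\la 1_A)=f_x(1_A\la c)=f_x(1_A)\la c$, i.e.\ $f_x(1_A)$ commutes with the image of $A$ inside $M$.

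First I would check that $\tilde{\gamma}(f)$ is $KE(S)$-balanced. Since $KE(S)$ is spanned as a $K$-module by its idempotents, and since the right action of $KE(S)$ on $X$ is given by~\cref{x-cdot-b=b-cdot-x} while the left action of $KE(S)$ on $A\rtimes_\0 S$ is~\cref{left KE(S)-action on cross}, it suffices to verify, for each $e=tt\m\in E(S)$,
\[
f_{e\la x}(1_A)\la\ol{a\dl_s}=f_x(1_A)\la\ol{1_ta\dl_s}.
\]
Using that $f$ is a $KS$-homomorphism one gets $f_{e\la x}=e\la f_x$, and then the definition~\cref{(sf)(a)=sf(s-inv-a)} together with $e\la 1_A=\0_e(1_e)=1_t$ and the $A^e$-linearity of $f_x$ gives $(e\la f_x)(1_A)=e\la f_x(1_t)=e\la(1_t f_x(1_A))$. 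By~\cref{left KS-mod M} and the fact that $\ol{1_t\dl_{tt\m}}=\ol{1_t\dl_1}$ this simplifies to $1_tf_x(1_A)1_t$, which in $M$ equals $1_tf_x(1_A)$ because $1_t$ commutes with $f_x(1_A)$. On the other hand, $f_x(1_A)\la\ol{1_ta\dl_s}=f_x(1_A)\la 1_t\la\ol{a\dl_s}=1_tf_x(1_A)\la\ol{a\dl_s}$, establishing the equality. This yields a well-defined $K$-linear map $\gamma(f):X\ot_{KE(S)}(A\rtimes_\0 S)\to M$ satisfying~\cref{gm(f)(x-ot-a.dl_s)=f_s(1_A)a.dl_s}.

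Next I would verify that $\gamma(f)$ respects both module structures. The right action is straightforward from~\cref{right crossprod-action} and the associativity of the $A\rtimes_\0 S$-action on $M$. For the left action, by~\cref{a.dl_s(x-ot-b.dl_t)} we must show
\[
f_{t\la x}(1_A)\la(\ol{b\dl_t}\cdot\ol{a\dl_s})=\ol{b\dl_t}\la(f_x(1_A)\la\ol{a\dl_s}).
\]
Exactly as above, one computes $f_{t\la x}(1_A)=(t\la f_x)(1_A)=\ol{1_t\dl_t}\la f_x(1_A)\la\ol{1_{t\m}\dl_{t\m}}$, using $t\m\la 1_A=1_{t\m}$, the $A^e$-linearity of $f_x$, and that $\ol{1_t\dl_t}\la\ol{1_{t\m}\dl_1}=\ol{1_t\dl_t}$. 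Substituting and using $\ol{1_{t\m}\dl_{t\m}}\la\ol{b\dl_t}=\0_{t\m}(b)$ (seen as an element of $A\sst A\rtimes_\0 S$), the left-hand side becomes $\ol{1_t\dl_t}\la f_x(1_A)\la\0_{t\m}(b)\la\ol{a\dl_s}$. Invoking the commutation $f_x(1_A)\la\0_{t\m}(b)=\0_{t\m}(b)\la f_x(1_A)$ inside $M$ and then $\ol{1_t\dl_t}\la\ol{\0_{t\m}(b)\dl_1}=\ol{\0_t(\0_{t\m}(b))\dl_t}=\ol{b\dl_t}$ (since $b\in 1_tA$) produces the right-hand side.

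Finally, the $K$-linearity of $\gamma$ itself is immediate: if $f,g\in\Hom_{KS}(X,\Hom_{A^e}(A,M))$ and $\lb\in K$, then $(\lb f+g)_x=\lb f_x+g_x$, so $\gamma(\lb f+g)(x\ot_{KE(S)}\ol{a\dl_s})=\lb f_x(1_A)\la\ol{a\dl_s}+g_x(1_A)\la\ol{a\dl_s}=\lb\gamma(f)(x\ot_{KE(S)}\ol{a\dl_s})+\gamma(g)(x\ot_{KE(S)}\ol{a\dl_s})$. The main technical obstacle is the left-module compatibility: the manipulations above rely simultaneously on the $KS$-action~\cref{(sf)(a)=sf(s-inv-a)}, on the description~\cref{left KS-mod M} of the $KS$-action on $M$, on the relation $\ol{1_s\dl_{ss\m}}=\ol{1_s\dl_1}$ coming from $\N$, and crucially on the centrality of $f_x(1_A)$ with respect to the image of $A$ in $M$.
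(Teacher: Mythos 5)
Your proposal is correct and follows essentially the same route as the paper's proof: define $\gm(f)$ via the $KE(S)$-balanced map $(x,\ol{a\dl_s})\mapsto f_x(1_A)\cdot\ol{a\dl_s}$, then verify the two $A\rtimes_\0 S$-module compatibilities, with the key identity $c\cdot f_x(1_A)=f_x(1_A)\cdot c$ (which the paper uses implicitly in the step $f_x(1_A)\cdot\0_{t\m}(b)=f_x(\0_{t\m}(b))=\0_{t\m}(b)\cdot f_x(1_A)$) doing the same work in both arguments.
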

\begin{proof}
	Let us see that, for a fixed 
	 $f\in\Hom_{KS}(X,\Hom_{A^e}(A,M))$, the map 
	\begin{align*}
		\gm^f: X\times (A \rtimes_\0 S) \to M,\ (x,\ol{a\dl_s})\mapsto f_x(1_A)\cdot \ol{a\dl_s},
	\end{align*}
	is balanced over $KE(S)$. Note that $f_{s\la x}=s\la f_x$ for all $s\in S$ and $x\in X$, since $f$ is a morphism of left $KS$-modules. Let $e\in E(S)$. Then
	\begin{align*}
		f_{x\cdot e}(1_A)&\overset{\cref{x-cdot-b=b-cdot-x}}{=}f_{e\cdot x}(1_A)=(e\cdot f_x)(1_A)\overset{\cref{(sf)(a)=sf(s-inv-a)}}{=}e\cdot f_x(e\cdot 1_A)\overset{\cref{EmmanuelLemma(i)}}{=}e\cdot f_x(1_e)
		\overset{\cref{EmmanuelLemma(ii)}}{=}1_ef_x(1_e)1_e\\
		&=f_x(1_e1_e1_e)=f_x(1_A1_e)=f_x(1_A)1_e\overset{\cref{A-bimod M}}{=}f_x(1_A)\cdot\ol{1_e\dl_1}.
	\end{align*}
	It follows that
	\begin{align*}
		\gm^f(x\cdot e,\ol{a\dl_s})&=f_{x\cdot e}(1_A)\cdot \ol{a\dl_s}=f_x(1_A)\cdot\ol{1_e\dl_1}\cdot \ol{a\dl_s}=f_x(1_A)\cdot\ol{1_ea\dl_s}\\
		& \overset{\cref{left KE(S)-action on cross}}{=}f_x(1_A)\cdot(e\la\ol{a\dl_s})=\gm^f(x,e\la\ol{a\dl_s}).
	\end{align*}
	Thus, $\gm^f$ is balanced over $KE(S)$, so that $\gm(f)$ given by \cref{gm(f)(x-ot-a.dl_s)=f_s(1_A)a.dl_s} is well-defined.
	
	Let us show that  $\gm(f)$ is an $A \rtimes_\0 S$-bimodule mapping. 
	We have
	\begin{align*}
		\gm(f)((x\ot_{KE(S)}\ol{a\dl_s})\cdot\ol{b\dl_t})&\overset{\cref{right crossprod-action}}{=}\gm(f)(x\ot_{KE(S)}(\ol{a\dl_s}\cdot\ol{b\dl_t}))\\
		&\overset{\cref{gm(f)(x-ot-a.dl_s)=f_s(1_A)a.dl_s}}{=}f_x(1_A)\cdot(\ol{a\dl_s}\cdot\ol{b\dl_t})=(f_x(1_A)\cdot \ol{a\dl_s})\cdot\ol{b\dl_t}\\
		&\overset{\cref{gm(f)(x-ot-a.dl_s)=f_s(1_A)a.dl_s}}{=}\gm(f)(x\ot_{KE(S)}\ol{a\dl_s})\cdot\ol{b\dl_t}.
	\end{align*}
	Furthermore, 
	\begin{align*}
		\gm(f)(\ol{b\dl_t}\la(x\ot_{KE(S)}\ol{a\dl_s}))&\overset{\cref{a.dl_s(x-ot-b.dl_t)}}{=}\gm(f)(t\cdot x\ot_{KE(S)}(\ol{b\dl_t}\cdot\ol{a\dl_s}))\\
		&\overset{\cref{gm(f)(x-ot-a.dl_s)=f_s(1_A)a.dl_s}}{=}f_{t\la x}(1_A)\cdot (\ol{b\dl_t}\cdot\ol{a\dl_s}).
	\end{align*}
	Since
	\begin{align*}
		f_{t\la x}(1_A)&=(t\la f_x)(1_A)\overset{\cref{(sf)(a)=sf(s-inv-a)}}{=}t\la f_x(t\m \la 1_A)\overset{\cref{left KS-mod A}}{=}t\la f_x(\0_{t\m}(1_t1_A))\\
		&\overset{\cref{left KS-mod M}}{=}\ol{1_t\dl_t}\cdot f_x(1_{t\m})\cdot\ol{1_{t\m}\dl_{t\m}},
	\end{align*}
	it follows that
	\begin{align*}
		\gm(f)(\ol{b\dl_t}\la(x\ot_{KE(S)}\ol{a\dl_s}))&=\ol{1_t\dl_t}\cdot f_x(1_{t\m})\cdot\ol{1_{t\m}\dl_{t\m}}\cdot (\ol{b\dl_t}\cdot\ol{a\dl_s})\\
		&=\ol{1_t\dl_t}\cdot f_x(1_{t\m}1_A)\cdot(\ol{1_{t\m}\dl_{t\m}}\cdot \ol{b\dl_t})\cdot\ol{a\dl_s}.
	\end{align*}
	Now,
	\begin{align*}
		\ol{1_t\dl_t}\cdot f_x(1_{t\m}1_A)\cdot(\ol{1_{t\m}\dl_{t\m}}\cdot \ol{b\dl_t})
		&=\ol{1_t\dl_t}\cdot 1_{t\m}\cdot f_x(1_A)\cdot \ol{\0_{t\m}(b)\dl_{t\m t}}\\
		&\overset{\cref{A-bimod M}}{=}(\ol{1_t\dl_t}\cdot \ol{1_{t\m}\dl_1})\cdot f_x(1_A)\cdot \ol{\0_{t\m}(b)\dl_1}\\
		&\overset{\cref{A-bimod M}}{=} \ol{\0_t(1_{t\m})\dl_t}\cdot (f_x(1_A)\cdot \0_{t\m}(b))\\
		&=\ol{1_t\dl_t}\cdot f_x(\0_{t\m}(b))=\ol{1_t\dl_t}\cdot \0_{t\m}(b)\cdot f_x(1_A)\\
		&\overset{\cref{A-bimod M}}{=}(\ol{1_t\dl_t}\cdot \ol{\0_{t\m}(b)\dl_1})\cdot f_x(1_A)\\
		&=\ol{(\0_t(\0_{t\m}(b))\dl_t}\cdot f_x(1_A)=\ol{b\dl_t}\cdot f_x(1_A).
	\end{align*}
	Thus,
	\begin{align*}
		\gm(f)(\ol{b\dl_t}\la(x\ot_{KE(S)}\ol{a\dl_s}))&=\ol{b\dl_t}\cdot f_x(1_A)\cdot\ol{a\dl_s}\overset{\cref{gm(f)(x-ot-a.dl_s)=f_s(1_A)a.dl_s}}{=}\ol{b\dl_t}\cdot\gm(f)(x\ot_{KE(S)}\ol{a\dl_s}),
	\end{align*}
	completing the proof that  $\gm(f)\in \Hom_{(A \rtimes_\0 S)^e}(X\ot_{KE(S)} (A \rtimes_\0 S) ,M).$
\end{proof}

\begin{lem}
	There is a $K$-linear map
	\begin{align*}
		\eta:\Hom_{(A \rtimes_\0 S)^e}(X\ot_{KE(S)}(A \rtimes_\0 S),M)\to\Hom_{KS}(X,\Hom_{A^e}(A,M))
	\end{align*}
	such that, for all $f\in\Hom_{(A \rtimes_\0 S)^e}(X\ot_{KE(S)}(A \rtimes_\0 S),M)$,
	\begin{align}\label{eta(f)_x(a)=f(x-ot-a.dl_1)}
		\eta(f)_x(a)=f(x\ot_{KE(S)}\ol{a\dl_1}).
	\end{align}
\end{lem}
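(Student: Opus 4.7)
The plan is to verify three things in the following order. First, for each $f$ and $x$, the formula \cref{eta(f)_x(a)=f(x-ot-a.dl_1)} defines an element $\eta(f)_x \in \Hom_{A^e}(A, M)$. Second, the assignment $x \mapsto \eta(f)_x$ is a homomorphism of left $KS$-modules, where the target carries the action \cref{(sf)(a)=sf(s-inv-a)} from \cref{Gamma-hom-S-to-End(Hom)}. Third, $f \mapsto \eta(f)$ is manifestly $K$-linear.

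For the first point I would start from $\eta(f)_x(bac) = f(x\ot_{KE(S)}\ol{bac\dl_1})$ and use \cref{a.dl_s(x-ot-b.dl_t),right crossprod-action} specialized to $s=t=1$ to rewrite the argument as $\ol{b\dl_1}\cdot(x\ot_{KE(S)}\ol{a\dl_1})\cdot\ol{c\dl_1}$. Since $f$ is $A\rtimes_\0 S$-bilinear and since, by \cref{A-bimod M}, the $A$-bimodule structure on $M$ is the restriction of the $A\rtimes_\0 S$-bimodule structure along $a\mapsto\ol{a\dl_1}$, applying $f$ then yields $b\cdot\eta(f)_x(a)\cdot c$, as desired.

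The expected main obstacle is the second point: verifying $\eta(f)_{s\cdot x} = s\cdot \eta(f)_x$ in $\Hom_{A^e}(A, M)$. Unfolding both sides via \cref{(sf)(a)=sf(s-inv-a),left KS-mod M,left KS-mod A}, the claim reduces to
\[
f(s\cdot x \ot_{KE(S)} \ol{a\dl_1}) = \ol{1_s\dl_s}\cdot f(x \ot_{KE(S)} \ol{\0_{s\m}(1_s a)\dl_1})\cdot\ol{1_{s\m}\dl_{s\m}}.
\]
I would pull the outer factors inside $f$ by its $A\rtimes_\0 S$-bilinearity and then simplify the resulting argument via \cref{a.dl_s(x-ot-b.dl_t),right crossprod-action}. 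Using $\0_s\circ\0_{s\m}|_{1_sA}=\id$ and the multiplication rule in $L(A,\0,S)$, the inner product contracts to $\ol{1_s a\dl_{ss\m}}$; since $ss\m\le 1_S$ and $1_s a \in 1_{ss\m}A$, this equals $\ol{1_s a\dl_1}$, giving $s\cdot x \ot_{KE(S)}\ol{1_s a\dl_1}$. Finally, $\ol{1_s a\dl_1} = (ss\m)\cdot\ol{a\dl_1}$ by \cref{left KE(S)-action on cross}, $KE(S)$-balancedness together with \cref{x-cdot-b=b-cdot-x} transfers $ss\m$ across the tensor, and $(ss\m)\cdot(s\cdot x)=s\cdot x$ (because $ss\m s = s$ in $S$) collapses the left factor back to $s\cdot x$. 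The right-hand side thus becomes $f(s\cdot x\ot_{KE(S)}\ol{a\dl_1}) = \eta(f)_{s\cdot x}(a)$.

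The main difficulty lies precisely in this chain of manipulations: one must carefully balance the left $KE(S)$-action on $A\rtimes_\0 S$ coming from \cref{left KE(S)-action on cross} against the right $KE(S)$-action on $X$ defined by $x\cdot e=e\cdot x$, and repeatedly exploit $ss\m\le 1_S$ to identify representatives modulo $\N$. Everything else amounts to routine bookkeeping inside the crossed product.
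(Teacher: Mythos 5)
Your proposal is correct and follows essentially the same route as the paper's proof: the same verification that $\eta(f)_x$ is $A^e$-linear by rewriting $\ol{bac\dl_1}$ via the bimodule actions on $X\ot_{KE(S)}(A\rtimes_\0 S)$, and the same chain of manipulations for $KS$-equivariance — contracting the inner product to $\ol{1_sa\dl_{ss\m}}=\ol{1_sa\dl_1}$, transferring $ss\m$ across the tensor by $KE(S)$-balancedness, and collapsing $(ss\m)\cdot(s\cdot x)=s\cdot x$. No gaps.
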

\begin{proof}
	Let us show that $\eta(f)_x$ is an $A^e$-module map. For all $a,b,c\in A$ we have
	\begin{align*}
		\eta(f)_x(bac)&\overset{\cref{eta(f)_x(a)=f(x-ot-a.dl_1)}}{=}f(x\ot_{KE(S)}\ol{bac\dl_1})=f(x\ot_{KE(S)}\ol{b\dl_1}\cdot\ol{a\dl_1}\cdot\ol{c\dl_1})\\
		&\overset{\cref{right crossprod-action}}{=}f((1\cdot x\ot_{KE(S)}\ol{b\dl_1}\cdot\ol{a\dl_1})\cdot\ol{c\dl_1})\overset{\cref{a.dl_s(x-ot-b.dl_t)}}{=}f(\ol{b\dl_1}\cdot(x\ot_{KE(S)}\ol{a\dl_1})\cdot\ol{c\dl_1})\\
		&=\ol{b\dl_1}\cdot f(x\ot_{KE(S)}\ol{a\dl_1})\cdot\ol{c\dl_1}\overset{\cref{A-bimod M}}{=}b\cdot f(x\ot_{KE(S)}\ol{a\dl_1})\cdot c\\
		&\overset{\cref{eta(f)_x(a)=f(x-ot-a.dl_1)}}{=}b\cdot \eta(f)_x(a)\cdot c.
	\end{align*}
	Thus, $\eta(f)_x\in\Hom_{A^e}(A,M)$.
	
	We now prove that $\eta(f)$ is a homomorphism of left $KS$-modules.
	For all $s\in S$, $x\in X$ and $a\in A$ we have
	\begin{align*}
		(s\cdot \eta(f)_x)(a)&\overset{\cref{(sf)(a)=sf(s-inv-a)}}{=}s\la \eta(f)_x(s\m\la a)\overset{\cref{eta(f)_x(a)=f(x-ot-a.dl_1)}}{=}s\la f(x\ot_{KE(S)}\ol{(s\m\la a)\dl_1})\\
		&\overset{\cref{left KS-mod A}}{=}s\la f(x\ot_{KE(S)}\ol{\0_{s\m}(1_sa)\dl_1})\\
		&\overset{\cref{left KS-mod M}}{=}\ol{1_s\dl_s}\la f(x\ot_{KE(S)}\ol{\0_{s\m}(1_sa)\dl_1})\cdot\ol{1_{s\m}\dl_{s\m}}\\
		&=f(\ol{1_s\dl_s}\la(x\ot_{KE(S)}\ol{\0_{s\m}(1_sa)\dl_1})\cdot\ol{1_{s\m}\dl_{s\m}})\\
		&\overset{\cref{right crossprod-action}}{=}f(\ol{1_s\dl_s}\la(x\ot_{KE(S)}\ol{\0_{s\m}(1_sa)\dl_1}\cdot\ol{1_{s\m}\dl_{s\m}}))\\
		&\overset{\cref{a.dl_s(x-ot-b.dl_t)}}{=}f(s\la x\ot_{KE(S)}\ol{1_s\dl_s}\cdot\ol{\0_{s\m}(1_sa)\dl_1}\cdot\ol{1_{s\m}\dl_{s\m}})\\
		&=f(s\la x\ot_{KE(S)}\ol{1_sa\dl_s}\cdot\ol{1_{s\m}\dl_{s\m}})=f(s\la x\ot_{KE(S)}\ol{1_sa\dl_{ss\m}})\\
		&=f(s\la x\ot_{KE(S)}\ol{1_sa\dl_1})\overset{\cref{left KE(S)-action on cross}}{=}f(s\la x\ot_{KE(S)}ss\m\la \ol{a\dl_1})\\
		&=f(s\la x\cdot ss\m\ot_{KE(S)}\ol{a\dl_1})\overset{\cref{x-cdot-b=b-cdot-x}}{=}f(ss\m(s\la x)\ot_{KE(S)}\ol{a\dl_1})\\
		&=f((ss\m s)\la x\ot_{KE(S)}\ol{a\dl_1})=f(s\la x\ot_{KE(S)}\ol{a\dl_1})	\overset{\cref{eta(f)_x(a)=f(x-ot-a.dl_1)}}{=}\eta(f)_{s\cdot x}(a).
	\end{align*}
	Thus, $\eta(f)\in\Hom_{KS}(X,\Hom_{A^e}(A,M))$, and so $\eta$ is well-defined.
\end{proof}

\begin{lem}
	The maps $\gm$ and $\eta$ are inverse to each other.
\end{lem}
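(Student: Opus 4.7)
The plan is to show directly that both compositions $\eta\circ\gm$ and $\gm\circ\eta$ are the identity, using only the explicit formulas \cref{gm(f)(x-ot-a.dl_s)=f_s(1_A)a.dl_s} and \cref{eta(f)_x(a)=f(x-ot-a.dl_1)} together with the defining linearity properties of the morphisms involved. No further structural input is needed; this is a routine verification.

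For the first composition, start with $f \in \Hom_{KS}(X,\Hom_{A^e}(A,M))$ and fix $x \in X$, $a \in A$. Unwinding the definitions gives
\[
\eta(\gm(f))_x(a) = \gm(f)(x\ot_{KE(S)}\ol{a\dl_1}) = f_x(1_A)\cdot \ol{a\dl_1}.
\]
Via the embedding $A \hookrightarrow A\rtimes_\0 S$ of \cref{A-bimod M} this is $f_x(1_A)\cdot a$ (right $A$-action on $M$). Using that $f_x \in \Hom_{A^e}(A,M)$, one has $f_x(1_A)\cdot a = f_x(1_A \cdot 1_A \cdot a) = f_x(a)$, so $\eta(\gm(f))_x = f_x$ and hence $\eta\circ\gm = \id$.

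For the second composition, take $g \in \Hom_{(A\rtimes_\0 S)^e}(X\ot_{KE(S)}(A\rtimes_\0 S),M)$ and compute, for $x\in X$ and $a \in 1_sA$,
\[
\gm(\eta(g))(x\ot_{KE(S)}\ol{a\dl_s}) = \eta(g)_x(1_A)\cdot \ol{a\dl_s} = g(x\ot_{KE(S)}\ol{1_A\dl_1})\cdot \ol{a\dl_s}.
\]
Since $g$ is a right $A\rtimes_\0 S$-module homomorphism, this equals $g\bigl((x\ot_{KE(S)}\ol{1_A\dl_1})\cdot \ol{a\dl_s}\bigr)$, and by \cref{right crossprod-action} the argument collapses to $x\ot_{KE(S)}\ol{a\dl_s}$. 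Hence $\gm(\eta(g)) = g$, which finishes the verification.

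The only place requiring any care is keeping the three distinct $A$-bimodule structures on $M$ consistent, namely the one induced by $a\mapsto \ol{a\dl_1}$ used in \cref{A-bimod M}, and making sure that the identity $f_x(a) = f_x(1_A)\cdot a$ is applied with respect to this same action, so that it matches the right $A\rtimes_\0 S$-action appearing in \cref{gm(f)(x-ot-a.dl_s)=f_s(1_A)a.dl_s}. No genuine obstacle arises; the construction has been engineered precisely so that these balance.
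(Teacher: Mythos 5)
Your verification is correct and follows essentially the same route as the paper's own proof: both compositions are checked pointwise from the defining formulas, using the right $A\rtimes_\0 S$-linearity of $g$ for $\gm\circ\eta$ and the $A^e$-linearity of $f_x$ together with $\ol{a\dl_1}$ acting as $a$ for $\eta\circ\gm$. The only difference is the order in which the two compositions are treated, which is immaterial.
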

\begin{proof}
	Let us show that $\gm\circ\eta=\id$. For all $f\in \Hom_{(A \rtimes_\0 S)^e}(X\ot_{KE(S)}(A \rtimes_\0 S),M)$ we have
	\begin{align*}
		\gm(\eta(f))(x\ot_{KE(S)}\ol{a\dl_s})&\overset{\cref{gm(f)(x-ot-a.dl_s)=f_s(1_A)a.dl_s}}{=}\eta(f)_x(1_A)\cdot \ol{a\dl_s}\overset{\cref{eta(f)_x(a)=f(x-ot-a.dl_1)}}{=}f(x\ot_{KE(S)}\ol{1_A\dl_1})\cdot \ol{a\dl_s}\\
		&=f((x\ot_{KE(S)}\ol{1_A\dl_1})\cdot \ol{a\dl_s})\overset{\cref{right crossprod-action}}{=}f(x\ot_{KE(S)}\ol{1_A\dl_1}\cdot \ol{a\dl_s})\\
		&=f(x\ot_{KE(S)}\ol{a\dl_s}),
	\end{align*}
	whence $\gm(\eta(f))=f$, as desired.
	
	We now show that $\eta\circ\gm=\id$. Take $g\in\Hom_{KS}(X,\Hom_{A^e}(A,M))$. For all $x\in X$ and $a\in A$ we have
	\begin{align*}
		\eta(\gm(g))_x(a)&\overset{\cref{eta(f)_x(a)=f(x-ot-a.dl_1)}}{=}\gm(g)(x\ot_{KE(S)}\ol{a\dl_1})\overset{\cref{gm(f)(x-ot-a.dl_s)=f_s(1_A)a.dl_s}}{=}g_x(1_A)\cdot \ol{a\dl_1}\\
		&\overset{\cref{A-bimod M}}{=}g_x(1_A)\cdot a=g_x(1_A\cdot a)=g_x(a),
	\end{align*}
	whence $\eta(\gm(g))=g$, as desired.
\end{proof}

\begin{prop}\label{nat-iso-Homs}
	\begin{enumerate}
		\item\label{contravar-Homs-nat-iso} For arbitrary $A \rtimes_\0 S$-bimodule $M$ the functors 
		\begin{align*}
			\Hom_{KS}(-,\Hom_{A^e}(A,M)):KS\Mod\to K\Mod
		\end{align*}
		and
		\begin{align*}
			\Hom_{(A \rtimes_\0 S)^e}(-\ot_{KE(S)}(A \rtimes_\0 S),M):KS\Mod\to K\Mod
		\end{align*}
		are naturally isomorphic.
		
		\item\label{covar-Homs-nat-iso} For arbitrary left $KS$-module $X$ the functors
		\begin{align*}
			\Hom_{KS}(X,\Hom_{A^e}(A,-)):(A \rtimes_\0 S)^e\Mod\to K\Mod
		\end{align*}
		and
		\begin{align*}
			\Hom_{(A \rtimes_\0 S)^e}(X\ot_{KE(S)}(A \rtimes_\0 S),-):(A \rtimes_\0 S)^e\Mod\to K\Mod
		\end{align*}
		are naturally isomorphic.
	\end{enumerate}
\end{prop}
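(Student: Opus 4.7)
The essential work has already been done in the three lemmas immediately preceding the proposition: for each pair $(X,M)$ consisting of a left $KS$-module $X$ and an $A\rtimes_\0 S$-bimodule $M$ they produce mutually inverse $K$-linear maps $\gm_{X,M}$ and $\eta_{X,M}$ between the two Hom-spaces in question. Thus the plan reduces to verifying that the collection $\{\gm_{X,M}\}$ is natural in $X$ (giving part (i)) and natural in $M$ (giving part (ii)). Both checks are diagram chases on generators of the form $x\ot_{KE(S)}\ol{a\dl_s}$ using the explicit formula \eqref{gm(f)(x-ot-a.dl_s)=f_s(1_A)a.dl_s}.

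For part (i), I would fix an $A\rtimes_\0 S$-bimodule $M$ and take an arbitrary homomorphism $\vf:X'\to X$ of left $KS$-modules. The induced map on the left-hand functor is pre-composition with $\vf$, while the induced map on the right-hand functor is pre-composition with $\vf\ot_{KE(S)}\id_{A\rtimes_\0 S}$. For any $f\in\Hom_{KS}(X,\Hom_{A^e}(A,M))$ and any generator $x'\ot_{KE(S)}\ol{a\dl_s}$, applying $\gm_{X',M}(f\circ\vf)$ yields $(f\circ\vf)_{x'}(1_A)\cdot\ol{a\dl_s}=f_{\vf(x')}(1_A)\cdot\ol{a\dl_s}$, which coincides with $\gm_{X,M}(f)(\vf(x')\ot_{KE(S)}\ol{a\dl_s})$ by the same formula. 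Hence the naturality square commutes.

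For part (ii), I would fix a left $KS$-module $X$ and take a morphism $\psi:M\to M'$ of $A\rtimes_\0 S$-bimodules. The induced maps are post-composition with $\psi_*:\Hom_{A^e}(A,M)\to\Hom_{A^e}(A,M')$ on the left-hand side and with $\psi$ itself on the right-hand side. Note first that $\psi_*$ is a map of left $KS$-modules: for $f\in\Hom_{A^e}(A,M)$ and $s\in S$, the equality $(s\cdot(\psi\circ f))(a)=s\cdot\psi(f(s\m\cdot a))=\psi(s\cdot f(s\m\cdot a))=(\psi\circ(s\cdot f))(a)$ follows from $\psi$ respecting the $A\rtimes_\0 S$-bimodule structure together with \cref{left KS-mod M}. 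Then for $g\in\Hom_{KS}(X,\Hom_{A^e}(A,M))$, evaluating $\gm_{X,M'}(\psi_*\circ g)$ on a generator gives $(\psi\circ g_x)(1_A)\cdot\ol{a\dl_s}=\psi(g_x(1_A))\cdot\ol{a\dl_s}=\psi(g_x(1_A)\cdot\ol{a\dl_s})=(\psi\circ\gm_{X,M}(g))(x\ot_{KE(S)}\ol{a\dl_s})$, whence the square commutes.

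I do not anticipate any serious obstacle: all formulas are $K$-linear in every variable, and the compatibility with the various module actions was verified in the preceding three lemmas. The mild subtlety is confirming that $\psi_*$ is $KS$-linear in part (ii), but this follows at once from $\psi$ being an $A\rtimes_\0 S$-bimodule morphism together with the definition \eqref{left KS-mod M} of the left $KS$-action on $M$ through $\Gamma^\0$.
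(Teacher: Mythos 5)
Your proposal is correct and follows essentially the same route as the paper: both reduce the statement to the two naturality squares for $\gm$ and verify them on generators $x\ot_{KE(S)}\ol{a\dl_s}$ via the formula $\gm(f)(x\ot_{KE(S)}\ol{a\dl_s})=f_x(1_A)\cdot\ol{a\dl_s}$. Your extra check that $\Hom_{A^e}(A,\psi)$ is $KS$-linear is a small point the paper leaves implicit, but it is not a different method.
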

\begin{proof} Write $\Lb = A \rtimes_\0 S.$

	\cref{contravar-Homs-nat-iso}. Take an arbitrary morphism $\vf:X\to X'$ of left $KS$-modules and consider the diagram
	\begin{align}\label{diagr-contravar-Homs}
		\begin{tikzcd}[column sep=3.5cm,ampersand replacement=\&]
			{\Hom_{KS}(X,\Hom_{A^e}(A,M))} \& {\Hom_{KS}(X',\Hom_{A^e}(A,M))} \\
			{\Hom_{\Lb^e}(X\ot_{KE(S)} \Lb ,M)} \& {\Hom_{\Lb^e}(X'\ot_{KE(S)}\Lb ,M),}
			\arrow[swap,"{\Hom_{KS}(\vf,\Hom_{A^e}(A,M))}", from=1-2, to=1-1]
			\arrow["{\Hom_{\Lb^e}(\vf\ot_{KE(S)}\Lb,M)}", from=2-2, to=2-1]
			\arrow[swap,"\gm", from=1-1, to=2-1]
			\arrow["\gm'", from=1-2, to=2-2]
		\end{tikzcd}
	\end{align}
	where $\gm'$ is defined as $\gm$ in \cref{construction-of-gm}, but with $X$ replaced by $X'$. For any $f\in \Hom_{KS}(X',\Hom_{A^e}(A,M))$ let $g$ be its image under $\Hom_{KS}(\vf,\Hom_{A^e}(A,M))$. Then 
	\begin{align*}
		\gm(g)(x\ot_{KE(S)}\ol{a\dl_s})\overset{\cref{gm(f)(x-ot-a.dl_s)=f_s(1_A)a.dl_s}}{=}g_x(1_A)\cdot\ol{a\dl_s}=f_{\vf(x)}(1_A)\cdot\ol{a\dl_s}.
	\end{align*}
	Since $\Hom_{\Lb^e}(\vf\ot_{KE(S)}\Lb,M)(\gm'(f))(x\ot_{KE(S)}\ol{a\dl_s})$ equals
	\begin{align*}
		\gm'(f)(\vf(x)\ot_{KE(S)}\ol{a\dl_s})=f_{\vf(x)}(1_A)\cdot\ol{a\dl_s},
	\end{align*}
	diagram \cref{diagr-contravar-Homs} commutes, which proves \cref{contravar-Homs-nat-iso}.
	
	\cref{covar-Homs-nat-iso}. Take an arbitrary morphism $\psi:M\to M'$ of $\Lb$-bimodules and consider the diagram
	\begin{align}\label{diagr-covar-Homs}
		\begin{tikzcd}[column sep=3.5cm,ampersand replacement=\&]
			{\Hom_{KS}(X,\Hom_{A^e}(A,M))} \& {\Hom_{KS}(X,\Hom_{A^e}(A,M'))} \\
			{\Hom_{\Lb^e}(X\ot_{KE(S)}\Lb,M)} \& {\Hom_{\Lb^e}(X\ot_{KE(S)}\Lb,M'),}
			\arrow["{\Hom_{KS}(X,\Hom_{A^e}(A,\psi))}", from=1-1, to=1-2]
			\arrow[swap,"{\Hom_{\Lb^e}(X\ot_{KE(S)}\Lb,\psi)}", from=2-1, to=2-2]
			\arrow[swap,"\gm", from=1-1, to=2-1]
			\arrow["\gm'", from=1-2, to=2-2]
		\end{tikzcd}
	\end{align}
	where $\gm'$ is as in \cref{construction-of-gm}, but with $M$ replaced by $M'$. Given a morphism $f\in \Hom_{KS}(X,\Hom_{A^e}(A,M))$, let $g=\Hom_{KS}(X,\Hom_{A^e}(A,\psi))(f)$. Then 
	\begin{align*}
		\gm'(g)(x\ot_{KE(S)}\ol{a\dl_s})\overset{\cref{gm(f)(x-ot-a.dl_s)=f_s(1_A)a.dl_s}}{=}g_x(1_A)\cdot\ol{a\dl_s}=\psi(f_x(1_A))\cdot\ol{a\dl_s}.
	\end{align*}
	Since $\Hom_{\Lb^e}(X\ot_{KE(S)}\Lb,\psi)(\gm(f))(x\ot_{KE(S)}\ol{a\dl_s})$ equals
	
	\begin{align*}
		\psi(\gm(f)(x \ot_{KE(S)}\ol{a\dl_s}))
\overset{\cref{gm(f)(x-ot-a.dl_s)=f_s(1_A)a.dl_s}}{=}  \psi(f_x(1_A)\cdot\ol{a\dl_s})=\psi(f_x(1_A))\cdot\ol{a\dl_s},
	\end{align*} 
		diagram \cref{diagr-covar-Homs} commutes, which proves \cref{covar-Homs-nat-iso}.
\end{proof}

\begin{cor} \label{T_2T_1-iso-T}
	The functors $T_2T_1$ and $T$ are naturally isomorphic.
\end{cor}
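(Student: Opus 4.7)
The plan is to deduce this corollary directly from the two already-established results: \cref{nat-iso-Homs}\cref{covar-Homs-nat-iso} and \cref{l_AsemidirSisoBsemidirS}. The argument is entirely parallel to the proof of \cref{cor:F2F1isoF} in the homological setting, only with $-\ot$ replaced by $\Hom$ in the appropriate variables.

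First, I would specialize \cref{nat-iso-Homs}\cref{covar-Homs-nat-iso} by taking $X = KE(S)$ as the fixed left $KS$-module. This gives a natural isomorphism of functors $(A\rtimes_\0 S)^e\Mod\to K\Mod$:
\[
\Hom_{KS}(KE(S),\Hom_{A^e}(A,-))\;\cong\;\Hom_{(A\rtimes_\0 S)^e}\bigl(KE(S)\ot_{KE(S)}(A\rtimes_\0 S),-\bigr).
\]
The left-hand side is by definition $T_2 T_1$.

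Next, by \cref{l_AsemidirSisoBsemidirS}, the map $\vf:A\rtimes_\0 S\to KE(S)\ot_{KE(S)}(A\rtimes_\0 S)$ sending $\ol{a\dl_t}\mapsto 1_{KE(S)}\ot_{KE(S)}\ol{a\dl_t}$ is an isomorphism of $A\rtimes_\0 S$-bimodules, hence of left $(A\rtimes_\0 S)^e$-modules. Applying the contravariant functor $\Hom_{(A\rtimes_\0 S)^e}(-,M)$ to $\vf$ for each $A\rtimes_\0 S$-bimodule $M$ produces a natural isomorphism
\[
\Hom_{(A\rtimes_\0 S)^e}\bigl(KE(S)\ot_{KE(S)}(A\rtimes_\0 S),-\bigr)\;\cong\;\Hom_{(A\rtimes_\0 S)^e}(A\rtimes_\0 S,-)=T.
\]
Composing these two natural isomorphisms yields $T_2T_1\cong T$, as required.

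There is no real obstacle here since all the technical content has been handled in \cref{nat-iso-Homs} and \cref{l_AsemidirSisoBsemidirS}. The only thing to double-check is that the isomorphism $\vf$ is indeed $(A\rtimes_\0 S)^e$-linear (not merely $K$-linear), but this is exactly the bimodule statement of \cref{l_AsemidirSisoBsemidirS}, and naturality in $M$ follows from functoriality of $\Hom_{(A\rtimes_\0 S)^e}(-,-)$ in its second variable.
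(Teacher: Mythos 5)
Your proposal is correct and follows essentially the same route as the paper's own proof: specialize \cref{nat-iso-Homs}\cref{covar-Homs-nat-iso} to $X=KE(S)$ and then transport along the $A\rtimes_\0 S$-bimodule isomorphism $\vf$ from \cref{l_AsemidirSisoBsemidirS} via the contravariant $\Hom$ functor, checking naturality in $M$. The paper merely makes the final naturality check explicit with a commuting square, which you correctly note follows from functoriality of $\Hom_{(A\rtimes_\0 S)^e}(-,-)$ in the second variable.
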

\begin{proof}  As above, denote 
$\Lb = A \rtimes_\0 S.$ Applying \cref{nat-iso-Homs}\cref{covar-Homs-nat-iso} with $X=KE(S)$ we see that $T_2T_1$ is naturally isomorphic to 
	\begin{align*}
		\Hom_{\Lb^e}(KE(S)\ot_{KE(S)}\Lb,-):\Lb^e\Mod\to K\Mod.
	\end{align*}
	Let $\vf:\Lb\to KE(S)\ot_{KE(S)}\Lb$ be the isomorphism of $\Lb$-bimodules from \cref{l_AsemidirSisoBsemidirS}. For any morphism of $\Lb$-bimodules $\psi:M\to M'$ consider the following diagram
	\begin{align}\label{diagr-Hom(Lb_M)-and-Hom(B-ot-Lb_M)}
		\begin{tikzcd}[column sep=3.5cm,ampersand replacement=\&]
			{\Hom_{\Lb^e}(\Lb,M)} \& {\Hom_{\Lb^e}(\Lb,M')} \\
			{\Hom_{\Lb^e}(KE(S)\ot_{KE(S)}\Lb,M)} \& {\Hom_{\Lb^e}(KE(S)\ot_{KE(S)}\Lb,M').}
			\arrow["{\Hom_{\Lb^e}(\Lb,\psi)}", from=1-1, to=1-2]
			\arrow[swap,"{\Hom_{\Lb^e}(KE(S)\ot_{KE(S)}\Lb,\psi)}", from=2-1, to=2-2]
			\arrow["{\Hom_{\Lb^e}(\vf,M)}", from=2-1, to=1-1]
			\arrow[swap,"{\Hom_{\Lb^e}(\vf,M')}", from=2-2, to=1-2]
		\end{tikzcd}
	\end{align}
	Given $f\in \Hom_{\Lb^e}(KE(S)\ot_{KE(S)}\Lb,M)$, we have
	\begin{align*}
		(\Hom_{\Lb^e}(\Lb,\psi)\circ\Hom_{\Lb^e}(\vf,M))(f)&=\psi\circ (f\circ\vf),\\
		(\Hom_{\Lb^e}(\vf,M')\circ\Hom_{\Lb^e}(KE(S)\ot_{KE(S)}\Lb,\psi))(f)&=(\psi\circ f)\circ\vf,
	\end{align*}
	so \cref{diagr-Hom(Lb_M)-and-Hom(B-ot-Lb_M)} establishes a natural isomorphism between $\Hom_{\Lb^e}(KE(S)\ot_{KE(S)}\Lb,-)$ and $\Hom_{\Lb^e}(\Lb,-)=T$. Thus, $T_2T_1\cong T$.
\end{proof}

In an analogy with the homology case we give the next:

\begin{lem}\label{lem:for0-cohomology} Let    $\0$ be a  unital action of an   inverse monoid $S$ on  an algebra $A$ over a commutative ring $K.$ Let  $M$ be an $A\rtimes_\0 S$-bimodule and write 
$$M^A = \{x\in M \; \mid \; a\cdot x =x\cdot a  \;\;\; \text{for all} \;\;\; a\in A\}.$$   Then $M^A$ is a left $KS$-submodule of $M$ and the
$K$-module mapping
 $${\psi}': \Hom_{A^e}(A,M) \to   M^A, \;\;\; f   \mapsto  f(1_A),$$  is an isomorphism of left $KS$-modules.
 \end{lem}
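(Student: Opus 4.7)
The plan is to exploit the classical bijection $\Hom_{A^e}(A,M)\cong M^A$ given by $f\mapsto f(1_A)$, with inverse sending $x\in M^A$ to the map $\phi_x:a\mapsto a\cdot x=x\cdot a$, and then verify that it intertwines the two relevant $KS$-actions. I will carry this out in four short steps.

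First I would check that $\psi'$ lands in $M^A$: for $f\in\Hom_{A^e}(A,M)$ and $a\in A$, $A^e$-linearity of $f$ gives $a\cdot f(1_A)=f(a)=f(1_A)\cdot a$, so $f(1_A)\in M^A$. Next I would construct the inverse $\phi\colon M^A\to \Hom_{A^e}(A,M)$ by $\phi(x)(a):=a\cdot x$; the identity $a\cdot x=x\cdot a$ valid for $x\in M^A$ together with associativity of the $A$-bimodule actions on $M$ makes $\phi(x)$ visibly $A^e$-linear, and the two compositions are immediately seen to be identities. At this stage we have a $K$-linear isomorphism.

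Then I would show that $M^A$ is closed under the $KS$-action \eqref{left KS-mod M}. Given $x\in M^A$, $s\in S$ and $a\in A$, the factorisation $\overline{a\cdot 1_s\delta_s}=\overline{1_s\delta_s}\cdot\overline{\theta_{s^{-1}}(1_sa)\delta_1}$ in $A\rtimes_\0 S$ (reflecting the fact that $\theta_s$ is an isomorphism $1_{s^{-1}}A\to 1_sA$) combined with the hypothesis that $x$ commutes with $\overline{\theta_{s^{-1}}(1_sa)\delta_1}$ lets one rewrite
\[
a\cdot(s\cdot x)=\overline{1_s\delta_s}\cdot x\cdot\overline{\theta_{s^{-1}}(1_sa)\delta_{s^{-1}}}=(s\cdot x)\cdot a.
\]

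Finally I would verify that $\psi'$ is $KS$-linear. By \eqref{(sf)(a)=sf(s-inv-a)} and \eqref{left KS-mod A}, one has $\psi'(s\cdot f)=s\cdot f(s^{-1}\cdot 1_A)=s\cdot f(1_{s^{-1}})=s\cdot(1_{s^{-1}}\cdot f(1_A))$; the key identity $\overline{1_s\delta_s}\cdot\overline{1_{s^{-1}}\delta_1}=\overline{1_s\delta_s}$, which follows from $\theta_s(1_{s^{-1}})=1_s$, then absorbs the middle $1_{s^{-1}}$ and reduces the expression to $s\cdot f(1_A)=s\cdot\psi'(f)$. The entire argument is formal; the only obstacle is the routine bookkeeping with the central idempotents $1_s$ and the convention $\overline{1_s\delta_{ss^{-1}}}=\overline{1_s\delta_1}$ used to pass between the $A$-bimodule and $KS$-module structures.
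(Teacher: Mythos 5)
Your proposal is correct and follows essentially the same route as the paper's proof: the same explicit inverse $x\mapsto(a\mapsto a\cdot x)$, the same commutation of $\overline{a\delta_1}$ past $\overline{1_s\delta_s}$ via $\theta_{s^{-1}}(1_sa)$ to show $M^A$ is a $KS$-submodule, and the same idempotent-absorption to get $KS$-linearity of $\psi'$ (you absorb $1_{s^{-1}}$ on the left via $\overline{1_s\delta_s}\cdot\overline{1_{s^{-1}}\delta_1}=\overline{1_s\delta_s}$, the paper on the right into $\overline{1_{s^{-1}}\delta_{s^{-1}}}$, which is an immaterial difference).
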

 \begin{proof} It is well-known and  readily seen  that ${\psi}'$ is a well-defined $K$-module isomorphism. Moreover, for any $x\in M^A, s \in S$ and $a \in A,$ in view of  \eqref{left KS-mod A},  \eqref{left KS-mod M} and \eqref{A-bimod M}, we see that
\begin{align*}
a\cdot (s\cdot x) &= 
 (\overline{a\delta _1} \overline{ 1_s \delta _s}) \cdot x \cdot (\overline{ 1_{s\m} \delta _{s\m}})=
(\overline{ 1_s \delta _s  \theta _{s\m}(a1_{s})\delta _1}) \cdot x \cdot (\overline{ 1_{s\m} \delta _{s\m}})\\
&= (\overline{ 1_s \delta _s}) \cdot 
(\theta _{s\m}(a1_{s}) \cdot x) \cdot 
(\overline{ 1_{s\m} \delta _{s\m}})=
(\overline{ 1_s \delta _s}) \cdot 
(  x\cdot \theta _{s\m}(a1_{s})) \cdot 
(\overline{ 1_{s\m} \delta _{s\m}})\\
&=(\overline{ 1_s \delta _s}) \cdot 
  x\cdot (\overline{\theta _{s\m}(a1_{s}) 
 1_{s\m} \delta _{s\m}})
 =(\overline{ 1_s \delta _s}) \cdot 
  x\cdot (\overline{ 1_{s\m} \delta _{s\m} a\delta_1})= (s\cdot x)\cdot a,
\end{align*} showing that $M^A$ is a $KS$-submodule of $M.$ Finally, for any $f \in  \Hom_{A^e}(A,M)$
and $s\in S,$ using \eqref{left KS-mod M}, \eqref{A-bimod M} and \eqref{(sf)(a)=sf(s-inv-a)} we have that 
\begin{align*}
{\psi}'(s\cdot f) &=  (s\cdot f)(1_A) = 
s\cdot f(s\m \cdot 1_A)= 
(\overline{ 1_s \delta _s} ) \cdot f(1_{s\m}) \cdot (\overline{ 1_{s\m} \delta _{s\m}})\\
&= (\overline{ 1_s \delta _s} ) \cdot (f(1_A)\cdot 1_{s\m}) \cdot (\overline{ 1_{s\m} \delta _{s\m}})
= (\overline{ 1_s \delta _s} ) \cdot f(1_A) \cdot (\overline{ 1_{s\m} \delta _{s\m}})\\
&= s \cdot f(1_A) = s\cdot {\psi}' (f),
\end{align*} proving that ${\psi}' $ is a left $KS$-module isomorphism.
	\end{proof}

\subsection{Cohomology of $A\rtimes_\0 S $ over a field $K$}  In this subsection we assume that $K$ is a field.


 Recall from  Definition~\ref{def:Inv Monoid (co)homol} that, given a left $KS$-module $N$, the cohomology of $S$ with values in $N$ is defined by $H^n(S,N)=\Ext^n_{KS}(KE(S),N).$

\begin{thrm}\label{E^pq_2=>H^(p+q)}
	Let $\0$ be a unital action of an inverse monoid $S$ on an algebra $A$  over a field $K.$
	Then for any $A\rtimes_\0 S$-bimodule $M$ there exists a third quadrant cohomology spectral sequence
	\begin{align*}
		E^{p,q}_2=H^p(S,(R^qT_1)M)\impl H^{p+q}(A\rtimes_\0 S,M).
	\end{align*} 
\end{thrm}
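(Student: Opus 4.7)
The plan is to apply Grothendieck's cohomology spectral sequence theorem~\cite[Theorem 10.47]{Rotman} to the composition of the left exact functors
\[
(A\rtimes_\0 S)^e\Mod\arr{T_1}KS\Mod\arr{T_2}K\Mod,
\]
yielding the third quadrant spectral sequence
\[
E^{p,q}_2=(R^pT_2)(R^qT_1)M\impl R^{p+q}(T_2T_1)M.
\]
Since $R^pT_2(-)=\Ext^p_{KS}(KE(S),-)=H^p(S,-)$ by \cref{def:Inv Monoid (co)homol} and $T_2T_1\cong T$ by \cref{T_2T_1-iso-T} (so that $R^{p+q}(T_2T_1)(M)\cong H^{p+q}(A\rtimes_\0 S,M)$), this will immediately give the desired spectral sequence once the hypotheses of Grothendieck's theorem are checked.

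The only nontrivial hypothesis is that $T_1$ sends injective $(A\rtimes_\0 S)^e$-modules to right $T_2$-acyclic $KS$-modules, i.e.\ that
\[
\Ext^n_{KS}(KE(S),\Hom_{A^e}(A,I))=0\qquad\text{for all }n\ge 1
\]
whenever $I$ is an injective $A\rtimes_\0 S$-bimodule. The key step is to mimic the homological argument using \cref{nat-iso-Homs}\cref{covar-Homs-nat-iso} (with $X=KE(S)$) and the dual of \cref{lem:exactfunctor}: since $K$ is a field, $KE(S)$ is von Neumann regular, so every $KE(S)$-module is flat, and in particular the functor $-\ot_{KE(S)}(A\rtimes_\0 S)$ is exact. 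Concretely, take a projective resolution $Q_\bullet\to KE(S)$ in $KS\Mod$. Then $Q_\bullet\ot_{KE(S)}(A\rtimes_\0 S)$ is exact at every positive degree (by flatness), and applying $\Hom_{(A\rtimes_\0 S)^e}(-,I)$ preserves this exactness because $I$ is injective. Using the naturality supplied by \cref{nat-iso-Homs}\cref{contravar-Homs-nat-iso}, we obtain
\begin{align*}
\Ext^n_{KS}(KE(S),\Hom_{A^e}(A,I))
&\cong R^n\Hom_{KS}(-,\Hom_{A^e}(A,I))(KE(S))\\
&\cong R^n\Hom_{(A\rtimes_\0 S)^e}(-\ot_{KE(S)}(A\rtimes_\0 S),I)(KE(S))\\
&= H^n\Hom_{(A\rtimes_\0 S)^e}(Q_\bullet\ot_{KE(S)}(A\rtimes_\0 S),I)=0
\end{align*}
for all $n\ge 1$, as required.

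With the acyclicity in hand, Grothendieck's theorem produces the first quadrant spectral sequence (contravariant in the homological convention, i.e.\ third quadrant in cohomological indexing)
\[
E^{p,q}_2=H^p(S,(R^qT_1)M)\impl H^{p+q}(A\rtimes_\0 S,M).
\]
The main technical obstacle is exactly the verification of the $T_2$-acyclicity of $T_1(I)$; everything else is an assembly of already established natural isomorphisms (\cref{T_2T_1-iso-T,nat-iso-Homs}) together with the field hypothesis, which ensures the exactness of $-\ot_{KE(S)}(A\rtimes_\0 S)$ via \cref{lem:exactfunctor}.
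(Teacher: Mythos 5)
Your proposal is correct and follows essentially the same route as the paper: apply Grothendieck's theorem \cite[Theorem 10.47]{Rotman} to $T_2\circ T_1$, using \cref{T_2T_1-iso-T} for $T_2T_1\cong T$, and verify the $T_2$-acyclicity of $T_1(I)$ by combining the natural isomorphism of \cref{nat-iso-Homs} with the exactness of $-\ot_{KE(S)}(A\rtimes_\0 S)$ coming from the von Neumann regularity of $KE(S)$ over a field (\cref{lem:exactfunctor}). The only cosmetic difference is which part of \cref{nat-iso-Homs} is invoked for the term-by-term identification of the complexes; your appeal to the contravariant version \cref{contravar-Homs-nat-iso} is the appropriate one, since the varying argument is the projective resolution in $KS\Mod$.
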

\begin{proof}We claim  that the functor $T_1$ sends injective 
	$(A \rtimes_\0 S)^e$-modules to right $T_2$-acyclic $KS$-modules. Indeed, let $Q$ be an injective $(A \rtimes_\0 S)^e$-module and $P_\bullet\to KE(S)$ be a projective resolution of $KE(S)$ in $KS\Mod$. In particular, the functor 
	$\Hom  _{(A \rtimes_\0 S)^e}(-, Q)$ is exact. 
	 Recall that, since $K$ is a filed, the $K$-algebra  $KE(S)$ is  von Neumann regular, and therefore $A \rtimes_\0 S$ is  a flat $KE(S)$-module. Hence, the functor 
	$-\ot_{KE(S)} (A \rtimes_\0 S)$ is also exact. Then 
	\begin{align*}
		(R^nT_2)(T_1(Q))&=\Ext^n_{KS}(KE(S),\Hom_{A^e}(A,Q))=H^n(\Hom_{KS}(P_\bullet,\Hom_{A^e}(A,Q))).
	\end{align*}
	By \cref{nat-iso-Homs}\cref{covar-Homs-nat-iso} the latter equals $H^n(\Hom_{(A \rtimes_\0 S)^e}(P_\bullet\ot_{KE(S)}(A \rtimes_\0 S),Q))$, which is $0$ for all $n\ge 1$, because $\Hom_{(A \rtimes_\0 S)^e}(-,Q)$ and $-\ot_{KE(S)}(A \rtimes_\0 S)$ are exact. This proves our claim.
	
	Next, since  the functor $T_2$ is left exact, in view of the natural isomorphism  $T_2T_1\cong T,$ provided  by \cref{T_2T_1-iso-T},  and
	\begin{align*}
		(R^nT)M=\Ext^n_{(A\rtimes_\0 S)^e}(A\rtimes_\0 S,M)=H^n(A\rtimes_\0 S,M),
	\end{align*}
 we obtain from \cite[Theorem 10.47]{Rotman}  the third quadrant Grothendieck spectral sequence
    \begin{equation}\label{eq:CohomSpectralSeqFromRotman}
        E_2^{p,q} = (R^p T_2) (R^q T_1) M \Rightarrow H^{p+q}(A\rtimes_\0 S,M). 
        \end{equation} 
	 Since by definition of cohomology of $S$, we have that 
    $$(R^p T_2) (R^q T_1) M = \operatorname{Ext}_{K S}^p (K E(S), (R^q F_1)M  ) =
    H^p(S, (R^q F_1)M  ),
    $$  the spectral sequence \eqref{eq:CohomSpectralSeqFromRotman} takes the desired form.
\end{proof}

\begin{cor}\label{cor:FlatCondition}
	Under the assumptions of \cref{E^pq_2=>H^(p+q)},  assume that $(A\rtimes_\0 S)^e$ is flat as a right $A^e$-module. Then for any $A\rtimes_\0 S$-bimodule $M$ there exists a third quadrant cohomology spectral sequence
	\begin{align*}
		E^{p,q}_2=H^p(S,H^q(A,M))\impl H^{p+q}(A\rtimes_\0 S,M).
	\end{align*} 
\end{cor}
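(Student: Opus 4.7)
The plan is to invoke \cref{E^pq_2=>H^(p+q)} and reduce it to showing that, under the flatness hypothesis, the derived functors $(R^q T_1)M$ coincide with the Hochschild cohomology groups $H^q(A,M) = \Ext^q_{A^e}(A,M)$. By \cref{rem:left R^e on (R')^e}, the natural algebra homomorphism $A^e \to (A\rtimes_\0 S)^e$ endows $(A\rtimes_\0 S)^e$ with the structure of a right $A^e$-module, which is assumed flat.

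The main step is to verify a standard change-of-rings principle: restriction along $A^e \to (A\rtimes_\0 S)^e$ sends injective $(A\rtimes_\0 S)^e$-modules to injective $A^e$-modules. The argument is the classical one via the Hom-tensor adjunction
\[
\Hom_{A^e}(X,Q) \cong \Hom_{(A\rtimes_\0 S)^e}((A\rtimes_\0 S)^e \otimes_{A^e} X,Q)
\]
for any $A^e$-module $X$ and any $(A\rtimes_\0 S)^e$-module $Q$. Since the functor $(A\rtimes_\0 S)^e \otimes_{A^e} -$ is exact by the flatness hypothesis, and $\Hom_{(A\rtimes_\0 S)^e}(-,Q)$ is exact when $Q$ is injective over $(A\rtimes_\0 S)^e$, the composite $\Hom_{A^e}(-,Q)$ is exact, so $Q$ is injective as an $A^e$-module.

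Having established this, for any $A\rtimes_\0 S$-bimodule $M$ take an injective resolution $M \to Q^\bullet$ in $(A\rtimes_\0 S)^e\Mod$. By the preceding paragraph, $Q^\bullet$ is also a complex of injective $A^e$-modules, and its exactness does not depend on the module structure. Thus $M \to Q^\bullet$ is an injective resolution in $A^e\Mod$ as well, and
\[
(R^q T_1)M = H^q(\Hom_{A^e}(A,Q^\bullet)) = \Ext^q_{A^e}(A,M) = H^q(A,M).
\]
Substituting into the spectral sequence of \cref{E^pq_2=>H^(p+q)} yields the desired form. I expect no serious obstacles: the only subtle point is confirming the adjunction/change-of-rings argument, which is routine given that the flatness hypothesis is precisely what makes the left adjoint to restriction exact.
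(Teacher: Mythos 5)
Your proposal is correct and follows essentially the same route as the paper: both reduce the statement to showing that, under the flatness hypothesis, restriction along $A^e\to(A\rtimes_\0 S)^e$ preserves injectives, so that an injective resolution of $M$ over $(A\rtimes_\0 S)^e$ computes $\Ext^q_{A^e}(A,M)=H^q(A,M)=(R^qT_1)M$. The only difference is that the paper cites this change-of-rings fact from Lam (Corollary 3.6A) while you prove it inline via the tensor--Hom adjunction, which is precisely the standard proof of that citation.
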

\begin{proof}
	Since  $(A\rtimes_\0 S)^e$ is a flat right $A^e$-module, it follows from \cite[Corollary 3.6A]{Lam} that any injective left $(A\rtimes_\0 S)^e$-module is an injective left $A^e$-module. Thus, any injective  resolution of $M$ in $(A\rtimes_\0 S)^e\Mod$ is an injective  resolution of $M$ in $A^e\Mod$. Consequently, the right derived functor of $T_1$ computes the Hochschild cohomology of $A$ with values in $M$, i.e.
	\begin{align*}
		(R^qT_1)M=\Ext^q_{A^e}(A,M)=H^q(A,M),
	\end{align*}
	and the result follows from \cref{E^pq_2=>H^(p+q)}.
\end{proof}

\begin{lem}\label{lem:0-cohomology} Let $\0$ be a unital action of an  inverse monoid $S$ on  a $K$-algebra $A$ over a field $K$ and $M$ an $A\rtimes_\0 S$-bimodule. Assume that 
	$(A\rtimes_\0 S)^e $ is flat as a right $A^e$-module. Then there is a  left $KS$-module isomorphism
	$$H^0(A,M)  \cong M^A.  $$
\end{lem}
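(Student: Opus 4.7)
The plan is to mirror the argument used for Lemma~\ref{lem:0-homology}, replacing flat resolutions by injective ones and $F_1$ by $T_1$. Since Lemma~\ref{lem:for0-cohomology} already gives us a left $KS$-module isomorphism $\Hom_{A^e}(A,M) \cong M^A$, the whole problem reduces to identifying $H^0(A,M)$ with $\Hom_{A^e}(A,M)$ as left $KS$-modules, not merely as $K$-modules.

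First, I would choose an injective resolution
\[
0 \to M \to Q^0 \to Q^1 \to \cdots
\]
of $M$ in the category $(A\rtimes_\0 S)^e\Mod$. The flatness assumption on $(A\rtimes_\0 S)^e$ as a right $A^e$-module lets us invoke \cite[Corollary~3.6A]{Lam} exactly as in the proof of Corollary~\ref{cor:FlatCondition}: each $Q^i$ is then also injective as a left $A^e$-module, so the deleted complex $0 \to Q^0 \to Q^1 \to \cdots$ is an injective resolution of $M$ in $A^e\Mod$. Consequently $H^q(A,M) = \Ext^q_{A^e}(A,M)$ may be computed as the $q$-th cohomology of the complex $T_1(Q^\bullet)$.

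Next, by Lemma~\ref{Gamma-hom-S-to-End(Hom)}, $T_1$ is a functor to $KS\Mod$, so each $T_1(Q^i)$ is a left $KS$-module and each coboundary is a morphism of left $KS$-modules. Applying $T_1$ to the left-exact sequence $0 \to M \to Q^0 \to Q^1$ yields the exact sequence of left $KS$-modules
\[
0 \to \Hom_{A^e}(A,M) \to T_1(Q^0) \to T_1(Q^1),
\]
so the kernel of $T_1(Q^0) \to T_1(Q^1)$ is precisely $\Hom_{A^e}(A,M)$ equipped with its natural left $KS$-action coming from~\eqref{(sf)(a)=sf(s-inv-a)}. Hence $H^0(A,M) \cong \Hom_{A^e}(A,M)$ as left $KS$-modules, and combining this with Lemma~\ref{lem:for0-cohomology} gives the desired $KS$-module isomorphism $H^0(A,M) \cong M^A$.

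The only subtlety to watch is that the $KS$-structure on $H^0(A,M)$ does not depend on a choice of injective resolution in $A^e\Mod$ alone; that is why we must start with a resolution in $(A\rtimes_\0 S)^e\Mod$ (which the flatness hypothesis reuses as an $A^e$-injective resolution), guaranteeing that the coboundaries are $KS$-linear and that the resulting kernel acquires the correct $KS$-action.
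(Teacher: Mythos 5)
Your proposal is correct and follows essentially the same route as the paper's own proof: both reduce the problem via Lemma~\ref{lem:for0-cohomology} to identifying $H^0(A,M)$ with $\Hom_{A^e}(A,M)$ as left $KS$-modules, take an injective resolution of $M$ in $(A\rtimes_\0 S)^e\Mod$, use the flatness hypothesis (as in Corollary~\ref{cor:FlatCondition}) to see that it is also an injective resolution in $A^e\Mod$, and then apply the left exact functor $T_1$ to obtain the kernel description of $H^0(A,M)$ as a $KS$-module.
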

\begin{proof} In view of Lemma~\ref{lem:for0-cohomology} it is enough to establish a left $KS$-module isomorphism between $H^0(A,M) $ and $\Hom_{A^e}(A,M).$   An isomorphism between these $K$-spaces is well-known and we will easily  show that  it is a  $KS$-isomorphism.

Let 
\begin{equation}\label{eq:InjResolution}
0\to M \to Q_0 \stackrel{d^0}{\to} Q_1 \stackrel{d^1}{\to}  \ldots
 \end{equation} be 
an injective resolution of $M$ in $(A\rtimes_\0 S)^e\Mod.$ Applying to the exact sequence 
$0\to M \to Q_0 \stackrel{d^0}{\to} Q_1$  the left exact functor  $T_1,$ we obtain the exact sequence  of left $KS$-modules and  $KS$-homomorphisms
$$0\to \Hom_{A^e}(A, M) \to   \Hom_{A^e}(A, Q_0) \stackrel{\Hom_{A^e}(A,d^0)}{\longrightarrow }  \Hom_{A^e}(A,  Q_1) .$$
   Then, as in the proof Corollary~\ref{cor:FlatCondition}, we see that the complex \eqref{eq:InjResolution} is also  an injective resolution of $M$ in $A^e\textbf{-Mod}$ and consequently  
 $$H^0(A,M) = {\rm ker}\, (\Hom_{A^e}(A,d^0)).$$
 This gives   the desired isomorphism between the left $KS$-modules $H^0(A,M)$ and  $\Hom_{A^e}(A,M).$ 
\end{proof}

\begin{cor}\label{rem:separableCohom}
    Under the conditions of \cref{E^pq_2=>H^(p+q)}, assume that $A$ is separable over $K.$ Then  for any $A\rtimes_\0 S$-bimodule $M$ there is  an isomorphism
    \[
         H^n(S, M^A) \cong H^{n}(A\rtimes_\0 S, M), 
    \] where $M^A$ is the $K$-submodule of invariants of $M,$ as defined in Lemma~\ref{lem:for0-cohomology}.
\end{cor}
\begin{proof}
	Since  $A$ is separable,   $A^e$ is semisimple and any left or right  $A^e$-module is projective.  In particular,  $(A\rtimes_\0 S)^e$ is flat as a right $A^e$-module, and we may apply Corollary~\ref{cor:FlatCondition}. Moreover, $A$ is projective as a left  $A^e$-module and, consequently,  the spectral sequence collapses on the $p$-axis, so that  by 
	\cite[Proposition 10.26]{Rotman} we obtain an isomorphism
	$$ H^n(S, H^0(A,M)) \cong H^{n}(A\rtimes_\0 S, M).  $$ Finally, by Lemma~\ref{lem:0-cohomology},  
	the left $K S$-modules  $H^0(A,M)$ and $M^A$ are isomorphic.
\end{proof}


\subsection{Cohomology of $A\rtimes_\0 S $ with $E$-unitary $S$}
 As we did in the case of homology, dealing with an   $E$-unitary $S$ we will abandon  the restriction on $K$ to be a field, replacing it by the flatness of $A$  over $K$ when needed. 

\begin{prop}\label{prop:T1sendInjtoT2acylic}
   Let $\0$ be a unital action of an $E$-unitary inverse monoid $S$ on an algebra $A$  over a commutative ring $K.$ 
   Then  $T_1$ sends injective $(A\rtimes_\0 S)^e$-modules to right $T_2$-acyclic modules.
\end{prop}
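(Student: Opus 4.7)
The plan is to mirror the strategy of Proposition 5.7 (the ``homological'' acyclicity statement), but replacing the right exactness of $-\otimes_{KE(S)}(A\rtimes_\0 S)$ with the left exactness of $\Hom_{(A\rtimes_\0 S)^e}(-,Q)$, and using the adjunction isomorphism in Proposition 7.5(i) instead of Proposition 5.10.

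First I would unfold the definition:
\begin{align*}
(R^nT_2)(T_1(Q))=\Ext^n_{KS}(KE(S),\Hom_{A^e}(A,Q))
\end{align*}
and fix a projective resolution $P_\bullet\to KE(S)$ in $KS\Mod$. Then $(R^nT_2)(T_1(Q))$ is the $n$-th cohomology of the complex $\Hom_{KS}(P_\bullet,\Hom_{A^e}(A,Q))$. Invoking the natural isomorphism in Proposition 7.5(i) (with $M=Q$ fixed and the first variable running over $P_\bullet$), this complex is identified with
\begin{align*}
\Hom_{(A\rtimes_\0 S)^e}\bigl(P_\bullet\otimes_{KE(S)}(A\rtimes_\0 S),Q\bigr).
\end{align*}
Since $Q$ is injective in $(A\rtimes_\0 S)^e\Mod$, the functor $\Hom_{(A\rtimes_\0 S)^e}(-,Q)$ is exact, so it will suffice to show that the complex $P_\bullet\otimes_{KE(S)}(A\rtimes_\0 S)$ is exact in positive degrees (after removing $KE(S)\otimes_{KE(S)}(A\rtimes_\0 S)\cong A\rtimes_\0 S$).

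The $E$-unitary hypothesis enters precisely at this point. By Corollary 2.12, $KS$ is projective as a (left or right) $KE(S)$-module; hence, by transitivity of projectivity, each projective left $KS$-module $P_i$ is projective, in particular flat, when regarded as a right $KE(S)$-module (using the commutativity of $KE(S)$ to identify left and right structures via \cref{x-cdot-b=b-cdot-x}). Consequently $P_\bullet\to KE(S)$ is also a flat resolution of $KE(S)$ in $\rMod KE(S)$, so the homology of $P_\bullet\otimes_{KE(S)}(A\rtimes_\0 S)$ computes $\operatorname{Tor}^{KE(S)}_n(KE(S),A\rtimes_\0 S)$, which vanishes for $n\ge 1$ since $KE(S)$ is a projective module over itself. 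Thus the augmented complex $\cdots\to P_1\otimes_{KE(S)}(A\rtimes_\0 S)\to P_0\otimes_{KE(S)}(A\rtimes_\0 S)\to A\rtimes_\0 S\to 0$ is exact, and applying $\Hom_{(A\rtimes_\0 S)^e}(-,Q)$ yields the required vanishing $(R^nT_2)(T_1(Q))=0$ for all $n\ge 1$.

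The only subtle point I expect is making explicit the transfer between left and right $KE(S)$-module structures on the $P_i$ so that the tensor product $P_\bullet\otimes_{KE(S)}(A\rtimes_\0 S)$ agrees (via Proposition 7.5(i)) with the one appearing in the Hom-tensor adjunction; this is routine thanks to the commutativity of $KE(S)$ and the convention in \cref{x-cdot-b=b-cdot-x}. No new combinatorial input about $S$ is needed beyond Corollary 2.12.
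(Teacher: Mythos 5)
Your proposal is correct and follows essentially the same route as the paper: unfold $(R^nT_2)(T_1(Q))$ via a projective resolution of $KE(S)$, transport the complex through the natural isomorphism of \cref{nat-iso-Homs}\cref{contravar-Homs-nat-iso}, establish exactness of $P_\bullet\otimes_{KE(S)}(A\rtimes_\0 S)$ in positive degrees from \cref{cor:KS pojective over KR(S)}, and finish using exactness of $\Hom_{(A\rtimes_\0 S)^e}(-,Q)$ for injective $Q$. The only cosmetic difference is that you re-derive inline (in flat-resolution language) what the paper isolates as \cref{lem:acyclic}.
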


\begin{proof} Let $Q$ be an arbitrary injective $(A\rtimes_\0 S)^e$-module.
    We need to show that
    \[
        (R^n T_2)(\operatorname{Hom}_{A^e}(A,Q)) = 0 ,
    \] for all $n > 0.$
     Notice that
    \begin{align*}
        (R^n T_2)(\operatorname{Hom}_{A^e}(A,Q))
        &= R^n(\operatorname{Hom}_{K S}(K E(S),-))(\operatorname{Hom}_{A^e}(A,Q)) \\
        &\cong R^n(\operatorname{Hom}_{K S}(-,\operatorname{Hom}_{A^e}(A,Q)))(K E(S)) \\
       (\text{by Proposition \ref{nat-iso-Homs} (i)}) &\cong R^n(\operatorname{Hom}_{(A\rtimes_\0 S)^e}(- \otimes_{K E(S)} (A\rtimes_\0 S), Q))(K E(S)).
    \end{align*}
    Take a projective resolution  $P_{\bullet} \to K E(S)$  of $K E(S)$ in $K S$-\textbf{Mod}. Then,
    \[
        (R^n T_2)(\operatorname{Hom}_{A^e}(A,Q)) \cong H^n \left( \operatorname{Hom}_{(A\rtimes_\0 S)^e}(P_\bullet \otimes_{K E(S)} (A\rtimes_\0 S), Q)\right).
    \]
    Observe  that   Lemma~\ref{lem:acyclic} states that $H_n(P_{\bullet}\otimes_{K E(S)} (A\rtimes_\0 S))=0$ for all $n\geq 1,$ which means that the complex  $P_{\bullet}\otimes_{K E(S)} (A\rtimes_\0 S)$ is exact at the $n$-th term for all $n\ge 1$. Then the chain complex $\operatorname{Hom}_{(A\rtimes_\0 S)^e}(P_\bullet \otimes_{K E(S)} (A\rtimes_\0 S), Q)$ is also exact at the $n$-th term for all $n \geq 1,$ because $Q$ is injective as an $(A\rtimes_\0 S)^e$-module. Therefore, 
    \[
        H^n \left( \operatorname{Hom}_{(A\rtimes_\0 S)^e}(P_\bullet \otimes_{K E(S)} (A\rtimes_\0 S), Q)\right) = 0,\ \forall n \geq 1,
    \] 
    completing our proof.  \end{proof}

\begin{thrm} \label{teo:EunitaryCohomSpectralSeq}
 Let $\0$ be a unital action of an $E$-unitary inverse monoid $S$ on an algebra $A$  over a commutative ring $K.$ Assume that $A$ is flat over $K.$ Then, for any $(A\rtimes_\0 S)^e$-module $M$ there exists a third quadrant  cohomology spectral sequence $E_r$ such that
    \begin{equation}\label{eq:EunitaryCohomSpectralSeq}
        E_2^{p,q} = H^p (S,H^q(A,M) ) \Rightarrow H^{p+q}(A\rtimes_\0 S, M). 
    \end{equation}
\end{thrm}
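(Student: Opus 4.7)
The plan is to mirror the argument of \cref{teo:E-UnitaryHomolSpectralSeq}, but using the cohomology version of Grothendieck's spectral sequence theorem, namely \cite[Theorem 10.47]{Rotman}. The two functors $T_1$ and $T_2$ introduced in \cref{sec-cohom} are left exact by construction. Their composition $T_2T_1$ is naturally isomorphic to $T$ by \cref{T_2T_1-iso-T}, and the key acyclicity hypothesis is already supplied by \cref{prop:T1sendInjtoT2acylic}, whose proof does not rely on $K$ being a field but only on the $E$-unitary hypothesis on $S$ (via the projectivity of $KS$ as a right $KE(S)$-module stated in \cref{cor:KS pojective over KR(S)}). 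Thus all the Grothendieck-type input is already in place.

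Applying \cite[Theorem 10.47]{Rotman} to the composable pair $(A\rtimes_\0 S)^e\Mod \xrightarrow{T_1} KS\Mod \xrightarrow{T_2} K\Mod$ will produce a third quadrant cohomology spectral sequence
\begin{align*}
E_2^{p,q} = (R^pT_2)(R^qT_1)M \impl R^{p+q}(T_2T_1)M.
\end{align*}
Using the natural isomorphism $T_2T_1 \cong T$ and the identity $R^nT(M) = \Ext^n_{(A\rtimes_\0 S)^e}(A\rtimes_\0 S, M) = H^n(A\rtimes_\0 S, M)$, the abutment becomes $H^{p+q}(A\rtimes_\0 S, M)$. Likewise, by \cref{def:Inv Monoid (co)homol} the outer derived functor is $(R^pT_2)(N) = \Ext^p_{KS}(KE(S), N) = H^p(S, N)$, so the spectral sequence takes the intermediate form $E_2^{p,q} = H^p(S, (R^qT_1)M)$.

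The last step is to identify $(R^qT_1)M$ with the Hochschild cohomology $H^q(A,M)$. Here enters the hypothesis that $A$ is flat over $K$: by \cref{lem:FlatnessInE-unitaryCase}, $(A\rtimes_\0 S)^e$ is flat as a right $A^e$-module. As in the proof of \cref{cor:FlatCondition}, \cite[Corollary 3.6A]{Lam} then yields that every injective left $(A\rtimes_\0 S)^e$-module is injective as a left $A^e$-module, so an injective resolution of $M$ in $(A\rtimes_\0 S)^e\Mod$ remains an injective resolution in $A^e\Mod$. Consequently $(R^qT_1)M = \Ext^q_{A^e}(A,M) = H^q(A,M)$, and the spectral sequence takes the desired form \eqref{eq:EunitaryCohomSpectralSeq}. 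There is no real obstacle left: every technical ingredient (the natural isomorphism of functors, the acyclicity of $T_1$ on injectives, and the transfer of the injective resolution from $(A\rtimes_\0 S)^e$ to $A^e$) has already been prepared, so the argument is purely assembly.
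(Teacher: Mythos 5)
Your proposal is correct and follows essentially the same route as the paper's own proof: it invokes \cref{prop:T1sendInjtoT2acylic} for the acyclicity hypothesis, \cref{T_2T_1-iso-T} for the natural isomorphism $T_2T_1\cong T$, applies \cite[Theorem 10.47]{Rotman}, and then uses \cref{lem:FlatnessInE-unitaryCase} together with the argument of \cref{cor:FlatCondition} to identify $(R^qT_1)M$ with $H^q(A,M)$. No gaps; the assembly is exactly what the paper does.
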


\begin{proof}  By Proposition~\ref{prop:T1sendInjtoT2acylic} the functor $T_1$ sends injective $(A\rtimes_\0 S)^e$-modules to right $T_2$-acyclic modules. Then, since   $T_2$ is a  left exact functor and by Corollary~\ref{T_2T_1-iso-T} there is a natural isomorphism  $T_2 T_1 \cong T,$ we are in conditions to apply   \cite[Theorem 10.47]{Rotman}  to obtain the  spectral sequence
\[
        E_2^{p,q} = (R^p T_2)(R^q T_1)M \Rightarrow (R^{p+q}T) M. 
    \]
 Obviously, $$(R^{p+q}T)(-)=H^{p+q}(A\rtimes_\0 S, -)$$ and $$R^p T_2 (-) = \operatorname{Ext}_{K S}^p(K E(S),-)= H^p(S,-),$$
      so that the spectral sequence takes the form
     \begin{equation}\label{eq:AlmostFinalCohomolSpectralSeq}
        E_2^{p,q} = H^p_{K S}(KS, (R^q T_1)M ) \Rightarrow H^{p+q}(A\rtimes_\0 S, M).
	 \end{equation} 

		Thanks to  Lemma~\ref{lem:FlatnessInE-unitaryCase} the right $A^e$-module $(A\rtimes_\0 S)^e$ is flat and, as in the  proof of Corollary~\ref{cor:FlatCondition},
		we conclude that
		\[ 
       R^q T_1(-)=H^q(A,-), 
    \] so that the spectral sequence  
	\eqref{eq:AlmostFinalCohomolSpectralSeq} takes the announced form \eqref{eq:EunitaryCohomSpectralSeq}.
  \end{proof}

  If $K$ is a field, then the flatness of $A$ over $K$ is guaranteed, so that we immediately obtain the next:
  
  \begin{cor}
   Let $\0$ be a unital action of an $E$-unitary inverse monoid $S$ on an algebra $A$  over a field $K.$  Then, for any $(A\rtimes_\0 S)^e$-module $M$ there exists a third quadrant  cohomology spectral sequence $E_r$  of the form \eqref{eq:EunitaryCohomSpectralSeq}.
   \end{cor}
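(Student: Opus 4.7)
The plan is essentially a one-line deduction from Theorem~\ref{teo:EunitaryCohomSpectralSeq}. The only hypothesis of that theorem beyond what appears in the corollary is the flatness of $A$ as a $K$-module; when $K$ is a field, every $K$-module is free, hence flat, so this hypothesis is automatic. Therefore I would just note that the assumption ``$A$ is flat over $K$'' is satisfied vacuously, and then apply Theorem~\ref{teo:EunitaryCohomSpectralSeq} directly to conclude the existence of the third quadrant cohomology spectral sequence
\[
E_2^{p,q} = H^p(S, H^q(A,M)) \Rightarrow H^{p+q}(A\rtimes_\0 S, M).
\]

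There is no real obstacle here; the corollary is a straightforward specialization. The only ``work'' is the trivial observation that vector spaces over a field are flat, which needs no justification. So the proof reduces to a single sentence citing Theorem~\ref{teo:EunitaryCohomSpectralSeq} with the remark that the flatness hypothesis on $A$ is automatic in the field case.
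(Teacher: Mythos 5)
Your proposal is correct and matches the paper exactly: the paper derives this corollary from Theorem~\ref{teo:EunitaryCohomSpectralSeq} with the single remark that an algebra over a field is automatically flat over it. Nothing more is needed.
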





	\section{Application to Steinberg algebras}\label{sec:Steinberg}
	
	 We shall consider unital Steinberg algebras, for which we recall some notions and facts.

\underline{In all what follows} $K $ will be a field and $X$ a topological space, on which some assumptions will be imposed soon. A function $f:X \to K $ is said to be {\it locally constant} if, for every $x$ in $X$, there exists a neighborhood $V$ of $x$, such that $f$ is
constant on $V$. Considering the discrete topology on $K,$ it is readily seen that a function $f: X \to K$ is continuous if and only if it is locally constant.  The {\it support} of $f$ is defined to be the set
  $$
  {\rm supp} (f) = \{x\in X: f(x)\neq 0\}.
  $$
  Observe that the support of a locally constant function $f$ is always closed.
  

  Let $S$ be a discrete inverse semigroup. A topological action  of $S$ on $X$ is  a semigroup homomorphism $\0:S\to\cI(X)$, $s\mapsto\0_s$, such that $\dom\0_s$ and $\ran\0_s$ are open subsets of $X$ and $\0_s$ is a homeomorphism for all $s\in S$.  Write 
$$X_{s\m }:= \dom\0_s \;\;\ \text{and} \;\;\; X_s := \ran\0_s$$ for all $s \in S.$ 
 As in the case of actions on algebras,   $\0_e$ is the identity map
$X_{e } \to X_{e },$ for each idempotent $e\in S,$ and the equality $\0 _s \circ \0\m_s =   \0_{s s\m},$ readily implies  that
 $$X_{s s\m} = X_s,$$
for all $s\in S.$ Hence, $\0 _{s s\m}$ is the identity map $X_s  \to  X_s.$ 
  
  \underline{Assume for the rest of the paper} that, unless otherwise stated,    $X$ is a compact Hausdorff space whose topology admits a basis formed by
  compact open~\footnote{That  is, sets which are simultaneously compact and open.} subsets. Since $S$ is Hausdorff, a subset of $X$ is closed if and only if it is compact.
    Given any compact open set $E\subseteq X$, it is easy to see that its characteristic function $1_E$ is
locally constant.  Moreover, it is not difficult to prove  that every locally constant function $f:X\to K$ is a linear combination of the form
  \begin{equation}\label{eq:LinearCombination}
  f = \sum_{i=1}^m c_i\,1_{E_i},
  \end{equation}
  where $E_i$ are pairwise disjoint compact open subsets and  $c_i \in K $. We will  denote by $\LX$ the set of all locally constant  $K $-valued  functions on $X$.
With point-wise multiplication, $\LX$ is a commutative $K $-algebra  with unity $1_X.$

Let $\theta $ be a topological action of $S$ on $X$ such that  the subset $X_s$  is closed in $X$  (besides being open) for every $s\in S$. For each $s$ in $S$, we may  consider the $K  $-algebra $\LXs$, which we will identify with
the set formed by all $f$ in $\LX$ vanishing on $X\setminus X_s$.  Under this identification $\LXs $ becomes an
ideal in $\LX ,$ generated by the idempotent element $1_{X_s}.$


Using the action $\theta ,$ we  define an action $\hat{\0}$ of $S$ on the algebra $\LX$ as follows:\\
the mapping 
  \begin{equation}\label{eq:InducedAlgAction}
  \hat{\theta}_g:\LXsm \to\LXs,
  \end{equation}
  such that
  $$ 
  \hat{\0}_s(f) = f\circ\theta_{s\m} \in\LXsm,
  $$
   is an isomorphism of algebras, and the collection formed by all ideals $\LXs $, together with the collection of all $\hat{\0}_s$, is  easily seen to be a unital   action  of $S$ on $\LX .$

Recall that a groupoid is a small category in which each morphism is an isomorphism. The set of unit elements of a  groupoid $\sG$ will be denoted by ${\sG}^{(0)}$ and identified with the set of  objects of $\sG.$ Every morphism $\gamma $ of $\sG$ has a source (domain) ${\bf s}(\gamma $) and a range (codomain)
 ${\bf r}(\gamma ) ,$ which are identified with the units ${\gm}\m \gm $ and  $\gm {\gm}\m ,$ respectively.   This defines the source map ${\bf s} : \sG \to \sG^{(0)},$ $\gm \mapsto {\gm}\m \gm ,$
and the range map ${\bf r} : \sG \to \sG^{(0)},$ $\gm \mapsto {\gm} {\gm}\m .$ The set of composable pairs is denoted by 
$\sG^{(2)},$ i.e. 
$$ \sG^{(2)} = \{ (\alpha , \beta ) \in \sG \times \sG \;  \mid \; 
{\bf s} (\alpha) = {\bf r}(\beta) \}.
$$ A subset $U$ of $\sG$ is called a \textit{bisection} if the restrictions ${\bf s}|_U$ and ${\bf r}|_U$ are injective.

A {\it topological groupoid} is a groupoid $\sG$ whose underlying set is equipped with a (non-necessarily Hausdorff) topology making
 the product and inversion continuous, where the set of composable pairs is given the 
 topology induced from the product topology of $\sG\times \sG.$
 
 An {\it \'etale}  groupoid
 is a topological groupoid $\sG$, whose unit space $\sG^{(0)}$ is locally compact and Hausdorff in the relative topology, and such that the range map ${\bf r} : \sG \to \sG^{(0)}$ is a
 local homeomorphism. The latter means that for any $\gm \in \sG$ there are an open subset $U$ of $\sG$ containing $\gm $ 
  and an open subset $V$ of $\sG^{(0)}$ containing ${\bf r}(\gm)$, such that ${\bf r}(U)=V ,$   and ${\bf r}|_U$
 is a homeomorphism onto $V$.   Since ${\bf d}(\gm ) ={\bf r}({\gm}\m),$ for every $\gm  \in \sG,$ it follows  that ${\bf  d}$  is also a local
 homeomorphism. Like any local homeomorphism, ${\bf  d}$ and ${\bf  r}$ are open
 maps. An important property of an {\'e}tale groupoid $\sG $ is that its unit space is open \cite[Proposition 3.2]{Exel2008}. Thus, $\sG^{(0)}$ is an open bisection of $\sG.$ It is easily seen that for any open bisection $U$ of $\sG $ the subsets ${\bf  d}(U)$ and ${\bf  r}(U)$ are open in $\sG$ and the restrictions  ${\bf  d}|_U$ and ${\bf  r}|_U$ are homeomorphisms.
 
 Given open bisections $U$ and $V$ of an {\'e}tale groupoids
 $\sG$ the (possibly empty)
set-wise product   
  $$UV =\{uv \; \mid \; u \in U, v\in V\}$$
 and the set-wise inverse
 $$U\m = \{u\m \; \mid \; u\in U\},$$ 
 are also open bisections,  such that with these operations the set of all open bisections ${\mathcal S} (\sG )$ of $\sG$ is an inverse monoid, whose idempotents are the open subsets of $\sG ^{(0)}$ and the unity is  $\sG^{(0)}$ (see \cite[Proposition 3.8]{Exel2008}).

  Considering an  {\'e}tale groupoid $\sG ,$  for each $U\in  {\mathcal S} (\sG )$ define  the mapping 
\begin{equation}\label{eq:action}
 {\0}_U : {\bf d}(U) \to {\bf r}(U) ,
 \end{equation} by setting $$  {\0}_U(x) = {\bf r}(\gm ), $$ where $x\in {\bf d}(U)$ and $\gamma $ is the unique element $U$ with ${\bf d} (\gm )=x.$ Then the collection of the mappings $\0 _U,$ where $U$ runs over the open bisections of  $\sG ,$ constitutes a topological action $\0$ of the inverse semigroup ${\mathcal S} (\sG )$ on the unit space $\sG ^{(0)}$  (see \cite[Proposition 5.3]{Exel2008}).
 
 An {\'e}tale groupoid $\sG $ is called {\it ample} if the topology of  $\sG $ admits a basis formed by compact open bisections. It is well-known and can be directly verified  that the compact open bisections of an ample groupoid $\sG$ form an inverse subsemigroup of  ${\mathcal S} (\sG ),$ which is usually denoted by ${\mathcal S}^{a}(\sG ).$
 
  Given  an ample groupoid  $\sG ,$ let
  $A_K(\sG)$  be the $K$-subspace of $K^{\sG}$ spanned by the characteristic functions of compact
 open bisections  of $\sG.$  Every element of  $A_K(\sG)$ is a linear combination of characteristic functions
 of pairwise disjoint compact bisections (see \cite[Proposition 4.3]{St2010}).   Evidently, every compact open subset of $\sG ^{(0)}$ is a compact open bisection of $\sG ,$ so that its characteristic function is an element of $A_K(\sG).$   For $f, g\in A_K(\sG)$ their product is defined  by the convolution formula
  $$(f \ast g )(\gm ) =
  \sum _{\af \beta = \gamma} f(\af ) g (\bt), $$ for each $\gm \in \sG.$ The summation above is taken over all $\af, \bt \in \sG$ such that $\af \bt = \gm.$ It is easily seen that the above sum is finite, $f \ast g$ is an element of   $A_K(\sG)$ and  $$1_U \ast 1_V = 1_{UV} , \;\;\; U,V \in {\mathcal S} (\sG ).$$  With this multiplication $A_K(\sG)$ becomes an algebra over $K,$ called the {\it Steinberg algebra} of the ample groupoid $\sG .$ 
  
  Since we work with unital algebras, \underline{we assume that} $\sG$ is an ample groupoid, 
  whose unit space $X=\sG ^{(0)}$ is compact, so that $A_K(\sG)$ is a unital algebra with unity 
  element $1_{X}.$
  
  If $\sG$ is Hausdorff then each compact open bisection of $\sG$ is closed in $\sG$ and, consequently, its characteristic 
  function is continuous. This yields that each element of  $A_K(\sG)$ is a continuous compactly 
  supported  function, and   $A_K(\sG)$ can be seen as the set of all continuous compactly 
  supported functions of the form $\sG \to K.$ However, if $\sG$ is not Hausdorff then it is easy to see that
 $\sG$ contains a compact open bisection $U$ which is not closed in  $\sG ,$ so that $1_U$ is not continuous.
 Nevertheless, if $U$ is a compact open subset of $X,$ then $1_U$ is continuous  as a function 
 $X \to K,$ as well as  as a function $\sG \to K.$ 
 Note also that in view of  \eqref{eq:LinearCombination} we may consider $\LX$  as a subalgebra  of $A_K(\sG).$

 Since  ${\bf s} (U)$ and
  ${\bf r} (U)$ are compact open subsets of $X$ (and also of $\sG$),  the topological action 
  \eqref{eq:action} of ${\mathcal S}^{a}(\sG )$ on $X$ gives rise 
  to a  unital action $\hat{\0}$ of  ${\mathcal S}^{a}(\sG )$  on $\LX $ given by \eqref{eq:InducedAlgAction}.
Given  $U \in {\mathcal S}^{a}(\sG )$
 and $\varphi \in {\mathcal L}({\br}(U)) = {\mathcal L}(X_U) ,$ 
 define $\varphi \Delta _U \in  A_K(\sG)$ by setting
 $$\varphi \Delta _U = \begin{cases}
	\varphi \circ {\bf r}|_U \; \text{on}\; U,\\
	\;\;\; 0 \; \;\;\; \;\;\; \text{on}\; \sG \setminus U .
	\end{cases} $$ Then for any $\gm \in \sG$ we have that
$$\varphi \Delta _U (\gm ) = [\gm \in U] \varphi ({\bf r}(\gm) ),$$
where the brackets indicate {\it boolean value}, i.e. $[\bullet]$ is $1$ if $\bullet$ is true and  $0$ otherwise.
Then by \cite[Theorem 5.2]{BeuterGon2018} (see also \cite[Theorem 5.2.4]{Demeneghi2019} and 
\cite[Proposition 5.4]{Exel2008}) the mapping 
\begin{equation}\label{eq:IsoPsi}
	\Psi : \LX \rtimes _{\hat{\0}} {\mathcal S}^{a}(\sG ) \to A_K(\sG),
\end{equation} given by 
\begin{equation*}
	\overline{\varphi \delta _U} \mapsto \varphi \Delta _U,
\end{equation*} where $U \in {\mathcal S}^{a}(\sG ), \varphi \in {\mathcal L}({\br}(U)),$ is an isomorphism of $K$-algebras. 

It is interesting to observe the following fact.
\begin{rem}\label{lem:AbimoduleSteinbergIso} The mapping \eqref{eq:IsoPsi} is an isomorphism of $ \LX$-bimodules.
\end{rem}
\begin{proof} Observe that for any $f \in \LX,$  $\psi \in A_K(\sG)$ and $\gm \in \sG$ we have that
	$$(f \ast \psi )(\gm ) = \sum _{\af \bt = \gm} f(\af) \psi (\bt) = f({\bf r}(\gm)) \psi (\gm). $$
Hence, for any $\gm \in \sG,$ $U \in  {\mathcal S}^{a}(\sG ), f \in \LX$ and $\varphi \in {\mathcal L} ({\br}(U))$ we see that
\begin{align*}
	\Psi (f \cdot \overline{\varphi \delta _U}) (\gm) &=
	\Psi ( \overline{f \varphi \delta _U}) (\gm) =
 ( (f \varphi) \Delta_{U})(\gm )= [\gm \in U] (f \varphi) ({\br}(\gm))\\
 &= [\gm \in U] f ({\br}(\gm)) \varphi  ({\br}(\gm))
 =  f ({\br}(\gm)) (\varphi \Delta _U)(\gm)\\
& =  (f \ast  \varphi \Delta _U)(\gm) = (f  \ast \Psi(\overline{\varphi \delta _U})) (\gm),\\
\end{align*} showing that $\Psi$ is a homomorphism of left $\LX$-modules. As to the 
 right action
 of $\LX ,$ recalling that the identity element of the monoid ${\mathcal S}^{a}(\sG )$ is the compact open bisection $X= \sG ^{(0)}$ we compute, 
  on the one  hand,
\begin{align*}
	\Psi ( \overline{\varphi \delta _U} \cdot f) (\gm)
	&= \Psi ( \overline{\varphi \delta _U f \delta _X}) (\gm)
	=  \Psi ( \overline{\hat{\0}_U(\hat{\0}_{U\m}(\varphi ) f) \delta _{U}}) (\gm)\\
	&=  \Psi ( \overline{\hat{\0}_U((\varphi \circ \0 _U ) f) \delta _{U}}) (\gm)
	=  \Psi ( \overline{((\varphi \circ \0 _U ) f) \circ {\0}_{U\m}\delta _{U}}) (\gm)\\
	&=  (((\varphi \circ \0 _U ) f) \circ {\0}_{U\m}\Delta _{U}) (\gm)
	= [\gm \in U] (((\varphi \circ \0 _U ) f) \circ {\0}_{U\m} ({\br} (\gm))\\
	&= [\gm \in U] ((\varphi \circ \0 _U ) f)  ({\bf s} (\gm))
	= [\gm \in U] (\varphi \circ \0 _U )({\bf s} (\gm)) f  ({\bf s} (\gm))\\
	&= [\gm \in U] \varphi  ({\bf r} (\gm)) f  ({\bf s} (\gm)).
\end{align*}
On the other hand,
\begin{align*}
	(\Psi ( \overline{\varphi \delta _U} ) \ast   f) (\gm)
	&= ((\varphi \Delta _U ) \ast   f) (\gm) 
	= \sum _{\af \bt = \gm}\varphi \Delta _U (\af)    f (\bt)\\
	& = \varphi \Delta _U (\gm)    f ({\bf s}(\gm))
	= [\gm \in U] \varphi ({\bf r} (\gm))    f ({\bf s}(\gm)),
\end{align*} proving that $\Psi$ is also a right $\LX $-module mapping.  \end{proof} 

Our homology result for Steinberg algebras is as follows.
\begin{thrm}\label{teo:SteinbergHomology}
	Let $\sG $ be an ample groupoid with compact unit space $\sG ^{(0)}$ and $M$ an $ A_K(\sG)$-bimodule.  Then there is an isomorphism of homology groups
\begin{equation}\label{eq:SteinbergHomology}
	H_{n}(A_K(\sG), M) \cong  H_n({\mathcal S}^{a}(\sG ), M/[{\mathcal L}( \sG ^{(0)}), M]),
	\end{equation} where the left $K {\mathcal S}^{a}(\sG )$-module structure on $M$ is given by 
	\begin{equation}\label{eq:SliceLeftAction}
	U \cdot x =  (1_{{\bf r} (U)} \Delta_U) \cdot  x \cdot (1_{{\bf s} (U)} \Delta_{U\m}), 
	\end{equation} for any  $U \in {\mathcal S}^{a}(\sG )$ 
	and
	 $ x \in M.$
	 \end{thrm}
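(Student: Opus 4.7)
The plan is to identify $A_K(\sG)$ with the crossed product $A \rtimes_{\hat{\0}} S$, where $A = \mathcal{L}(\sG^{(0)})$ and $S = \mathcal{S}^a(\sG)$, via the algebra and $A$-bimodule isomorphism $\Psi$ of \eqref{eq:IsoPsi} (see also Remark~\ref{lem:AbimoduleSteinbergIso}), and then to apply Corollary~\ref{cor:flatHHH} followed by a collapse argument in the spirit of Remark~\ref{rem:separable}. Since $\Psi$ is a $K$-algebra isomorphism, Hochschild homology is preserved, so $H_n(A_K(\sG), M) \cong H_n(A \rtimes_{\hat{\0}} S, M)$ for any $A_K(\sG)$-bimodule $M$.

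The key technical input is that the enveloping algebra $A^e = A \otimes_K A$ is von Neumann regular. Indeed, $A = \mathcal{L}(\sG^{(0)})$ is commutative and generated by its idempotents (the characteristic functions of compact open subsets, by \eqref{eq:LinearCombination}), so $A^e$ is a commutative $K$-algebra generated by the idempotents $e \otimes f$, where $e, f$ are idempotents of $A$. By \cite[Proposition 5.14]{DJ2}, $A^e$ is von Neumann regular, and by \cite[Corollary 1.13]{Goodearl} every $A^e$-module is flat. This has two consequences. First, $(A \rtimes_{\hat{\0}} S)^e$ is flat as a left $A^e$-module, so Corollary~\ref{cor:flatHHH} yields the first quadrant spectral sequence
\begin{equation*}
E^2_{p,q} = H_p(S, H_q(A, M)) \Rightarrow H_{p+q}(A \rtimes_{\hat{\0}} S, M).
\end{equation*}
Second, $A$ itself, viewed as an $A^e$-module via the multiplication $A^e \to A$, is flat, whence $H_q(A, M) = \operatorname{Tor}^{A^e}_q(A, M) = 0$ for all $q \geq 1$. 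Therefore $E^2_{p,q} = 0$ for $q \geq 1$, the sequence collapses on the $p$-axis, and by \cite[Proposition 10.21]{Rotman} together with Lemma~\ref{lem:0-homology} we obtain
\begin{equation*}
H_n(A \rtimes_{\hat{\0}} S, M) \cong H_n(S, H_0(A, M)) \cong H_n(S, M/[A, M]).
\end{equation*}

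The remaining step is to verify that the left $KS$-module structure \eqref{left KS-mod M} on $M$ transferred through $\Psi$ agrees with the slice action \eqref{eq:SliceLeftAction}. This is a direct computation: for $U \in \mathcal{S}^a(\sG)$ the unit $1_U$ of the ideal $\mathrm{ran}(\hat{\0}_U) \subseteq A$ is the characteristic function $1_{{\bf r}(U)}$, and $\Psi(\overline{1_{{\bf r}(U)} \delta_U}) = 1_{{\bf r}(U)} \Delta_U$ while $\Psi(\overline{1_{{\bf s}(U)} \delta_{U^{-1}}}) = 1_{{\bf s}(U)} \Delta_{U^{-1}}$, so the two actions match. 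The main obstacle, namely simultaneously verifying the flatness hypothesis of Corollary~\ref{cor:flatHHH} and the vanishing $H_q(A, M) = 0$ for $q \geq 1$ that drives the collapse, is dissolved in a single stroke by the observation that $A^e$ is von Neumann regular; everything else is bookkeeping.
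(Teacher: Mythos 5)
Your proposal is correct and follows essentially the same route as the paper: identify $A_K(\sG)$ with $\LX\rtimes_{\hat\0}{\mathcal S}^a(\sG)$ via $\Psi$, observe that $\LX^e=\LX\ot_K\LX$ is a commutative algebra generated by idempotents and hence von Neumann regular, deduce both the flatness hypothesis of \cref{cor:flatHHH} and the vanishing $H_q(\LX,M)=0$ for $q\ge 1$, and conclude by collapsing the spectral sequence onto the $p$-axis and invoking \cref{lem:0-homology}. The only (harmless) cosmetic difference is that you phrase the vanishing via flatness of $A$ as an $A^e$-module, whereas over a von Neumann regular ring every module is flat, so either argument of $\operatorname{Tor}$ does the job.
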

	 \begin{proof} Notice that the left action of 
		$K {\mathcal S}^{a}(\sG )$ on $M$ comes from formula  \eqref{left KS-mod M} and the isomorphism $\Psi $ 
		in \eqref{eq:IsoPsi}.
		 Note also that, as it can be seen  
		  in  Lemma~\ref{lem:for0-homology},  the left action of $K {\mathcal S}^{a}(\sG )$ on $M$ gives a left $K {\mathcal S}^{a}(\sG )$-module structure on $M/[\LX, M].$
		
		Recall that the $K$-algebra $\LX $ is generated by the characteristic functions $1_{U},$ where $U$ are compact open subsets of $X.$
	 It follows   that  $\LX \otimes _K  \LX$ is a commutative $K$-algebra generated by idempotents. 
	  By \cite[Proposition 5.14]{DJ2} any unital commutative algebra $A$ over a field  is von Neumann regular, if $A$ is generated by idempotents. Thus, $\LX \otimes _K  \LX$  is von Neumann regular. Note that $\LX = \LX^{\rm op} .$ Consequently,  any left  module over  $\LX ^e $ is flat (see  \cite[Corollary 1.13)]{Goodearl}). In particular, the algebra $(\LX \rtimes _{\hat{\0}} {\mathcal S}^{a}(\sG ) )^e$ is flat as  a left  $\LX ^e $-module.  
 Using the  $K$-algebra isomorphism $\Psi $ from \eqref{eq:IsoPsi}, we  transform the   $ A_K(\sG)$-bimodule $M$ into a $\LX \rtimes _{\hat{\0}} {\mathcal S}^{a}(\sG ) $-bimodule. Then  by Corollary~\ref{cor:flatHHH} there is a 
		first quadrant homology spectral sequence
	 \begin{equation}\label{eq:SteinbergHomolSpectralSeq}
		 E^2_{p,q} = H_p({\mathcal S}^{a}(\sG ), 
		 H_q (\LX, M)) \Rightarrow H_{p+q}(\LX \rtimes _{\hat{\0}} {\mathcal S}^{a}(\sG ) , M). 
	 \end{equation} In view of the fact that $\LX ^e$ is von Neumann regular, the left   $\LX ^e$-module $M$ is flat, which implies that   $H_q (\LX, M)=0$ for all $q>0.$ This yields  that the spectral sequence \eqref{eq:SteinbergHomolSpectralSeq} collapses on the $p$-axis, and by \cite[Proposition 10.21]{Rotman} we obtain an isomorphism of $K$-vector spaces
	 $$ H_n({\mathcal S}^{a}(\sG ), H_0(\LX ,M)) \cong H_{n}(\LX \rtimes_{\hat{\0}} {\mathcal S}^{a}(\sG ), M).$$ Then, since  by Lemma~\ref{lem:0-homology} we may write
	 $H_0(\LX,M) =  M/[\LX, M],$ we get an isomorphism
	 \begin{equation}\label{eq:StenbergHomolIso}
	  H_n({\mathcal S}^{a}(\sG ), M/[\LX, M]) \cong H_{n}(\LX \rtimes_{\hat{\0}} {\mathcal S}^{a}(\sG ), M).
	  \end{equation}
	 
	 Observe that $\Psi $ maps the copy of $\LX $ in $\LX \rtimes_{\hat{\0}} {\mathcal S}^{a}(\sG )$ onto the copy of $\LX$ in $A_K(\sG)$ and, consequently, the $\LX $-bimodule structure on $M$ coming from its $\LX \rtimes_{\hat{\0}} {\mathcal S}^{a}(\sG )$-bimodule structure is the same as that coming from the  $A_K(\sG)$-bimodule structure. Then it can be readily seen that there is an isomorphism of $K$-vector spaces
	 \begin{equation*}\label{eq:SkewAndSteinbergHomolIso}
H_{n}(\LX \rtimes_{\hat{\0}} {\mathcal S}^{a}(\sG ), M) \cong H_n (A_K(\sG),M),
	 \end{equation*} so that the announced isomorphism 
	 \eqref{eq:SteinbergHomology} follows from \eqref{eq:StenbergHomolIso}.
   \end{proof}
		
		For  cohomology we give the next:
		
		\begin{thrm}\label{teo:SteinbergCoHomology}
	Let $\sG $ be an ample groupoid with compact unit space $\sG ^{(0)}$ and $M$ an $ A_K(\sG)$-bimodule.  Then there is a third quadrant cohomology spectral sequence 
	\begin{align}\label{eq:SteinbergCohomolSpectralSeq}
		E^{p,q}_2=H^p({\mathcal S}^{a}(\sG ),H^q({\mathcal L}( \sG ^{(0)}) ,M))\impl H^{p+q}( A_K(\sG),M),
	\end{align} where the left $K {\mathcal S}^{a}(\sG )$-action on $M$ 
	 is  given  in \eqref{eq:SliceLeftAction}.
	 \end{thrm}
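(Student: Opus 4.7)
The plan is to deduce the result directly from Corollary~\ref{cor:FlatCondition} applied to the unital action $\hat{\0}$ of the inverse monoid $\mathcal{S}^{a}(\sG)$ on the $K$-algebra $\LX$, transporting the conclusion along the $K$-algebra isomorphism $\Psi: \LX \rtimes_{\hat\0} \mathcal{S}^{a}(\sG) \to A_K(\sG)$ of \eqref{eq:IsoPsi}.

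First I would use $\Psi$ to convert the given $A_K(\sG)$-bimodule $M$ into a $\LX \rtimes_{\hat\0} \mathcal{S}^{a}(\sG)$-bimodule, which by functoriality of Hochschild cohomology induces an isomorphism $H^n(A_K(\sG), M) \cong H^n(\LX \rtimes_{\hat\0} \mathcal{S}^{a}(\sG), M)$ for all $n \ge 0$.

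The main ingredient is the verification of the flatness hypothesis of Corollary~\ref{cor:FlatCondition}, which is the same argument already used in Theorem~\ref{teo:SteinbergHomology}. The algebra $\LX$ is commutative and generated by idempotents (the characteristic functions of compact open subsets of $X = \sG^{(0)}$), so $\LX^e = \LX \otimes_K \LX^{\mathrm{op}} = \LX \otimes_K \LX$ is a unital commutative $K$-algebra generated by idempotents. By \cite[Proposition 5.14]{DJ2} such an algebra is von Neumann regular, and then by \cite[Corollary 1.13]{Goodearl} every $\LX^e$-module is flat. In particular, $(\LX \rtimes_{\hat\0} \mathcal{S}^{a}(\sG))^e$ is flat as a right $\LX^e$-module.

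Applying Corollary~\ref{cor:FlatCondition} with $A = \LX$, $S = \mathcal{S}^{a}(\sG)$ and $\0 = \hat\0$ then yields a third quadrant cohomology spectral sequence
\begin{align*}
E^{p,q}_2 = H^p\bigl(\mathcal{S}^{a}(\sG), H^q(\LX, M)\bigr) \Rightarrow H^{p+q}\bigl(\LX \rtimes_{\hat\0} \mathcal{S}^{a}(\sG), M\bigr),
\end{align*}
which via the isomorphism of the first paragraph becomes \eqref{eq:SteinbergCohomolSpectralSeq}. Finally, to match the module structure, I would note, just as in the homology case, that $\Psi$ restricts to the identity on the copy of $\LX$ and sends $\overline{1_{\br(U)}\delta_U}$ to $1_{\br(U)}\Delta_U$, so that the left $K\mathcal{S}^{a}(\sG)$-action on $M$ induced by \eqref{left KS-mod M} coincides with \eqref{eq:SliceLeftAction}. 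I expect no serious obstacle: unlike in the homology case, there is no reason for $H^q(\LX, M)$ to vanish for $q > 0$ (von Neumann regularity yields flatness but not projectivity of arbitrary modules), so the spectral sequence need not collapse and the statement is limited to the existence of \eqref{eq:SteinbergCohomolSpectralSeq}.
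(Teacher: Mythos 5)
Your proposal is correct and follows essentially the same route as the paper's own proof: transport the bimodule structure along $\Psi$, verify the flatness hypothesis of Corollary~\ref{cor:FlatCondition} via the von Neumann regularity of ${\mathcal L}(\sG^{(0)})^e$, and identify the Hochschild cohomologies of the crossed product and of $A_K(\sG)$. Your closing remark that the sequence need not collapse (since flatness does not give projectivity of $A$ over $A^e$) correctly explains why the cohomological statement is weaker than its homological counterpart.
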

			\begin{proof} We saw in the proof of Theorem~\ref{teo:SteinbergHomology} that the $K$-algebra $\LX ^e$ is von Neumann regular.  Then   
			$(\LX \rtimes _{\hat{\0}} {\mathcal S}^{a}(\sG ))^e$ is flat as a right $\LX ^e$-module and, considering  the   $ A_K(\sG)$-bimodule $M$  as 
			a $\LX \rtimes _{\hat{\0}} {\mathcal S}^{a}(\sG ) $-bimodule via the $K$-algebra isomorphism $\Psi $ from \eqref{eq:IsoPsi}, we obtain 
			by Corollary~\ref{cor:FlatCondition}
			a third quadrant cohomology spectral sequence 
				\begin{align}\label{eq:SkewSteinbergCohomolSpectralSeq}
		E^{p,q}_2=H^p({\mathcal S}^{a}(\sG ),H^q(\LX ,M))\impl H^{p+q}(\LX \rtimes _{\hat{\0}} {\mathcal S}^{a}(\sG ),M).
	\end{align} Then one can readily  see the isomorphism 
	$$H^{p+q}(\LX \rtimes _{\hat{\0}} {\mathcal S}^{a}(\sG ),M) \cong H^{p+q}( A_K(\sG),M),$$ which transforms  \eqref{eq:SkewSteinbergCohomolSpectralSeq} into \eqref{eq:SteinbergCohomolSpectralSeq}. \end{proof}

\section*{Acknowledgements}
This work was supported by Fundación Séneca (22004/PI/22) and the Spanish Government Grant PID2020-113206GBIO0 funded by MCIN/ AEl/ 10.13039/ 501100011033.
 The first-named author was partially supported by 
Funda\c c\~ao de Amparo \`a Pesquisa do Estado de S\~ao Paulo (Fapesp), process n°:  2020/16594-0, and by  Conselho Nacional de Desenvolvimento Cient\'{\i}fico e Tecnol{\'o}gico (CNPq), process n°: 312683/2021-9. The second-named author
was partially supported by CMUP, member of LASI, which is financed by national funds through FCT --- Funda\c{c}\~ao para a Ci\^encia e a Tecnologia, I.P., under the project with reference UIDB/00144/2020, and by the Funda\c{c}\~ao para a Ci\^encia e a Tecnologia (Portuguese Foundation for Science and Technology) through the project PTDC/MAT-PUR/31174/2017. 
 Both the first-named and the second-named authors would like to thank the Department of Mathematics of the University of Murcia for its warm hospitality during 
their visits.  The first-named author would also like to thank the Department of Mathematics of the University of Porto for its cordial hospitality during his visit.

    \bibliography{bibl-pact}{}

\begin{thebibliography}{10}

\bibitem{AAR}
{\sc Alvares, E.~R., Alves, M.~M., and Redondo, M.~J.}
\newblock Cohomology of partial smash products.
\newblock {\em J. Algebra 482\/} (2017), 204--223.

\bibitem{ADK}
{\sc Alves, M.~M., Dokuchaev, M., and Kochloukova, D.~H.}
\newblock Homology and cohomology via the partial group algebra.
\newblock {\em Quarterly J. Math. 75}, 2 (2024), 613--661.

\bibitem{Ara-Buss24}
{\sc Ara, P., and Buss, A.}
\newblock Inverse semigroups of separated graphs and associated algebras.
\newblock {\em {\tt arXiv:2403.05295}\/} (2024).

\bibitem{Barnes85}
{\sc Barnes, D.~W.}
\newblock {\em Spectral sequence constructors in algebra and topology},
  vol.~317 of {\em Mem. Am. Math. Soc.}
\newblock Providence, RI: American Mathematical Society (AMS), 1985.

\bibitem{BeuterGon2018}
{\sc Beuter, V.~M., and Gon\c{c}alves, D.}
\newblock The interplay between {S}teinberg algebras and skew rings.
\newblock {\em J. Algebra 497\/} (2018), 337--362.

\bibitem{Beyl81}
{\sc Beyl, F.~R.}
\newblock The spectral sequence of a group extension.
\newblock {\em Bull. Sci. Math., II. S{\'e}r. 105\/} (1981), 417--434.

\bibitem{Cartan48}
{\sc Cartan, H.}
\newblock Sur la cohomologie des espaces o{\`u} op{\`e}re un groupe: notions
  alg{\'e}briques pr{\'e}liminaires.
\newblock {\em C. R. Acad. Sci., Paris 226\/} (1948), 148--150.

\bibitem{Cartan-Leray49}
{\sc Cartan, H., and Leray, J.}
\newblock Relations entre anneaux d'homologie et groupes de {Poincar{\'e}}.
\newblock Topologie alg{\'e}brique, {Paris} 1947, {Colloq}. {Int}. {Centre}
  {Nat}. {Rech}. {Sci}. 12, 83-85 (1949)., 1949.

\bibitem{ClarkFarthSimsTomf2014}
{\sc Clark, L.~O., Farthing, C., Sims, A., and Tomforde, M.}
\newblock A groupoid generalisation of leavitt path algebras.
\newblock {\em Semigroup Forum 89}, 3 (2014), 501--517.

\bibitem{CastroGoncalvesWyk2011}
{\sc de~Castro, G.~G., Gonçalves, D., and van Wyk, D.~W.}
\newblock Ultragraph algebras via labelled graph groupoids, with applications
  to generated uniqueness theorems.
\newblock {\em J. Algebra 579}, 1 (2021), 456--495.

\bibitem{DeMeyerIngraham}
{\sc De~Meyer, F., and Ingraham, E.}
\newblock {\em Separable Algebras over Commutative Rings}.
\newblock Lecture Notes in Mathematics. {Springer Berlin Heidelberg}, 2006.

\bibitem{Demeneghi2019}
{\sc Demeneghi, P.}
\newblock The ideal structure of {S}teinberg algebras.
\newblock {\em Adv. Math. 352\/} (2019), 777--835.

\bibitem{DE}
{\sc Dokuchaev, M., and Exel, R.}
\newblock {Associativity of crossed products by partial actions, enveloping
  actions and partial representations}.
\newblock {\em Trans. Amer. Math. Soc. 357}, 5 (2005), 1931--1952.

\bibitem{DEP}
{\sc Dokuchaev, M., Exel, R., and Piccione, P.}
\newblock {Partial representations and partial group algebras}.
\newblock {\em J. Algebra 226}, 1 (2000), 251--268.

\bibitem{DJ1}
{\sc Dokuchaev, M., and Jerez, E.}
\newblock {(Co)Homology of Partial Smash Products}.
\newblock {\em J. Algebra 652\/} (2024), 113--157.

\bibitem{DJ2}
{\sc Dokuchaev, M., and Jerez, E.}
\newblock {The twisted partial group algebra and (co)homology of partial
  crossed products}.
\newblock {\em Bull. Braz. Math. Soc. (N.S.) 55}, 3 (2024), Paper No. 33, 48
  pp.

\bibitem{DK}
{\sc Dokuchaev, M., and Khrypchenko, M.}
\newblock {Partial cohomology of groups}.
\newblock {\em J. Algebra 427\/} (2015), 142--182.

\bibitem{DKS2}
{\sc Dokuchaev, M., Khrypchenko, M., and Sim\'{o}n, J.~J.}
\newblock Globalization of group cohomology in the sense of
  {A}lvares-{A}lves-{R}edondo.
\newblock {\em J. Algebra 546}, 15 (2020), 604--640.

\bibitem{Evens75}
{\sc Evens, L.}
\newblock The spectral sequence of a finite group extension stops.
\newblock {\em Trans. Am. Math. Soc. 212\/} (1975), 269--277.

\bibitem{E-1}
{\sc Exel, R.}
\newblock {Circle actions on {$C^*$}-algebras, partial automorphisms and
  generalized {P}imsner-{V}oiculescu exact sequences}.
\newblock {\em J. Funct. Anal. 122}, 3 (1994), 361--401.

\bibitem{E1}
{\sc Exel, R.}
\newblock {Partial actions of groups and actions of inverse semigroups}.
\newblock {\em Proc. Amer. Math. Soc. 126}, 12 (1998), 3481--3494.

\bibitem{Exel2008}
{\sc Exel, R.}
\newblock {Inverse semigroups and combinatorial C*-algebras}.
\newblock {\em Bull. Braz. Math. Soc. 39}, 2 (2008), 191--313.

\bibitem{E6}
{\sc Exel, R.}
\newblock {\em {Partial Dynamical Systems, Fell Bundles and Applications}},
  vol.~224 of {\em {Math. Surveys and Monographs 7}}.
\newblock Amer. Math. Soc., Providence, Rhode Island, 2017.

\bibitem{EV}
{\sc Exel, R., and Vieira, F.}
\newblock {Actions of inverse semigroups arising from partial actions of
  groups}.
\newblock {\em J. Math. Anal. Appl. 363}, 1 (2010), 86--96.

\bibitem{Fl}
{\sc Ferrero, M., and Lazzarin, J.}
\newblock {Partial actions and partial skew group rings}.
\newblock {\em J. Algebra 319}, 12 (2008), 5247--5264.

\bibitem{Goodearl}
{\sc Goodearl, K.}
\newblock {\em Von Neumann Regular Rings}.
\newblock Monographs and Studies in Mathematics. {Pitman}, 1979.

\bibitem{Grothendieck57}
{\sc Grothendieck, A.}
\newblock Sur quelques points d'alg{\`e}bre homologique.
\newblock {\em T{\^o}hoku Math. J. (2) 9\/} (1957), 119--221.

\bibitem{Guichardet01}
{\sc Guichardet, A.}
\newblock Spectral sequences {\`a} la {Hochschild}-{Serre} for crossed products
  of algebras and of groups.
\newblock {\em J. Algebra 235}, 2 (2001), 744--765.

\bibitem{HazratNam2023}
{\sc Hazrat, R., and Nam, T.~G.}
\newblock Realizing ultragraph leavitt path algebras as steinberg algebras.
\newblock {\em J. Pure Appl. Algebra 227}, 5 (2023), Article ID 107275, 20 p.

\bibitem{Hochschild-Serre53}
{\sc Hochschild, G., and Serre, J.-P.}
\newblock Cohomology of group extensions.
\newblock {\em Trans. Am. Math. Soc. 74\/} (1953), 110--134.

\bibitem{KL}
{\sc Kellendonk, J., and Lawson, M.~V.}
\newblock {Partial actions of groups}.
\newblock {\em Internat. J. Algebra Comput. 14}, 1 (2004), 87--114.

\bibitem{Lam}
{\sc Lam, T.~Y.}
\newblock {\em {Lectures on modules and rings}}.
\newblock {Graduate texts in mathematics 189}. Springer-Verlag, New
  York-Berlin-Heidelberg, 1999.

\bibitem{Lawson}
{\sc Lawson, M.~V.}
\newblock {\em {Inverse semigroups. {T}he theory of partial symmetries}}.
\newblock World Scientific, Singapore-New Jersey-London-Hong Kong, 1998.

\bibitem{Leray46a}
{\sc Leray, J.}
\newblock L'anneau d'homologie d'une repr{\'e}sentation.
\newblock {\em C. R. Acad. Sci., Paris 222\/} (1946), 1366--1368.

\bibitem{Leray46b}
{\sc Leray, J.}
\newblock Structure de l'anneau d'homologie d'une repr{\'e}sentation.
\newblock {\em C. R. Acad. Sci., Paris 222\/} (1946), 1419--1422.

\bibitem{Loday}
{\sc Loday, J.~L.}
\newblock {\em Cyclic Homology}, vol.~301.
\newblock {Springer Science \& Business Media}, 2013.

\bibitem{Lyndon48}
{\sc Lyndon, R.~C.}
\newblock The cohomology theory of group extensions.
\newblock {\em Duke Math. J. 15\/} (1948), 271--292.

\bibitem{McClanahan95}
{\sc McClanahan, K.}
\newblock {{\(K\)}}-theory for partial crossed products by discrete groups.
\newblock {\em J. Funct. Anal. 130}, 1 (1995), 77--117.

\bibitem{Nistor90}
{\sc Nistor, V.}
\newblock Group cohomology and the cyclic cohomology of crossed products.
\newblock {\em Invent. Math. 99}, 2 (1990), 411--424.

\bibitem{Petrich-Reilly79}
{\sc Petrich, M., and Reilly, N.~R.}
\newblock {A representation of $E$-unitary inverse semigroups}.
\newblock {\em Quart. J. Math. Oxford Ser. (2) 30}, 119 (1979), 339--350.

\bibitem{Renault}
{\sc Renault, J.~N.}
\newblock {\em A groupoid approach to C*-algebras}, vol.~793 of {\em Lecture
  Notes in Mathematics}.
\newblock {Springer Berlin-Heidelberg-New York}, 1980.

\bibitem{Rotman}
{\sc Rotman, J.}
\newblock {\em An Introduction to Homological Algebra}.
\newblock Universitext. {Springer New York}, 2008.

\bibitem{Sanada93}
{\sc Sanada, K.}
\newblock On the {Hochschild} cohomology of crossed products.
\newblock {\em Commun. Algebra 21}, 8 (1993), 2727--2748.

\bibitem{Serre50}
{\sc Serre, J.-P.}
\newblock Cohomologie des extensions de groupes.
\newblock {\em C. R. Acad. Sci., Paris 231\/} (1950), 643--646.

\bibitem{Sieben97}
{\sc Sieben, N.}
\newblock {{$C^*$}-crossed products by partial actions and actions of inverse
  semigroups}.
\newblock {\em J. Aust. Math. Soc., Ser. A 63}, 1 (1997), 32--46.

\bibitem{St2010}
{\sc Steinberg, B.}
\newblock {A groupoid approach to discrete inverse semigroup algebras}.
\newblock {\em Adv. Math. 223}, 2 (2010), 689--727.

\end{thebibliography}
	\bibliographystyle{acm}
	
\end{document}